\newtheorem{proposition}{Proposition}
\newtheorem{theorem}[proposition]{Theorem}
\newtheorem{lemma}[proposition]{Lemma}
\newtheorem{corollary}[proposition]{Corollary}
\theoremstyle{remark}
\newtheorem{remark}[proposition]{Remark}
\theoremstyle{definition}
\newtheorem{definition}[proposition]{Definition}
\numberwithin{equation}{section}
\numberwithin{proposition}{section}
\numberwithin{figure}{section}
\numberwithin{table}{section}
\newcommand{\N}{\mathbb{N}}
\newcommand{\Q}{\mathbb{Q}}
\newcommand{\R}{\mathbb{R}}
\newcommand{\E}{\mathbb{E}}
\renewcommand{\P}{\mathbb{P}}
\newcommand{\EE}{\mathbf{E}}
\newcommand{\PP}{\mathbf{P}}
\renewcommand{\le}{\leqslant}
\renewcommand{\ge}{\geqslant}
\renewcommand{\leq}{\leqslant}
\renewcommand{\geq}{\geqslant}
\renewcommand{\subset}{\subseteq}
\renewcommand{\bar}{\overline}
\renewcommand{\tilde}{\widetilde}
\renewcommand{\hat}{\widehat}
\newcommand{\Ll}{\left}
\newcommand{\Rr}{\right}
\renewcommand{\d}{\mathrm{d}}
\newcommand{\1}{\mathbf{1}}
\newcommand{\mcl}{\mathcal}
\newcommand{\msf}{\mathsf}
\newcommand{\mfk}{\mathfrak}
\newcommand{\msc}{\mathscr}
\newcommand{\al}{\alpha}
\newcommand{\de}{\delta}
\newcommand{\ep}{\varepsilon}
\newcommand{\eps}{\varepsilon}
\newcommand{\si}{\sigma}
\DeclareMathOperator{\on}{\text{ on }}
\DeclareMathOperator{\supp}{supp}
\newcommand{\upa}{\uparrow}
\newenvironment{e}{\begin{equation}}{\end{equation}\ignorespacesafterend}
\newenvironment{e*}{\begin{equation*}}{\end{equation*}\ignorespacesafterend}
\newcommand{\scdot}{{\,\cdot\,}} 
\newcommand{\bmart}{\mathbf{Mart}}
\newcommand{\D}{{D}}
\newcommand{\s}{d}
\newcommand{\sS}{{[D]}}
\newcommand{\spindist}{{\boldsymbol{\pi}}} 
\newcommand{\sq}{{\mathsf{q}}}
\renewcommand{\sp}{{\mathsf{p}}}
\newcommand{\cL}{{\mcl L}}
\newcommand{\one}{\mathds{1}}
\begin{document}

\author{Hong-Bin Chen\,\orcidlink{0000-0001-6412-0800}}
\address[Hong-Bin Chen]{Institut des Hautes Études Scientifiques, Bures-sur-Yvette, France}
\email{\href{mailto: hbchen@ihes.fr}{ hbchen@ihes.fr}}

\author{Victor Issa\,\orcidlink{0009-0009-1304-046X}}
\address[Victor Issa]{Department of Mathematics, ENS Lyon, Lyon, France}
\email{\href{mailto:victor.issa@ens-lyon.fr}{victor.issa@ens-lyon.fr}}

\author{Jean-Christophe Mourrat\,\orcidlink{0000-0002-2980-725X}}
\address[Jean-Christophe Mourrat]{Department of Mathematics, ENS Lyon and CNRS, Lyon, France}
\email{\href{mailto:jean-christophe.mourrat@ens-lyon.fr}{jean-christophe.mourrat@ens-lyon.fr}}

\keywords{}
\subjclass[2010]{}

\title[The convex structure of the Parisi formula]{The convex structure of the Parisi formula for multi-species spin glasses}

\begin{abstract}
    We study the free energy of mean-field multi-species spin glasses with convex covariance function. For such models with $D$ species, the Parisi formula is known to be valid, and expresses the limit free energy as a supremum over monotone probability measures on~$\R_+^D$. We show here that one can transform this representation into a supremum over \emph{all} probability measures on~$\R_+^D$ of a \emph{concave} functional. We then deduce that the Parisi formula admits a unique maximizer.
    Using convex-duality arguments, we also obtain a new representation of the free energy as an infimum over martingales in a Wiener space. 

   \bigskip

    \noindent \textsc{Keywords and phrases: mean-field spin glass, Parisi formula, Hamilton--Jacobi equation}  

    \medskip

    \noindent \textsc{MSC 2020: 82D30, 82B44, 35D40} 
\end{abstract}

\maketitle

{
  \hypersetup{linkcolor=black}
  \tableofcontents
}

%
%
%
%
%
%
\section{Introduction}

We study mean-field spin-glass models with multiple species. Fixing an integer $\D\ge 1$, we write $[D]=\{1,\dots,\D\}$ to denote the set of different species of spins.
For each $N\in\N$, we give ourselves $(I_{N,\s})_{\s\in\sS}$ a partition of $\{1,\dots,N\}$, and interpret each $I_{N,\s}$ as the set of indices that belong to the $\s$-th species. For any two configurations $\si,\si' \in \R^N$ and $d \in [D]$, we define the overlap for the $d$-th species as
\begin{equation}  
R_{N,d}(\sigma,\sigma') = \frac 1 N \sum_{n \in I_{N,d}} \si_n \si'_{n},
\end{equation}
and we set
\begin{equation}  
R_N(\sigma,\sigma') = (R_{N,d}(\sigma,\sigma'))_{d \in [D]}.
\end{equation}
We give ourselves a function $\xi : \R^D \to \R$ and a centered Gaussian field $(H_N(\sigma))_{\sigma\in [-1,+1]^N}$ with covariance
\begin{align}\label{e.def H_N}
    \E\Ll[ H_N(\sigma)H_N(\sigma')\Rr] = N\xi\Ll(R_N(\sigma,\sigma')\Rr).
\end{align}
Throughout, we assume that the function $\xi$ admits an absolutely convergent power-series expansion and satisfies $\xi(0) = 0$, and we make the key assumption that \emph{the function $\xi$ is convex on $\R_+^D$}. 
The definitions of the free energy and Gibbs measure of the spin glass require that we also choose a reference probability measure. For each species $d \in [D]$, we fix a probability measure~$\spindist_\s$ with support in $[-1,1]$ that is not a Dirac mass, and let
\begin{equation}\label{e.P_N=}
\d P_N(\sigma) = \bigotimes_{d \in [D]} \bigotimes_{n \in I_{N,d}} \d \spindist_\s(\si_n).
\end{equation}
In words, when $\sigma = (\si_1,\ldots, \si_N)$ is sampled according to $P_N$, the coordinates of $\sigma$ are independent, and the law of $\si_n$ is $\spindist_\s$ if  $n$ belongs to~$I_{N,d}$. For each $d \in [D]$, the proportion of spins that belong to the $d$-th species is given by 
\begin{equation}
\label{e.def.lambda_Nd}
\lambda_{N,\s} = |I_{N,\s}|/N, \quad \text{ and we set }  \lambda_N=\Ll(\lambda_{N,\s}\Rr)_{\s\in\sS}.
\end{equation} 
We assume throughout that for some $\lambda_\infty = (\lambda_{\infty,d})_{d \in [D]} \in (0,1)^D$, we have 
\begin{equation}
\label{e.lambda_infty}
    \lim_{N\to\infty} \lambda_N =\lambda_\infty.
\end{equation}
The main object of study  of this paper is the limiting value as $N \to +\infty$ of the free energy defined for every $t \geq 0$ by 
\begin{equation}
\label{e.def.FN.delta0}
   \bar F_N(t,\delta_0) = - \frac{1}{N} \E \log \int \exp \left( \sqrt{2t} H_N(\sigma) - Nt\xi \left( R_N(\sigma,\sigma) \right) \right)\d P_N(\sigma).
\end{equation}
The term $-Nt\xi \left( R_N(\sigma,\sigma) \right)$ is introduced as a convenience to simplify the expression of the limit. One possible way to remove it a posteriori is described in \cite{mourrat2020extending}; we also
point out that this term is constant if each $\spindist_\s$ is supported in $\{-1,+1\}$. In \eqref{e.def.FN.delta0}, the second argument of $\bar F_N$ is a Dirac mass at $0 \in \R^D$, and we will extend this function to more general arguments below.

In order to state the Parisi formula for the limit of \eqref{e.def.FN.delta0}, we introduce further notation. We write $\R_+ = [0,+\infty)$, and let $\mcl P(\R_+^D)$ denote the set of probability measures on $\R_+^D$. We say that $\mu \in \mcl P(\R_+^D)$ is \emph{monotone} when there exists an increasing map $\msf q:[0,1)\to\R_+^D$ such that $ \mu = \mathrm{Law}(\msf q(U))$ with $U$ a uniform random variable in $[0,1]$. Here and throughout, the word ``increasing'' (and likewise for ``decreasing'') is understood in the sense of non-strict inequalities, that is, we are asking here that $\msf q(t) - \msf q(s) \in \R_+^D$  for every $s \le t \in [0,1)$. 
We denote by $\mcl P^\upa(\R_+^D)$ the collection of monotone probability measures on $\R_+^D$.
For every $p \geq 1$, we denote by $\mcl P_p(\R_+^D)$ the set of probability measures with finite $p$-th moment, by $\mcl P_\infty(\R_+^D)$ the set of compactly supported probability measures on $\R_+^D$, and we set $\mcl P^\upa_p(\R_+^D) = \mcl P^\upa(\R_+^D) \cap \mcl P_p(\R_+^D)$. 

For all the models considered here, the Parisi formula is known to be valid \cite{barra2015multi, chen2024free, chenmourrat2023cavity, gue03,pan, pan.multi, Tpaper}. We give two versions of the Parisi formula, the first one in \eqref{e.parisi.usual} being the most classical, and the second one in \eqref{e.parisi.preferred} being the one which will best clarify its relationship with the sequel. For each $\nu \in \mcl P(\R^D_+)$ and $d \in [D]$, we denote by $\nu_d$ the $d$-th marginal of $\nu$; in other words, $\nu_d$ is the image of $\nu$ through the mapping $x \mapsto x_d$.  
The first term in the Parisi formula is given, for each $\nu \in \mcl P_1^\upa(\R_+^D)$, by
\begin{equation}  
\label{e.decomp.psi}
\psi(\nu) = \sum_{d=1}^D \lambda_{\infty,d} \, \psi_d (\nu_d),
\end{equation}
where for each $d \in [D]$, the function $\psi_d$ is the cascade transform of the reference measure $\spindist_\s$ defined precisely in~\eqref{e.psi_d=} (see also Lemma~\ref{l.ext_Parisi_PDE} for an alternative characterization). 
It follows from \cite{auffinger2015parisi} that the functions $\psi_1, \ldots, \psi_D$ are each concave over $\mcl P_1(\R_+)$. The formula \eqref{e.decomp.psi} therefore allows us to extend~$\psi$ into a concave (but not strictly concave) function on all of $\mcl P_1(\R_+^D)$.
For every $y \in \R^D$, we set
\begin{equation}\label{e.theta=}
\theta(y) = y \cdot \nabla \xi(y) - \xi(y),
\end{equation}
\begin{equation}  
\label{e.def.chi.*}
\xi^*(y) = \sup_{x \in \R^D_+} \{x \cdot y -  \xi(x)\},
\end{equation}
and similarly for $(t\xi)^*$ for any $t \ge 0$. We note that for every $t > 0$, we have $(t\xi)^*  = t \, \xi^* \Ll( \scdot/t \Rr)$. 
\begin{theorem}[Parisi formula]
\label{t.parisi.basic}
For every $t \ge 0$, we have 
\begin{align}  
\label{e.parisi.usual}
\lim_{N \to +\infty} \bar F_N(t,\de_0) 
& = \sup_{\mu \in \mcl P^\upa_\infty(\R_+^D)} \Ll\{ \psi((t \nabla \xi)(\mu))- t \int \theta \, \d \mu \Rr\} 
\\
\label{e.parisi.preferred}
& = \sup_{\nu \in \mcl P^\upa_\infty(\R_+^D)} \Ll\{ \psi(\nu) - \int (t\xi)^* \, \d \nu \Rr\} 
\end{align}
where $(t\nabla \xi)(\mu)$ denotes the image of the measure $\mu$ through the mapping $x \mapsto t \nabla \xi(x)$.
\end{theorem}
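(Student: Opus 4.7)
The identity \eqref{e.parisi.usual} is the classical form of the Parisi formula for multi-species models with convex covariance, and I would simply quote it from the references \cite{barra2015multi, chen2024free, chenmourrat2023cavity, pan.multi, Tpaper} together with \cite{mourrat2020extending} (the latter handling the $-Nt\xi(R_N(\sigma,\sigma))$ term). The substance of the theorem is the equality of \eqref{e.parisi.usual} and \eqref{e.parisi.preferred}, which I would establish by a change of variable based on Fenchel--Legendre duality.

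The key pointwise identity is that for every $x \in \R_+^D$, if $y = t\nabla\xi(x)$, then the supremum defining $(t\xi)^*(y)$ in \eqref{e.def.chi.*} is attained at $x$ (by convexity of $\xi$ on $\R_+^D$ and the first-order condition $\nabla(t\xi)(x) = y$), which yields
\begin{e*}
(t\xi)^*(y) \;=\; x\cdot y - t\xi(x) \;=\; t\bigl(x\cdot\nabla\xi(x)-\xi(x)\bigr) \;=\; t\,\theta(x).
\end{e*}
Integrating against any $\mu \in \mcl P_\infty^\upa(\R_+^D)$ and setting $\nu = (t\nabla\xi)(\mu)$ gives $\int(t\xi)^*\,\d\nu = t\int\theta\,\d\mu$, so the two bracketed expressions coincide for such pairs $(\mu,\nu)$. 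Since the power-series structure of $\xi$ combined with convexity on $\R_+^D$ forces the partial derivatives $\partial_i\xi$ to be componentwise increasing on $\R_+^D$, the pushforward $\nu$ remains in $\mcl P_\infty^\upa(\R_+^D)$, proving the inequality \eqref{e.parisi.usual} $\le$ \eqref{e.parisi.preferred}.

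The reverse inequality requires inverting this substitution. Let $E = t\nabla\xi(\R_+^D) \subseteq \R_+^D$; on $E$ the map $t\nabla\xi$ admits a componentwise-monotone selection of inverse. Given $\nu \in \mcl P_\infty^\upa(\R_+^D)$, I would use the Fenchel--Young inequality $x\cdot y \le t\xi(x) + (t\xi)^*(y)$ together with the monotonicity of $\psi$ implicit in the decomposition \eqref{e.decomp.psi} to argue that replacing $\nu$ by a monotone pushforward supported on $E$, obtained via a subgradient of $(t\xi)^*$, does not decrease $\psi(\nu) - \int(t\xi)^*\,\d\nu$; then setting $\mu = ((t\nabla\xi)^{-1})(\nu)$ gives an admissible $\mu$ realizing the same value. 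The main obstacle is precisely this reduction step: $t\nabla\xi$ need not be surjective onto $\R_+^D$ (its image may miss neighborhoods of coordinate hyperplanes, depending on $\nabla\xi(0)$), so one must verify that the projection onto $E$ preserves monotonicity and is compatible with the structure of $\psi$, a delicate issue requiring careful analysis of the boundary behavior of $(t\xi)^*$ on the complement of $E$.
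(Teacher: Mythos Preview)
Your handling of \eqref{e.parisi.usual} and of the inequality \eqref{e.parisi.usual} $\le$ \eqref{e.parisi.preferred} is correct and matches the paper: the identity $(t\xi)^*(t\nabla\xi(x)) = t\theta(x)$ is~\eqref{e.ttheta=(txi)^*(tnablaxi)}, and the $\R_+^D$-monotonicity of $\nabla\xi$ (from the nonnegativity of the power-series coefficients) ensures that $(t\nabla\xi)(\mu)\in\mcl P_\infty^\upa(\R_+^D)$.

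The reverse inequality is where your approach fails. Your plan is to push $\nu$ back to $\mu' = T(\nu)$ with $T=\nabla(t\xi)^*$ (a subgradient selection), but $T$ is \emph{not} $\R_+^D$-increasing when $D\ge 2$: on the interior of $E=t\nabla\xi(\R_+^D)$ its Jacobian is $(\nabla^2(t\xi))^{-1}$, and already for $D=2$ the off-diagonal entry of this inverse equals $-(\nabla^2\xi)_{12}/\det(\nabla^2\xi)$, which is strictly negative whenever $\xi$ has a cross term (this is exactly the computation underlying Proposition~\ref{p.xi^* inequality}). Thus an increasing path $\sq'$ representing $\nu$ is sent to a non-monotone path $T\circ\sq'$, and $\mu'$ falls outside $\mcl P_\infty^\upa(\R_+^D)$; the obstruction is already present inside $E$, so no boundary analysis of $(t\xi)^*$ on $\R_+^D\setminus E$ repairs it. The paper does not attempt to invert the substitution. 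Instead it sandwiches \eqref{e.parisi.preferred} between \eqref{e.parisi.usual} and the transport formula $h(t,\delta_0)$, and closes the chain via Proposition~\ref{p.h=hopf-lax_on_Q}, which in turn rests on the Kantorovich-potential identity (Lemma~\ref{l.rel_at_max}), the $\R_+^D$-convexity of $S_t\chi$ (Proposition~\ref{p.R^D_+-convexity}), and a strongly-convex approximation $\xi_\lambda$ with continuity in $\lambda$ (Proposition~\ref{p.parisi_other}).
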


A measure $\mu \in \mcl P^\upa_\infty(\R_+^D)$ that realizes the supremum in \eqref{e.parisi.usual} is usually called a Parisi measure; this measure can often be interpreted as the asymptotic law of the overlap under the Gibbs measure (see \cite[Corollary~8.7]{chenmourrat2023cavity} or \cite[Theorem~1.4]{chen2024free} for precise statements). The existence of a Parisi measure can be obtained via coercivity and compactness arguments. 
For models with a single species ($D = 1$), it was proved in \cite{auffinger2015parisi} that the Parisi measure is unique. Here we extend this result to all multi-species models.

\begin{theorem}[Uniqueness of Parisi measure]  \label{t.unique_parisi}
For every $t \ge 0$, the supremum in \eqref{e.parisi.preferred} is achieved at exactly one measure, say $\bar \nu \in \mcl P^\upa_\infty(\R^D_+)$; and the supremum in~\eqref{e.parisi.usual} is achieved at all measures $\bar\mu \in \mcl P^\upa_\infty(\R^D_+)$ that satisfy $\bar \nu = (t \nabla \xi)(\bar \mu)$, and only at those measures.
\end{theorem}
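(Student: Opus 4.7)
The plan is to exploit the concave reformulation of the Parisi problem announced in the abstract and established in the earlier parts of the paper: the supremum in \eqref{e.parisi.preferred} is rewritten as a supremum, over \emph{all} of $\mcl P_\infty(\R_+^D)$, of a concave extension of the functional $G(\nu) := \psi(\nu) - \int (t\xi)^*\,\d\nu$. In particular, any $\bar\nu \in \mcl P^\upa_\infty(\R_+^D)$ that maximizes \eqref{e.parisi.preferred} is automatically a maximizer of this extension on $\mcl P_\infty(\R_+^D)$; this opens the door to concavity arguments based on convex combinations, even though such combinations typically leave the monotone class.

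Uniqueness of $\bar\nu$ will then follow by contradiction. Assume $\bar\nu_0 \ne \bar\nu_1$ are two maximizers in $\mcl P^\upa_\infty(\R_+^D)$, and set $\nu_{1/2} := \tfrac12(\bar\nu_0+\bar\nu_1) \in \mcl P_\infty(\R_+^D)$; by concavity of the extension, $\nu_{1/2}$ is again a maximizer. Using the marginal decomposition $\psi(\nu) = \sum_d \lambda_{\infty,d}\,\psi_d(\nu_d)$ together with the strict concavity of each $\psi_d$ on $\mcl P_1(\R_+)$ (the one-dimensional result of~\cite{auffinger2015parisi}), any mismatch between a marginal of $\bar\nu_0$ and the corresponding marginal of $\bar\nu_1$ would yield a strict inequality $\psi(\nu_{1/2}) > \tfrac12\bigl(\psi(\bar\nu_0)+\psi(\bar\nu_1)\bigr)$; combined with the (merely) concave behavior of the remaining part of the extended functional, this would contradict maximality. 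Hence $\bar\nu_0$ and $\bar\nu_1$ share all marginals. Since both are monotone, each equals the comonotone coupling of its marginals, namely the law of $(\msf q_1(U),\ldots,\msf q_D(U))$ where $\msf q_d$ is the quantile function of the $d$-th marginal and $U$ is uniform on $[0,1]$; therefore $\bar\nu_0 = \bar\nu_1$.

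The claim concerning~\eqref{e.parisi.usual} is then a transparent consequence of the Legendre identity $(t\xi)^*\bigl(t\nabla\xi(x)\bigr) = x\cdot t\nabla\xi(x) - t\xi(x) = t\,\theta(x)$, which is valid on $\R_+^D$ by the definition~\eqref{e.def.chi.*} and the convexity of $\xi$. For any $\mu \in \mcl P^\upa_\infty(\R_+^D)$, the image $\nu := (t\nabla\xi)(\mu)$ lies in $\mcl P^\upa_\infty(\R_+^D)$ because $\nabla\xi$ is increasing on~$\R_+^D$, and the two functionals match along such pairs: $\int (t\xi)^*\,\d\nu = t\int \theta\,\d\mu$, while $\psi(\nu) = \psi\bigl((t\nabla\xi)(\mu)\bigr)$. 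Since the suprema in~\eqref{e.parisi.usual} and~\eqref{e.parisi.preferred} coincide by Theorem~\ref{t.parisi.basic}, a measure $\bar\mu \in \mcl P^\upa_\infty(\R_+^D)$ maximizes~\eqref{e.parisi.usual} if and only if $(t\nabla\xi)(\bar\mu) = \bar\nu$.

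The main obstacle is really the concave-extension step itself; granted that tool, the uniqueness argument above is short. A secondary delicate point to watch for is that after extension the functional replacing $-\int(t\xi)^*\,\d\nu$ must remain merely concave, not strictly so, so that the strict concavity of the marginal functional $\psi$—and not of the full extended functional—is indeed what forbids two maximizers with differing marginals; this is compatible with the fact that uniqueness fails at the level of $\bar\mu$ in~\eqref{e.parisi.usual}, where entire fibers of $(t\nabla\xi)^{-1}(\bar\nu)$ realize the supremum.
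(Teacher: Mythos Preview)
Your proposal is correct and matches the paper's approach essentially line for line: the paper deduces Theorem~\ref{t.unique_parisi} from Theorem~\ref{t.convexoptimization} (the concave extension to $\mcl P_\infty(\R_+^D)$, which at $\mu=\delta_0$ reduces to $\nu\mapsto\psi(\nu)-\int(t\xi)^*\,\d\nu$), uses Corollary~\ref{c.strict_concavity} (strict concavity of $\psi$ on convex combinations of distinct monotone measures, via the Auffinger--Chen strict concavity of each $\psi_d$ and the marginal characterization of monotone measures in Lemma~\ref{l.monotone_marginal}) for uniqueness, and invokes the Legendre identity $(t\xi)^*(t\nabla\xi(x))=t\theta(x)$ (equation~\eqref{e.ttheta=(txi)^*(tnablaxi)}) to handle~\eqref{e.parisi.usual}. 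The only item you omit is the existence of a maximizer $\bar\nu$, which the paper dispatches by a short compactness argument in the first paragraph of its proof.
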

As already noted, although the function $\psi$ is initially only defined on $\mcl P^\upa_1(\R^D_+)$, the formula in \eqref{e.decomp.psi} readily allows us to extend it into a concave function defined on all of $\mcl P_1(\R^D_+)$. In order to show Theorem~\ref{t.unique_parisi}, we aim to relax the maximization problem in \eqref{e.parisi.preferred} into one that is posed over all of $\mcl P_\infty(\R^D_+)$. In our extension to this variational problem, the last term in there is replaced by the following optimal-transport cost. For every $\mu,\nu \in \mcl P(\R_+^D)$, we denote by $\Pi(\mu,\nu)$ the set of all probability measures on $\R_+^D \times \R_+^D$ with first marginal $\mu$ and second marginal $\nu$.  For each $t \ge 0$ and $\mu,\nu \in \mcl P(\R_+^D)$, we set 
\begin{align}\label{e.T_t=}
    \mcl T_t(\mu,\nu) = \inf_{\pi\in\Pi(\mu,\nu)}\int (t\xi)^*(y-x)\d \pi(x,y).
\end{align}
In this introduction we have only defined $\bar F_N(t,\delta_0)$ in \eqref{e.def.FN.delta0}, but below we define $\bar F_N(t,\mu)$ for all $(t,\mu) \in \R_+ \times \mcl P^\upa_1(\R^D_+)$, and we state our next results for arbitrary arguments.
\begin{theorem} \label{t.convexoptimization}
For every $(t,\mu) \in \R_+ \times \mcl P^\upa_\infty(\R_+^D)$, we have 
     \begin{e} \label{e.convexoptimization}
         \lim_{N \to +\infty} \bar F_N(t,\mu) = \sup_{\nu \in \mcl P_\infty(\R_+^D)} \left\{ \psi(\nu) - \mcl T_t(\mu,\nu) \right\}.
     \end{e}
     Moreover, the functional inside the supremum in \eqref{e.convexoptimization} is concave in $\nu$. Finally, there is exactly one measure $\nu \in \mcl P_\infty(\R_+^D)$ that is monotone and realizes the supremum in \eqref{e.convexoptimization}. 
\end{theorem}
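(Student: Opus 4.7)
The plan is to establish the three assertions of the theorem in turn, starting with concavity. By~\eqref{e.decomp.psi} and the concavity of each $\psi_d$, the function $\psi$ is concave on $\mcl P_1(\R_+^D)$, so it suffices to show that $\nu \mapsto \mcl T_t(\mu,\nu)$ is convex for fixed~$\mu$. Given $\nu_0,\nu_1 \in \mcl P(\R_+^D)$, $\alpha \in [0,1]$ and couplings $\pi_i \in \Pi(\mu,\nu_i)$, the combination $\alpha \pi_0 + (1-\alpha)\pi_1$ lies in $\Pi(\mu,\alpha\nu_0+(1-\alpha)\nu_1)$; linearity of the integral followed by infimization over $\pi_0,\pi_1$ then gives $\mcl T_t(\mu,\alpha\nu_0+(1-\alpha)\nu_1) \leq \alpha \mcl T_t(\mu,\nu_0)+(1-\alpha)\mcl T_t(\mu,\nu_1)$, as required.

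For the identity~\eqref{e.convexoptimization}, the plan is to leverage the Parisi formula~\eqref{e.parisi.preferred}. At $\mu = \delta_0$ the only element of $\Pi(\delta_0,\nu)$ is $\delta_0 \otimes \nu$, so $\mcl T_t(\delta_0,\nu) = \int (t\xi)^*\,\d\nu$, and~\eqref{e.parisi.preferred} coincides with~\eqref{e.convexoptimization} at $\mu = \delta_0$ except for the restriction of the supremum to $\mcl P^\upa_\infty$. To relax this restriction, I use that $\psi$ depends on $\nu$ only through its marginals (by~\eqref{e.decomp.psi}), so $\psi(\nu) = \psi(\nu^\upa)$ where $\nu^\upa$ is the comonotonic measure with the same marginals as~$\nu$; the remaining key step is the rearrangement inequality $\mcl T_t(\mu,\nu^\upa) \leq \mcl T_t(\mu,\nu)$ for monotone~$\mu$, which I expect to derive by constructing an admissible coupling of $\mu$ and $\nu^\upa$ out of one of $\mu$ and $\nu$, together with the convexity of $(t\xi)^*$. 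To extend from $\mu = \delta_0$ to arbitrary $\mu \in \mcl P^\upa_\infty(\R_+^D)$, the plan is to invoke the enriched Parisi formula for $\bar F_N(t,\mu)$ developed in the body of the paper; equivalently, one can recognize~\eqref{e.convexoptimization} as the Hopf representation of the Hamilton--Jacobi equation satisfied by $\bar F$ on the space of monotone measures, whose nonlinearity $\xi$ admits $(t\xi)^*$ as Legendre dual.

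For uniqueness of the monotone maximizer, the case $\mu = \delta_0$ is furnished by Theorem~\ref{t.unique_parisi}. For general $\mu$, suppose $\nu_1 \ne \nu_2$ are two monotone maximizers. Since a monotone measure is uniquely determined by its one-dimensional marginals, this forces $\nu_1^d \ne \nu_2^d$ for some~$d$. Concavity of the functional makes $\bar\nu = (\nu_1+\nu_2)/2$ a maximizer as well, which in turn forces equality in the concavity of $\psi$, that is $\psi(\bar\nu) = (\psi(\nu_1)+\psi(\nu_2))/2$; but the strict concavity of each $\psi_d$ from~\cite{auffinger2015parisi} renders this impossible once $\nu_1^d \ne \nu_2^d$. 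The main obstacle in this plan is expected to be the rearrangement inequality $\mcl T_t(\mu,\nu^\upa) \leq \mcl T_t(\mu,\nu)$: in dimension $D = 1$ it is the classical Hardy--Littlewood rearrangement, but in higher dimensions it requires exploiting the convexity of $\xi^*$ on the monotone cone together with a multi-dimensional coupling construction that preserves the monotone marginal~$\mu$.
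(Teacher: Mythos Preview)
Your treatment of concavity and of uniqueness of the monotone maximizer is correct and essentially matches the paper's argument (the paper packages these as Proposition~\ref{p.almost_strict_concavity}, Corollary~\ref{c.strict_concavity}, and the Kantorovich dual representation~\eqref{e.kantorovich.dual.rep}, but the content is the same).

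The gap is in your proof of the identity~\eqref{e.convexoptimization}. Your entire strategy rests on the rearrangement inequality $\mcl T_t(\mu,\nu^\upa) \le \mcl T_t(\mu,\nu)$, which for $\mu=\delta_0$ reduces to $\int (t\xi)^*\,\d\nu^\upa \le \int (t\xi)^*\,\d\nu$. The paper explicitly shows that this inequality is \emph{false} in general: Remark~\ref{r. no higher generalization} exhibits an admissible convex $\xi$ in dimension $D=3$ and a measure $\nu$ for which $\int \xi^*\,\d\nu^\upa > \int \xi^*\,\d\nu$. The heuristic reason is that the mixed second partial derivatives of $\xi^*$ need not all be nonpositive once $D\ge 3$ (the inverse of a positive-definite matrix with nonnegative entries can have positive off-diagonal entries). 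So the ``multi-dimensional coupling construction'' you anticipate cannot exist in general, and the obstacle you flagged is in fact fatal to this route. The paper does carry out your plan in Section~\ref{s.2d}, but only for $D=2$ with $\nabla\xi(0)=0$ and $\mu=\delta_0$.

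What the paper does instead is quite different: rather than comparing $\nu$ to $\nu^\upa$, it analyzes the structure of an optimizer $\bar\nu$ directly. The derivative $\chi$ of $\psi$ at $\bar\nu$ lies in $\mfk X$ (Proposition~\ref{p.diff of psi}), and the bulk of the technical work (Section~\ref{s.hjcones}) shows that $S_t\chi$ is $\R_+^D$-convex, i.e.\ $\nabla S_t\chi$ is $\R_+^D$-increasing. Using that $\chi$ is a Kantorovich potential (Lemma~\ref{l.rel_at_max}), the optimal transport from $\mu$ to $\bar\nu$ is given by $x\mapsto x+t\nabla\xi(\nabla S_t\chi(x))$, which is therefore $\R_+^D$-increasing; this monotonicity is then transferred from $\mu$ to $\bar\nu$ (after an approximation of $\mu$ by absolutely continuous measures and a limit argument, Lemma~\ref{l.limit_monotone} and Proposition~\ref{p.h=hopf-lax_on_Q}). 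This establishes~\eqref{e.p.h=hopf-lax_on_Q(1)} and hence~\eqref{e.convexoptimization} without any rearrangement inequality.
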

Using convex-duality arguments inspired by \cite{issa2024hopflike, uninverting}, we then deduce from Theorem~\ref{t.convexoptimization} a representation of the limit free energy as an infimum. 
We denote by $\mathfrak X$ the set of functions $\chi : \R^D_+ \to \R$ of the form $\chi(x) = \sum_{d = 1}^D \lambda_{\infty,d} \chi_d(x_d)$ where $\chi_1, \ldots, \chi_D : \R_+ \to \R$ are convex, $1$-Lipschitz, increasing functions that vanish at the origin. For every $\chi \in \mathfrak X$, $x \in \R^D_+$ and $t \ge 0$, we let 
\begin{align}
\label{e.def.psi.*}
    \psi_*(\chi) & = \inf_{\nu \in \mcl P_1^\upa(\R_+^D)} \left\{ \int \chi \d \nu - \psi(\nu) \right\}
\end{align}
and
\begin{align}
\label{e.def.St1}
    S_t \chi(x) & = \sup_{p \in \R_+^D} \left\{p \cdot x +  t \xi(p) - \chi^*(p)\right\} \\
\label{e.def.St2}
& = \sup_{y \in \R_+^D} \left\{ \chi(y) -(t\xi)^*(y-x) \right\} .
\end{align}
In \eqref{e.def.St1}, the definition of $\chi^*$ is as in \eqref{e.def.chi.*}. The equality between \eqref{e.def.St1} and \eqref{e.def.St2} will be explained below in Lemma~\ref{l.St1=St2}.

\begin{theorem} \label{t.hopflike}
For every $(t,\mu) \in \R_+ \times \mcl P^\upa_\infty(\R_+^D)$, we have 
     \begin{e} \label{e.convexoptimization2}
         \lim_{N \to +\infty} \bar F_N(t,\mu) = \inf_{\chi \in \mathfrak X} \left\{ \int S_t\chi\, \d \mu - \psi_*(\chi) \right\}.
     \end{e}
\end{theorem}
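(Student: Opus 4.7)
The plan is to apply Kantorovich duality to the transport term in Theorem~\ref{t.convexoptimization} and then exchange the resulting sup and inf. The key observation is that, by \eqref{e.def.St2}, $S_t\chi$ is exactly the $c$-transform of $\chi$ for the cost $c(x,y) = (t\xi)^*(y-x)$ appearing in \eqref{e.T_t=}. Hence Kantorovich--Rubinstein duality (whose hypotheses are immediate: $c$ is lower semi-continuous and the measures involved are compactly supported) yields
\begin{e*}
    \mcl T_t(\mu,\nu) = \sup_\chi \Ll\{\int \chi \, d\nu - \int S_t\chi \, d\mu\Rr\},
\end{e*}
and substitution into Theorem~\ref{t.convexoptimization} expresses the limit as the double-extremal expression $\sup_\nu \inf_\chi\{\psi(\nu) - \int \chi\, d\nu + \int S_t\chi \, d\mu\}$.

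I would then exchange the sup and inf via Sion's minimax theorem: the integrand is concave in $\nu$ (by concavity of $\psi$, as recalled after \eqref{e.decomp.psi} and using \cite{auffinger2015parisi}) and convex in $\chi$ (by the sup representation \eqref{e.def.St2} of $S_t\chi$), and a suitable compactness on the $\nu$ side can be extracted from the compact support of $\mu$ together with the growth properties of the $\psi_d$'s. The inner supremum $\sup_\nu \{\psi(\nu) - \int \chi \, d\nu\}$ is then identified with $-\psi_*(\chi)$ by the following observation: for $\chi \in \mathfrak X$, both $\psi(\nu)$ and $\int \chi\, d\nu$ depend on $\nu$ only through its marginals, so the supremum is unchanged when restricted to comonotone couplings, and the $1$-Lipschitz continuity of $\psi_d$ and $\chi_d$ with respect to $W_1$ allows one to pass from compactly supported to $\mcl P_1^\upa(\R_+^D)$-supported measures, matching the definition in \eqref{e.def.psi.*}.

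The main obstacle is to justify restricting the outer infimum to $\chi \in \mathfrak X$ rather than to a generic class of Kantorovich potentials. My strategy is to show that any candidate $\chi$ can be replaced by one in $\mathfrak X$ without worsening the objective: (i) the sum-separability $\chi(x) = \sum_d \lambda_{\infty,d} \chi_d(x_d)$ is forced by the species-separability of $\psi$, since replacing $\chi$ by its coordinate-wise conditional expectations does not increase the objective; (ii) for each $d$, the function $\chi_d$ can be replaced by its convex, $1$-Lipschitz, increasing envelope vanishing at $0$, by invoking the known regularity of the Parisi derivative in the single-species theory \cite{auffinger2015parisi, issa2024hopflike} together with the formula \eqref{e.def.St1}, in which $\chi$ enters only via its Legendre conjugate $\chi^*$ restricted to $\R_+^D$. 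A cleaner alternative route is to construct an optimizer $\chi \in \mathfrak X$ directly from the unique monotone Parisi maximizer $\bar \nu$ given by Theorem~\ref{t.convexoptimization}, which reduces the problem to the convex-analytic statement that a subgradient of $-\psi$ at $\bar \nu$ can always be chosen in $\mathfrak X$ and is simultaneously a Kantorovich potential for the pair $(\mu, \bar \nu)$.
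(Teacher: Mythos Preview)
Your ``cleaner alternative route'' is precisely the paper's argument. The paper packages Theorem~\ref{t.hopflike} as an immediate consequence of the identity $h=g$ (Proposition~\ref{p.reuninverted}), where $h$ is the supremum in~\eqref{e.convexoptimization} and $g$ is the infimum in~\eqref{e.convexoptimization2}. For the nontrivial inequality $h\geq g$, the paper takes a maximizer $\bar\nu$ of $h(t,\mu)$ (Lemma~\ref{l.exist_max}), shows via the Parisi PDE that the derivative $\chi$ of $\psi$ at $\bar\nu$ lies in $\mathfrak X$ (Proposition~\ref{p.diff of psi} and Lemma~\ref{l.chi_properties}), and shows that this same $\chi$ is a Kantorovich potential for $(\mu,\bar\nu)$ (Lemma~\ref{l.rel_at_max}). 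Concavity of $\psi$ then gives $\psi_*(\chi)=\int\chi\,\d\bar\nu-\psi(\bar\nu)$, hence $h(t,\mu)=\int S_t\chi\,\d\mu-\psi_*(\chi)\geq g(t,\mu)$. This is exactly the subgradient construction you sketch at the end.

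Your primary strategy, on the other hand, has two soft spots. First, you propose to apply Sion over the class of \emph{all} Kantorovich potentials, with compactness placed on the $\nu$ side. But ``all Kantorovich potentials'' is not a workable convex set on which to run Sion, and you would still need semi-continuity of $\chi\mapsto\int S_t\chi\,\d\mu$ there. The paper avoids this entirely by placing compactness on the $\chi$ side: $\mathfrak X$ is compact for local uniform convergence by Arzel\`a--Ascoli, and Sion is only invoked \emph{after} restricting to $\mathfrak X$. In fact the inequality $g\geq h$ needs no minimax at all: for any $\chi$ and any $\nu$ one has $\int\chi\,\d\nu-\int S_t\chi\,\d\mu\leq\mcl T_t(\mu,\nu)$ directly from~\eqref{e.def.St2}, whence $\int S_t\chi\,\d\mu-\psi_*(\chi)=\sup_\nu\{\psi(\nu)-\int\chi\,\d\nu+\int S_t\chi\,\d\mu\}\geq h(t,\mu)$.

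Second, your reduction strategies (i)--(ii) are not justified as stated. Replacing a general $\chi$ by ``coordinate-wise conditional expectations'' is ill-defined without specifying a reference measure, and there is no reason the resulting separable function should dominate the original in the sense you need. For (ii), it is true that $S_t\chi$ depends on $\chi$ only through $\chi^*$ via~\eqref{e.def.St1}, but you must simultaneously control how $\psi_*(\chi)$ moves under your envelope replacements, and you give no mechanism for this. The paper never optimizes over a class larger than $\mathfrak X$ and therefore never faces this restriction problem; the real work is instead the analytic fact (Lemma~\ref{l.chi_properties}) that the derivative of $\psi$ at any $\nu$ is automatically increasing, $1$-Lipschitz, convex, and vanishing at the origin in each coordinate.
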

Finally, we describe a more concrete representation of the limit free energy as an infimum over martingales. Let $\msc P = (\Omega, \mcl F, \PP)$ be a probability space with associated expectation $\EE$, and let 
$(\mcl F_1({t}))_{t \ge 0}$, \ldots, $(\mcl F_D({t}))_{t \ge 0}$ be an independent family of filtrations over $\msc P$. We assume that all these $\sigma$-algebras are complete, that is, they all contain every subset of any null-measure set. We also assume that the probability space is rich enough to accommodate for the existence of independent Brownian motions $(B_1(t))_{t \ge 0}$, \ldots, $(B_D(t))_{t \ge 0}$ that are adapted to the filtrations $(\mcl F_1({t}))_{t \ge 0}$, \ldots, $(\mcl F_D({t}))_{t \ge 0}$ respectively.  We denote by $\bmart_1$, \ldots, $\bmart_D$ the spaces of bounded martingales over~$\msc P$ with respect to the filtrations $(\mcl F_1({t}))_{t \ge 0}$, \ldots, $(\mcl F_D({t}))_{t \ge 0}$ respectively, and we set
\begin{equation}
\label{e.def.bmart}
\bmart = \prod_{d = 1}^D \bmart_d.
\end{equation}
For every $\alpha = (\alpha_1,\ldots, \alpha_D) \in \bmart$, we define $\chi_\alpha : \R^D_+ \to \R$ such that for every $x = (x_1, \ldots, x_D) \in \R_+^D$, 
\begin{equation}  
\label{e.def.chi.alpha}
\chi_\alpha(x) = \sum_{d = 1}^D \lambda_{\infty,d} \int_0^{x_d} \EE \Ll[ \alpha_d(t)^2 \Rr] \, \d t.
\end{equation}
We also define, for every $t \ge 0$ and $x \in \R$,
\begin{equation}  
\label{e.def.phid}
\phi_d(t,x) = \log \int \exp \Ll( x \sigma - t |\sigma|^2 \Rr) \, \d \spindist_\s(x),
\end{equation}
and set, for every $t \ge 0$ and $y \in \R$,
\begin{equation}  
\label{e.def.phid*}
\phi_d^*(t,y) = \sup_{x \in \R} \Ll\{ x \cdot y - \phi_d(t,x)\Rr\}.
\end{equation}
\begin{theorem}
\label{t.explicit.uninverted}
For every $t \ge 0$, $\mu \in \mcl P^\upa_\infty(\R^D_+)$, and $T = (T_1, \ldots, T_D) \in \R_+^D$ such that $\supp(\mu) + t \nabla \xi([0,1]^D) \subset \prod_{d= 1}^D [0,T_d]$,  we have
\begin{multline}  \label{e.explicit.uninverted}
\lim_{N \to +\infty} \bar F_N(t,\mu) 
= \inf_{\al \in \bmart} \bigg\{ \sum_{d = 1}^D \lambda_{\infty,d} \, \EE \Ll[ \phi^*_d(T_d,\alpha_d(T_d)) - \sqrt{2} \alpha_d(T_d)B_d(T_d) \Rr]  \\
- \chi_\alpha(T) + \int S_t \chi_\al \d \mu\bigg\} .
\end{multline}
In addition the infimum in \eqref{e.explicit.uninverted} is reached at a unique $\bar \alpha \in \bmart$. This martingale $\bar \alpha$ is such that for every $d \in [D]$ and almost every $t \in [0,T_d]$, we have $\bar \alpha_d(t) = \partial_x \Phi_{\bar \nu_d}(t,X_d(t))$, where $\bar \nu$ is the unique monotone measure that realizes the supremum in \eqref{e.convexoptimization}, $\Phi_{\bar \nu_d}$ is defined in \eqref{e.Phi_nu_eqn} subject to the terminal condition \eqref{e.Psi_nu(nu^-1(1))=}, and $X_d$ is defined in \eqref{e.def.X}.
\end{theorem}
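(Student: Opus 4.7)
The plan is to deduce Theorem~\ref{t.explicit.uninverted} from Theorem~\ref{t.hopflike} by reparametrizing the infimum over $\chi \in \mathfrak X$ using bounded martingales via the map $\alpha \mapsto \chi_\alpha$, together with a stochastic representation of $\psi_*$ on $\mathfrak X$. The scheme reduces to two ingredients: (i) the map $\alpha \mapsto \chi_\alpha$ surjects $\bmart$ onto $\mathfrak X$, and (ii) an identity that for every $\alpha \in \bmart$ identifies $-\psi_*(\chi_\alpha)$ with an infimum over bounded martingales,
\begin{equation*}
-\psi_*(\chi_\alpha) = \inf_{\substack{\alpha' \in \bmart \\ \chi_{\alpha'} = \chi_\alpha}} \sum_{d=1}^D \lambda_{\infty,d}\,\EE\!\left[\phi_d^*(T_d,\alpha'_d(T_d)) - \sqrt{2}\,\alpha'_d(T_d)B_d(T_d)\right] - \chi_\alpha(T).
\end{equation*}
Substituting this into the formula of Theorem~\ref{t.hopflike} and merging the outer and inner infima then produces \eqref{e.explicit.uninverted}.

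Step (i) is elementary. Any $\chi \in \mathfrak X$ decomposes as $\chi(x) = \sum_d \lambda_{\infty,d}\chi_d(x_d)$ with each $\chi_d$ convex, $1$-Lipschitz, increasing, and vanishing at $0$, so its derivative $\chi_d'$ is an increasing $[0,1]$-valued function on $\R_+$. A standard construction on the filtration $(\mcl F_d(t))_t$, for instance via time-changing a Brownian motion, yields a bounded martingale $\alpha_d$ adapted to $(\mcl F_d(t))_t$ with $\EE[\alpha_d(t)^2] = \chi_d'(t)$ for a.e.\ $t$, whence $\chi_\alpha = \chi$. Conversely, for every $\alpha \in \bmart$ the function $t \mapsto \EE[\alpha_d(t)^2]$ is nonnegative, increasing (since $\alpha_d^2$ is a submartingale), and bounded, so $\chi_\alpha \in \mathfrak X$.

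Step (ii) is the technical heart. By \eqref{e.decomp.psi} together with the comonotonic coupling of prescribed monotone marginals, the dual factorizes as
\begin{equation*}
-\psi_*(\chi) = \sum_d \lambda_{\infty,d}\sup_{\nu_d \in \mcl P^\upa_1(\R_+)}\left\{\psi_d(\nu_d) - \int \chi_d\,\d\nu_d\right\},
\end{equation*}
reducing the problem to the single-species setting. The single-species stochastic representation developed in \cite{uninverting, issa2024hopflike} expresses each summand as an infimum over bounded $(\mcl F_d(t))_t$-martingales with prescribed second moment $\chi_d'$; it is obtained by applying Ito's formula to $\Phi_{\bar\nu_d}(t,X_d(t))$, combining the HJ equation \eqref{e.Phi_nu_eqn} under the terminal condition \eqref{e.Psi_nu(nu^-1(1))=} with a Cole-Hopf transformation linking $\Phi_{\bar\nu_d}$ to $\phi_d$. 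Summing in $d$ yields the identity of step (ii), and substituting into Theorem~\ref{t.hopflike} while interchanging outer and inner infima gives \eqref{e.explicit.uninverted}.

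The uniqueness of the minimizer $\bar\alpha$ and the identification $\bar\alpha_d(t) = \partial_x\Phi_{\bar\nu_d}(t,X_d(t))$ follow by combining the uniqueness of the primal monotone maximizer $\bar\nu$ (Theorem~\ref{t.convexoptimization}) with Fenchel duality: the relation $\bar\chi \in \partial\psi(\bar\nu)$ supplemented by the slope constraint coming from $S_t$ uniquely determines $\bar\chi$ on the relevant box $\prod_d [0,T_d]$, hence $\EE[\bar\alpha_d(t)^2]$ a.e., and the candidate $\partial_x\Phi_{\bar\nu_d}(t,X_d(t))$ is itself a bounded martingale (by Ito on \eqref{e.Phi_nu_eqn}) that achieves the single-species infimum, so the martingale representation theorem pins it down as $\bar\alpha_d$. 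The main technical obstacle is step (ii): the single-species representation requires careful matching of the terminal condition \eqref{e.Psi_nu(nu^-1(1))=}, approximation on the unbounded space $\mcl P^\upa_1(\R_+)$, and justification of exchanging supremum with expectation. A secondary subtlety is the attainment of the outer infimum over $\bmart$, which follows from weak $L^2$-compactness together with the lower semicontinuity provided by step (ii).
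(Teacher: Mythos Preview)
Your plan has two genuine gaps. First, step~(i) is incorrect as stated: you claim $\chi_\alpha\in\mathfrak X$ for every $\alpha\in\bmart$, but membership in $\mathfrak X$ requires each component to be $1$-Lipschitz, i.e.\ $\EE[\alpha_d(t)^2]\le 1$, while $\bmart_d$ consists of bounded martingales with no prescribed bound on the $L^\infty$ norm. So even granting step~(ii), your merged infimum ranges only over $\{\alpha:\chi_\alpha\in\mathfrak X\}$, not over all of $\bmart$ as \eqref{e.explicit.uninverted} demands. (The surjectivity direction is also fragile: on the bare Brownian filtration every adapted martingale is continuous, so you cannot hit any $\chi_d$ whose derivative has a jump by ``time-changing a Brownian motion''.)

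Second, and more seriously, step~(ii) is where the real difficulty hides. After the integration by parts of Proposition~\ref{p.explicit.rep.psi}, your identity is a sup--inf $=$ constrained--inf statement, but the functional in Proposition~\ref{p.rep.psid} is \emph{not} convex in $\alpha_d$ (the term $-\int_0^{T_d}\nu_d[0,t]\,\EE[\alpha_d^2]$ is concave), so Sion's theorem does not apply on $\bmart_d$. The references \cite{uninverting,issa2024hopflike} do not supply this constrained identity as a black box; they obtain the single-species analogue of \eqref{e.explicit.uninverted} by the same route the paper follows here: start from Theorem~\ref{t.convexoptimization} (not Theorem~\ref{t.hopflike}), substitute the martingale representation of $\psi(\nu)$, pass to the closed convex hull $K$ of the image set $\{(\varphi(\alpha),(\EE[\alpha_d^2(\cdot)])_d)\}$ so that the objective becomes affine in the new variable and Sion applies (Proposition~\ref{p.explicit.uninverted.mixed}), then undo the convexification by realizing convex combinations as single martingales via an $\mcl F_d(0)$-measurable randomization on a rich auxiliary space (Propositions~\ref{p.explicit.uninverted.whenrich}--\ref{p.existence.optimal.martingale}), and finally transfer to an arbitrary $\msc P$ by observing that the saddle-point martingale $\bar\alpha_d=\partial_x\Phi_{\bar\nu_d}(\cdot,X_d(\cdot))$ is Brownian-measurable. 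Your It\^o computation only shows that this specific $\bar\alpha_d$ is optimal in Proposition~\ref{p.rep.psid} for a \emph{given} $\nu_d$; it does not deliver the interchange.
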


\bigskip

\noindent \textbf{Conjecture.} 
We conjecture that Theorems~\ref{t.hopflike} and \ref{t.explicit.uninverted} remain valid even when the function~$\xi$ is not assumed to be convex on $\R^D_+$, with the understanding that~$S_t\chi$ is defined according to \eqref{e.def.St1} (the equality of this quantity with \eqref{e.def.St2} would be lost in this case). To explain why, we start by recalling that when $\xi$ is convex over $\R^D_+$, the limit free energy is known \cite{chen2024free, chenmourrat2023cavity, chen2022hamilton, mourrat2019parisi, mourrat2020extending} to converge to the unique viscosity solution to the Hamilton--Jacobi equation
\begin{equation}  
\label{e.hj}
\begin{cases}  
\partial_t f - \int \xi(\partial_\mu f) \, \d \mu = 0 & \quad \text{ on } \R_+ \times \mcl P^\upa_2(\R_+^D) ,\\  
f(0,\cdot) = \psi & \quad \text{ on } \mcl P^\upa_2(\R_+^D).
\end{cases}
\end{equation}
The same result may also be valid in the case when $\xi$ is non-convex, and partial results in this direction were obtained in \cite{chen2024free, chenmourrat2023cavity, chen2022hamilton, mourrat2020nonconvex, mourrat2020free}.
As discussed more precisely in \cite{issa2024hopflike}, the right-hand side of \eqref{e.convexoptimization2} can be interpreted as a generalized version of the Hopf variational representation of the solution to~\eqref{e.hj}. In other words, there is a plausible heuristic argument connecting the Hamilton--Jacobi equation~\eqref{e.hj} with the variational representation in Theorem~\ref{t.hopflike}, and therefore also with that in Theorem~\ref{t.explicit.uninverted}; the idea being that since $\psi$ is concave, it can be represented as an infimum of affine functions, and we expect the solution to the PDE to be the infimum of the solutions started with these enveloping affine functions. One consequence of the results presented here is a proof that when $\xi$ is convex over $\R^D_+$, the solution to the Hamilton--Jacobi equation~\eqref{e.hj} can indeed be represented as the variational formula in \eqref{e.convexoptimization2}. We expect this link between the solution to \eqref{e.hj} and the variational representations in Theorems~\ref{t.hopflike} and \ref{t.explicit.uninverted} to remain valid even when the function $\xi$ is no longer assumed to be convex. In addition to being interesting on its own, this new representation could be helpful to complement the lower bound obtained in \cite{mourrat2020nonconvex, mourrat2020free} with the reverse bound.

\bigskip 

\noindent \textbf{Proof ideas.} For the choice of $\mu = \de_0$, and in view of \eqref{e.parisi.preferred}, the task of showing \eqref{e.convexoptimization} boils down to showing that 
\begin{equation}
\label{e.basic.task}
 \sup_{\nu \in \mcl P^\upa_\infty(\R_+^D)} \Ll\{ \psi(\nu) - \int (t\xi)^* \, \d \nu \Rr\} =  \sup_{\nu \in \mcl P_\infty(\R_+^D)} \Ll\{ \psi(\nu) - \int (t\xi)^* \, \d \nu \Rr\} .
\end{equation}
For each measure $\nu \in \mcl P(\R^D_+)$, there is a unique measure $\nu^\upa$ that has the same marginals as $\nu$ and is a monotone measure. Due to the equality of the marginals and \eqref{e.decomp.psi}, we have that $\psi(\nu) = \psi(\nu^\upa)$, so surely the most natural idea for showing \eqref{e.basic.task} would be to assert that 
\begin{equation}
\label{e.try.this}
\int (t\xi)^* \d \nu^\upa \leq \int (t\xi)^* \d \nu.
\end{equation}
We show in Proposition~\ref{p.xi^* transport inequality} that the inequality \eqref{e.try.this} is indeed valid when $D = 2$ and $\nabla \xi(0) = 0$. However, we also show in Remark~\ref{r. no higher generalization} that the inequality \eqref{e.try.this} is false in general, so another strategy must be developed towards the proof of our main results. 

Our approach consists instead of a careful analysis of the optimizers of the variational formula in \eqref{e.convexoptimization}. First, recall that 
the definition of $S_t \chi$ from \eqref{e.def.St2} implies that for every $x, y \in \R^D_+$, 
\begin{equation} 
\label{e.chi.stchi.duality}
\chi(y) - S_t \chi(x) \le (t\xi)^*(y-x),
\end{equation}
with equality when $y$ is a maximizer in \eqref{e.def.St2}. From this, we check in Lemma~\ref{l.hopf-lax with linear initial condtion} that
\begin{equation}  
\label{e.stchi.to.transport}
\int S_t \chi \, \d \mu = \sup_{\nu \in \mcl P_\infty(\R^D_+)} \Ll\{ \int \chi \, \d \nu - \mcl T_t (\mu,\nu) \Rr\} .
\end{equation}
Letting $\bar \nu  \in \mcl P_\infty(\R^D_+)$ denote a maximizer of \eqref{e.convexoptimization}, and $\chi$ denote the derivative of $\psi$ at $\bar \nu$, we observe that, thanks to the convexity of the mapping $\nu \mapsto \mcl T_t(\mu,\nu)$, we have that 
\begin{equation}  
\int \chi \, \d \bar \nu - \mcl T_t(\mu,\bar \nu ) = \sup_{\nu \in \mcl P_\infty(\R^D_+)} \Ll\{ \int \chi \, \d \nu - \mcl T_t(\mu,\nu) \Rr\} ,
\end{equation}
and thus, in view of \eqref{e.stchi.to.transport}, that 
\begin{equation}  
\label{e.kantorovich.potential}
\int \chi \, \d \bar \nu -\int  S_t \chi \, \d \mu = \mcl T_t(\mu,\bar \nu ) .
\end{equation}
Denoting by $\pi$ the optimal transport plan from $\mu$ to $\bar \nu$, the previous display tells us that the inequality \eqref{e.chi.stchi.duality} is in fact an equality for $\d \pi$-almost every $(x,y)$. Heuristically, we then want to exploit certain monotonicity properties of the mappings $\nabla \chi$ and $\nabla S_t \chi$ to deduce that if $\mu$ is monotone, then so is~$\bar \nu$. The details are a bit more subtle, as it may well be that the measure~$\bar \nu$ itself is actually not monotone, and we need to smear out the measure~$\mu$ in order to effectuate this transfer of monotonicity, but we ultimately show that we can build a maximizing measure for \eqref{e.convexoptimization} that is indeed monotone whenever $\mu$ itself is monotone. The bulk of the technical work consists in showing that $\nabla \chi$ and $\nabla S_t \chi$ satisfy the desired monotonicity properties. Once Theorem~\ref{t.convexoptimization} is shown, we obtain Theorem~\ref{t.unique_parisi} as a direct consequence, while the proofs of Theorems~\ref{t.hopflike} and \ref{t.explicit.uninverted} rely on convex-duality arguments that are inspired by \cite{issa2024hopflike, uninverting}.

\bigskip

\noindent \textbf{Related works.} 
The Parisi formula was initially predicted using the non-rigorous replica method in the case of the Sherrington-Kirkpatrick model (i.e.\ for $D = 1$, $\xi(r) = r^2$, and $P_1 = \frac 1 2 (\de_1 + \de_{-1})$) in \cite{mpv, parisi1979infinite, parisi1980order, parisi1980sequence, parisi1983order}, and was later proved rigorously in \cite{gue03, Tpaper}. A more conceptual proof based on the ultrametricity of the Gibbs measure was later developed \cite{pan.aom, pan}, and allowed all models with a single type of spins ($D = 1$) to be covered. This ultrametricity property was further leveraged in order to identify the limit free energy of spin glasses with multiple types and convex $\xi$ \cite{barra2015multi, pan.multi, pan.potts, pan.vec}. One key ingredient of the proof consists in showing that, up to a small perturbation of the energy function, the overlaps between the different types of spins must synchronize with one another; in other words, the joint law of the overlaps must be a monotone measure.
Theorem~\ref{t.convexoptimization} shows that one can in fact relax the final optimization problem into a ``desynchronized'' one, where the measure is no longer constrained to be monotone. 
Concurrent developments for spherical models include \cite{chen2013aizenman, tal.sph} for models with a single type of spins, and \cite{bates2022free} for multi-type models. For models with additional symmetries, the limit formula may in certain cases be simplified into an expression corresponding to an effective single-type model \cite{bates2024parisi, bates2025balanced, chen2024parisi, issa2024existence, mourrat2025color}. %

Models with non-convex $\xi$ are less well-understood, and even the existence of the limit free energy is not proved in general. Yet, under the assumption that the limit free energy exists, its value has been identified in \cite{subag2023tap2, subag2025tap1} for all spherical models in the case when $\xi$ is a monomial. Still for spherical models, the case of $D = 2$ and $\xi(x,y) = xy$ has been obtained unconditionally in \cite{aufchebi, baik2020free}, and the cases of $\xi(x,y) = x^p y^q$ and $\xi(x_1,\ldots, x_D) = x_1 \cdots  \, x_D$ have also been obtained unconditionally for particular choices of the parameter $\lambda_\infty$ in \cite{bates2025balanced, dartois2024injective}. 

Outside of these cases, even stating an appropriate conjecture for the limit free energy is a non-trivial task. Indeed, several possible candidate variational formulas for the limit free energy, inspired by the classical Parisi formula, were shown to be invalid in \cite[Section~6]{mourrat2020nonconvex}. A candidate for the limit free energy, defined in terms of the solution to a Hamilton-Jacobi equation, was proposed in \cite{chen2022hamilton, mourrat2020nonconvex, mourrat2019parisi, mourrat2020free}, and this candidate was shown there to be a lower bound for the limit free energy. Under the assumption that the limit free energy exists, a representation of this limit in terms of a critical point of an explicit functional was also found in \cite{chen2024free, chenmourrat2023cavity}. This description is in the spirit of the cavity fixed-point equations appearing in the physics literature. It does not completely characterize the limit however, as there may be more than one critical point in general. 

As discussed around \eqref{e.hj}, and despite the negative results of \cite[Section~6]{mourrat2020nonconvex}, we now believe that there is in fact a plausible candidate variational formula for the limit free energy for non-convex $\xi$. The point is that it does \emph{not} take a form similar to the classical Parisi formula, but rather a form as in \eqref{e.explicit.uninverted}. Such alternative ``un-inverted'' formulas first appeared in the context of models with a single type of spins in \cite{uninverting}, and were further developed in \cite{issa2024hopflike}, with results close to Theorem~\ref{t.hopflike} for vector spin glasses. A key ingredient for our analysis as well as in \cite{uninverting} is the fact that the function~$\psi$ is concave. This was first proved for models with a single type of spins in~\cite{auffinger2015parisi}; the uniqueness of the Parisi measure was also obtained there in this case. For spherical models, the uniqueness of the Parisi measure can be seen more directly, and a similar convex-duality analysis was performed in \cite{jagannath2017low, jagannath2018bounds}. We also mention that for vector spin glasses with convex $\xi$, the uniqueness of the Parisi measure was shown to be valid in \cite{chen2025uniqueness} provided that the measure argument $\mu$ does not charge any single point.  

These ``un-inverted'' formulas are in turn inspired by a series of works concerning the construction of efficient algorithms that aim to identify configurations $\sigma$ in the support of $P_N$ such that $H_N(\sigma)$ is as large as possible. For many models, an explicit quantity $\msf{ALG}$ was identified as a variational formula analogous to the Parisi formula, such that the following holds. On the one hand, there exists an efficient algorithm that outputs a configuration~$\sigma$ in the support of $P_N$ such that $H_N(\sigma)/N$ is approximately equal to $\msf{ALG}$. On the other hand, for any $\ep > 0$, if an algorithm is Lipschitz continuous with respect to its entries, then it cannot reliably output a configuration $\sigma$ in the support of $P_N$ such that $H_N(\sigma)/N$ exceeds $\msf{ALG} + \ep$. Whether or not this quantity $\msf{ALG}$ coincide with the limit of $\sup_{\sigma \in \supp P_N} H_N(\sigma)$ depends on the choice of $\xi$. At least for models with a single type, this quantity $\msf{ALG}$ can also be written as an ``un-inverted'' formula very similar to that found in \cite{uninverting}. 

An algorithm that indeed achieves the correct value $\msf{ALG}$ was first put forward in \cite{subag2021following} for spherical models with a single type of spins. The algorithm is based on a Hessian ascent. Using a different approach based on approximate message passing algorithms, this result was extended to models with $\pm 1$ spins in \cite{elalaoui2021optimization, montanari2021optimization, sellke2024optimizing}. For models with $\pm 1$ spins, a Hessian-ascent algorithm was also constructed in \cite{jekel2025potential}. 
The converse statement, that no Lipschitz algorithm can identify configurations with energy exceeding $\msf{ALG}$, is based on the notion of overlap gap property \cite{gamarnik2021overlap-survey, gamarnik2021overlap-paper, gamarnik2022disordered, huang2025tight}. 
Remarkably, the identification of this threshold $\msf {ALG}$ has been extended to spherical models with multiple types, including for models with non-convex~$\xi$ for which the limit free energy is not currently known \cite{huang2023algorithmic, huang2025tight}. 

\bigskip

\noindent \textbf{Organization of the paper.} In Section~\ref{s.def.free.energy}, we define the enriched free energy $\bar F_N(t,\mu)$ for arbitrary arguments $(t,\mu) \in \R_+ \times \mcl P_1(\R_+)$. In Section~\ref{s.initial condition}, we study properties of $\psi(\mu) = \lim_{N\to \infty} \bar F_N(0,\mu)$, including its regularity and concavity and the decomposition in \eqref{e.decomp.psi}. In particular, we show that the derivative of $\psi$ at any measure is an element of the set $\mfk X$ defined above \eqref{e.def.psi.*}. In Section~\ref{s.2d}, we focus on the two-species case ($D = 2$) with no external field ($\nabla \xi(0) = 0$), and show \eqref{e.try.this} in this particular case, leading to a simple proof of \eqref{e.basic.task}, with some long but unsurprising proof deferred to Appendix~\ref{s.xi^* ineq}. Section~\ref{s.hjcones} is focused on the proof that $\nabla S_t \chi$ satisfies the desired monotonicity properties whenever $\chi \in \mfk X$. We show that the mapping $(t,x) \mapsto S_t\chi(x)$ can be interpreted as the solution to a Hamilton-Jacobi equation, and we rely on this property to obtain the desired result. Section~\ref{s.optim} contains the proofs of \eqref{e.stchi.to.transport} and \eqref{e.kantorovich.potential}. We also show that the variational problems appearing in \eqref{e.parisi.preferred} and \eqref{e.convexoptimization} are equal. We complete the proofs of Theorems~\ref{t.parisi.basic} to \ref{t.hopflike} in  Section~\ref{s.proofs.main}. Finally, we develop a more explicit dual representation of $\psi$ and use it to prove Theorem~\ref{t.explicit.uninverted} in Section~\ref{s.explicit}.

%
%
%
%
%
%
\section{Definition of the enriched free energy}
\label{s.def.free.energy}

The goal of this section is to give a precise definition of the enriched free energy $\bar F_N(t,\mu)$. We start by giving a definition of this quantity in the case when the measure $\mu \in \mathcal P^\upa(\R_+^D)$ has a finite support. For every $x,y \in \R^{D}$, we write $x \leq y$ when $y-x \in \R_+^D$.
Let $K \in \N$ and let $\mu \in \mcl P^\upa(\R_+^D)$ be of the form
\begin{e} \label{e.piecewise}
    \mu = \sum_{k = 0}^K (\zeta_{k+1}-\zeta_k) \delta_{q_k} \quad \text{with marginals}\quad \mu_\s = \sum_{k = 0}^K (\zeta_{k+1}-\zeta_k) \delta_{q_{k,\s}}
\end{e}
for each $\s\in\sS$, where $\zeta_k$ and $q_k = (q_{k,\s})_{\s\in\sS}$ satisfy
\begin{gather}
    0 = q_{-1,\s} \leq q_{0,\s} < q_{1,\s} < \dots < q_{K,\s}, \label{e.qs=}
    \\
    0 = \zeta_0 < \zeta_1 < \dots < \zeta_K < \zeta_{K+1}=1.\label{e.zetas=}
\end{gather}
The construction of $\bar F_N(t,\mu)$ involves a random probability measure with ultrametric properties called the Poisson--Dirichlet cascade. We briefly introduce this object and refer to \cite[Section~2.3]{pan} or \cite[Section~5.6]{HJbook} for a more detailed explanation. We define
\begin{e}
    \mcl A = \N^0 \cup \N^1 \cup \dots \cup \N^K
\end{e}
with the understanding that $\N^0 = \{ \emptyset \}$. We think of $\mcl A$ as a tree rooted at $\emptyset$ such that each vertex of depth $k < K$ has countably many children. For each $k < K$ and $\alpha = (n_1,\dots,n_k) \in \N^k$, the children of $\alpha$ are the vertices of the form 
\begin{e}
    \alpha n = (n_1,\dots,n_k,n) \in \N^{k+1}.
\end{e}
The depth of $\alpha = (n_1,\dots,n_k)$ is denoted by $|\alpha| = k$ and for every $l \leq k$, we write 
\begin{e}
    \alpha_{|l} = (n_1,\dots,n_l)
\end{e}
to denote the ancestor of $\alpha$ at depth $l$. Given two leaves $\alpha,\beta \in \N^K$, we denote by $\alpha \wedge \beta$ the generation of the most recent common ancestor of $\alpha$ and $\beta$, that is
\begin{e}
    \alpha \wedge \beta = \sup \{ k \leq K:\: \alpha_{|k} = \beta_{|k} \}.
\end{e}
We attach an independent Poisson process to each non-leaf vertex $\alpha \in \mcl A$ with intensity measure
\begin{e}
    x^{-1-\zeta_{|\alpha|+1}} \d x.
\end{e}
We order increasingly the points of those Poisson processes and denote them by $u_{\alpha 1} \geq u_{\alpha 2} \geq \dots$. For every $\alpha \in \N^K$, we set $w_\alpha = \prod_{k  = 1}^K u_{\alpha_{|k}}$ and define
\begin{e}
   v_\alpha = \frac{w_\alpha}{\sum_{\beta \in \N^K} w_\beta}. 
\end{e}
The Poisson--Dirichlet cascade associated to $(\zeta_k)_{1\leq K+1}$ in \eqref{e.zetas=} is the random probability measure on $\N^K$ (the leaves of the tree $\mcl A$) whose weights are given by $(v_\alpha)_{\alpha \in \N^K}$.

Now, let $(v_\alpha)_{\alpha \in \N^K}$ be the Poisson--Dirichlet cascade associated to $(\zeta_k)_{1\leq K+1}$ in \eqref{e.zetas=}, chosen to be independent of $H_N$. 
Let $(z_{\beta,\s})_{\beta \in \mcl A, d \in [D]}$ be a family of independent standard Gaussian variables. We choose $(z_{\beta,\s})_{\beta \in \mcl A}$ independent of $(v_\alpha)_{\alpha \in \N^K}$ and $H_N$. For every $\alpha \in \N^K$, we set 
\begin{e}\label{e.w^mu_d=}
    w^{\mu_\s}(\alpha) = \sum_{k = 0}^K (q_{k,\s} - q_{k-1,\s})^{1/2} z_{\alpha_{|k},\s}
\end{e}
with $(q_{k,\s})_{0\leq k\leq K}$ given in~\eqref{e.qs=}.
The centered Gaussian process $(w^{\mu_\s}(\alpha))_{\alpha \in \N^K}$ has the following covariance structure 
\begin{e}
    \E \left[ w^{\mu_\s}(\alpha)w^{\mu_\s}(\alpha') \right] = q_{\alpha \wedge \alpha',\s}.
\end{e}
Let $(\omega^{\mu_d}_n)_{n\in\N}$ be i.i.d.\ copies of $\omega^{\mu_d}$.
For each $N\in\N$, we define
\begin{align}
    W^\mu_N(\sigma,\alpha) = \sum_{\s\in \sS}\sum_{n\in I_{N,\s}}w^{\mu_\s}_n(\alpha)\cdot \sigma_n
\end{align}
which is a centered Gaussian process with covariance
\begin{align}
    \E \Ll[ W^\mu_N(\sigma,\alpha) W^\mu_N(\sigma',\alpha')\Rr]= N q_{\alpha\wedge\alpha'}\cdot R_N(\sigma,\sigma').
\end{align}
For $t\in\R_+$ and $\mu$ given in~\eqref{e.piecewise}, we define the enriched Hamiltonian
\begin{align*}
    H^{t,\mu}_N(\sigma,\alpha)= \sqrt{2t}H_N(\sigma) - t N \xi \Ll(R_N(\sigma,\sigma)\Rr)  
    + \sqrt{2}W^\mu_N(\sigma,\alpha) - Nq_K\cdot R_N(\sigma,\sigma),
\end{align*}
where $H_N$ is given as in~\eqref{e.def H_N}. We define
\begin{e} \label{e.enriched free energy}
    \bar F_N(t,\mu) = -\frac{1}{N} \E \log \int \sum_{\alpha \in \N^K} \exp \left(H_N^{t,\mu}(\sigma,\alpha) \right) v_\alpha \d P_N(\sigma).
\end{e}
The expression in the previous display is Lipschitz with respect to $(t,\mu)$. More precisely, for every $t,t' \in \R_+$ and every $\mu, \mu'$ of the form in~\eqref{e.piecewise}, we have 
\begin{e}\label{e.F_L^1}
    |\bar F_N(t,\mu) - \bar F_N(t',\mu')| \leq \msf W_1(\mu,\mu') +|t-t'| \sup_{|a| \leq 1} |\xi(a)|,
\end{e}
where $\msf W_1$ denotes the Wasserstein $L^1$ distance between $\mu$ and $\mu'$. This is borrowed from~\cite[Proposition~4.1]{chen2024free}, where the term $\msf W_1(\mu,\mu')$ is written as $\int_0^1|\sq(s)-\sq'(s)|\d s$ with $\mu \stackrel{\d}{=} \mathrm{Law}(\sq(U))$ and $\mu' \stackrel{\d}{=} \mathrm{Law}(\sq'(U))$ for a uniform random variable $U$ over $[0,1]$. The equality between the two expressions can be obtained from a straightforward modification of~\cite[Proposition~2.5]{mourrat2020free}.

As a consequence, the free energy $\bar F_N$ admits a unique Lipschitz extension to $\R_+ \times \mcl P^\upa_1(\R_+^D)$.

Given $w^{\mu_\s}$ in~\eqref{e.w^mu_d=} and $\spindist_\s$ in~\eqref{e.P_N=}, each $\psi_\s$ appearing in~\eqref{e.decomp.psi} is given by
\begin{align}\label{e.psi_d=}
    \psi_{d}(\mu_\s)=-\E \log \int\sum_{\alpha\in\N^K}\exp\Ll(\sqrt{2}w^{\mu_\s}(\alpha)\tau- q_{K,\s}\tau^2\Rr)v_\alpha\d \spindist_\s(\tau)
\end{align}
at every $\mu$ of the form in~\eqref{e.piecewise}. A similar property as in~\eqref{e.F_L^1} holds for $\psi_\s$ and thus $\psi_\s$ can be extended to $\mcl P^\upa_1(\R_+)=\mcl P_1(\R_+)$.
By an invariance property of $(\nu_\alpha)_{\alpha\in\N^K}$ (see e.g.\ \cite[Corollary~5.26]{HJbook} and~\cite[Lemma~4.11]{chen2024free}), we have
\begin{align}\label{e.F_N(0,mu)=}
    \bar F_N(0,\mu) = \sum_{\s\in\sS}\lambda_{N,\s} \psi_\s(\mu_\s),\qquad\forall \mu\in\mcl P_1^\upa(\R_+^D).
\end{align}
Therefore, due to~\eqref{e.lambda_infty} and~\eqref{e.decomp.psi}, we have 
\begin{align}\label{e.limF_N(0,mu)=psi(mu)}
    \lim_{N\to\infty}\bar F_N(0,\mu) = \psi(\mu),\qquad\forall \mu\in\mcl P_1^\upa(\R_+^D).
\end{align}
In order to state the Parisi formula for $\bar F_N(t,\mu)$ at a general measure $\mu$, we introduce the following notation. Let $\mcl Q$ be the collection of càdlàg increasing maps $\sq:[0,1)\to\R_+^D$, and let $U$ denote a uniform random variable on $[0,1]$. For any $\sq \in \mcl Q$, we write
\begin{align}\label{e.L_q=}
    \cL_\sq = \mathrm{Law}(\sq(U)) \in \mcl P^\upa(\R_+^D).
\end{align}
We also set $\mcl Q_\infty = \mcl Q\cap L^\infty([0,1);\R^D)$. It is clear that $\sq\mapsto \cL_\sq$ is a bijection from $\mcl Q$ to $\mcl P^\upa(\R_+^D)$, and also from $\mcl Q_\infty$ to $\mcl P^\upa_\infty(\R_+^D)$. 
The following is a restatement of \cite[Proposition~6.1]{chen2024free} which adapts \cite[Proposition~8.1]{chenmourrat2023cavity} from the setting of vector spin glasses. 
It generalizes~\eqref{e.parisi.usual} in Theorem~\ref{t.parisi.basic}.

\begin{proposition}[Parisi formula]\label{p.parisi_gen}
For every $t\geq0$ and $\sq \in \mcl Q_\infty$, denoting $\mu = \mcl L_{\sq}\in\mcl P^\upa_\infty(\R_+^D)$, we have
\begin{align}\label{e.p.parisi_gen}
    \lim_{N \to +\infty} \bar F_N(t,\mu) = \sup_{\sp \in \mcl Q_\infty} \Ll\{ \psi\Ll(\cL_{\sq+t\nabla\xi(\sp)}\Rr)- t \int \theta \, \d\cL_\sp \Rr\} .
\end{align}
\end{proposition}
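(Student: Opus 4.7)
The plan is to establish the formula via matching upper and lower bounds, following the Guerra--Aizenman--Sims--Starr scheme adapted to the multi-species enriched setting of \cite{chen2024free, chenmourrat2023cavity}. By the Lipschitz bound \eqref{e.F_L^1} and the corresponding regularity of the right-hand side (inherited from $\psi$ on $\mcl P_1$), it suffices to treat $\sq, \sp$ taking only finitely many values, so that $\mu$ is of the form \eqref{e.piecewise}. The case $t=0$ is already covered by \eqref{e.limF_N(0,mu)=psi(mu)}, so the whole task is to quantify how the enriched free energy evolves from the initial value $\psi(\mu)$ once the spin-glass interaction is turned on.

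For the upper bound I would run a Guerra--Talagrand interpolation along a candidate $\sp \in \mcl Q_\infty$ discretized on the jump levels $(\zeta_k)$ of $\sq$. At the final time of the interpolation the Hamiltonian decouples, and using the additive decomposition \eqref{e.decomp.psi} together with the definition \eqref{e.theta=} of $\theta$, the associated free energy equals exactly $\psi(\mcl L_{\sq + t\nabla\xi(\sp)}) - t \int \theta \, \d \mcl L_\sp$. Differentiating in the interpolation parameter, Gaussian integration by parts produces at each cascade level $k$ a Bregman-divergence expression $\xi(a) - \xi(p_k) - (a - p_k)\cdot \nabla\xi(p_k)$ evaluated at $a = R_N(\sigma,\sigma')$, which is nonnegative on $\R_+^D$ by the convexity of $\xi$ there. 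The multi-species extension of Talagrand's positivity principle (cf.\ \cite{pan.multi}) ensures that the overlap values sampled under the tilted measure lie in $\R_+^D$ up to a negligible error, so the derivative has the correct sign and integrating from $0$ to $1$ yields the upper bound.

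For the matching lower bound I would use the Aizenman--Sims--Starr cavity scheme, extracting an almost-optimal $\sp$ from the asymptotic joint law of the species-wise overlaps under the Gibbs measure. The decisive multi-species input is Panchenko's synchronization \cite{pan.multi}: after a small generic perturbation of the Hamiltonian, the asymptotic law of $R_N$ under the Gibbs measure is a monotone coupling of its marginals, so the overlap data can be encoded by a monotone map $\sp \in \mcl Q_\infty$. The enrichment parameter $\sq$ is absorbed via the invariance of Poisson--Dirichlet cascades used to derive \eqref{e.F_N(0,mu)=}, which effectively reduces the general monotone $\mu$ to a controlled perturbation of the case $\mu = \de_0$ while correctly accounting for the initial value $\psi(\mu)$. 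The main obstacle is this synchronization step, where the convexity of $\xi$ on $\R_+^D$ and the ultrametric structure of the multi-species Gibbs measure must combine; once this is in hand, matching the two bounds and invoking the Lipschitz extension \eqref{e.F_L^1} completes the proof.
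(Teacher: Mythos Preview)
The paper does not prove Proposition~\ref{p.parisi_gen} at all: it is stated as a direct restatement of \cite[Proposition~6.1]{chen2024free}, which in turn adapts \cite[Proposition~8.1]{chenmourrat2023cavity}, and is used throughout as a black box. Your outline --- Guerra interpolation for the upper bound via the convexity of $\xi$ on $\R_+^D$ and Talagrand's positivity principle, and the Aizenman--Sims--Starr cavity computation combined with Panchenko's synchronization for the lower bound --- is indeed the strategy carried out in those cited references, so at the level of a sketch you have identified the correct argument. That said, since the present paper treats the result as known, there is nothing here to compare against; if you intend to supply a self-contained proof you should consult \cite{chen2024free, chenmourrat2023cavity, pan.multi} for the technical execution of the synchronization and cavity steps in the enriched multi-species setting, which are considerably more delicate than the high-level sketch suggests.
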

In~\eqref{e.p.parisi_gen}, the notation $\nabla\xi(\sp)$ stands for the path $s\mapsto \nabla\xi(\sp(s))$, and $\int\theta\d\cL_\sp$ can be explicitly written as $\int_0^1\theta(\sp(s))\d s$. The right-hand side in \eqref{e.p.parisi_gen} can be connected to the unique solution of a Hamilton--Jacobi equation \cite{chenmourrat2023cavity, mourrat2020nonconvex, mourrat2019parisi, mourrat2020free, mourrat2020extending}.
%
%
%
%
%
%
%
%
\section{Properties of the enriched free energy at infinite temperature} \label{s.initial condition}

In this section, we derive useful properties of $\psi$. Due to the decomposition in~\eqref{e.decomp.psi}, we mostly focus on the study of $\psi_\s$ for each $d \in [D]$.

\subsection{Properties of \texorpdfstring{$\psi_\s$}{psi\_d}}

We fix $\s\in\sS$, and use the following notation in this subsection:
\begin{align}\label{e.barpsi=psi_d}
    \spindist_\circ = \spindist_\s \qquad\text{and}\qquad \psi_\circ = \psi_\s.
\end{align}

For $\nu\in \mcl P(\R_+)$, we denote by $t\mapsto \nu(t)= \nu([0,t])$ its cumulative distribution function, which is a right-continuous increasing function taking values in $[0,1]$.
When $\nu \in \mcl P_\infty(\R_+)$, we denote by
\begin{align*}
    \nu^{-1}(1) = \inf\{t\in\R_+:\: 1\leq \nu(t)\}
\end{align*}
the right endpoint of the support of $\nu$.
For $\nu \in \mcl P_\infty(\R_+)$, we consider the backward parabolic equation
\begin{align}\label{e.Phi_nu_eqn}
-    \partial_t\Phi_\nu(t,x) =  \partial_x^2 \Phi_\nu(t,x) + \nu(t) \Ll(\partial_x \Phi_\nu\Rr)^2(t,x)  \qquad 
\end{align}
with $(t,x)$ ranging in $[0,\nu^{-1}(1)] \times  \R$, and with the terminal condition
\begin{align}\label{e.Psi_nu(nu^-1(1))=}
    \Phi_\nu(\nu^{-1}(1),x)= \log\int\exp\Ll(x\sigma - \nu^{-1}(1)\sigma^2\Rr)\d \spindist_\circ(\sigma),\quad\forall x \in\R.
\end{align}
We call the equation \eqref{e.Phi_nu_eqn} the \emph{Parisi PDE} associated with $\nu$. 
The fact that the function $\Phi_\nu$ is well-defined can be found in \cite{jagannath2016dynamic}. When the support of $\nu$ is finite, one can use the Cole-Hopf transform to solve~\eqref{e.Phi_nu_eqn} on each time sub-interval of $[0,\nu^{-1}(1))$ on which $\nu(\cdot)$ is constant; and then one can argue by continuity to build $\Phi_\nu$ for arbitrary $\nu$ (see for instance \cite[Section~6.5]{HJbook}).
We extend $\Phi_\nu$ to $\R_+\times \R$ by setting
\begin{align}\label{e.Phi_nu_trivial}
    \Phi_\nu(t,x) 
    = \log \int \exp\Ll(x\sigma -t\sigma^2\Rr)\d \spindist_\circ(\sigma), \quad\forall (t,x) \in [\nu^{-1}(1),\infty)\times \R.
\end{align}
A simple computation reveals that this extension also satisfies the equation~\eqref{e.Phi_nu_eqn} on $[\nu^{-1}(1),\infty)\times \R$.
We call $\Phi_\nu:\R_+\times\R\to\R$ the \emph{Parisi PDE solution} associated with $\nu \in \mcl P_\infty(\R_+)$.
We summarize the basic properties of the Parisi PDE solution in the following lemma.

\begin{lemma}[Regularity of Parisi PDE]\label{l.ext_Parisi_PDE}
For every $\nu \in \mcl P_\infty(\R_+)$ and $k\in \N = \{1,2,\ldots\}$, the derivative $\partial^k_x \Phi_\nu$ exists everywhere and is bounded uniformly over $\nu,t,x$. 
In particular, $|\partial_x\Phi_\nu|\leq 1$ everywhere.
For every $k\in \N\cup\{0\}$, there is a constant $C_k$ such that for every $\nu,\nu'\in \mcl P_\infty(\R_+)$ and $(t,x) \in \R_+ \times \R$, we have
\begin{align}\label{e.compare_Phi_nu}
    \Ll|\partial^k_x\Phi_\nu(t,x) - \partial^k_x\Phi_{\nu'}(t,x)\Rr|\leq C_k \int_t^\infty \Ll|\nu(\tau)-\nu'(\tau)\Rr|\d \tau.
\end{align}
For every $\nu \in \mcl P_1(\R_+)$, we have
\begin{align}\label{e.barpsi=Phi}
    \psi_\circ(\nu) = -\Phi_\nu(0,0).
\end{align}
\end{lemma}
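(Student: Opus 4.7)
The plan is to establish everything first for atomic measures $\nu \in \mcl P_\infty(\R_+)$ of the form \eqref{e.piecewise}, then extend by continuity to general $\nu \in \mcl P_\infty(\R_+)$ and finally to $\nu \in \mcl P_1(\R_+)$ using the stability estimate \eqref{e.compare_Phi_nu}. For atomic $\nu$, the Parisi PDE linearizes under the Cole--Hopf transform on each sub-interval of $[0,\nu^{-1}(1))$ where $\nu(\cdot) \equiv \zeta_k$ is constant: setting $\Psi = \exp(\zeta_k \Phi_\nu)$ turns \eqref{e.Phi_nu_eqn} into the backward heat equation $-\partial_t \Psi = \partial_x^2 \Psi$. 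Iterating backward from $t = \nu^{-1}(1)$, applying $\zeta_k^{-1} \log$ at each transition, produces an explicit nested formula for $\Phi_\nu(0,0)$ built from Gaussian convolutions and scalar exponentiations; direct inspection shows it matches $-\psi_\circ(\nu)$ as defined through the cascade \eqref{e.psi_d=}, proving \eqref{e.barpsi=Phi} in the atomic case (this matching is standard, cf.\ \cite[Section~6.5]{HJbook}).

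The regularity bounds follow by applying the maximum principle successively to the $x$-derivatives of $\Phi_\nu$. Writing $u_k = \partial_x^k \Phi_\nu$, differentiation of \eqref{e.Phi_nu_eqn} gives that $u_1$ solves the viscous Burgers-type equation
\[
-\partial_t u_1 = \partial_x^2 u_1 + 2\nu(t)\, u_1 \,\partial_x u_1,
\]
whose terminal data at $t = \nu^{-1}(1)$ is the tilted mean of $\sigma$ under a probability supported in $[-1,1]$ and thus has absolute value at most $1$. The maximum principle, run backward in $t$, propagates the bound $|u_1| \leq 1$ to all of $\R_+ \times \R$ (consistently also on the trivial extension \eqref{e.Phi_nu_trivial}). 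For $k \geq 2$, a further differentiation yields a linear parabolic equation for $u_k$ whose coefficients depend only on $u_1, \ldots, u_{k-1}$ and whose terminal data is a $k$-th cumulant-type quantity of a random variable supported in $[-1,1]$, hence bounded by a constant depending only on $k$. Induction on $k$ together with the maximum principle produces uniform bounds on $\|u_k\|_\infty$ independent of $\nu$.

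For the stability estimate, the difference $d = \Phi_\nu - \Phi_{\nu'}$ satisfies
\[
-\partial_t d = \partial_x^2 d + \nu(t)\, \partial_x(\Phi_\nu + \Phi_{\nu'})\, \partial_x d + \bigl(\nu(t) - \nu'(t)\bigr) (\partial_x \Phi_{\nu'})^2,
\]
a linear parabolic equation with drift bounded by the preceding step and source term dominated by $|\nu(t) - \nu'(t)|$. Integrating the maximum principle backward in time yields \eqref{e.compare_Phi_nu} for $k = 0$; differentiating $k$ more times and iterating the same argument, using the uniform bounds already established on the derivatives of $\Phi_\nu$ and $\Phi_{\nu'}$, handles general $k$. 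Finally, to promote the identity \eqref{e.barpsi=Phi} to $\nu \in \mcl P_1(\R_+)$, approximate $\nu$ by a truncation $\nu_M \in \mcl P_\infty(\R_+)$; since $\int_0^\infty |\nu(\tau) - \nu_M(\tau)|\,\d\tau$ is the $L^1$-Wasserstein distance between the quantile functions and tends to $0$, the $k=0$ case of \eqref{e.compare_Phi_nu} at $(t,x) = (0,0)$ shows that $\Phi_{\nu_M}(0,0)$ is Cauchy, defining $\Phi_\nu(0,0)$ in the limit, while the Wasserstein continuity of $\psi_\circ$ (obtained by passing to the limit in \eqref{e.F_L^1}) transfers \eqref{e.barpsi=Phi} to $\mcl P_1(\R_+)$. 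I expect the main technical point to be the clean inductive verification of the cumulant-type bounds for the higher terminal derivatives together with the explicit matching of the Cole--Hopf recursion with the cascade formula \eqref{e.psi_d=}; the remainder is essentially bookkeeping through the maximum principle.
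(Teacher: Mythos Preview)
The paper's own proof is a one-paragraph deferral to \cite{chen2024simultaneous}, so your self-contained outline via Cole--Hopf plus the maximum principle is a genuinely different route. Your treatment of $|u_1|\le 1$, of the $k=0$ stability estimate, and of the density extension of \eqref{e.barpsi=Phi} is correct and standard.

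The inductive scheme for the higher derivative bounds has a real gap, however. For $k=2$ the differentiated equation reads
\[
-\partial_t u_2 = \partial_x^2 u_2 + 2\nu(t)\,u_1\,\partial_x u_2 + 2\nu(t)\,u_2^2,
\]
so it is \emph{not} linear in $u_2$ with coefficients depending only on $u_1$; the nonnegative source $2\nu u_2^2$ blocks an upper bound by the plain maximum principle. A fix is to run the maximum principle on $v=1-u_1^2-u_2$ instead, which satisfies $-\partial_t v=\partial_x^2 v+2\nu u_1\partial_x v+2(1-\nu)u_2^2$ with terminal data $v(\nu^{-1}(1),\cdot)=1-\E_{\cdot}[\sigma^2]\ge 0$, giving $0\le u_2\le 1$ uniformly. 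More seriously, for $k\ge 3$ the equation for $u_k$ is indeed linear in $u_k$ but carries a zeroth-order term $2k\,\nu(t)\,u_2\,u_k$ with nonnegative coefficient; Grönwall then only yields a bound containing the factor $\exp\bigl(2k\int_t^{\nu^{-1}(1)}\nu u_2\bigr)$, which is \emph{not} uniform in $\nu$ since the exponent can be of order $\nu^{-1}(1)$. To obtain $\nu$-independent bounds one needs the probabilistic (SDE or cascade) representation of $\Phi_\nu$, under which each $\partial_x^k\Phi_\nu(t,x)$ is a fixed polynomial in the first $k$ moments of a $[-1,1]$-valued random variable under a tilted law; this is the mechanism used in \cite{auffinger2015parisi,jagannath2016dynamic} and in the reference the paper invokes.
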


\begin{proof}
First, assume $\nu,\nu'\in \mcl P_\infty(\R_+)$.
For each $\nu$, there is an explicit expression for $\Phi_\nu$ given in~\cite[Lemma~4.5 and Remark~4.6]{chen2024simultaneous} (with $D=1$ and $L(t)=t$ for $t\in\R_+$ therein, and with $\nu$ and $P_1$ substituted for $\alpha$ and $\mu$ therein). In particular, both~\eqref{e.Psi_nu(nu^-1(1))=} and~\eqref{e.Phi_nu_trivial} are satisfied. The relation to the initial condition as in~\eqref{e.barpsi=Phi} is given in~\cite[Lemma~5.4]{chen2024simultaneous}. The fact that $\Phi_\nu$ satisfies the equation and the regularity of $\Phi_\nu$ are from~\cite[Proposition~4.7]{chen2024simultaneous}. The particular case $|\partial_x\Phi_\nu|\leq 1$ follows from~\cite[Proposition~4.7~(3)]{chen2024simultaneous} and the fact that $\spindist_\circ$ is supported in $[-1,1]$.
The inequality \eqref{e.compare_Phi_nu} then allows us to extend the results to every $\nu,\nu'\in\mcl P_1(\R_+)$. To extend~\eqref{e.barpsi=Phi}, we also use the continuity of $\psi_\circ$.
\end{proof}

We now recall the concavity of $\psi_\circ$ originally due to~\cite{auffinger2015parisi}.

\begin{lemma}[Concavity of $\psi_\circ$]
\label{l.psi is concave}
The function $\psi_\circ:\mcl P_1(\R_+)\to \R$ is concave; that is, for every $\nu, \nu' \in \mcl P_1(\R_+)$ and $\lambda \in [0,1]$, we have $\psi_\circ((1-\lambda)\nu + \lambda \nu') \geq (1-\lambda)\psi_\circ(\nu) + \lambda\psi_\circ(\nu')$.

The restriction of $\psi_\circ$ to $\mcl P_\infty(\R_+)$ is strictly concave; that is, for every distinct $\nu, \nu' \in \mcl P_\infty(\R_+)$ and $\lambda \in (0,1)$, we have $\psi_\circ((1-\lambda)\nu + \lambda \nu') > (1-\lambda)\psi_\circ(\nu) + \lambda\psi_\circ(\nu')$.
\end{lemma}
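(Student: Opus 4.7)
By Lemma~\ref{l.ext_Parisi_PDE}, $\psi_\circ(\nu) = -\Phi_\nu(0,0)$, so it suffices to show convexity (resp.\ strict convexity) of $\nu \mapsto \Phi_\nu(0,0)$. My plan is to establish a stochastic-control representation of $\Phi_\nu$ that exposes concavity directly, in the spirit of \cite{auffinger2015parisi}.

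Fix $T\ge \nu^{-1}(1)$, so that by \eqref{e.Phi_nu_trivial} the terminal data $\Phi_\nu(T,\cdot)$ does not depend on $\nu$. For any bounded process $u$ adapted to a Brownian filtration, let $X^u$ solve
\begin{equation*}
\d X^u(s)= 2\nu(s) u(s)\,\d s + \sqrt{2}\,\d B(s), \qquad X^u(0)=0.
\end{equation*}
Applying Ito's formula to $\Phi_\nu(s, X^u(s))$ and using the Parisi PDE \eqref{e.Phi_nu_eqn}, a completion-of-squares argument yields
\begin{equation*}
\Phi_\nu(0,0) = \sup_u \Bigl\{ \EE\Ll[\Phi_\nu(T, X^u(T))\Rr] - \int_0^T \nu(s)\,\EE\Ll[u(s)^2\Rr]\,\d s \Bigr\},
\end{equation*}
with the supremum attained by $u^*(s)=\partial_x\Phi_\nu(s, X^{u^*}(s))$. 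Differentiating \eqref{e.Phi_nu_eqn} in $x$ and applying Ito's formula once more shows that $u^*$ is a martingale, so the supremum above coincides with the supremum over bounded martingale controls~$u$.

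For each fixed bounded martingale $u$, stochastic Fubini expresses $X^u(T)$ as an affine function of $\nu$, while $\int_0^T \nu(s) \EE[u(s)^2]\,\d s$ is linear in $\nu$. Since $\Phi_\nu(T,\cdot)$ is convex (being the logarithm of the moment-generating function of the positive measure $e^{-T\sigma^2}\,\d\spindist_\circ(\sigma)$), the objective inside the supremum is convex in $\nu$ for each fixed martingale $u$. The pointwise supremum of convex functions is convex, so $\Phi_\nu(0,0)$ is convex in $\nu$ on $\mcl P_\infty(\R_+)$, hence $\psi_\circ$ is concave there. The Lipschitz estimate \eqref{e.compare_Phi_nu} together with approximation by compactly supported measures then extends concavity to all of $\mcl P_1(\R_+)$.

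For strict concavity on $\mcl P_\infty(\R_+)$, take distinct $\nu_0,\nu_1\in \mcl P_\infty(\R_+)$ and let $u^*$ be the optimal martingale for $\nu_{1/2}=\frac12(\nu_0+\nu_1)$. Writing $F(\nu, u^*)$ for the objective above evaluated at the fixed control $u^*$ and the drift $2\nu u^*$, one has $\Phi_{\nu_i}(0,0)\ge F(\nu_i, u^*)$ for $i\in\{0,1\}$ with equality at $\nu_{1/2}$. Since $\Phi_\nu(T,\cdot)$ is \emph{strictly} convex (because $\spindist_\circ$ is not a Dirac mass), $F(\cdot, u^*)$ is strictly convex along the segment $[\nu_0,\nu_1]$ provided that $\int_0^T (\nu_1-\nu_0)(s)\, u^*(s)\,\d s$ is not almost surely zero. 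Verifying this non-degeneracy is the main obstacle; it requires a careful analysis of the support of the optimal martingale $u^*$, and is the core technical input supplied by \cite{auffinger2015parisi}, which I would invoke to complete the proof.
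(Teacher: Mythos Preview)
Your proposal and the paper's proof take essentially the same route: both reduce the claim to the strict convexity of $\nu \mapsto \Phi_\nu(0,0)$ established in \cite{auffinger2015parisi}, with the paper simply citing that result while you sketch the stochastic-control representation underlying it. Your argument for (non-strict) concavity via the supremum-of-convex-functions argument is correct and complete; one minor remark is that the detour through martingale controls is unnecessary there, since already for any fixed adapted bounded control $u$ the map $\nu \mapsto F(\nu,u)$ is convex (the martingale property of the optimizer is relevant for other purposes, as in Section~\ref{s.explicit}, but not here). For strict concavity, the non-degeneracy condition you isolate --- that $\int_0^T (\nu_1-\nu_0)(s)\,u^*(s)\,\d s$ not vanish almost surely --- is indeed the crux, and you are right that this is where the real work of \cite{auffinger2015parisi} enters; note however that their actual proof of strict convexity proceeds via a direct second-variation computation rather than the control-theoretic non-degeneracy you state, though both approaches lead to the same conclusion.
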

\begin{proof}
By the Lipschitzness of $\psi_\circ$ in $\nu$ discussed in the previous section (or implied by~\eqref{e.compare_Phi_nu} and~\eqref{e.barpsi=Phi}), it suffices to prove the strict concavity on $\mcl P_\infty(\R_+)$. Fix any distinct $\nu$ and $\nu'\in \mcl P_\infty(\R_+)$. By~\eqref{e.Phi_nu_trivial}, we can fix any $T> \nu^{-1}(1), \nu'^{-1}(1)$ so that $\Phi_\nu$ and $\Phi_{\nu'}$ both satisfy the Parisi PDE~\eqref{e.Phi_nu_eqn} on $[0,T]\times \R$ with the same terminal condition equal to $x\mapsto\log\int\exp(x\cdot\sigma-T\sigma^2)\d \spindist_\circ(\sigma)$.
Then, the concavity follows from~\eqref{e.barpsi=Phi} and the strict convexity of $\nu\mapsto \Phi_{\nu}(0,0)$ on the set of probability measures with support in $[0,T]$, which is available from a straightforward modification of~\cite[Theorem~4~(i)]{auffinger2015parisi}.
\end{proof}

In order to determine the derivative of $\psi_\circ$, we introduce an adjoint equation to the Parisi PDE, for each $\nu \in \mcl P^1(\R_+)$. 
Let $P:\R_+\times \R\to\R$ be the one-dimensional heat kernel, namely
\begin{e}
    P(t,x) = \frac{1}{\sqrt{4\pi t}}e^{-\frac{x^2}{4t}}.
\end{e}
Since $\partial_x\Phi_\nu$ is bounded due to Lemma~\ref{l.ext_Parisi_PDE}, we can use a Picard fixed-point argument in a suitable function space to build a function $u_\nu:\R_+\times\R\to\R$ such that, for every $(t,x) \in\R_+\times \R$,
\begin{align*}
    u_\nu(t,x) = P(t,x) + \int_0^t\int_\R \partial_x P(t-s,x-y)u_\nu(s,y)\Ll(2\nu(s)\partial_x\Phi_\nu(s,y)\Rr)\d y \d s.
\end{align*}
In other words, the function $u_\nu$ is a mild solution to
\begin{align}\label{e.u_nu=eqn}
\begin{cases}
    \partial_t u_\nu = \partial_x^2 u _\nu - 2 \nu(t) \partial_x\Ll(u_\nu \partial_x \Phi_\nu\Rr),\qquad&\text{on $\R_+\times \R$},
    \\
    u_\nu(0,\cdot) = \delta_0,\qquad&\text{on $\R$},
\end{cases}
\end{align}
where $\delta_0$ is the Dirac mass at zero. One can verify that $u_\nu$ is also a weak solution to this equation. 
From the mild solution formulation, one can check that $u_\nu$ and its derivatives in $x$ have exponential decay in $x$, which allows for integration by parts. We also recognize that~\eqref{e.u_nu=eqn} is a Kolmogorov equation and thus $u_\nu(t,\cdot)$ can be interpreted as the probability density function of a stochastic process at time $t$. Therefore, $u_\nu$ also satisfies
\begin{align}\label{e.u_v>0,intu_v=1}
    u_\nu\geq0\qquad\text{and}\qquad \int u_\nu(t, x)\d x=1,\quad\forall t\in\R_+.
\end{align}
We use the properties of $u_\nu$ to prove the next two lemmas.

\begin{lemma}[Derivative of $\psi_\circ$]
\label{l.d/dpsi=}
Let $\nu,\nu'\in \mcl P_1(\R_+)$ and let $\nu_\eps = (1-\eps) \nu + \eps \nu'$ for $\eps\in [0,1]$. We have
\begin{align}\label{e.l.d/dpsi=}
    \frac{\d}{\d\eps}\psi_\circ(\nu_\eps) \big|_{\eps=0} = \int_0^\infty \chi(t) \d (\nu'-\nu)
\end{align}
where
\begin{align}\label{e.chi=}
    \chi(t) = \int_0^t \int_\R \Ll(\partial_x \Phi_\nu(\tau,x)\Rr)^2 u_\nu(\tau,x)\d x\d \tau,\quad\forall t\in\R_+.
\end{align}
\end{lemma}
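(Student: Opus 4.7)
The plan is to use the identity $\psi_\circ(\nu) = -\Phi_\nu(0,0)$ from \eqref{e.barpsi=Phi} to reduce the computation to differentiating $\eps\mapsto \Phi_{\nu_\eps}(0,0)$, and to carry this out by linearizing the Parisi PDE \eqref{e.Phi_nu_eqn} and pairing the linearization against the adjoint density $u_\nu$ in \eqref{e.u_nu=eqn}. First I would reduce to $\nu,\nu'\in\mcl P_\infty(\R_+)$ with joint support in $[0,T]$ for some $T>0$: both sides of \eqref{e.l.d/dpsi=} are continuous under Wasserstein-1 approximation (the left side by Lipschitzness of $\psi_\circ$; the right side because $|\partial_x\Phi_\nu|\le 1$, $\int u_\nu(\tau,x)\,\d x=1$, so $0\le\chi(t)\le t$ and \eqref{e.compare_Phi_nu} controls $\chi$ in terms of $\nu$).

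For such compactly supported $\nu,\nu'$, set $\Psi(t,x)=\partial_\eps \Phi_{\nu_\eps}(t,x)|_{\eps=0}$. The terminal condition $\Phi_{\nu_\eps}(T,\cdot)$ is independent of $\eps$ because both $\nu_\eps$ have total mass $1$ on $[0,T]$ (cf.\ \eqref{e.Phi_nu_trivial}), so differentiating \eqref{e.Phi_nu_eqn} in $\eps$ formally yields the linearized equation
\begin{align*}
-\partial_t\Psi = \partial_x^2 \Psi + 2\nu(t)\,\partial_x\Phi_\nu\,\partial_x\Psi + (\nu'(t)-\nu(t))(\partial_x\Phi_\nu)^2, \qquad \Psi(T,\cdot)=0.
\end{align*}
The existence and regularity of $\Psi$ can be justified by first treating discrete measures via the Cole--Hopf representation of $\Phi_\nu$ on each interval of constancy of $\nu(\cdot)$, and then using the quantitative estimate \eqref{e.compare_Phi_nu} to pass to the limit.

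Next, compute $\frac{\d}{\d t}\int \Psi\, u_\nu\,\d x$ by inserting the PDEs for $\Psi$ and $u_\nu$. Integration by parts in $x$ (justified by the exponential decay in $x$ of $u_\nu$ and its derivatives, together with the boundedness of $\Psi, \partial_x\Phi_\nu$) cancels the diffusive pair $-\partial_x^2\Psi\cdot u_\nu + \Psi\,\partial_x^2 u_\nu$ and the transport pair $-2\nu\,\partial_x\Phi_\nu\,\partial_x\Psi\cdot u_\nu - 2\nu\,\Psi\,\partial_x(u_\nu\,\partial_x\Phi_\nu)$, leaving
\begin{align*}
\frac{\d}{\d t}\int \Psi(t,x)u_\nu(t,x)\,\d x = -(\nu'(t)-\nu(t))\int (\partial_x\Phi_\nu(t,x))^2 u_\nu(t,x)\,\d x.
\end{align*}
Integrating from $0$ to $T$ with the boundary data $u_\nu(0,\cdot)=\de_0$ and $\Psi(T,\cdot)=0$ gives $\Psi(0,0)=\int_0^T (\nu'(t)-\nu(t))\chi'(t)\,\d t$.

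To finish, integrate by parts in $t$ in the Lebesgue--Stieltjes sense. Since $\chi(0)=0$ and $(\nu'-\nu)(T)=0$, the boundary contributions vanish and $\int_0^T (\nu'-\nu)\chi'\,\d t = -\int_0^\infty \chi\,\d(\nu'-\nu)$, so $\Psi(0,0) = -\int_0^\infty \chi\,\d(\nu'-\nu)$. Combined with $\psi_\circ(\nu_\eps)=-\Phi_{\nu_\eps}(0,0)$, this yields $\frac{\d}{\d\eps}\psi_\circ(\nu_\eps)|_{\eps=0}=-\Psi(0,0)=\int \chi\,\d(\nu'-\nu)$, as required. The main technical obstacle is rigorously justifying the linearization step, that is, producing $\Psi$ as an honest $\eps$-derivative with enough regularity to validate the pairing with $u_\nu$ and the integrations by parts. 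Approximating $\nu,\nu'$ by finitely supported measures, where the Cole--Hopf formula makes $\Phi_{\nu_\eps}$ a smooth explicit composition of heat semigroups and log-exp maps, and then passing to the limit via \eqref{e.compare_Phi_nu}, provides the cleanest route.
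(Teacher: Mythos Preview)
Your approach is correct and rests on the same duality between the Parisi PDE and its adjoint $u_\nu$ as the paper's proof, but the execution differs in one useful way. You first construct the linearization $\Psi=\partial_\eps\Phi_{\nu_\eps}|_{\eps=0}$ and then pair it with $u_\nu$; the paper instead pairs $\Phi_{\nu_\eps}$ itself against $u_\nu$, obtains a closed expression for $\Phi_\eps(0,0)-\Phi_0(0,0)$ in terms of $\int_0^T\int_\R(\nu_\eps a_\eps^2-2\nu a_\eps a_0+\nu a_0^2)u_\nu$ with $a_\eps=\partial_x\Phi_{\nu_\eps}$, and then uses the finite-$\eps$ estimate \eqref{e.compare_Phi_nu} directly to show the difference is $\eps\int_0^T\int_\R(\nu'-\nu)(\partial_x\Phi_\nu)^2u_\nu + O(\eps^2)$. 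This bypasses entirely the construction and regularity of $\Psi$ that you flag as the main technical obstacle: no linearized equation is ever written down, and no Cole--Hopf approximation is needed. Your route is the standard ``differentiate the PDE'' argument and is perfectly valid once $\Psi$ is justified; the paper's finite-difference route is slightly slicker because it only uses objects ($\Phi_{\nu_\eps}$, $u_\nu$) whose existence and regularity are already established. A minor point: your reduction to compactly supported $\nu,\nu'$ requires continuity of $\chi$ (hence of $u_\nu$) in $\nu$, which is true but not covered by \eqref{e.compare_Phi_nu} alone; the paper instead works with general $\nu,\nu'\in\mcl P_1(\R_+)$ throughout and sends $T\to\infty$ at the end, controlling the boundary terms via the first-moment condition.
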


Notice that the right-hand side in~\eqref{e.l.d/dpsi=} is well-defined and finite since the first moments of $\nu'$ and $\nu$ exist and $|\chi(t)|\leq t $ due to $\int u_\nu(\tau,x)\d x =1$ and the boundedness of $|\partial_x \Phi_\nu|\leq 1$ given by Lemma~\ref{l.ext_Parisi_PDE}.

\begin{proof}
For simplicity, we write $\Phi_\eps =\Phi_{\nu_\eps}$ and $u= u_\nu$. For any $T>0$, we set $C_\eps(T)= \int \Phi_\eps(T,x)u(T,x)\d x$. Using $u(0,\scdot)=\delta_0$ and integrating by parts (or more precisely appealing to the weak formulation of \eqref{e.u_nu=eqn}), we have
\begin{align*}
    \Phi_\eps(0,0) = \int_\R \Phi_\eps(0,x) u(0,x)\d x = C_\eps(T) - \int_0^T\int_\R \frac{\d}{\d t}\Ll(\Phi_\eps u\Rr)\d x \d t.
\end{align*}
Using the equations satisfied by $\Phi_\eps$ and $u$ in~\eqref{e.Phi_nu_eqn} and~\eqref{e.u_nu=eqn}, we have
\begin{align*}
    \frac{\d}{\d t}\Ll(\Phi_\eps u\Rr) = - u \partial_x^2\Phi_\eps - \nu_\eps u(\partial_x\Phi_\eps)^2 + \Phi_\eps \partial^2_x u - 2 \nu  \Phi_\eps\partial_x(\partial_x\Phi_0 u).
\end{align*}
Integrating this in $x$ and using integration by parts, we can cancel the second-order terms and get
\begin{align*}
    \Phi_\eps(0,0) = C_\eps(T) + \int_0^T\int_\R \Ll(\nu_\eps a_\eps^2-2 \nu a_\eps a_0 \Rr)u 
\end{align*}
where we used the shorthand $a_\eps = \partial_x \Phi_\eps$. Then, we have
\begin{align*}
    \Phi_\eps(0,0)- \Phi_0(0,0) -\Ll(C_\eps(T)-C_0(T)\Rr) = \int_0^T\int_\R \big(\underbrace{\nu_\eps a_\eps^2-2 \nu a_\eps a_0 + \nu a_0^2}_{\blacksquare}\big) u 
\end{align*}
We can rearrange $\blacksquare = (\nu_\eps - \nu)a_0^2 -(\nu_\eps -\nu)(a_\eps^2-a_0^2)+ \nu_0(a_\eps-a_0)^2$ and we argue that the last two terms are of order $O(\eps^2)$ uniformly in $\eps, t,x$. Indeed, this follows from the facts that $\nu_\eps -\nu = O(\eps)$ by definition, that $a_\eps - a_0 = O(|\nu_\eps-\nu|_{L^1})= O(\eps)$ due to~\eqref{e.compare_Phi_nu}, and that $a_\eps, a_0 = O(1)$ by Lemma~\ref{l.ext_Parisi_PDE}. Using this and $\nu_\eps -\nu=\eps(\nu'-\nu)$, we get $\blacksquare = \eps(\nu'-\nu) a_0^2 + O(\eps^2)$ uniformly in $\eps, t,x$. 
Inserting this to the above display we get
\begin{align*}
    \Phi_\eps(0,0)- \Phi_0(0,0) -\Ll(C_\eps(T)-C_0(T)\Rr) = \eps\int_0^T\int_\R (\nu'-\nu)a_0^2 u  + O(\eps^2)
    \\
    \stackrel{\eqref{e.chi=}}{=}\eps\int_0^T (\nu'-\nu)\dot \chi  + O(\eps^2) 
    \\
    \stackrel{\text{(IBP)}}{=} - \eps \int_0^T \chi \d (\nu'-\nu) + \eps \Ll(\nu'(T)-\nu(T)\Rr)\chi(T)++ O(\eps^2).
\end{align*}
In the last step, we used the fact that $\chi(0)=0$. Using this relation and $\psi_\circ(\nu_\eps) = -\Phi_\nu(0,0)$ (due to~\eqref{e.barpsi=Phi}), we can obtain~\eqref{e.l.d/dpsi=} by first sending $T\to\infty$ and then $\eps\to0$ provided that we can show that, for any fixed $\eps$,
\begin{align}
    \lim_{T\to\infty} \Ll(C_\eps(T)-C_0(T)\Rr) & =0, \label{e.lim_T|C(T)-C(T)|=0}
    \\
    \lim_{T\to\infty} \Ll(\nu'(T)-\nu(T)\Rr)\chi(T) & =0 . \label{e.lim_T|nu'(T)-nu(T)|chi(T)=0}
\end{align}
Hence, it remains to verify them.
Using the definition of $C_\eps(T)$, the estimate in~\eqref{e.compare_Phi_nu} (with $k=0$), and $\int u(T,x)\d x =1$, we have
\begin{align*}
    \Ll|C_\eps(T)-C_0(T)\Rr|\leq c_0\eps \int_T^\infty|\nu(\tau)-\nu'(\tau)|\d \tau
\end{align*}
for some constant $c_0$.
Since $\int_0^\infty|\nu(\tau)-\nu'(\tau)|\d\tau$ is finite as it is equal to $\msf W_1(\nu,\nu')$, the right-hand side vanishes as $T\to\infty$, which verifies~\eqref{e.lim_T|C(T)-C(T)|=0}. To see~\eqref{e.lim_T|nu'(T)-nu(T)|chi(T)=0}, we start by recalling that $|\chi(t)|\leq t$ as explained below Lemma~\ref{l.d/dpsi=}. Hence, we have
\begin{align*}
    \Ll|\Ll(\nu'(T)-\nu(T)\Rr)\chi(T)\Rr|\leq T(1-\nu'(T)) + T(1-\nu(T)).
\end{align*}
Let $X$ be a random variable with law $\nu$. We have $T(1-\nu(T)) = T\P(X>T)\leq \E [X\mathds{1}_{X>T}]$. Since $\nu$ has finite first moment, the dominated convergence theorem implies $\lim_{T\to\infty} T(1-\nu'(T)) =0$. The other term can be treated similarly and thus we obtain~\eqref{e.lim_T|nu'(T)-nu(T)|chi(T)=0}. As explained previously, now that~\eqref{e.lim_T|C(T)-C(T)|=0} and~\eqref{e.lim_T|nu'(T)-nu(T)|chi(T)=0} are verified, the proof is complete.
\end{proof}

\begin{lemma}[Properties of $\chi$]\label{l.chi_properties}
Let $\nu\in\mcl P_1(\R_+)$ and let $\chi:\R_+\to\R$ be given as in~\eqref{e.chi=}. Then, $\chi(0)=0$ and $\chi$ is increasing, $1$-Lipschitz, and convex.
\end{lemma}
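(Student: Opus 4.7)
The first three properties fall out essentially by inspection. The identity $\chi(0) = 0$ is immediate from the definition as an integral from $0$ to $0$. For monotonicity, I would observe that the integrand
\[
 (\partial_x \Phi_\nu(\tau,x))^2 u_\nu(\tau,x)
\]
is non-negative, since $u_\nu \geq 0$ by \eqref{e.u_v>0,intu_v=1}, hence $\chi$ is increasing. For the Lipschitz bound, I would differentiate under the integral sign, so that
\[
\dot\chi(t) = \int_\R (\partial_x \Phi_\nu(t,x))^2 u_\nu(t,x) \, \d x,
\]
and then invoke Lemma~\ref{l.ext_Parisi_PDE} to bound $|\partial_x \Phi_\nu| \leq 1$ and the mass identity $\int u_\nu(t,x)\,\d x = 1$ from \eqref{e.u_v>0,intu_v=1}. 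This yields $0 \leq \dot\chi(t) \leq 1$, giving monotonicity and the 1-Lipschitz bound simultaneously.

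The convexity is the substantive part. The plan is to show that $\dot\chi$ is itself increasing, by computing $\ddot\chi$ and finding a manifestly non-negative expression. Write $a = \partial_x \Phi_\nu$ and $u = u_\nu$. Differentiating the Parisi PDE~\eqref{e.Phi_nu_eqn} once in $x$ gives
\[
-\partial_t a = \partial_x^2 a + 2\nu(t)\, a\, \partial_x a,
\]
and $u$ solves \eqref{e.u_nu=eqn}. Then
\[
\ddot\chi(t) = \frac{\d}{\d t}\int a^2 u \, \d x = \int\! 2a(\partial_t a)\,u\,\d x + \int\! a^2(\partial_t u)\,\d x.
\]
Substituting the two evolution equations and integrating by parts in $x$ (justified because $u$ and its derivatives decay exponentially in $x$ by the mild solution representation, while $a$ and its $x$-derivatives are uniformly bounded by Lemma~\ref{l.ext_Parisi_PDE}), the two second-order terms combine to give $2\int u(\partial_x a)^2\,\d x$, while the two first-order $\nu$-dependent terms cancel exactly after expanding $\partial_x(ua) = a \partial_x u + u \partial_x a$ and using $\int a^3 \partial_x u = -3\int a^2 u \partial_x a$. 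The end result is
\[
\ddot\chi(t) = 2\int_\R \bigl(\partial_x^2 \Phi_\nu(t,x)\bigr)^2 u_\nu(t,x)\,\d x \;\geq\; 0,
\]
which is the desired convexity.

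The main obstacle I anticipate is justifying the manipulations when $\nu$ is a general measure in $\mcl P_1(\R_+)$ rather than a finite atomic measure. For atomic $\nu$ one has classical smoothness between atoms (via Cole--Hopf) and everything is unambiguous; for general $\nu$ one works with the mild solution for $u_\nu$ and the regularity statement of Lemma~\ref{l.ext_Parisi_PDE} for the derivatives of $\Phi_\nu$. The boundary terms at infinity vanish because of the exponential tail of $u_\nu$ combined with polynomial bounds on $a^2, a^3$. If desired, one can first prove the result for $\nu \in \mcl P_\infty(\R_+)$ with finite support, where all identities are pointwise and smooth, and then pass to the limit using the continuity estimate \eqref{e.compare_Phi_nu} together with the continuity of $\nu \mapsto u_\nu$ in an appropriate norm; the four claimed properties are preserved under pointwise limits.
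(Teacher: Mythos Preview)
Your proposal is correct and follows essentially the same approach as the paper: the first three properties are handled identically, and for convexity you both compute $\ddot\chi$ by substituting the evolution equations for $\partial_x\Phi_\nu$ (or $\Phi_\nu$) and $u_\nu$, integrate by parts, observe the cancellation of the $\nu$-dependent terms, and arrive at $\ddot\chi(t)=2\int (\partial_x^2\Phi_\nu)^2 u_\nu\,\d x\ge 0$. The only cosmetic difference is that the paper first integrates by parts to trade $\partial_t\partial_x\Phi$ for $\partial_t\Phi$ and then substitutes the Parisi PDE directly, which makes the $\nu$-terms cancel without an auxiliary identity; your route via the differentiated equation for $a=\partial_x\Phi_\nu$ requires the extra observation $\int a^3\partial_x u=-3\int a^2 u\,\partial_x a$, but this is equivalent and your discussion of the justification via approximation is, if anything, slightly more careful than the paper's formal computation.
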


\begin{proof}
Due to $\dot\chi\geq 0$ evidently from \eqref{e.chi=}, $\chi$ is increasing. By Lemma~\ref{l.ext_Parisi_PDE}, $|\partial_x \Phi_\nu|\leq 1$ everywhere. Using this and~\eqref{e.u_v>0,intu_v=1}, we have $|\dot\chi|\leq 1$ and thus $\chi$ is $1$-Lipschitz. To show that $\chi$ is convex, we compute $\ddot \chi$. For brevity of notation, we write $\Phi=\Phi_\nu$, $u=u_\nu$, and $\int = \int_\R \d x$. Using the equations satisfied by $\Phi$ and $u$ as in~\eqref{e.Phi_nu_eqn} and~\eqref{e.u_nu=eqn} and integration by parts (IBP), we can compute
\begin{align*}
    \ddot \chi 
    & = \frac{\d}{\d t}\int (\partial_x\Phi)^2 u  = \int \Ll(2\partial_{t} \partial_{x}\Phi(u \partial_{x}\Phi)  + (\partial_{x}\Phi_x)^2 \partial_{t}u\Rr)
    \\
    & \stackrel{\text{(IBP)}}{=} \int \Ll(-2\partial_{t}\Phi \partial_{x}(u\partial_{x}\Phi ) + (\partial_{x}\Phi)^2 \partial_{t}u \Rr)
    \\
    & \stackrel{\eqref{e.Phi_nu_eqn}\eqref{e.u_nu=eqn}}{=} \int \Ll(2\Ll(\partial_{x}^2\Phi+\nu(\partial_{x}\Phi)^2\Rr)\partial_{x}(u\partial_{x}\Phi ) + (\partial_{x}\Phi)^2\Ll(\partial_{x}^2u - 2 \nu\partial_{x}(u\partial_{x}\Phi )\Rr)\Rr)
    \\
    & = \int \Ll(2\partial_{x}^2\Phi \partial_{x}(u\partial_{x}\Phi ) + (\partial_{x}\Phi)^2 \partial_{x}^2u\Rr)
     \stackrel{\text{(IBP)}}{=} \int 2(\partial_{x}^2\Phi)^2 u \stackrel{\eqref{e.u_v>0,intu_v=1}}{\geq} 0.
\end{align*}
Therefore, $\chi$ is convex, completing the proof.
\end{proof}

\subsection{Properties of \texorpdfstring{$\psi$}{psi}}

Using the decomposition in~\eqref{e.decomp.psi}, we obtain the Lipschitzness, concavity, and differentiability of $\psi$ as stated below. 
For each $p\geq 1$, let $\msf W_p$ be the Wasserstein $L^p$ metric over probability measures on a Euclidean space that will be clear from the context.

Recall that for any $\nu\in\mcl P(\R^D_+)$ and $\s\in\sS$, we denote by $\nu_\s\in\mcl P(\R_+)$ its $\s$-th marginal.
Due to~\eqref{e.F_L^1} and~\eqref{e.limF_N(0,mu)=psi(mu)}, we have, for all $\nu,\nu'\in \mcl P_1(\R_+^D)$,
\begin{align}\label{e.Lip_psi}
    \Ll|\psi(\nu)-\psi(\nu')\Rr|\leq \msf W_1(\nu,\nu').
\end{align}

\begin{proposition}[Concavity of $\psi$] \label{p.almost_strict_concavity}
The function $\psi$ is concave on $\mcl P_1(\R_+^D)$. Furthermore, for every $\nu,\nu' \in \mcl P_1(\R_+^D)$, if there exists $\s\in\sS$ such that $\nu_\s \neq \nu'_\s$ and $\nu_\s,\nu'_\s \in \mcl P_\infty(\R_+)$, then we have, for every $\lambda \in (0,1)$,
    \begin{equation}
       \psi\left(\lambda\nu'+(1-\lambda)\nu\right) > \lambda\psi(\nu')+ (1-\lambda)\psi(\nu).
    \end{equation}
\end{proposition}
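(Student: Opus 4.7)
The plan is to derive both assertions directly from the decomposition \eqref{e.decomp.psi} and the corresponding one-dimensional statements in Lemma~\ref{l.psi is concave}. The key observation is that the marginal map $\nu \mapsto \nu_d$ is \emph{linear}: for every $\nu, \nu' \in \mcl P_1(\R_+^D)$ and every $\lambda \in [0,1]$, the $d$-th marginal of $\lambda \nu' + (1-\lambda) \nu$ equals $\lambda \nu'_d + (1-\lambda) \nu_d$. Combined with the positivity of the weights $\lambda_{\infty,d} \in (0,1)$ furnished by \eqref{e.lambda_infty}, this will transfer both the concavity and the strict concavity from the coordinate functions $\psi_d$ to $\psi$.

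Concretely, I would first write
\begin{align*}
\psi\left(\lambda \nu' + (1-\lambda) \nu\right) = \sum_{d=1}^D \lambda_{\infty,d} \, \psi_d\left(\lambda \nu'_d + (1-\lambda) \nu_d\right),
\end{align*}
and invoke the concavity of each $\psi_d$ on $\mcl P_1(\R_+)$ given by Lemma~\ref{l.psi is concave}, together with $\lambda_{\infty,d} > 0$, to conclude
\begin{align*}
\psi\left(\lambda \nu' + (1-\lambda) \nu\right) \geq \lambda \psi(\nu') + (1-\lambda) \psi(\nu),
\end{align*}
which establishes concavity on $\mcl P_1(\R_+^D)$.

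For the almost-strict concavity, fix $\lambda \in (0,1)$ and a coordinate $d \in [D]$ with $\nu_d \neq \nu'_d$ and $\nu_d, \nu'_d \in \mcl P_\infty(\R_+)$. Because $\mcl P_\infty(\R_+)$ is closed under convex combinations, the strict-concavity clause of Lemma~\ref{l.psi is concave} applies to the pair $\nu_d, \nu'_d$ and yields a strict inequality for the $d$-th summand of \eqref{e.decomp.psi}. The remaining summands still obey the weak inequality from the same lemma; since $\lambda_{\infty,d} > 0$, summing preserves the strict inequality. There is no genuine obstacle in this argument: the whole content has already been isolated in Lemma~\ref{l.psi is concave}, and the decomposition \eqref{e.decomp.psi} simply linearises the problem across species.
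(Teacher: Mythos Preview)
Your proposal is correct and follows essentially the same approach as the paper: both rely on the decomposition \eqref{e.decomp.psi}, the linearity of the marginal map, and the concavity/strict-concavity statements of Lemma~\ref{l.psi is concave}. The paper's proof is just the one-sentence compressed version of what you have written out.
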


\begin{proof}
This follows from the decomposition in~\eqref{e.decomp.psi} and Lemma~\ref{l.psi is concave} stating that each $\psi_d$ is concave on $\mcl P_1(\R_+)$ and strictly concave on $\mcl P_\infty(\R_+)$.
\end{proof}

Note that the function $\psi$ is not strictly concave on $\mcl P_\infty(\R_+^D)$ since it is possible for $\nu,\nu' \in \mcl P_\infty(\R_+^D)$ to satisfy $\nu \neq \nu'$ while $\nu_d = \nu'_d$ for every $d \in \{1,\dots,D\}$. However, the following is true.

\begin{lemma}[Monotone measures characterized by marginals]\label{l.monotone_marginal}
Let $\nu,\nu' \in \mcl P^\upa(\R_+^D)$. Then, $\nu = \nu'$ if and only if $\nu_\s = \nu'_\s$ for all $\s \in \sS$.
\end{lemma}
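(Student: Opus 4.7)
\textbf{Proof plan for Lemma~\ref{l.monotone_marginal}.}
The forward implication is immediate since marginals are determined by the joint law. For the converse, my plan is to use the definition of monotonicity to reduce the equality of two measures in $\mcl P^\upa(\R_+^D)$ to the equality, almost everywhere on $[0,1)$, of two increasing maps, and then invoke a standard one-dimensional fact: an increasing function $g:[0,1)\to\R$ is determined almost everywhere by the law of $g(U)$, where $U$ is uniform on $[0,1]$.

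First I would write $\nu = \mathrm{Law}(\msf q(U))$ and $\nu' = \mathrm{Law}(\msf q'(U))$ for increasing maps $\msf q, \msf q':[0,1)\to\R_+^D$. Then for each $d \in [D]$, the map $\msf q_d:[0,1)\to\R_+$ is increasing and satisfies $\nu_d = \mathrm{Law}(\msf q_d(U))$, and similarly for $\msf q'_d$. The key one-dimensional observation is that any increasing function $g:[0,1)\to\R$ coincides, outside a countable set (where it may be discontinuous), with the right-continuous quantile function $F^{-1}(u) := \inf\{x \in \R : \mu([-\infty,x]) \ge u\}$ of the measure $\mu = \mathrm{Law}(g(U))$. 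I would verify this briefly: the set $\{u \in [0,1) : g(u) \neq F^{-1}(u)\}$ is at most countable because both functions are increasing and, for each $x$, the two sets $\{g \le x\}$ and $\{F^{-1} \le x\}$ are intervals of the form $[0,a)$ or $[0,a]$ both with Lebesgue measure $\mu([-\infty,x])$, so they differ by at most their endpoint.

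Applying this to each coordinate, we conclude $\msf q_d = \msf q'_d$ Lebesgue-almost everywhere on $[0,1)$ for every $d \in [D]$, hence $\msf q = \msf q'$ almost everywhere on $[0,1)$. Taking the law of both sides evaluated at $U$ then yields $\nu = \nu'$, as desired.

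I do not expect any serious obstacle: the entire argument reduces to the classical fact that a monotone probability measure on $\R_+^D$ is, by its very definition, parametrized by $[0,1)$ via its quantile map, and the quantile map is determined coordinate-by-coordinate by the marginals. The only point that requires a bit of care is that the definition above only assumes ``increasing'' without any continuity or càdlàg requirement, which is why I phrase the conclusion as almost-everywhere equality rather than pointwise equality; this is perfectly sufficient to identify the two push-forward laws.
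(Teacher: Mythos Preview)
Your proposal is correct and follows essentially the same route as the paper: represent each monotone measure by an increasing map on $[0,1)$, reduce $\nu_d=\nu'_d$ to $\msf q_d=\msf q'_d$ a.e., and conclude $\msf q=\msf q'$ a.e.\ coordinatewise. The only difference is that you spell out the one-dimensional quantile argument that the paper simply asserts.
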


\begin{proof}
By the definition of monotone measures (see the paragraph below~\eqref{e.def.FN.delta0}), there are two increasing maps $\sq,\sq':[0,1]\to \R^D_+$ such that $\nu = \mathrm{Law}(\sq(U))$ and $\nu'=\mathrm{Law}(\sq(U))$ where $U$ is a uniform random variable in $[0,1]$. Then, $\nu_\s=\nu'_\s$ is equivalent to $\sq_\s=\sq'_\s$ a.e.\ on $[0,1)$. The desired result follows from the simple observation that $\sq=\sq'$ if and only if $\sq_\s=\sq'_\s$ for all $\s\in\sS$.
\end{proof}

The set $\mcl P_\infty^\upa(\R_+^D)$ is not convex when $D > 1$, so it does not make sense to talk about concavity or strict concavity of $\psi$ on $\mcl P_\infty^\upa(\R_+^D)$. Yet we have the following result, which is an immediate corollary of Proposition~\ref{p.almost_strict_concavity} and Lemma~\ref{l.monotone_marginal}.

\begin{corollary}[``Strict concavity'' of $\psi$ on $\mcl P_\infty^\upa(\R_+^D)$]\label{c.strict_concavity}
For any two distinct $\nu,\nu' \in \mcl P^\upa_\infty(\R_+^D)$ and $\lambda\in(0,1)$, we have
\begin{equation}
       \psi\left(\lambda\nu'+(1-\lambda)\nu\right) > \lambda\psi(\nu')+ (1-\lambda)\psi(\nu).
\end{equation}
\end{corollary}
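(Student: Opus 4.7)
The claim is labeled ``an immediate corollary'' of Proposition~\ref{p.almost_strict_concavity} and Lemma~\ref{l.monotone_marginal}, so the plan is simply to combine these two inputs. The key observation is that strict concavity on $\mcl P_\infty^\upa(\R_+^D)$ cannot follow from strict concavity of $\psi$ on $\mcl P_\infty(\R_+^D)$ (the latter fails, as noted before the statement), but it will follow by relaying the distinctness hypothesis from the full measures down to at least one of their marginals.

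Concretely, the plan is as follows. Given distinct $\nu,\nu' \in \mcl P_\infty^\upa(\R_+^D)$, the contrapositive of Lemma~\ref{l.monotone_marginal} yields some index $d \in [D]$ for which the marginals differ, that is $\nu_d \neq \nu'_d$. Since both $\nu$ and $\nu'$ are compactly supported, so are their marginals, i.e.\ $\nu_d, \nu'_d \in \mcl P_\infty(\R_+)$. These are precisely the hypotheses of the strict-concavity clause in Proposition~\ref{p.almost_strict_concavity}, which then gives
\begin{equation*}
\psi(\lambda \nu' + (1-\lambda) \nu) > \lambda \psi(\nu') + (1-\lambda) \psi(\nu)
\end{equation*}
for every $\lambda \in (0,1)$, as required.

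There is no genuine obstacle in this argument; it is a one-line composition of two previously established facts. The only subtlety worth flagging is that one should not attempt to directly invoke strict concavity of $\psi$ on $\mcl P_\infty(\R_+^D)$, since, as emphasized in the paragraph preceding Lemma~\ref{l.monotone_marginal}, two distinct measures in $\mcl P_\infty(\R_+^D)$ may share all their one-dimensional marginals, in which case $\psi$ takes the same value on them (via the decomposition~\eqref{e.decomp.psi}) and strict concavity breaks. The role of Lemma~\ref{l.monotone_marginal} is exactly to rule out this degenerate situation within the monotone class $\mcl P_\infty^\upa(\R_+^D)$, thereby licensing the application of the strict-concavity part of Proposition~\ref{p.almost_strict_concavity}.
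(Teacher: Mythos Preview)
Your proposal is correct and matches the paper's approach exactly: the paper does not give a separate proof but simply notes that the result is an immediate corollary of Proposition~\ref{p.almost_strict_concavity} and Lemma~\ref{l.monotone_marginal}, and your argument spells out precisely this combination.
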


We define
\begin{align}\label{e.frakX=}
    \mathfrak X = \Ll\{\chi : \R_+^D \to \R\bigg|\begin{array}{l}\text{$\chi(x) = \sum_{d = 1}^D \lambda_{\infty,d}\chi_d(x_d)$, $\forall x\in\R_+^D$, where $\chi_d(0)= 0$, } \\ \text{ $\chi_d$ is increasing, $1$-Lipschitz, and convex on $\R_+$} \end{array} \Rr\}.
\end{align}

\begin{proposition}[Differentiability of $\psi$] 
For every $\nu\in \mcl P_1(\R_+^D)$, there is $\chi\in \mathfrak X$ such that, for every $\nu'\in\mcl P_1(\R_+^D)$, we have
\begin{align}
    \frac{\d}{\d\eps}\psi\Ll((1-\eps)\nu+\eps\nu'\Rr)\Big|_{\eps=0} =  \int_{\R^\D_+} \chi \d \Ll(\nu'-\nu\Rr).
\end{align}
\label{p.diff of psi}
\end{proposition}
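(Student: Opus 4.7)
The plan is to reduce the multi-species statement to the single-species Lemma~\ref{l.d/dpsi=} via the decomposition \eqref{e.decomp.psi}, and to read off membership in $\mathfrak X$ from Lemma~\ref{l.chi_properties}.

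First I would observe that marginals depend linearly on the measure: if $\nu_\eps = (1-\eps)\nu + \eps\nu'$, then the $d$-th marginal satisfies $(\nu_\eps)_d = (1-\eps)\nu_d + \eps\nu'_d$ for every $d \in [D]$. Using the decomposition~\eqref{e.decomp.psi}, this gives
\begin{align*}
\psi(\nu_\eps) = \sum_{d=1}^D \lambda_{\infty,d}\, \psi_d\bigl((1-\eps)\nu_d + \eps\nu'_d\bigr).
\end{align*}
Since $\nu,\nu'\in\mcl P_1(\R_+^D)$ implies $\nu_d,\nu'_d\in\mcl P_1(\R_+)$, Lemma~\ref{l.d/dpsi=} applies to each term: for each $d\in[D]$ there exists $\chi_d:\R_+\to\R$ (depending only on $\nu_d$, given by the formula~\eqref{e.chi=} built from the Parisi PDE associated with $\nu_d$ and the reference measure $\spindist_d$) such that
\begin{align*}
\frac{\d}{\d\eps}\psi_d\bigl((1-\eps)\nu_d + \eps\nu'_d\bigr)\Big|_{\eps=0} = \int_0^\infty \chi_d\, \d(\nu'_d - \nu_d).
\end{align*}
Summing with weights $\lambda_{\infty,d}$ and using Fubini/the definition of the marginal to rewrite $\int_{\R_+}\chi_d\,\d(\nu'_d - \nu_d) = \int_{\R_+^D}\chi_d(x_d)\,\d(\nu'-\nu)(x)$, I would then set
\begin{align*}
\chi(x) = \sum_{d=1}^D \lambda_{\infty,d}\, \chi_d(x_d), \qquad x\in\R_+^D,
\end{align*}
and obtain the claimed identity.

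It remains to verify $\chi\in\mathfrak X$, which is immediate from Lemma~\ref{l.chi_properties}: each $\chi_d$ satisfies $\chi_d(0)=0$ and is increasing, $1$-Lipschitz, and convex on $\R_+$. These are exactly the conditions appearing in the definition~\eqref{e.frakX=} of $\mathfrak X$.

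The only subtle point is justifying the interchange of $\frac{\d}{\d\eps}$ with the summation over $d$, but since $D$ is a finite sum and each single-species derivative exists by Lemma~\ref{l.d/dpsi=}, this is automatic. I do not expect any genuine obstacle here: the proposition is essentially a packaging result stating that the single-species differentiability established in Lemma~\ref{l.d/dpsi=} transfers through the additive decomposition~\eqref{e.decomp.psi} without loss.
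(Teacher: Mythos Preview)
Your proposal is correct and follows exactly the same approach as the paper: reduce to the single-species case via the decomposition~\eqref{e.decomp.psi}, apply Lemma~\ref{l.d/dpsi=} to each $\psi_d$, and invoke Lemma~\ref{l.chi_properties} to verify $\chi\in\mathfrak X$. The paper's proof is a single sentence recording precisely these three ingredients; you have simply spelled out the details.
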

\begin{proof}
This follows from the decomposition in~\eqref{e.decomp.psi} and Lemma~\ref{l.d/dpsi=} for the differentiability of each $\psi_d$, with
Lemma~\ref{l.chi_properties} ensuring the desired properties of each of the~$\chi_d$ in the decomposition of $\chi$.
\end{proof}

%
%
%
%
%
%
\section{Models with two species} \label{s.2d}

In this section, we present relatively simple proofs of Theorems \ref{t.convexoptimization} and Theorem~\ref{t.hopflike} in the special case of models with two species ($D = 2$) with no external field ($\nabla \xi(0) = 0$) and $\mu = \delta_0$. The proofs we give later of the general versions of these theorems do not make any reference to the results of this section, so one can skip this entire section if so enclined. Nevertheless, we find it interesting to also present this comparatively simpler argument, and also to explain why it does not generalize to models with more than two species.

Given $\mu \in \mcl P(\R^2_+)$, recall that we write $\mu_1$ and $\mu_2$ to denote its marginals. Letting $F_1$ and $F_2$ be the cumulative distribution functions of $\mu_1$ and $\mu_2$ respectively, and $U$ be a uniform random variable on $[0,1]$, we set $\mu^\upa = \mathrm{Law}(F_1^{-1}(U), F_2^{-1}(U))$. This defines a mapping from $\mcl P(\R^2_+)$ to $\mcl P^\upa(\R^2_+)$, and we observe that $\mu = \mu^\upa$ if $\mu \in \mcl P^\upa(\R^2_+)$. The fundamental idea is the following observation. 
\begin{proposition} \label{p.xi^* transport inequality}
Suppose that $D = 2$ and that $\nabla \xi(0) = 0$. For every $\mu \in \mcl P(\R^2_+)$, we have 
    \begin{equation} \label{e.xi^* transport inequality}
        \int \xi^* \d \mu^\upa \leq \int \xi^* \d \mu.
    \end{equation}
\end{proposition}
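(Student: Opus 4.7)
The plan is to recognize \eqref{e.xi^* transport inequality} as an instance of the classical Hardy--Littlewood--Pólya rearrangement inequality, applied to the cost $\xi^*$. Concretely, $\mu^\upa$ is the comonotone coupling of the marginals $\mu_1$ and $\mu_2$, and it is a standard fact that the comonotone coupling minimizes $\int f \, \d \mu'$ over all $\mu'$ with those prescribed marginals whenever $f$ is \emph{submodular}, i.e.\ $f(y\vee y')+f(y\wedge y') \le f(y)+f(y')$ with $\vee$, $\wedge$ taken coordinate-wise on $\R_+^2$. So the task reduces to proving that $\xi^*$ is submodular on $\R_+^2$.

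To reach this, I would first observe that $\xi$ itself is \emph{supermodular} on $\R_+^2$, that is, $\partial_1 \partial_2 \xi \ge 0$. Indeed, the existence of a genuine centered Gaussian field $H_N$ with covariance \eqref{e.def H_N} for every $N$ and every partition into two species forces all coefficients $c_{\alpha,\beta}$ in the absolutely convergent expansion $\xi(x) = \sum_{\alpha,\beta \ge 0} c_{\alpha,\beta} x_1^\alpha x_2^\beta$ to be non-negative (this is how $H_N$ is typically built as a sum of tensor products of i.i.d.\ Gaussians). Termwise differentiation then gives $\partial_1 \partial_2 \xi \ge 0$ on $\R_+^2$. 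Next, I would pass from supermodularity of $\xi$ to submodularity of $\xi^*$ via the Hessian identity: when $\xi$ is $C^2$ and strictly convex, $D^2 \xi^*(y) = (D^2 \xi(x))^{-1}$ at the unique maximizer $x = \nabla\xi^*(y)$, and the inverse of a $2\times 2$ positive-definite symmetric matrix with non-negative off-diagonal entry has a non-positive off-diagonal entry; hence $\partial_1 \partial_2 \xi^* \le 0$. For general $\xi$, I would regularize by $\xi_\varepsilon(x) = \xi(x) + \varepsilon|x|^2$, run the argument for $\xi_\varepsilon^*$, and pass to the limit using the pointwise convergence $\xi_\varepsilon^* \to \xi^*$.

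Once submodularity of $\xi^*$ is in hand, applying the rearrangement inequality with $f = \xi^*$ and with $\mu$ versus $\mu^\upa$ delivers \eqref{e.xi^* transport inequality} directly. The main obstacle is the second step: since the Legendre transform is taken over $\R_+^2$ rather than $\R^2$, one must handle boundary maximizers, where the Hessian identity does not apply as stated. This is exactly where the hypothesis $\nabla\xi(0)=0$ enters. Combined with $\xi(0)=0$ and convexity of $\xi$ on $\R_+^2$, it guarantees that $\xi^*(0)=0$ is attained at $x=0$, and more generally that the optimizer in $\xi^*(y)$ is not forced to the corner $x=0$ for generic $y\in\R_+^2$, so the regularized Hessian computation carries over to the limit. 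This technical bookkeeping is presumably the "long but unsurprising" work that the authors defer to Appendix~\ref{s.xi^* ineq}.
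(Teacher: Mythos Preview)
Your proposal is correct and matches the paper's approach exactly: the paper isolates the submodularity inequality as Proposition~\ref{p.xi^* inequality} (your key claim) and then invokes the rearrangement inequality \cite[Proposition~2.5]{mourrat2020free} (your Hardy--Littlewood--P\'olya step), while the appendix proof of submodularity follows your Hessian heuristic with the strong-convexity regularization $\xi_\varepsilon=\xi+\varepsilon|\cdot|^2$ you describe. The one detail you leave vague is handled there by a case split: on the region $V=\nabla\xi(\R^2_{++})$ the Hessian identity gives $\partial_a\partial_b\xi^*\le 0$, while on the complement the optimizer lies on a single coordinate axis (this is where $\nabla\xi(0)=0$ is used, to rule out the corner $x_{\mathrm{opt}}(y)=0$ for $y\neq 0$), so $\xi^*$ locally depends on only one coordinate and $\partial_a\partial_b\xi^*=0$ there.
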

As will be explained in more details shortly, Proposition~\ref{p.xi^* transport inequality} allows us to rewrite the supremum in \eqref{e.parisi.preferred} as
\begin{e}  
\begin{aligned}
\sup_{\nu \in \mcl P^\upa_\infty(\R_+^2)} \Ll\{ \psi(\nu) - \int (t\xi)^* \, \d \nu \Rr\}  & = \sup_{\nu \in \mcl P_\infty(\R_+^2)} \Ll\{ \psi(\nu) - \int (t\xi)^* \, \d \nu^\upa \Rr\}  \\
& = \sup_{\nu \in \mcl P_\infty(\R_+^2)} \Ll\{ \psi(\nu) - \int (t\xi)^* \, \d \nu \Rr\} ,
\end{aligned}
\end{e}
which is key to establishing Theorems~\ref{t.convexoptimization} and \ref{t.hopflike} in this particular case. The key ingredient of the proof of Proposition~\ref{p.xi^* transport inequality} is the following.
\begin{proposition} \label{p.xi^* inequality}
Suppose that $D = 2$ and that $\nabla \xi(0) = 0$. For all real numbers $a \leq a'$ and $b \leq b'$, we have 
    \begin{equation} \label{e. xi^* inequality}
        \xi^*(a,b) + \xi^*(a',b') \leq \xi^*(a',b) + \xi^*(a,b').
    \end{equation}
\end{proposition}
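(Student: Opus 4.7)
The inequality \eqref{e. xi^* inequality} is precisely the statement that $\xi^*$ is submodular on $\R^2$: setting $p = (a', b)$ and $q = (a, b')$, one has $p \vee q = (a', b')$ and $p \wedge q = (a, b)$, so the claim reads $\xi^*(p \vee q) + \xi^*(p \wedge q) \le \xi^*(p) + \xi^*(q)$. My plan is to deduce this submodularity of $\xi^*$ from a corresponding supermodularity of $\xi$ on $\R_+^2$, via a rearrangement of near-maximizers in the Legendre transform~\eqref{e.def.chi.*}.

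The key structural input on $\xi$ is the rectangular inequality
\begin{equation*}
\xi(u_1 \vee u_2,\, v_1 \wedge v_2) + \xi(u_1 \wedge u_2,\, v_1 \vee v_2) \le \xi(u_1, v_1) + \xi(u_2, v_2), \qquad u_i, v_i \in \R_+.
\end{equation*}
When $u_1 - u_2$ and $v_1 - v_2$ have opposite signs the two sides are equal; when they share a sign this amounts to $\iint \partial_1 \partial_2 \xi \ge 0$ over the spanned rectangle. The latter in turn follows from the non-negativity of the mixed Taylor coefficients of $\xi$ on $\R_+^D$ underlying the spin-glass covariance structure. This step---which does \emph{not} follow from convexity alone, as $\xi(x,y) = x^2 - xy + y^2$ illustrates---is where I expect the bulk of Appendix~\ref{s.xi^* ineq} to be spent.

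Granted this input, the rearrangement is immediate. Restricting attention to the case where all four values of $\xi^*$ are finite (the infinite case being handled trivially by the monotonicity of $\xi^*$ in each coordinate, itself a consequence of the restriction of the sup in~\eqref{e.def.chi.*} to $\R_+^D$), for each $\eps > 0$ pick $\eps$-maximizers $(p_1, q_1), (p_2, q_2) \in \R_+^2$ for the sups defining $\xi^*(a,b)$ and $\xi^*(a', b')$. Plug the test points $(p_1 \vee p_2,\, q_1 \wedge q_2) \in \R_+^2$ into the sup defining $\xi^*(a', b)$ and $(p_1 \wedge p_2,\, q_1 \vee q_2) \in \R_+^2$ into that of $\xi^*(a, b')$, and sum. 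The linear part of the resulting lower bound dominates $a p_1 + a' p_2 + b q_1 + b' q_2$ by the elementary identities
\begin{equation*}
a (p_1 \wedge p_2) + a' (p_1 \vee p_2) \ge a p_1 + a' p_2, \qquad b (q_1 \wedge q_2) + b' (q_1 \vee q_2) \ge b q_1 + b' q_2,
\end{equation*}
each of which is a one-line check from $a \le a'$ and $b \le b'$; the $\xi$-part dominates $-\xi(p_1, q_1) - \xi(p_2, q_2)$ by the rectangular inequality. Altogether, $\xi^*(a', b) + \xi^*(a, b') \ge \xi^*(a, b) + \xi^*(a', b') - 2\eps$, and $\eps \downarrow 0$ closes the argument.

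The only non-routine step is the supermodularity of $\xi$ on $\R_+^2$. The hypothesis $\nabla \xi(0) = 0$ does not intervene in the rearrangement proper but, in concert with this supermodularity, ensures that each coordinate of $\nabla \xi$ is non-negative and monotone on $\R_+^2$, a point needed when passing from this pointwise inequality to its integral form in Proposition~\ref{p.xi^* transport inequality}.
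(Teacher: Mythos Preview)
Your proof is correct and takes a genuinely different, more elementary route than the paper's. The paper's Appendix~\ref{s.xi^* ineq} proceeds differentially: it reduces to the strongly convex case by perturbing $\xi \mapsto \xi + \tfrac{\lambda}{2}|\cdot|^2$, establishes that $\xi^*$ is $C^{1,1}$ on $\R^2$, and then shows $\partial_a\partial_b\xi^* \le 0$ almost everywhere by decomposing $\R^2_{++}$ into the image $V = \nabla\xi(\R^2_{++})$ (where $\nabla^2\xi^* = (\nabla^2\xi)^{-1}$ and the off-diagonal entry is computed explicitly), its complement (where one coordinate of the optimizer vanishes and $\partial_a\partial_b\xi^* = 0$), and a null boundary set; integrating then gives~\eqref{e. xi^* inequality}. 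Your rearrangement of near-maximizers bypasses all of this: no perturbation, no differentiability of $\xi^*$, no domain decomposition, and indeed no use of $\nabla\xi(0) = 0$, which you correctly observe is unnecessary for the inequality itself (the paper uses it to ensure $x_{\mathrm{opt}}(y) = 0$ only at $y = 0$, a point its decomposition needs). What the paper's approach buys is the finer pointwise statement $\partial_a\partial_b\xi^* \le 0$, but this is not used elsewhere.

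Two minor remarks. First, your expectation that ``the bulk of Appendix~\ref{s.xi^* ineq}'' is spent on the supermodularity of $\xi$ is off: that step is immediate from the non-negative Taylor coefficients (cf.\ Remark~\ref{r.xi.monotonicity}), and nearly all of the appendix is devoted to the regularity and domain analysis of $\xi^*$ that your argument avoids entirely. Second, the infinite case is not fully covered by monotonicity alone: when $\xi^*(a',b') = +\infty$ but $\xi^*(a,b) < +\infty$, monotonicity in each coordinate does not directly force $\xi^*(a',b) + \xi^*(a,b') = +\infty$, since both $(a',b)$ and $(a,b')$ lie coordinatewise below $(a',b')$. Your rearrangement argument itself handles this case without change (run it with $(p_2,q_2)$ along a sequence driving $a'p_2 + b'q_2 - \xi(p_2,q_2) \to +\infty$), so the gap is cosmetic.
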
   
The condition \eqref{e. xi^* inequality} is an integrated version of the claim that the mixed second derivatives of $\xi^*$ are non-positive, i.e.\ $\partial_a \partial_b \xi^*(a,b) \le 0$. Heuristically, the mappings $\nabla \xi$ and $\nabla \xi^*$ are inverse of one another, and thus the Hessian of $\xi^*$ at $(a,b)$ should be the inverse of the matrix $A = \nabla \xi^2 (\nabla \xi^*(a,b))$. Moreover, the assumption that $\xi$ must allow for the existence of a Gaussian random field satisfying \eqref{e.def H_N} imposes some constraints on $\xi$ \cite[Proposition~6.6]{mourrat2020free}. In particular, the Hessian of $\xi$  must be a symmetric positive semi-definite matrix with nonnegative coefficients, and thus we should have
\begin{e*}
   \partial^2_{ab} \xi^*(a,b) = -\det(A)^{-1} A_{12} \leq 0. 
\end{e*}
Relying on this heuristic, we give a rigorous proof of Proposition~\ref{p.xi^* inequality} in Appendix~\ref{s.xi^* ineq}.
\begin{proof}[Proof of Proposition~\ref{p.xi^* transport inequality}]
This follows from Proposition \ref{p.xi^* inequality} and \cite[Proposition~2.5]{mourrat2020free} applied to $c(a,b) = \xi^*(a,b)$. (We point out that the sentence ``Examples of functions satisfying the condition (2.16) include any convex function of $x - y$.'' in \cite[Remark~2.5]{mourrat2020free} is actually false, as explained below some convex functions fail to satisfy \cite[(2.16)]{mourrat2020free}.)
\end{proof}
\begin{remark} \label{r. no higher generalization}
We now give an example with $D = 3$ of an admissible function $\xi : \R^3\to \R$ for which Proposition~\ref{p.xi^* transport inequality} is invalid. Consider $\xi(x) = \frac{x \cdot Ax}{2}$ where
    \begin{equation*}
        A = 
        \begin{pmatrix}
            6 & 2 & 1 \\ 
            2 & 5 & 3 \\
            1 & 3 & 4
        \end{pmatrix}.
    \end{equation*}
    The function $\xi : \R^3 \to \R$ thus defined satisfies 
    \begin{e*}
        \E H_N(\sigma) H_N(\tau) = \xi \left( \frac{\sigma_1 \cdot \tau_1}{N},\frac{\sigma_2 \cdot \tau_2}{N},\frac{\sigma_3 \cdot \tau_3}{N}\right)
    \end{e*}
provided that we set
    \begin{e*}
        H_N(\sigma) = \frac{1}{\sqrt{N}} \sum_{i,j=1}^N \sum_{d,d'=1}^3 (A_{dd'})^{1/2} J_{i,j}^{d,d'}  \sigma_{di} \sigma_{d'j},
    \end{e*}
    where the $J_{i,j}^{d,d'}$ are independent standard Gaussian random variables. We observe that $A$ is positive definite, as can be checked using Sylvester's criterion on the positivity of the leading principal minors, so the function $\xi$ is convex on $\R^3$. We have 
    \begin{equation*}
        A^{-1} = \frac{1}{57}
            \begin{pmatrix}
            11 & -5 & 1 \\ 
            -5 & 23 & -16 \\
            1 & -16 & 26
            \end{pmatrix}.
    \end{equation*}
    Note that $(A^{-1})_{13} > 0$. Let $\bar V = \nabla \xi(\R^3_+)$, we have that $\nabla \xi(x) = A x$ so  $\bar V = \{ Ax \big| \, x \in \R^3_+ \}$. Using that $\xi^*(\nabla \xi(x)) = x \cdot \nabla \xi(x) - \xi(x)$ (proved below in \eqref{e.xi^*(nabla_xi)=} in any dimension), we see that for every $y \in \bar V$,
    \begin{equation*}
        \xi^*(y) = \frac{y \cdot A^{-1}y}{2}.
    \end{equation*}
    In particular $\xi^*$ is differentiable on the interior of $\bar V$ and $\nabla^2 \xi^*(y) = A^{-1}$. Now let $(e_1,e_2,e_3)$ denote the canonical basis of $\R^3$ and let us fix $y_0$ in the interior of $\bar V$. For $\varepsilon > 0$ small enough we have that the points $y_0 + \varepsilon e_1$, $y_0 + \varepsilon e_3$ and $y_0 + \varepsilon e_1 +  \varepsilon e_3$ all belong to the interior of $\bar V$. Let $\mu$ be the uniform measure on $\{y_0,y_0+ \varepsilon e_1,y_0+\varepsilon e_3\}$, we have 
    \begin{equation*}
        \mu^\upa = \frac{1}{3} \left( 2\delta_{y_0} + \delta_{y_0+\varepsilon e_1+ \varepsilon e_3}\right).
    \end{equation*}
    In particular,
    \begin{align*}
        &\int \xi^* \d \mu^\upa - \int \xi^* \d \mu \\
        &= \frac{1}{3} \left( \xi^*(y_0) + \xi^*(y_0 +\varepsilon e_1 +\varepsilon e_3) - \xi^*(y_0 +\varepsilon e_1) - \xi^*(y_0 +\varepsilon e_1) \right) 
        \\
        &= \frac{1}{3} \int_0^1  \int_0^1 \partial_{13} \xi^*(y_0 + t\varepsilon e_1 + s\varepsilon e_3) \d t \d s.
    \end{align*}
    Since $\partial_{13} \xi^* > 0$ on the interior of $V$, we obtain 
    \begin{equation*} 
        \int \xi^* \d \mu^\upa > \int \xi^* \d \mu,
    \end{equation*}
    as announced. \qed
\end{remark}


We now prove Theorem~\ref{t.convexoptimization} when $D = 2$, $\nabla \xi(0) = 0$ and $\mu = \delta_0$ by relying on Proposition~\ref{p.xi^* transport inequality}. As a consequence, we will be able to deduce that Theorem~\ref{t.hopflike} also holds at $D = 2$ and $\mu = \delta_0$.

Recall from \eqref{e.F_N(0,mu)=} that every $\nu \in \mcl P^\upa_1(\R^2_+)$, we have
\begin{equation}
    \psi(\nu) = \lambda_{\infty,1}\psi_1(\nu_1) + \lambda_{\infty,2}\psi_2(\nu_2),
\end{equation}
where $\psi_d$ is the Parisi functional associated to $\spindist_\s$. The right-hand side of the previous display is well defined even when $\nu$ is not monotone, in particular this gives an extension of the functional $\psi$ to $\mcl P_1(\R^2_+)$.

\begin{proof}[Proof of Theorem~\ref{t.convexoptimization} when $D = 2$, $\nabla \xi(0) = 0$ and $\mu = \delta_0$]
    Let $\nu \in \mcl P_\infty(\R^2_+)$. Since $\nu$ and $\nu^\upa$ have the same marginals, we have $\psi(\nu) = \psi(\nu^\upa)$. Thus according to Proposition~\ref{p.xi^* transport inequality} we have 
    \begin{equation*}
        \psi(\nu) - \int \xi^* \d \nu \leq \psi(\nu^\upa) - \int \xi^* d\nu^\upa.
    \end{equation*}
    This implies 
    \begin{equation*}
        \sup_{\nu \in \mcl P_\infty(\R^2_+)} \left\{ \psi(\nu) - \int \xi^* d\nu \right\} \leq \sup_{\nu \in \mcl P^\upa_\infty(\R^2_+)} \left\{ \psi(\nu) - \int \xi^* d\nu \right\}.
    \end{equation*}
    Since the other bound is trivial, 
    \begin{equation*}
        \sup_{\nu \in \mcl P_\infty(\R^2_+)} \left\{ \psi(\nu) - \int \xi^* d\nu \right\} = \sup_{\nu \in \mcl P^\upa_\infty(\R^2_+)} \left\{ \psi(\nu) - \int \xi^* d\nu \right\}.
    \end{equation*}
    We use Theorem~\ref{t.parisi.basic} to conclude that \eqref{e.convexoptimization} holds at $\mu = \de_0$.

    From Lemma~\ref{l.psi is concave}, we have that $\psi_1$ and $\psi_2$ are (strictly) concave on $\mcl P_\infty(\R_+)$ and the map $\nu \mapsto \int \xi^* \d \nu$ is affine, thus the function inside the supremum in \eqref{e.convexoptimization} is concave in $\nu$.

    Finally, by contradiction, assume that there are two distinct monotone probability measures $\nu$ and $\nu'$ that reach the supremum in \eqref{e.convexoptimization}. Consider $\nu'' = \frac{\nu+\nu'}{2} \in \mcl P_\infty(\R^2_+)$ (note that $\nu''$ may not be monotone). Since $\nu \neq \nu'$ and $\nu,\nu' \in \mcl P^\upa(\R^2_+)$, we must have $\nu_1 \neq \nu'_1$ or $\nu_2 \neq \nu'_2$. Since $\psi_1$ and $\psi_2$ are strictly concave on $\mcl P_\infty(\R_+)$, this yields
    \begin{align*}
        \psi(\nu'') &= \psi_1 \left( \frac{\nu_1 +\nu'_1}{2}\right) + \psi_2 \left( \frac{\nu_2 +\nu'_2}{2}\right) \\
                  &>  \frac{\psi_1 (\nu_1) +\psi_1 (\nu'_1)}{2} + \frac{\psi_2 (\nu_2) +\psi_2 (\nu'_2)}{2} \\
                  &= \frac{\psi (\nu) +\psi (\nu')}{2}.
    \end{align*}
    By \eqref{e.convexoptimization} at $\mu = \de_0$, we thus have 
    \begin{align*}
        \lim_{N \to +\infty} \bar F_N(t,\delta_0) &= \frac{1}{2} \left( \psi(\nu) - \int \xi^* d\nu + \psi(\nu') - \int \xi^* d\nu' \right) \\
        &< \psi(\nu'') - \int \xi^* \d\nu'' \\
        &\leq \sup_{\rho \in \mcl P_\infty(\R^2_+)} \left\{ \psi(\rho) - \int \xi^* \d\rho' \right\} \\
        &= \lim_{N \to +\infty} \bar F_N(t,\delta_0).
    \end{align*}
    This is a contradiction. Therefore there is exactly one probability measure $\nu$ that reaches the supremum in \eqref{e.convexoptimization}. 
\end{proof}
Recall the definition of $\mfk X$ from \eqref{e.frakX=}. For every $\chi \in \mfk X$, we  define $S_t \chi$ according to \eqref{e.def.St2}.

\begin{proof}[Proof of Theorem~\ref{t.hopflike} when $D = 2$, $\nabla \xi(0) = 0$, and $\mu = \delta_0$]
    We assume, without loss of generality, that $t =1$. According to \cite[Lemma~4.1]{issa2024hopflike}, we have that
    \begin{equation} \label{e.psi_d_bidual}
        \psi_d(\nu_d) = \inf_{\chi_d} \left\{ \int \chi_d \d \nu_d - (\psi_d)_*(\chi_d) \right\},
    \end{equation}
    where the infimum is taken over all $\chi_d : \R_+ \to \R$ that are $1$-Lipschitz, increasing, convex functions that vanish at the origin and where we have defined
    \begin{e*}
        (\psi_d)_*(\chi_d) = \inf_{\nu_d \in \mcl P_1(\R_+)} \left\{ \int \chi_d \d \nu_d - \psi_d(\nu_d) \right\}.
    \end{e*} 
    It then follows that
    \begin{align*}
        \psi(\nu) &= \psi_1(\nu_1) + \psi_2(\nu_2) \\
                  &= \inf_{\chi_1} \left\{ \int \chi_1 \d \nu_1 - (\psi_1)_*(\chi) \right\} + \inf_{\chi_2} \left\{ \int \chi_2 \d \nu_2 - (\psi_2)_*(\chi) \right\} \\
                  &= \inf_{\chi \in \mathfrak X} \left\{ \int \chi \d \nu - \psi_*(\chi) \right\}.
    \end{align*}
    Plugging this into the formula of Theorem~\ref{t.convexoptimization} at $\mu = \delta_0$, we obtain 
    \begin{equation*}
        \lim_{N \to +\infty} \bar F_N(t,\delta_0) = \sup_{\mu \in \mcl P_\infty(\R^2_+)} \inf_{\chi \in \mathfrak X} \left\{ \int \chi \d \nu - \psi_*(\chi) + \int \xi^* \d \nu \right\}.
    \end{equation*}
    The sets $\mcl P_\infty(\R^2_+)$ and $\mathfrak X$ are both convex. In addition, $\mathfrak X$ is compact with respect to convergence in the local uniform topology. Furthermore, for every $\nu \in \mcl P_\infty(\R^2_+)$, the map $\chi \mapsto \int (\chi-\xi^*) \d \nu - \psi_*(\chi)$ is convex on $\mathfrak X$ and for every $\chi \in \mathfrak X$, the map $\nu \mapsto \int (\chi-\xi^*) \d \nu - \psi_*(\chi)$ is concave on $\mcl P_\infty(\R^2_+)$. According to Sion's min-max theorem \cite{sion1958minmax}, we obtain 
    \begin{equation*}
        \lim_{N \to +\infty} \bar F_N(t,\delta_0) = \inf_{\chi \in \mathfrak X} \sup_{\nu \in \mcl P_\infty(\R^2_+)} \left\{ \int \chi \d \nu - \psi_*(\chi) + \int \xi^* \d \nu \right\}.
    \end{equation*}
    According to \eqref{e.def.St2}, we have 
    \begin{equation*}
        \sup_{\nu \in \mcl P_\infty(\R^2_+)} \left\{ \int (\chi-\xi^*) \d \nu \right\} = \sup_{x \in \R^2_+} \left\{ \chi(x) - \xi^*(x) \right\}  = S_1 \chi(0).
    \end{equation*}
    we obtain 
    \begin{equation*}
        \lim_{N \to +\infty} \bar F_N(t,\delta_0) = \inf_{\chi \in \mathfrak X}  \left\{S_1 \chi(0) - \psi_*(\chi) \right\},
    \end{equation*}
    as desired.
\end{proof}

%
%
%
%
%
%
\section{Stability of \texorpdfstring{$\R^D_+$}{R\_+\^D}-convexity under HJ semigroup}
\label{s.hjcones}

We now return to the general setting where $D$ and $\nabla \xi(0)$ are arbitrary, as is the case everywhere except in the previous section. For every $\chi : \R^D_+ \to \R$, $t \ge 0$ and $x \in \R^D_+$, we define
\begin{equation}  
\label{e.real.def.Stchi}
S_t \chi(x) = \sup_{y \in \R_+^D} \left\{ \chi(y) -(t\xi)^*(y-x) \right\} ,
\end{equation}
as was announced in \eqref{e.def.St2}. The fact that this quantity is equal to that in~\eqref{e.def.St1} for every $\chi \in \mfk X$ will be a consequence of Lemma~\ref{l.St1=St2} proved below. 
The first result of this section is an analysis of the optimizers in \eqref{e.real.def.Stchi}. 

\begin{proposition}\label{p.nablaS_tchi_and_nablachi}
Assume that $\xi^*$ is differentiable on $\R^D_+$. Let $\chi:\R_+^D\to \R$ be a Lipschitz function. Let $t> 0$ and $x, y \in \R_+^D$ be such that 
\begin{equation}
\label{e.l.nablaS_tchi_and_nablachi(0)}
    \chi(y) - S_t\chi(x) -(t\xi)^*(y-x)=0.
\end{equation}
If $x\in (0,+\infty)^D$ and $x$ is a differentiable point of $S_t\chi$, then we have
\begin{align}\label{e.l.nablaS_tchi_and_nablachi(1)}
    \nabla S_t\chi(x) =\nabla(t\xi)^*(y-x).
\end{align}
If $y\in (0,+\infty)^D$ and $y$ is a differentiable point of $\chi$, then we have
\begin{align}\label{e.l.nablaS_tchi_and_nablachi(2)}
    \nabla \chi(y) = \nabla (t\xi)^*(y-x).
\end{align}
In particular, if the two conditions hold at the same time, then we have
\begin{align}\label{e.l.nablaS_tchi_and_nablachi(2)(3)}
    \nabla S_t \chi(x) = \nabla \chi(y).
\end{align}
\end{proposition}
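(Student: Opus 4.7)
The plan is to view the hypothesis $\chi(y) - S_t\chi(x) - (t\xi)^*(y-x) = 0$ as the statement that $y$ realizes the supremum in the definition \eqref{e.real.def.Stchi} of $S_t\chi(x)$. From this I will extract two one-sided inequalities — one by perturbing the outer argument $x$ with $y$ held fixed, the other by perturbing the inner argument $y$ with $x$ held fixed — and I will upgrade each of them to the claimed equality of gradients via a first-order envelope-type argument at an interior minimum.

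For \eqref{e.l.nablaS_tchi_and_nablachi(1)}, I observe that by \eqref{e.real.def.Stchi} the function
$$\phi(x') := S_t\chi(x') + (t\xi)^*(y-x') - \chi(y)$$
is nonnegative on $\R_+^D$ (since $y$ is always an admissible competitor at every $x'$) and vanishes at $x$. Thus $x$ is a global minimizer of $\phi$ over $\R_+^D$. Because $x\in(0,+\infty)^D$ is interior, all directions are admissible for small perturbations. Combined with the differentiability of $S_t\chi$ at $x$ and of $(t\xi)^*$ at $y-x$, the first-order optimality condition at an interior minimum yields $\nabla\phi(x)=0$, and the chain rule applied to $x'\mapsto(t\xi)^*(y-x')$ rewrites this as $\nabla S_t\chi(x)=\nabla (t\xi)^*(y-x)$.

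The proof of \eqref{e.l.nablaS_tchi_and_nablachi(2)} is entirely symmetric. The function
$$\psi(y') := \chi(y) - \chi(y') + (t\xi)^*(y'-x) - (t\xi)^*(y-x)$$
is nonnegative on $\R_+^D$ by the optimality of $y$, and vanishes at $y'=y$. Since $y\in(0,+\infty)^D$ is interior, the same first-order argument forces $\nabla\chi(y)=\nabla (t\xi)^*(y-x)$. The last identity \eqref{e.l.nablaS_tchi_and_nablachi(2)(3)} then follows by chaining the two.

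The argument is essentially a routine envelope calculation; the only point requiring care is to ensure that $\nabla (t\xi)^*(y-x)$ is well defined, which is guaranteed by the standing hypothesis that $\xi^*$ — and hence $(t\xi)^*=t\xi^*(\,\cdot/t\,)$ — is differentiable on $\R_+^D$. Interiority of $x$ (respectively $y$) is what licenses two-sided perturbations so that first-order optimality produces equality of gradients rather than a mere one-sided inequality; I do not foresee any further obstacle.
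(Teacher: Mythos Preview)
Your proof is correct and uses the same first-order optimality idea as the paper. The only difference is in the treatment of \eqref{e.l.nablaS_tchi_and_nablachi(1)}: the paper first verifies (via superlinearity of $(t\xi)^*$) that the supremum in \eqref{e.real.def.Stchi} is attained in a fixed compact set for $x'$ near $x$ and then invokes the envelope theorem, whereas your observation that $\phi(x')=S_t\chi(x')+(t\xi)^*(y-x')-\chi(y)$ has an interior minimum at $x$ bypasses this compactness step and yields the gradient identity directly from the assumed differentiability of $S_t\chi$ at $x$.
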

Before showing this, we derive in the next three lemmas basic properties of $\xi$ and $\xi^*$ that will be useful throughout the rest of the paper. We say that a function $\varsigma:\R^D\to \R$ is \emph{superlinear on $\R_+^D$} provided that, for every $M>0$, there is $R>0$ such that
\begin{align}\label{e.def_superlinear}
    \inf_{x\in\R_+^D:\:|x|\geq R}\frac{\varsigma(x)}{|x|}\geq M.
\end{align} 
\begin{lemma} \label{l.xi* continuous and superlinear}
    The function $\xi^*$ is superlinear on $\R_+^D$. In addition, if we assume that $\xi$ is superlinear on $\R_+^D$, then $\xi^*$ is also continuous. The same holds for $(t\xi)^*$ for any $t>0$.
\end{lemma}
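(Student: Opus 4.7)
The proof will rest on two well-known principles of convex analysis: a Legendre transform is always convex as a pointwise supremum of affine functions, and a finite convex function on $\R^D$ is automatically continuous. The superlinearity of $\xi^*$ is then a matter of plugging into the definition, and continuity follows once finiteness everywhere is established.

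\emph{Step 1 (superlinearity of $\xi^*$).} Since $\xi$ has an absolutely convergent power series, it is continuous on $\R_+^D$. Given $M>0$, the set $K_M=\{x\in\R_+^D:|x|=M\}$ is compact, so $C_M:=\sup_{x\in K_M}\xi(x)<\infty$. For any nonzero $y\in\R_+^D$ the vector $x:=My/|y|$ lies in $K_M$, so
\[\xi^*(y)\ge x\cdot y-\xi(x)= M|y|-\xi(My/|y|)\ge M|y|-C_M.\]
Dividing by $|y|$ gives $\liminf_{|y|\to\infty,\ y\in\R_+^D}\xi^*(y)/|y|\ge M$, and since $M$ is arbitrary, $\xi^*$ is superlinear on $\R_+^D$ in the sense of \eqref{e.def_superlinear}.

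\emph{Step 2 (finiteness of $\xi^*$ under superlinearity of $\xi$).} For general $y\in\R^D$, writing $y^+$ for the componentwise positive part and using $x\in\R_+^D$, we have $x\cdot y\le x\cdot y^+$, so $\xi^*(y)\le\xi^*(y^+)$; it is therefore enough to bound $\xi^*$ on $\R_+^D$. Fix $y\in\R_+^D$. By superlinearity of $\xi$ applied with $M=2|y|$, there exists $R>0$ such that $\xi(x)\ge 2|y|\,|x|$ for all $x\in\R_+^D$ with $|x|\ge R$. For such $x$,
\[x\cdot y-\xi(x)\le |y|\,|x|-2|y|\,|x|\le 0,\]
while on the compact set $\{x\in\R_+^D:|x|\le R\}$, the continuous function $x\mapsto x\cdot y-\xi(x)$ attains a finite maximum. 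Hence $\xi^*(y)<+\infty$.

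\emph{Step 3 (continuity).} In all cases, $\xi^*$ is convex on $\R^D$ as a pointwise supremum of the affine functions $y\mapsto x\cdot y-\xi(x)$, $x\in\R_+^D$. Under the superlinearity assumption on $\xi$, Step~2 shows $\xi^*$ is finite everywhere, and any finite convex function on $\R^D$ is continuous; in particular $\xi^*$ is continuous on $\R_+^D$.

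\emph{Step 4 ($(t\xi)^*$).} For $t>0$, the identity $(t\xi)^*(y)=t\,\xi^*(y/t)$ transfers both superlinearity and continuity from $\xi^*$ to $(t\xi)^*$. Note also that if $\xi$ is superlinear on $\R_+^D$, then so is $t\xi$, so Step~3 applies to $t\xi$ directly as well.

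No real obstacle is anticipated; the only point that requires a brief extra argument compared with the classical statement on $\R^D$ is Step~2, where one must make sure that taking the supremum over $\R_+^D$ rather than $\R^D$ still produces a finite value for arbitrary $y\in\R^D$, which is handled by the $y\mapsto y^+$ reduction.
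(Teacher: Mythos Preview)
Your proof is correct and follows essentially the same approach as the paper: plug a scaled copy of $y$ into the defining supremum to get superlinearity, then use superlinearity of $\xi$ to establish finiteness of $\xi^*$ everywhere, and conclude continuity from the standard fact that finite convex functions on $\R^D$ are continuous. Your Step~2 reduction via $y\mapsto y_+$ is a slightly more explicit way of checking finiteness on all of $\R^D$ than the paper's direct argument, but the substance is identical.
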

\begin{proof}
    The function $\xi$ is continuous so it is locally bounded. For every $r > 0$, let $M_r$ denote the supremum of $|\xi|$ on the ball of radius $r$ centered at $0$. Letting $y \in \R_+^D$ and setting $x = ry/|y|$ in the supremum below, we get that 
    \begin{e*}
        \xi^*(y) = \sup_{x \in \R_+^D} \left\{ x \cdot y- \xi(x)\right\} \geq r|y| - M_r.
    \end{e*}
    Then \eqref{e.def_superlinear} follows from the last display. Let us now further assume that $\xi$ is superlinear. In this case, we have that for every $y \in \R_+^D$,
    \begin{e*}
        \lim_{\substack{|x| \to +\infty \\ x \in \R_+^D}} \left(x \cdot y - \xi(x) \right) = -\infty. 
    \end{e*}
    This implies that $\xi^* < +\infty$ on $\R^D$. Hence, $\xi^*$ is a convex function on $\R^D$ taking only finite values and thus $\xi^*$ is continuous (see e.g.\ \cite[Proposition~2.9]{HJbook}).
\end{proof}
\begin{lemma} \label{l.xi^*.diff}
    If $\xi$ is superlinear on $\R^D_+$ and strictly convex on $\R^D_+$, then $\xi^*$ is differentiable on $\R^D$.
\end{lemma}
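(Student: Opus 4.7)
The plan is to establish differentiability of $\xi^*$ by identifying its subdifferential at each point as a singleton, and then invoking the classical fact that a finite-valued convex function on $\R^D$ is differentiable at a point iff its subdifferential there is a singleton (see e.g.\ Rockafellar, Theorem~25.1). The finiteness is already in hand: superlinearity of $\xi$ on $\R^D_+$ forces $z \cdot y - \xi(z) \to -\infty$ as $|z| \to \infty$ in $\R^D_+$, so $\xi^*(y) < +\infty$ for every $y \in \R^D$, and hence $\xi^*$ is continuous on $\R^D$ (as it is convex and finite-valued).

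The first step is to show that the supremum in $\xi^*(y) = \sup_{x \in \R^D_+}\{x \cdot y - \xi(x)\}$ is attained at a unique point $x^\star(y) \in \R^D_+$. Existence follows from superlinearity of $\xi$: the sup is effectively taken over some compact subset of $\R^D_+$, on which $z \mapsto z \cdot y - \xi(z)$ is continuous. Uniqueness follows from strict convexity of $\xi$ on $\R^D_+$: two distinct maximizers would have their midpoint yield a strictly larger value of $z \cdot y - \xi(z)$, since $\R^D_+$ is convex and $z \mapsto z \cdot y$ is affine.

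Next I would identify the subdifferential $\partial \xi^*(y)$ with $\{x^\star(y)\}$. The inclusion $x^\star(y) \in \partial \xi^*(y)$ is immediate from the definition: for every $y' \in \R^D$,
\begin{equation*}
    \xi^*(y') \geq x^\star(y) \cdot y' - \xi(x^\star(y)) = \xi^*(y) + x^\star(y) \cdot (y'-y).
\end{equation*}
For the reverse inclusion, I would use the Fenchel--Young characterization: any $x \in \partial \xi^*(y)$ satisfies the equality $\xi^*(y) + \xi^{**}(x) = x \cdot y$. Since $\xi$ restricted to $\R^D_+$ (and set to $+\infty$ on $\R^D \setminus \R^D_+$) is convex and lower semicontinuous, the Fenchel--Moreau theorem yields $\xi^{**}(x) = \xi(x)$ for $x \in \R^D_+$ and $\xi^{**}(x) = +\infty$ otherwise; combined with $\xi^*(y) < +\infty$, this forces $x \in \R^D_+$ and then the Fenchel equality expresses that $x$ attains the supremum defining $\xi^*(y)$. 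By uniqueness of the maximizer, $x = x^\star(y)$.

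The step I expect to require the most care is the identification $\xi^{**} = \xi$ on $\R^D_+$ and $\xi^{**} = +\infty$ off $\R^D_+$. The second part uses that if some coordinate $x_i < 0$, then taking $y = -M e_i$ in $x \cdot y - \xi^*(y)$ drives it to $+\infty$ as $M \to +\infty$, because $\xi^*(-M e_i) \le 0 - \inf_{\R^D_+} \xi$ remains bounded while $-M x_i \to +\infty$. Once the subdifferential is shown to be a singleton at every $y$, the Rockafellar criterion finishes the proof. No further structure on $\xi$ is needed beyond what is assumed.
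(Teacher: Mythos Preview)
Your proof is correct, but takes a different route from the paper's. The paper argues as follows: for $|y| \le M$, superlinearity confines the supremum in $\xi^*(y)$ to a fixed compact subset of $\R^D_+$; strict convexity then gives a unique maximizer; and the envelope theorem (the reference is \cite[Theorem~2.21]{HJbook}) directly yields differentiability of $\xi^*$. Your argument instead identifies $\partial \xi^*(y)$ as the singleton $\{x^\star(y)\}$ via Fenchel--Young and the biconjugate identity, then invokes Rockafellar's criterion. Both are short and standard; the envelope-theorem route is marginally more direct and does not rely on convexity of $\xi^*$ for the final step, whereas your subdifferential route makes the formula $\nabla \xi^*(y) = x^\star(y)$ emerge from the computation itself.

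One small remark: the step you flag as ``requiring the most care'' --- that $\xi^{**}$ coincides with the extension of $\xi$ by $+\infty$ off $\R^D_+$ --- is actually immediate from Fenchel--Moreau, since the extension is proper, convex, and lower semicontinuous (the latter because $\R^D_+$ is closed and $\xi$ is continuous on it). Your explicit verification via $y = -M e_i$ is therefore not needed, though it is correct.
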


\begin{proof}
    Let $M > 0$ be arbitrary and let $R > 0$ be such that \eqref{e.def_superlinear} holds for $\xi$. Let $y \in \R^D$ be such that $|y| \leq M$. For every $|x| \geq R$, we have $x \cdot y - \xi(x) \leq |x|(|y|-M) \leq 0$. Hence,
    \begin{e*}
        \xi^*(y) = \sup_{|x| \leq R} \left\{ x \cdot y - \xi(x) \right\}.
    \end{e*}
    Since $\xi$ is assumed to be strictly convex, the variational problem in the previous display admits exactly one maximizer. It then follows from the envelope theorem \cite[Theorem~2.21]{HJbook} that $\xi^*$ is differentiable on $\R^D$.
\end{proof}

\begin{lemma} \label{l.save_the_day}
    For every $x \in \R^D_+$, we have
    \begin{e} \label{e.xi^*(nabla_xi)=}
        \xi^*(\nabla \xi(x)) = x \cdot \nabla \xi(x) - \xi(x).
    \end{e}
    Furthermore, if $\xi$ is strongly convex on $\R^D_+$, then we have for every $x,y\in \R^D_+$ that
    \begin{align}
        &\nabla \xi^*( \nabla \xi(x)) = x, \label{e.nabla_xi^*(nabla_xi)=}\\ 
        &\nabla \xi(\nabla \xi^*(y)) -y \in \R^D_+, \label{e.KKT1}\\ 
        &\nabla \xi^*(y) \cdot (\nabla \xi(\nabla \xi^*(y)) -y) =0, \label{e.KKT2}\\
        &\xi^*(\nabla \xi(\nabla \xi^*(y))) = \xi^*(y) \label{e.xi^*(nabla_xi(nabla_xi^*))}.
    \end{align}
    
\end{lemma}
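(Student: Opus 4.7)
The first identity does not use strong convexity, only the convexity of $\xi$ on $\R_+^D$. By the subgradient inequality applied at $x$, for every $z\in\R^D$ we have $\xi(z)\ge \xi(x)+\nabla\xi(x)\cdot(z-x)$, hence
\begin{equation*}
z\cdot\nabla\xi(x)-\xi(z)\le x\cdot\nabla\xi(x)-\xi(x),
\end{equation*}
with equality at $z=x$. Taking the supremum over $z\in\R_+^D$ (and noting $x\in\R_+^D$ is admissible) yields \eqref{e.xi^*(nabla_xi)=}.

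Now assume $\xi$ is strongly convex on $\R_+^D$. This implies both strict convexity and superlinearity on $\R_+^D$, so by Lemma~\ref{l.xi* continuous and superlinear} and Lemma~\ref{l.xi^*.diff}, $\xi^*$ is continuous and differentiable on $\R^D$, and for each $y\in\R^D$ the supremum defining $\xi^*(y)$ is attained at a unique point, which I denote $x^*(y)\in\R_+^D$. For \eqref{e.nabla_xi^*(nabla_xi)=}, I combine \eqref{e.xi^*(nabla_xi)=} with the trivial bound $\xi^*(y)\ge x\cdot y-\xi(x)$ valid for all $y\in\R^D$; subtracting gives
\begin{equation*}
\xi^*(y)-\xi^*(\nabla\xi(x))\ge x\cdot\bigl(y-\nabla\xi(x)\bigr),\qquad\forall y\in\R^D,
\end{equation*}
so $x$ is a subgradient of the convex function $\xi^*$ at $\nabla\xi(x)$. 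Since $\xi^*$ is differentiable there, the subgradient is unique and equals $\nabla\xi^*(\nabla\xi(x))$, giving $\nabla\xi^*(\nabla\xi(x))=x$.

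For \eqref{e.KKT1} and \eqref{e.KKT2}, the plan is to identify $\nabla\xi^*(y)$ with the unique maximizer $x^*(y)$ through the envelope theorem (\cite[Theorem~2.21]{HJbook} as used in the proof of Lemma~\ref{l.xi^*.diff}), and then read off the Karush--Kuhn--Tucker conditions for the concave program $\sup_{x\in\R_+^D}\{x\cdot y-\xi(x)\}$: with Lagrange multipliers $\mu\in\R_+^D$ for the constraints $-x_i\le 0$, stationarity reads $\nabla\xi(x^*)-y=\mu$ with $\mu\ge 0$, and complementary slackness reads $\mu\cdot x^*=0$. Substituting $x^*=\nabla\xi^*(y)$ and $\mu=\nabla\xi(\nabla\xi^*(y))-y$ gives exactly \eqref{e.KKT1} and \eqref{e.KKT2}. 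Finally, \eqref{e.xi^*(nabla_xi(nabla_xi^*))} is a one-line consequence: applying \eqref{e.xi^*(nabla_xi)=} with $x=\nabla\xi^*(y)$ and then using \eqref{e.KKT2} to replace $\nabla\xi^*(y)\cdot\nabla\xi(\nabla\xi^*(y))$ by $\nabla\xi^*(y)\cdot y$ reduces the left-hand side to $\nabla\xi^*(y)\cdot y-\xi(\nabla\xi^*(y))$, which equals $\xi^*(y)$ because $\nabla\xi^*(y)$ attains the supremum defining $\xi^*(y)$.

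The only subtlety — hardly an obstacle — is that $\xi^*$ is a supremum over the cone $\R_+^D$ rather than over all of $\R^D$, so the classical involutivity of the Legendre transform is replaced by the KKT relations \eqref{e.KKT1}--\eqref{e.KKT2}. Once this is set up correctly via the envelope theorem, everything else is routine convex-analytic bookkeeping.
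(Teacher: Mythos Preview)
Your proof is correct and follows essentially the same route as the paper's: convexity for \eqref{e.xi^*(nabla_xi)=}, the envelope theorem to identify $\nabla\xi^*(y)$ with the unique maximizer, KKT for \eqref{e.KKT1}--\eqref{e.KKT2}, and then substitution for \eqref{e.xi^*(nabla_xi(nabla_xi^*))}. One minor slip: you write the subgradient inequality ``for every $z\in\R^D$'', but $\xi$ is only assumed convex on $\R_+^D$; this is harmless since the supremum defining $\xi^*$ is over $\R_+^D$ anyway.

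The one place you diverge from the paper is the proof of \eqref{e.nabla_xi^*(nabla_xi)=}. The paper differentiates the identity \eqref{e.xi^*(nabla_xi)=} in $x$ on the interior $\R_{++}^D$ to obtain $\nabla^2\xi(x)\,\nabla\xi^*(\nabla\xi(x))=\nabla^2\xi(x)\,x$, and then cancels the Hessian using strong convexity before extending to the boundary by continuity. Your subgradient argument---observing that $x$ lies in $\partial\xi^*(\nabla\xi(x))$ and that differentiability forces the subdifferential to be a singleton---is cleaner: it avoids differentiating through the identity, does not need to argue separately on the interior, and uses strong convexity only indirectly (to guarantee differentiability of $\xi^*$ via Lemma~\ref{l.xi^*.diff}). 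Both approaches are standard convex analysis, but yours is the more economical of the two.
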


\begin{remark} \label{r.bla}
    When $\xi$ is strongly convex on $\R^D_+$, note that thanks to \eqref{e.nabla_xi^*(nabla_xi)=} we have in particular that for every $y \in \nabla \xi(\R^D_+)$,
    \begin{e}
        \nabla \xi(\nabla \xi^*(y)) = y.
    \end{e}
\end{remark}

\begin{proof}
    Let $x \in \R^D_+$, we start by proving \eqref{e.xi^*(nabla_xi)=}. By definition of $\xi^*$, it is clear that the left-hand side is larger than the right-hand side. To prove the other inequality, let $x ' \in \R^D_+$, by convexity of $\xi$ we have 
    \begin{equation*}
        \xi( \lambda x' + (1-\lambda) x) \leq \lambda \xi(x') + (1-\lambda) \xi(x).
    \end{equation*}
    Rearranging and letting $\lambda \to 0$ we get $\nabla \xi(x) \cdot (x'-x) \leq \xi(x') - \xi(x)$, in particular
    \begin{equation*}
        x' \cdot \nabla \xi(x) - \xi(x') \leq x \cdot \nabla \xi(x) - \xi(x).
    \end{equation*}
    Taking the supremum over $x' \in \R^D_+$ in the previous display, we obtain $\xi^*(\nabla \xi(x)) \leq x \cdot \nabla \xi(x) - \xi(x)$, which thus completes the proof of \eqref{e.nabla_xi^*(nabla_xi)=}. 
    
    Since $\xi$ is assumed to be strongly on $\R^D_+$, it is superlinear on $\R^D_+$ and strictly convex on $\R^D_+$, so by Lemma~\ref{l.xi^*.diff}, the function $\xi^*$ is differentiable on $\R^D$. For $x \in \R^D_{++}$ we can differentiate \eqref{e.xi^*(nabla_xi)=} to obtain 
    \begin{e*}
        \nabla^2 \xi(x) \nabla \xi^*(\nabla \xi(x)) = \nabla^2 \xi(x)x.
    \end{e*}
    Since $\xi$ is assumed to be strongly convex on $\R^D_+$, the matrix $\nabla^2 \xi(x)$ is positive definite and the previous display implies \eqref{e.nabla_xi^*(nabla_xi)=} for $x \in \R^D_{++}$, which we can then extend to $x \in \R^D_+$ by continuity. 

    We now turn to proving \eqref{e.KKT1} and \eqref{e.KKT2}. Let $y \in \R^D_{++}$, by definition we have
    \begin{e*} 
        \xi^*(y)= \sup_{x \in \R^D_+} \left\{ x\cdot y - \xi(x) \right\}.
    \end{e*}
    Let $M > 0$ and assume that $|y| \leq M$. Since $\xi$ is assumed to be superlinear, there exists $R > 0$ such that \eqref{e.def_superlinear} holds for $\xi$ and we have that for $|x| > R$,
    \begin{e*}
        x \cdot y - \xi(x) \leq |x|(|y|-M) < 0.
    \end{e*}
    Therefore,
    \begin{e*} 
        \xi^*(y)= \sup_{\substack{x \in \R^D_+ \\ |x| \leq R}} \left\{ x\cdot y - \xi(x) \right\}.
    \end{e*}
    In addition since $\xi$ is assumed to be strongly convex on $\R^D_+$, there is a unique maximizer $x_\text{opt}(y) \in \R^D_+$ in the variational formula of the above display. According to the envelope theorem \cite[Theorem~2.21]{HJbook} we have $\nabla \xi^*(y) = x_\text{opt}(y)$. It follows that for every $x \in \R^D_+$,
    \begin{align*}
        \xi(x_\text{opt}(y)) &= x_\text{opt}(y) \cdot y - \xi^*(y) \\
                             &\le x_\text{opt}(y) \cdot y - (x \cdot y - \xi(x)).
    \end{align*}    
    Rearranging, we obtain 
    \begin{e*}
        \xi(x) \geq \xi(x_\text{opt}(y)) + y \cdot(x- x_\text{opt}(y)).
    \end{e*}
    Chosing $x = x_\text{opt}(y) + t e_d$ for $t > 0$ and letting $t \to 0$ in the previous display, we obtain \eqref{e.KKT1}. Furthermore, if $x_{\text{opt},d}(y) > 0$ for some $d \in [D]$, then we can also take $t < 0$ in the previous construction (provided that $t$ is small enough) and this yields $\partial_{x_d} \xi(x_\text{opt}(y)) = y_d$. Therefore for every $d \in [D]$, we have that the $d$-th coordinate of $\nabla \xi^*(y)$ is $0$ or the $d$-th coordinate of $\nabla \xi(\nabla \xi^*(y))-y$ is $0$, and this yields \eqref{e.KKT2} for $y \in \R^D_{++}$. By continuity, we obtain \eqref{e.KKT1} and \eqref{e.KKT2} as announced.

    Finally, by a similar proof as for \eqref{e.xi^*(nabla_xi)=}, we have for every $y \in \R^D_+$ that
    \begin{e*}
        \xi(\nabla \xi^*(y)) = y \cdot \nabla \xi^*(y) - \xi^*(y).
    \end{e*}
    From this, \eqref{e.xi^*(nabla_xi)=} and \eqref{e.KKT2}, we deduce that
    \begin{align*}
        \xi^*(\nabla \xi(\nabla \xi^*(y))) &= \nabla \xi^*(y) \cdot \nabla \xi(\nabla \xi^*(y)) - \xi(\nabla \xi^*(y)) \\
                                           &= \nabla \xi^*(y) \cdot \nabla \xi(\nabla \xi^*(y)) - \left( y \cdot \nabla \xi^*(y) - \xi^*(y)\right) \\
                                           &= \xi^*(y) + \nabla \xi^*(y) \cdot(\nabla \xi(\nabla \xi^*(y))-y ) \\
                                           &= \xi^*(y),
    \end{align*} 
    and thus, \eqref{e.xi^*(nabla_xi(nabla_xi^*))} is proved.
\end{proof}

In the proof of Proposition~\ref{p.nablaS_tchi_and_nablachi} as well as later on, we make use of the following simple fact: 
\begin{align}\label{e.|y-x|-|(y-x)_+|}
    x,y\in\R_+^D\quad\Longrightarrow\quad |y-x|-|(y-x)_+|\leq |x|.
\end{align}
Indeed, we have
\begin{align*}
    |y-x|-|(y-x)_+| \leq |y-x-(y-x)_+|\leq\Big(\sum_{d=1}^D(y_d-x_d)^2\mathds{1}_{0\leq y_d<x_d}\Big)^\frac{1}{2} \leq |x|.
\end{align*}
We also need the following definition.
\begin{definition}[$\R_+^D$-increasing] \label{d.R^D_+-increasing}
For any $k \in \N$, we say that a function $\chi : \R^D_+ \to \R^k$ is \emph{$\R_+^D$-increasing} when for every $x,y \in \R_+^D$ we have 
\begin{e} \label{e.R^D_+-increasing}
    \chi(x+y)  \geq  \chi(x).
\end{e}
\end{definition}
The inequality \eqref{e.R^D_+-increasing} is interpreted in the sense that $\chi(x+y) - \chi(x) \in \R^k_+$. We may at times also make use of this notion for functions that are defined almost everywhere, in which case we ask that \eqref{e.R^D_+-increasing} be satisfied for almost every $x, y \in \R^D_+$. 

We record the following last preparatory step, which will be used more crucially in the proof of Proposition~\ref{p.modified hopf-lax} below.
For every $y \in \R^D$, we set
\begin{align}\label{e.x_+=}
    y_+ = \Ll(y_d \mathds{1}_{y_d\geq 0}\Rr)_{1\leq d\leq D}.
\end{align}
\begin{lemma}\label{l.H^*(x)=H^*(x_+)}
    Let $\varsigma:\R_+^D\to\R$ be an $\R_+^D$-increasing function, and for every $y \in \R^D$, let 
    \begin{e} \label{e.convex dual R^D_+}
        \varsigma^*(y)= \sup_{x \in \R^D_+} \left\{ x \cdot y -\varsigma(x) \right\}.
    \end{e}
    For every $y \in \R^D$, we have $\varsigma^*(y) =\varsigma^*(y_+)$.
\end{lemma}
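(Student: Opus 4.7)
The plan is to split the claimed equality into two inequalities and treat each by a simple one-line construction exploiting the $\R^D_+$-monotonicity of $\varsigma$ together with the fact that $y_+ - y \in \R_+^D$ and $(y_+)_d = 0$ whenever $y_d < 0$.

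For the inequality $\varsigma^*(y) \leq \varsigma^*(y_+)$, I would simply note that any $x \in \R_+^D$ satisfies $x \cdot y \leq x \cdot y_+$ (coordinate by coordinate, using $x_d \geq 0$ and $(y_+)_d \geq y_d$). Subtracting $\varsigma(x)$ from both sides and taking the supremum over $x \in \R_+^D$ yields the inequality immediately, with no use of the monotonicity hypothesis.

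For the reverse inequality $\varsigma^*(y_+) \leq \varsigma^*(y)$, given $x \in \R_+^D$, I would introduce the truncation $\tilde x \in \R_+^D$ defined by $\tilde x_d = x_d$ when $y_d \geq 0$ and $\tilde x_d = 0$ otherwise. Two observations then close the argument. First, by construction, $\tilde x \cdot y = \sum_{d:\, y_d \geq 0} x_d y_d = x \cdot y_+$. Second, since $x - \tilde x \in \R_+^D$, the assumption that $\varsigma$ is $\R^D_+$-increasing (in the sense of Definition~\ref{d.R^D_+-increasing}) gives $\varsigma(x) \geq \varsigma(\tilde x)$. Combining these, $x \cdot y_+ - \varsigma(x) \leq \tilde x \cdot y - \varsigma(\tilde x) \leq \varsigma^*(y)$, and taking the supremum over $x \in \R_+^D$ yields $\varsigma^*(y_+) \leq \varsigma^*(y)$.

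There is no real obstacle here; the only thing to watch is that $\tilde x$ remains in $\R_+^D$ (which is immediate from $x \in \R_+^D$) so that it is admissible in the supremum defining $\varsigma^*(y)$, and that the monotonicity hypothesis is used exclusively in the second direction, since the first direction relies only on the nonnegativity of $x$.
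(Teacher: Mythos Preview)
Your proof is correct and matches the paper's argument almost verbatim: the paper also proves $\varsigma^*(y)\le\varsigma^*(y_+)$ by noting that $\varsigma^*$ is $\R_+^D$-increasing (equivalently, your observation $x\cdot y\le x\cdot y_+$ for $x\in\R_+^D$), and for the reverse inequality it introduces exactly your truncation $\hat x_d=x_d\mathds{1}_{y_d\ge0}$ and uses the same two identities $\hat x\cdot y=x\cdot y_+$ and $\varsigma(x)\ge\varsigma(\hat x)$.
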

\begin{proof}
Let $y \in \R^D$.
Since the supremum in the definition of $\varsigma^*$ in~\eqref{e.convex dual R^D_+} is taken over $\R_+^D$, we have that $\varsigma^*$ is $\R_+^D$-increasing and thus $\varsigma^*(y) \leq\varsigma^*(y_+)$. For each $x\in\R_+^D$, we set $\hat x = (x_d \mathds{1}_{y_d\geq 0})_{1\leq d\leq D}$. We observe that $\hat x \cdot y = x \cdot y_+$. Moreover, since $\varsigma$ is $\R_+^D$-increasing, we have $\varsigma(x)\geq \varsigma(\hat x)$. Hence, for every $x\in\R_+^D$, we have
\begin{align*}
    x \cdot y_+ - \varsigma(x)\leq \hat x \cdot y -\varsigma(\hat x) \leq \varsigma^*(y),
\end{align*}
which implies $\varsigma^*(y_+)\leq \varsigma^*(y)$. 
\end{proof}
\begin{proof}[Proof of Proposition~\ref{p.nablaS_tchi_and_nablachi}]
Assume that $x\in (0,+\infty)^D$ and that $x$ is a differentiable point of $S_t\chi$.
Using that $(t\xi)^*$ is superlinear on $\R_+^D$ (Lemma~\ref{l.xi* continuous and superlinear}), Lemma~\ref{l.H^*(x)=H^*(x_+)}, and~\eqref{e.|y-x|-|(y-x)_+|}, we can verify that, for each fixed $x\in\R_+^D$, $(t\xi)^* (\scdot -x)$ is superlinear on $\R_+^D$.
Hence, for $x'$ sufficiently close to $x$, we can see that the variational formula~\eqref{e.real.def.Stchi} for $S_t\chi(x')$ attains its maximum in a compact set.
By~\eqref{e.l.nablaS_tchi_and_nablachi(0)}, $y$ is a maximizer of the formula for $S_t\chi(x)$.
Hence, we can apply the envelope theorem (e.g.\ see~\cite[Theorem~2.21]{HJbook}) to see that $\nabla_xS_t\chi (x) = \nabla_x \Ll(\chi(y) -(t\xi)^*(y-x)\Rr)$, which gives~\eqref{e.l.nablaS_tchi_and_nablachi(1)}.

Next, assume that $ y\in (0,+\infty)^D$ is a differentiable point of $\chi$.
Since $y$ is a maximizer of the formula~\eqref{e.real.def.Stchi} for $S_t\chi(x)$, we have
\begin{equation*}  
\nabla_y  \Ll(\chi(y) -(t\xi)^*(y-x)\Rr)=0,
\end{equation*}
 which gives~\eqref{e.l.nablaS_tchi_and_nablachi(2)}. Lastly, \eqref{e.l.nablaS_tchi_and_nablachi(2)(3)}  immediately follows from~\eqref{e.l.nablaS_tchi_and_nablachi(1)} and~\eqref{e.l.nablaS_tchi_and_nablachi(2)}.
\end{proof}

We now turn to the main objective of this section. 
As discussed around \eqref{e.kantorovich.potential}, we will be interested in the case when $\chi$ is the derivative of $\psi$ at some measure $\bar \nu$, and the main technical ingredients needed in the proofs of our main results involve certain monotonicity properties of $\nabla \chi$ and $\nabla S_t \chi$. More precisely, we are particularly interested in asserting that $\chi$ and $S_t \chi$ are $\R^D_+$-convex, where the notion of $\R^D_+$-convexity is defined as follows. 

\begin{definition}[$\R_+^D$-convexity] \label{d.R^D_+-convexity}
We say that a function $\chi : \R^D_+ \to \R$ is \emph{$\R_+^D$-convex} when for every $x,y,z \in \R_+^D$ we have 
\begin{e} \label{e.R^D_+-convexity}
    \chi(x+y+z) - \chi(x+z) \geq \chi(x+y) - \chi(x).
\end{e}
\end{definition}
When $\chi : \R^D_+ \to \R$ is Lipschitz, one can check that $\chi$ is $\R^D_+$-convex if and only if $\nabla \chi$ is $\R^D_+$-increasing.

Recalling the definition of $\mfk X$ from \eqref{e.frakX=}, we already know from Proposition~\ref{p.diff of psi} that the derivative of $\psi$ at any point in $\mcl P_1(\R^D_+)$ is a function $\chi$ that belongs to $\mfk X$, and this implies in particular that $\chi$ is indeed $\R^D_+$-convex. The main result of this section is as follows.
\begin{proposition}
 \label{p.R^D_+-convexity}
    For every $\chi \in \mathfrak X$ and $t \geq 0$, the function $S_t \chi$ is $\R_+^D$-convex.
\end{proposition}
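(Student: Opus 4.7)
The overall strategy is to exploit the Hamilton--Jacobi interpretation of $(t,x) \mapsto S_t\chi(x)$: formally, $u(t,x) := S_t\chi(x)$ solves $\partial_t u = \xi(\nabla u)$ with $u(0,\cdot) = \chi$, which is the main tool developed in the rest of this section (consistent with the section title). Recall that a smooth function $f:\R^D_+ \to \R$ is $\R^D_+$-convex if and only if all entries of its Hessian are non-negative. So the proof amounts to showing that this entry-wise non-negativity is propagated from $\nabla^2\chi$ at time $0$ to $\nabla^2 S_t\chi$ at time $t$.

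In the smooth, strongly convex setting, the characteristics of the equation $\partial_t u = \xi(\nabla u)$ are the straight lines $X(s) = x_0 - s\,\nabla \xi(p)$, with $p = \nabla \chi(x_0)$ constant along each one. Differentiating the HJ equation twice in $x$ and combining with the characteristic equation yields, along each characteristic, the matrix Riccati ODE
\begin{e*}
    \dot W(s) = W(s)\,A\,W(s), \qquad W(s) := \nabla^2_x u(s, X(s)), \qquad A := \nabla^2 \xi(p).
\end{e*}
Since $\xi$ admits an absolutely convergent power-series expansion with non-negative coefficients (a necessary condition for $\xi$ to be the covariance function of a centered Gaussian field of the form \eqref{e.def H_N}; see \cite[Proposition~6.6]{mourrat2020free}), so do its second partial derivatives, meaning that all entries of $A$ are non-negative at any $p \in \R^D_+$. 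For $\chi \in \mathfrak X$, the initial Hessian $W(0) = \nabla^2\chi(x_0)$ is diagonal with non-negative entries (by separability of $\chi$ and convexity of each $\chi_d$). Writing the ODE entry-wise as $\dot W_{de} = \sum_{j,k} W_{dj} A_{jk} W_{ke}$, we see that whenever $W$ has entry-wise non-negative entries, so does $\dot W$; a standard bootstrap argument (the right-hand side is locally Lipschitz in $W$ and vanishes on the boundary of the positive orthant in the relevant directions) then gives $W(s) \ge 0$ entry-wise for all admissible $s$, establishing $\R^D_+$-convexity of $S_t\chi$ in the smooth setting.

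The general case follows by approximation: mollify each $\chi_d$ separately (preserving the structure of $\mathfrak X$) and replace $\xi$ by $\xi + \eps|\scdot|^2/2$ to enforce smooth strong convexity, apply the smooth case to the regularized data, and pass to the limit, using that the inequalities $S_t\chi(x+y+z) + S_t\chi(x) \ge S_t\chi(x+y) + S_t\chi(x+z)$ defining $\R^D_+$-convexity are closed under pointwise (indeed locally uniform) convergence. The main obstacle will be keeping the HJ analysis rigorous: general $S_t\chi$ can develop shocks where the characteristic picture breaks down, so one must either rely carefully on the approximation scheme (verifying that $S_t\chi^\eps \to S_t\chi$ locally uniformly, which ought to follow from the continuity of the Hopf-Lax formula in its data) or interpret the entry-wise non-negativity of $\nabla^2 u$ in a viscosity or distributional sense. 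A secondary delicate point concerns the constraint $y \in \R^D_+$ in \eqref{e.real.def.Stchi}: when maximizers touch the boundary, the interior first-order conditions of Proposition~\ref{p.nablaS_tchi_and_nablachi} no longer apply directly, so one should either restrict to $x$ in the open half-cone and extend by continuity, or treat boundary optimizers separately via a localization argument.
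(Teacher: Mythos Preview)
Your core mechanism is exactly the one the paper uses: writing the evolution equation for the Hessian and observing that the zero-order term $WAW$ (with $A=\nabla^2\xi(p)$ entry-wise nonnegative) preserves entry-wise nonnegativity of $W$. Where your argument differs from the paper is in how you keep this computation rigorous, and this is where a genuine gap appears.

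The Riccati ODE $\dot W = WAW$ along characteristics blows up in finite time whenever $W(0)$ is not identically zero and $A$ is positive. Concretely, take $D=1$, $\xi(p)=p^2$, and $\chi(y)=y^2/2$ on $[0,1]$, extended affinely for $y\ge 1$ (this lies in $\mathfrak X$). Characteristics from $y\in(0,1)$ carry $W(0)=1$, $A=2$, so $W(s)=1/(1-2s)$ and the characteristic Jacobian $1-t\xi''(p)\chi''(y)$ vanishes at $t=1/2$. After this time $S_t\chi$ is no longer $C^2$, the characteristic map ceases to be a diffeomorphism, and your ODE argument has nothing to say. Neither of the regularizations you propose helps: mollifying each $\chi_d$ does not stop the Hessian from blowing up (it just shifts the blowup time), and replacing $\xi$ by $\xi+\tfrac{\eps}{2}|\cdot|^2$ makes $A$ \emph{larger}, so blowup happens sooner. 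Iterating the semigroup in small steps also fails, since the admissible step size scales like $1/\|\nabla^2 S_t\chi\|_\infty$, which is exactly what diverges.

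The paper's resolution is to introduce a different regularization: the \emph{vanishing viscosity} term $\varepsilon\Delta u$. This makes $u^\varepsilon$ smooth for all $t$, and the Hessian $w=\nabla^2 u^\varepsilon$ now solves the parabolic system
\[
\partial_t w_{dd'} = \varepsilon\Delta w_{dd'} + b\cdot\nabla w_{dd'} + (wAw)_{dd'},
\]
with the same nonnegative nonlinearity $(wAw)_{dd'}$ you identified. The Laplacian prevents blowup and, crucially, makes the linear comparison principle for parabolic equations available: perturbing the initial data by $\alpha\mathbf 1$ with $\alpha>0$, one shows each $w^\alpha_{dd'}$ stays $\ge\alpha$ for all time, then sends $\alpha\to 0$ and finally $\varepsilon\to 0$. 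This replaces your finite-time ODE invariance argument by a global-in-time PDE maximum-principle argument. The paper also handles the boundary of $\R^D_+$ by first extending $\chi$ and $\xi$ to all of $\R^D$ (via $\bar\chi(x)=\chi(x_+)$ and a Lipschitz truncation of $\xi$), so that the viscous equation can be posed on the whole space; this is the clean way to deal with the secondary concern you raise about maximizers touching $\partial\R^D_+$.
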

In order to prove Proposition~\ref{p.R^D_+-convexity}, we will rely on the fact that the mapping $(t,x) \mapsto S_t \chi(x)$ can be interpreted as the solution to a Hamilton--Jacobi equation. We find it clearer to state some results in a slightly more general setting.
Let $H : \R^D_+ \to \R$ be a locally Lipschitz function and $\phi : \R^D_+ \to \R$ be a Lipschitz function. Thanks to \cite{chen2023viscosity}, we know that when $\phi$ and $H$ are $\R^D_+$-increasing, the Hamilton--Jacobi equation
\begin{e}
    \begin{cases}
        \partial_t u - H(\nabla u) = 0 \on (0,+\infty) \times \R^D_+ \\
        u(0,\cdot) = \phi
    \end{cases}
\end{e}
admits a unique viscosity solution. Moreover, by \cite[Proposition~6.2]{chen2023viscosity}, when $H$ is further assumed to be convex and bounded from below, this unique viscosity solution can be represented as the following version of the Hopf-Lax formula 
\begin{e}
\label{e.usual.hopf.lax}
    u(t,x) = \sup_{y \in \R^D_+} \{ \phi(x+y) - (tH)^*(y) \},
\end{e}
where $(tH)^*(x) = \sup_{p \in \R^D_+} \left\{ x \cdot p - tH(p) \right\}$ denotes the convex dual of $tH$ as defined in \eqref{e.convex dual R^D_+}. We show that, under exactly the same assumptions on $H$ and $\phi$, one can represent the solution using another version of the Hopf-Lax representation.
\begin{proposition} \label{p.modified hopf-lax}
    Let $\phi : \R^D_+ \to \R$ be an $\R^D_+$-increasing and Lipschitz function, and let $H : \R^D_+ \to \R$ be a locally Lipschitz function that is $\R^D_+$-increasing, convex and bounded below. The unique $\R^D_+$-increasing and Lipschitz viscosity solution $u$ of 
    \begin{align}
        \begin{cases}
            \partial_t u - H(\nabla u) = 0 \text{ on } (0,+\infty) \times \R^D_+, \\
            u(0,\cdot) = \phi,
        \end{cases}
    \end{align}
    satisfies, for every $(t,x) \in [0,+\infty) \times \R^D_+$,
    \begin{e} \label{e.modified hopf lax}
        u(t,x) = \sup_{x ' \in \R^D_+} \{ \phi(x') - (tH)^*(x'-x) \}.
    \end{e}
\end{proposition}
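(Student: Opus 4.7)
I would deduce the representation \eqref{e.modified hopf lax} from the representation \eqref{e.usual.hopf.lax} recalled just above the statement, which by \cite[Proposition~6.2]{chen2023viscosity} applies verbatim under our hypotheses and yields
\begin{equation*}
u(t,x) = \sup_{y \in \R_+^D}\{\phi(x+y) - (tH)^*(y)\}
\end{equation*}
for the unique $\R_+^D$-increasing Lipschitz viscosity solution $u$. The problem thus reduces to showing that this supremum coincides with $\sup_{x' \in \R_+^D}\{\phi(x') - (tH)^*(x'-x)\}$ for every $(t,x) \in \R_+ \times \R_+^D$.

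One direction is immediate: for every $y \in \R_+^D$, the point $x' := x+y$ lies in $\R_+^D$ and satisfies $y = x'-x$, so the two integrands coincide at these matched points. This yields $u(t,x) \le \sup_{x' \in \R_+^D}\{\phi(x') - (tH)^*(x'-x)\}$.

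For the reverse inequality, given $x' \in \R_+^D$, I would introduce $y := (x'-x)_+ \in \R_+^D$, with the notation of \eqref{e.x_+=}, and proceed in two complementary steps. First, since $H$ is $\R_+^D$-increasing, so is $tH$ for $t \ge 0$; Lemma~\ref{l.H^*(x)=H^*(x_+)} applied with $\varsigma = tH$ then gives $(tH)^*(x'-x) = (tH)^*((x'-x)_+) = (tH)^*(y)$. Second, one has coordinatewise $(x+y)_d = x_d + \max(x'_d - x_d, 0) = \max(x_d, x'_d) \ge x'_d$, so $x+y - x' \in \R_+^D$, and the $\R_+^D$-monotonicity of $\phi$ yields $\phi(x') \le \phi(x+y)$. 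Combining the two reductions gives $\phi(x') - (tH)^*(x'-x) \le \phi(x+y) - (tH)^*(y) \le u(t,x)$, and taking the supremum over $x' \in \R_+^D$ finishes the argument.

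The only place where the structural hypotheses on $H$ and $\phi$ enter non-trivially is the reduction $(tH)^*(x'-x) = (tH)^*(y)$, which encodes the insensitivity of the convex dual of an $\R_+^D$-increasing function to the negative part of its argument. I do not foresee any serious obstacle beyond identifying the correct projection $x' \mapsto x + (x'-x)_+$, which is precisely the one for which the two reductions — Lemma~\ref{l.H^*(x)=H^*(x_+)} on one side and the monotonicity of $\phi$ on the other — push the inequality in the same direction.
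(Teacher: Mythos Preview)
Your proof is correct and essentially identical to the paper's: both start from the standard Hopf-Lax representation \eqref{e.usual.hopf.lax}, obtain $u \le RHS$ by the trivial substitution $x' = x+y$, and then obtain $u \ge RHS$ by setting $y = (x'-x)_+$ and invoking Lemma~\ref{l.H^*(x)=H^*(x_+)} together with the $\R_+^D$-monotonicity of $\phi$. The only difference is cosmetic --- you spell out the coordinatewise verification that $x+y \ge x'$ --- so there is nothing further to add.
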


\begin{proof}
    We let $RHS$ denote the right-hand side in \eqref{e.modified hopf lax}. It is clear from the standard Hopf-Lax representation in \eqref{e.usual.hopf.lax} that $u \leq RHS$. Let $x' \in \R^D_+$, we let $y \in \R^{k}_+$ denote the vector whose coordinates are positive parts of the coordinates of $x'-x$. We have $(tH)^*(y)  =(tH)^*(x'-x)$ by Lemma~\ref{l.H^*(x)=H^*(x_+)}, and since $(x+y)- x' \in \R^D_+$, by monotonicity of $\phi$ we have $\phi(x') \leq \phi(x+y)$. This yields
    \begin{e*}
       u(t,x) \geq \phi(x+y) -(tH)^*(y)  \geq \phi(x') + (tH)^*(x'-x).
    \end{e*}
    Taking the supremum over $x' \in \R^D_+$ in the previous display, we obtain that $u(t,x) \geq RHS$, and thus the result.
\end{proof}

\begin{remark} \label{r.xi.monotonicity}
    The condition \eqref{e.def H_N} puts some constraint on the function $\xi$. In fact, as proved in \cite[Proposition~6.6]{mourrat2020free}, the coefficients in the power series expansion of $\xi$ must be non-negative. This implies that both $\xi$ and $\nabla\xi$ are $\R^D_+$-increasing on $\R^D_+$ and that $\xi$ is $\R^D_+$-convex on $\R^D_+$. In particular, we can indeed apply Proposition~\ref{p.modified hopf-lax} to $H = \xi_{\big|\R^D_+}$ and $\phi = \chi$ for any $\chi \in \mfk X$. \qed
\end{remark}

A first consequence of this remark and the previous proposition is a simple proof of the equality between \eqref{e.def.St1} and \eqref{e.def.St2}. 
\begin{lemma} \label{l.St1=St2}
    Let $\chi : \R^D_+ \to \R$ be a Lipschitz, $\R^D_+$-increasing and convex function. We have 
    \begin{e} \label{e.St1=St2}
       \sup_{y \in \R^D_+} \left\{ \chi(y) - (t\xi)^*(y-x) \right\} =\sup_{p \in \R^D_+} \{ p \cdot x +t \xi(p) - \chi^*(p) \}.
    \end{e}
\end{lemma}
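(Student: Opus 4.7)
The plan is to deduce~\eqref{e.St1=St2} from two Fenchel--Moreau biconjugacy identities, each carried out on the cone $\R^D_+$ rather than on all of $\R^D$. Since $\chi$ is convex, Lipschitz, and $\R^D_+$-increasing, extending it by $+\infty$ outside $\R^D_+$ yields a proper lower semicontinuous convex function whose usual Fenchel conjugate on $\R^D$ coincides with $\chi^*$ from~\eqref{e.convex dual R^D_+}. Applying Fenchel--Moreau, and then using that $\chi^*$ is coordinate-monotone together with Lemma~\ref{l.H^*(x)=H^*(x_+)} to restrict the outer supremum to $\R^D_+$, I would obtain the cone biconjugacy
\begin{equation*}
    \chi(y) = \sup_{p \in \R^D_+} \{ y \cdot p - \chi^*(p) \}, \qquad y \in \R^D_+.
\end{equation*}
Since $\xi$ is convex and $\R^D_+$-increasing on $\R^D_+$ by Remark~\ref{r.xi.monotonicity}, the identical argument applied to $t\xi$ yields
\begin{equation*}
    t\xi(p) = \sup_{z \in \R^D_+} \{ z \cdot p - (t\xi)^*(z) \}, \qquad p \in \R^D_+.
\end{equation*}

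With these two identities in hand, I would substitute the first into the left-hand side of~\eqref{e.St1=St2} and swap the order of the two suprema. The resulting inner supremum is $\sup_{y \in \R^D_+}\{y \cdot p - (t\xi)^*(y-x)\}$, which after the change of variable $z = y-x$ equals $x \cdot p + \sup_{z \geq -x}\{z \cdot p - (t\xi)^*(z)\}$. Using $(t\xi)^*(z) = (t\xi)^*(z_+)$ from Lemma~\ref{l.H^*(x)=H^*(x_+)} together with $p \in \R^D_+$, I would argue that replacing the negative coordinates of $z$ by zero leaves $(t\xi)^*(z)$ unchanged while not decreasing $z \cdot p$, so the constraint $z \geq -x$ is inactive and the inner sup reduces to $\sup_{z \in \R^D_+}\{z \cdot p - (t\xi)^*(z)\}$, which equals $t\xi(p)$ by the second biconjugacy. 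Substituting back reproduces the right-hand side of~\eqref{e.St1=St2}.

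The only real subtlety is the passage between standard Fenchel--Moreau biconjugacy (which uses a supremum over all of $\R^D$) and the conjugates in~\eqref{e.St1=St2}, where the supremum is restricted to $\R^D_+$. This is precisely what Lemma~\ref{l.H^*(x)=H^*(x_+)} is designed to handle; the remaining work is the routine verification that the extensions by $+\infty$ outside $\R^D_+$ are proper, lower semicontinuous, and convex, so that Fenchel--Moreau is applicable. I do not anticipate obstacles beyond this bookkeeping, and in particular this approach is self-contained and does not require invoking the Hamilton--Jacobi interpretation from Proposition~\ref{p.modified hopf-lax}.
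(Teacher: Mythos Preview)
Your approach is correct and follows essentially the same algebraic skeleton as the paper's proof: both arguments substitute the cone Fenchel--Moreau identities for $\chi$ and for $t\xi$, swap the order of the two suprema, and collapse. The paper cites \cite{chen2020fenchel} directly for the cone biconjugacy identities and then, after reaching $\sup_{x' \in \R^D_+}\{\chi(x+x')-(t\xi)^*(x')\}$, invokes Proposition~\ref{p.modified hopf-lax} together with~\eqref{e.usual.hopf.lax} to convert this into the left-hand side of~\eqref{e.St1=St2}. Your version avoids that detour: you handle the passage from $\sup_{z\ge -x}\{\,\cdot\,\}$ to $\sup_{z\in\R^D_+}\{\,\cdot\,\}$ by the same inline use of Lemma~\ref{l.H^*(x)=H^*(x_+)} that underlies the proof of Proposition~\ref{p.modified hopf-lax} itself, so the argument is slightly more self-contained. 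One small wording point: when you write ``using that $\chi^*$ is coordinate-monotone together with Lemma~\ref{l.H^*(x)=H^*(x_+)}'', what is actually needed is to apply Lemma~\ref{l.H^*(x)=H^*(x_+)} to $\varsigma=\chi$ (not to $\chi^*$), which yields $\chi^*(p)=\chi^*(p_+)$; combined with $y\cdot p\le y\cdot p_+$ for $y\in\R^D_+$, this is precisely what lets you restrict the outer supremum to $\R^D_+$. The same remark applies verbatim to the biconjugacy of $t\xi$.
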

\begin{proof}
    The Fenchel-Moreau theorem states that a function $h : \R^D \to \R$ is convex and lower semi-continuous if and only it can be written as the supremum of a family of affine functions. In \cite{chen2020fenchel}, a similar result has been proved for functions on $\R^D_+$, more precisely a function $h: \R^D_+ \to \R$ is convex, lower semi-continuous and $\R^D_+$-increasing if and only if it satisfies 
    \begin{e*}
        h(x) = \sup_{p \in \R^D_+} \left\{ x \cdot p - h^*(p)\right\},
    \end{e*}
    where $h^*$ is defined as in \eqref{e.convex dual R^D_+}. From this, we have that 
    \begin{align*}
        \xi(x) = \sup_{p \in \R^D_+} \left\{ x \cdot p - \xi^*(p)\right\}, \\
        \chi(x) = \sup_{p \in \R^D_+} \left\{ x \cdot p - \chi^*(p)\right\},
    \end{align*}
    where again $\xi^*$ and $\chi^*$ are defined as in \eqref{e.convex dual R^D_+}. Thus, we can write
    \begin{e*} 
    \begin{aligned}
        \sup_{p \in \R^D_+} \{ p \cdot x +t \xi(p) & - \chi^*(p) \} \\
                                               &= \sup_{p \in \R^D_+} \sup_{x' \in \R^D_+} \left\{ p \cdot (x+x') - (t\xi)^*(x') - \chi^*(p) \right\} \\
                                               &= \sup_{x' \in \R^D_+} \sup_{p \in \R^D_+}  \left\{ p \cdot (x+x') - (t\xi)^*(x') - \chi^*(p) \right\} \\
                                               &= \sup_{x' \in \R^D_+}  \left\{\chi(x+x') - (t\xi)^*(x') \right\}.
    \end{aligned}    
    \end{e*}
    The last line is then equal to the left side of \eqref{e.St1=St2} thanks to Proposition~\ref{p.modified hopf-lax} and \eqref{e.usual.hopf.lax}. 
\end{proof}

Another useful consequence of Proposition~\ref{p.modified hopf-lax} is the following result.

\begin{lemma}[Preservation of Lipschitzness by $S_t$]\label{l.S_tLip}
Let $\chi:\R_+^D\to\R$ be $\R_+^D$-increasing and Lipschitz. Let $(S_t)_{t\in\R_+}$ be the semigroup introduced in~\eqref{e.real.def.Stchi} and set $C=\sup_{z\in\R_+^D:\: |z|\leq \|\chi\|_\mathrm{Lip}}|\xi(z)|$. For every $t,t'\in \R_+$ and $x,x'\in\R_+^D$, we have 
\begin{align}
    \Ll|S_t\chi(x) - S_{t'}\chi(x')\Rr|\leq C\Ll|t-t'\Rr| + \|\chi\|_\mathrm{Lip}\Ll|x-x'\Rr|.
\end{align}
\end{lemma}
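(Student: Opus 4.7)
The plan is to bound $|S_t\chi(x) - S_t\chi(x')|$ and $|S_t\chi(x) - S_{t'}\chi(x)|$ separately and combine via the triangle inequality. Throughout I write $L = \|\chi\|_\mathrm{Lip}$. Since $\xi$ restricted to $\R_+^D$ is convex, locally Lipschitz, $\R_+^D$-increasing (Remark~\ref{r.xi.monotonicity}), and bounded below (because $\xi(0) = 0$ combined with $\R_+^D$-monotonicity forces $\xi \geq 0$ on $\R_+^D$), Proposition~\ref{p.modified hopf-lax} applies to $\phi = \chi$ and gives the forward representation
\[
S_t\chi(x) = \sup_{y \in \R_+^D}\{\chi(x+y) - (t\xi)^*(y)\}.
\]
The spatial bound is now immediate: for each fixed $y$, the map $x \mapsto \chi(x+y) - (t\xi)^*(y)$ is $L$-Lipschitz, so taking the sup gives $|S_t\chi(x) - S_t\chi(x')| \leq L|x-x'|$. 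The same argument also shows that $S_t\chi$ is itself $\R_+^D$-increasing and $L$-Lipschitz for every $t \geq 0$.

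For the temporal bound, I would first establish the one-step estimate $0 \leq S_s\chi(x) - \chi(x) \leq sC$, valid for every $s \geq 0$ and $x \in \R_+^D$. The lower bound follows by taking $y = 0$ in the supremum and using that $(s\xi)^*(0) = 0$, which holds because $\xi \geq 0$ on $\R_+^D$. For the upper bound, for $y \in \R_+^D \setminus \{0\}$ I would set $p = Ly/|y| \in \R_+^D$, so that $|p| = L$ and hence $\xi(p) \leq C$ by definition of $C$, which yields
\[
(s\xi)^*(y) \geq p \cdot y - s\xi(p) \geq L|y| - sC.
\]
Combined with the Lipschitz bound $\chi(x+y) - \chi(x) \leq L|y|$, this produces $\chi(x+y) - (s\xi)^*(y) \leq \chi(x) + sC$, and passing to the supremum in $y$ gives the claimed upper bound.

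To promote this from $t' = 0$ to arbitrary $t \geq t' \geq 0$, I would invoke the semigroup property $S_t\chi = S_{t-t'}(S_{t'}\chi)$. This is a consequence of the PDE characterization in Proposition~\ref{p.modified hopf-lax}: by uniqueness of the $\R_+^D$-increasing Lipschitz viscosity solution of $\partial_t u - \xi(\nabla u) = 0$ with initial condition $S_{t'}\chi$ (which lies in the admissible class by the first paragraph), the two functions must coincide. Alternatively, one can verify this directly via the Fenchel conjugate identity
\[
((t+s)\xi)^*(w) = \inf\{(t\xi)^*(y) + (s\xi)^*(z) : y + z = w, \; y, z \in \R_+^D\}, \qquad w \in \R_+^D,
\]
which follows from a routine computation using $\varepsilon$-optimal maximizers. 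Applying the one-step estimate with $\chi$ replaced by $S_{t'}\chi$ then yields
\[
0 \leq S_t\chi(x) - S_{t'}\chi(x) \leq (t-t')C,
\]
and the full bound follows from the triangle inequality. The main technical step is the semigroup property, although it is a standard consequence of the PDE characterization or of a classical Fenchel identity.
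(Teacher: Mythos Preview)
Your proof is correct. The paper's own proof is a two-line citation: it identifies $(t,x)\mapsto S_t\chi(x)$ with the unique viscosity solution via Proposition~\ref{p.modified hopf-lax} and Remark~\ref{r.xi.monotonicity}, and then invokes \cite[Theorem~1.2~(2a)]{chen2023viscosity} for the Lipschitz bounds in both variables.

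Your route is more hands-on and self-contained. The spatial bound via the forward Hopf-Lax form is the standard argument. For the temporal bound, your one-step estimate is clean; the only nontrivial step is the semigroup property. Your Fenchel-identity justification can in fact be made completely elementary: taking $y=\tfrac{t}{t+s}w$ and $z=\tfrac{s}{t+s}w$ gives
\[
(t\xi)^*(y)+(s\xi)^*(z)=t\,\xi^*\!\Big(\tfrac{w}{t+s}\Big)+s\,\xi^*\!\Big(\tfrac{w}{t+s}\Big)=((t+s)\xi)^*(w),
\]
so the infimum is attained and equals the left side, and the reverse inequality is the easy direction you already have. This avoids any appeal to viscosity uniqueness. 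What the paper's approach buys is brevity and uniformity (the cited theorem handles general Hamiltonians on cones at once); what yours buys is that the reader sees exactly where the constants $C$ and $\|\chi\|_{\mathrm{Lip}}$ come from, without opening another paper.
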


\begin{proof}
By Proposition~\ref{p.modified hopf-lax} and Remark~\ref{r.xi.monotonicity}, the mapping $(t,x) \mapsto S_t \chi(x)$ is the unique $\R_+^D$-increasing and Lipschitz viscosity solution of the equation described therein with the choices of $H = \xi_{\big|\R^D_+}$ and $\phi = \chi$. The desired result thus follows from~\cite[Theorem~1.2~(2a)]{chen2023viscosity}.
\end{proof}

Note that when $D = 1$ the notion of $\R_+^D$-convexity coincides with the usual notion of convexity. In this case it is clear from the alternative representation of $S_t \chi$ obtained in Lemma~\ref{l.St1=St2} that, for every $\chi \in \mfk X$, we have that $S_t \chi$ is convex in the usual sense. In particular, when $D=1$ Proposition~\ref{p.R^D_+-convexity} holds. More generally, the following characterization of $\R^D_+$-convexity holds.

\begin{lemma} \label{l.R^D_+-convexity}
    Let $\chi : \R^D_+ \to \R$ be a continuous function that is  $\mcl C^2$ on $(0,+\infty)^D$. We have that $\chi$ is $\R^D_+$-convex if and only if for every $x \in (0,+\infty)^D$, $\nabla^2 \chi(x) \in \R^{D \times D}_+$.
\end{lemma}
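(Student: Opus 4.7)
The plan is to recognize the $\R_+^D$-convexity condition as the non-negativity of the mixed second-order difference $\chi(x+y+z)-\chi(x+y)-\chi(x+z)+\chi(x)$, and to connect this to the entries of the Hessian via second-order Taylor expansion (for the ``only if'' direction) and the fundamental theorem of calculus applied twice (for the ``if'' direction).

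For the forward direction, I would fix $x \in (0,+\infty)^D$ and indices $i, j \in [D]$. Applying Definition~\ref{d.R^D_+-convexity} with $y = s e_i$ and $z = t e_j$ for small $s, t > 0$ (and using $y = z = s e_i$ in the case $i = j$) rearranges into
\begin{equation*}
    \chi(x + s e_i + t e_j) - \chi(x + s e_i) - \chi(x + t e_j) + \chi(x) \geq 0.
\end{equation*}
Since $\chi$ is $\mathcal C^2$ near $x$, expanding each of the four terms to second order around $x$ shows that the left-hand side equals $s t \, \partial_{ij}^2 \chi(x) + o(s^2 + t^2)$, with the linear and pure quadratic terms cancelling exactly. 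Dividing by $st$ and letting $s = t \to 0^+$ gives $\partial_{ij}^2 \chi(x) \geq 0$, which is the desired sign on each entry of $\nabla^2 \chi(x)$.

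For the reverse direction, I would fix $x, y, z \in \R_+^D$ and regularize to the interior: let $\mathbf{1} \in \R_+^D$ denote the all-ones vector, and for $\epsilon > 0$ define $F_\epsilon(s,t) = \chi(x + \epsilon \mathbf{1} + s y + t z)$. For every $(s,t) \in [0,1]^2$ the argument lies in $(0,+\infty)^D$, so $F_\epsilon$ is $\mathcal C^2$ on a neighborhood of $[0,1]^2$. Applying the fundamental theorem of calculus in each variable in turn yields
\begin{equation*}
    F_\epsilon(1,1) - F_\epsilon(1,0) - F_\epsilon(0,1) + F_\epsilon(0,0) = \int_0^1 \int_0^1 \sum_{i,j=1}^D y_i z_j \, \partial_{ij}^2 \chi(x + \epsilon \mathbf{1} + s y + t z) \, \d s \, \d t.
\end{equation*}
Under the hypothesis that every entry of $\nabla^2 \chi$ is non-negative on $(0,+\infty)^D$, and since $y, z \in \R_+^D$, the integrand is non-negative and hence the left-hand side is non-negative. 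Letting $\epsilon \to 0$ and invoking the continuity of $\chi$ on $\R_+^D$ produces the inequality required by Definition~\ref{d.R^D_+-convexity}.

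The main obstacle, which is minor, lies in handling the possibility that any of $x$, $x+y$, $x+z$, or $x+y+z$ may lie on the boundary of $\R_+^D$, where the Hessian is not assumed to exist; the $\epsilon \mathbf{1}$ perturbation pushes the relevant points into the open orthant where the Hessian is defined, and the continuity of $\chi$ on all of $\R_+^D$ allows the inequality to pass to the limit without loss.
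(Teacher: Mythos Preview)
Your proof is correct and follows essentially the same approach as the paper: both directions use the second-order difference $\chi(x+y+z)-\chi(x+z)-\chi(x+y)+\chi(x)$, extracting the Hessian entry via a limit in small $y,z$ for the forward direction, and writing it as the double integral $\int_0^1\int_0^1 y\cdot\nabla^2\chi(x+sy+tz)\,z\,\d s\,\d t$ for the converse. The only cosmetic differences are that the paper works with general $y,z\in\R_+^D$ rather than basis vectors and handles the boundary by first taking $x\in(0,+\infty)^D$ and then invoking continuity, which is equivalent to your $\epsilon\mathbf{1}$ shift.
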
 

\begin{proof}
    Assume that $\chi$ is $\R^D_+$-convex. Applying \eqref{e.R^D_+-convexity} to $(x,ty,sy)$ and letting $s\to0$ and $t\to 0$, we obtain that for every $x \in (0,+\infty)^D$ and every $y,z \in \R^D_+$, 
    \begin{e*}
        y \cdot \nabla^2 \chi(x)z \geq 0.
    \end{e*}
    This exactly means that $\nabla^2\chi(x) \in \R^{D \times D}_+$. Conversely, for $x \in (0,+\infty)^D$ and $y,z \in \R^D_+$ we have 
    \begin{e*}
        (\chi(x+y+z) - \chi(x+z))-(\chi(x+y) - \chi(x)) = \int_0^1 \int_0^1 y \cdot \nabla^2 \chi(x+ty +sz)z \d s \d t.
    \end{e*}
    Thus, if $\nabla^2\chi$ is $\R^{D \times D}_+$-valued, we obtain that the right-hand side of the previous display is nonnegative and thus \eqref{e.R^D_+-convexity} holds. We then obtain \eqref{e.R^D_+-convexity} for $x \in \R^D_+$ by continuity.
\end{proof}
 
To prove Proposition~\ref{p.R^D_+-convexity} we will make use of the following idea. Assume that the map $u : (t,x) \mapsto S_t \chi(x)$ is smooth and let $w = \nabla^2 u$ denote its Hessian. We have 
\begin{equation*}
    \begin{cases}
        \partial_t w = \nabla w \nabla \xi(\nabla u ) + w \nabla^2 \xi(\nabla u) w \text{ on } (0,+\infty) \times \R_+^D \\
        w(0,\cdot) = \nabla^2 \chi.
    \end{cases}
\end{equation*}
By Lemma~\ref{l.R^D_+-convexity}, we have that $w(0,\cdot)$ is $\R^{D \times D}_+$-valued. Furthermore, it can be checked that the PDE in the previous display preserves positivity of the coefficients of its solutions. Indeed, on the right-hand side the first term is a plain transport term and the second term is of the form $f(t,x,w(t,x))$ with $f(t,x,z) \in \R^{D \times D}_+$ when $z \in \R^{D \times D}_+$, this is because the function $\xi$ itself is $\R_+^D$-convex. 

To make this observation rigorous we will have to pay attention to several details. First, we will extend $\xi$ and $\chi$ in order to work with PDEs on $\R^D$ and avoid boundary problems. Second, we will add a regularizing term of the form $\varepsilon \Delta u$ on the right hand-side in the Hamilton--Jacobi equation defining $S_t \chi$, this will yield a family of approximations of $S_t \chi$ that are smooth enough to perform the computations sketched above.  

Observe that every $\chi \in \mfk X$ is $1$-Lipschitz, hence the function $S_t \chi$ is $\R_+^D$-increasing thanks to \eqref{e.def.St1} and $1$-Lipschitz thanks to Lemma~\ref{l.S_tLip}. Thus, changing $\xi$ outside the intersection of $\R_+^D$ with the ball of radius $1$ in $\R^D$ does not affect $S_t\chi$, this is proved rigorously in \cite[Theorem~1.2~(2)~(c)]{chen2023viscosity}. Therefore, we have some freedom on choosing how to define $\xi$ outside of $B(0,1) \cap \R_+^D$, and it will be convenient for our purposes to use this freedom to modify $\xi$ so that it becomes uniformly Lipschitz. 
Let $L$ denote the largest value taken by $|\nabla \xi|$ on the set of $x \in \R^D_+$ such that $\sum_{d =1}^D x_d \leq 2D$. For every $x \in \R_+^D$, let
\begin{e*}
   \xi_{\text{reg}}(x) = \begin{cases}    
                        \max\{\xi(x),\xi(0) + 2L(\sum_{d = 1}^D x_d - D)\} &\text{ if } \sum_{d=1}^D x_d \leq 2D \\
                        \xi(0) + 2L(\sum_{d = 1}^D x_d - D) &\text{ otherwise}.
                        \end{cases}
\end{e*}
It can be checked (see \cite[Proposition~6.8]{mourrat2020free} and \cite[Lemma~4.3]{chen2022hamilton}) that $\xi_{\text{reg}}$ coincides with $\xi$ on $B(0,1) \cap \R_+^D$, that $\xi_{\text{reg}}$ is convex and Lipschitz on $\R_+^D$, and that it is $\R_+^D$-convex and $\R_+^D$-increasing on $\R_+^D$ (this last property is referred to as ``proper'' in the cited references).

We define $\overline{\xi} : \R^D \to \R$ by setting 
\begin{e*}
    \overline \xi(x) = \xi_{\text{reg}}(x_+).
\end{e*}
It can be easily checked that $\overline \xi$ is locally Lipschitz and convex. We let $(\overline S_t)_{t \geq 0}$ denote the semigroup associated to the equation 
\begin{e*}
    \partial_t u - \overline \xi(\nabla u) = 0 \text{ on } (0+,\infty) \times \R^D.
\end{e*}
According to \cite{chen2023viscosity} or \cite{HJbook}, the semigroup $(\overline S_t)_{t \geq 0}$ is well-defined on the set of Lipschitz functions $\R^D \to \R$. 
\begin{lemma} \label{l.removeborder}
    Let $\chi \in \mathfrak X$, define $\overline \chi : \R^D  \to \R$ by setting $\overline \chi(x) = \chi(x_+)$. Then $\overline \chi$ is Lipschitz and convex on $\R^D$ and we have for every $(t,x) \in [0,+\infty) \times \R_+^D$,
    \begin{e}
        \overline S_t \overline \chi(x) = S_t \chi(x).
    \end{e}
\end{lemma}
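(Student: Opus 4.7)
The plan is to express both $\overline S_t \overline\chi$ and $S_t \chi$ via Hopf--Lax formulas on $\R^D$ and $\R_+^D$ respectively, and then identify them using the $\R_+^D$-monotonicity of $\chi$. First, to check the regularity of $\overline\chi$, I would write $\chi(x) = \sum_d \lambda_{\infty,d}\chi_d(x_d)$ and handle each coordinate separately. The one-variable map $t\mapsto \chi_d(t_+)$ is $1$-Lipschitz as a composition of $1$-Lipschitz maps, and it is convex on $\R$: convexity of $t\mapsto t_+$ gives $(\lambda t_1 + (1-\lambda)t_2)_+ \le \lambda(t_1)_+ + (1-\lambda)(t_2)_+$, after which the facts that $\chi_d$ is both increasing and convex on $\R_+$ yield the corresponding Jensen-type inequality after applying $\chi_d$. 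Summing over $d$ gives Lipschitzness and convexity of $\overline\chi$ on $\R^D$.

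Next, I would analyse the convex dual $(t\overline\xi)^*(z) := \sup_{p\in\R^D}\{p\cdot z - t\overline\xi(p)\}$ on $\R^D$. Since $\overline\xi(p) = \xi_{\text{reg}}(p_+)$ is independent of the negative part of $p$, one obtains
\begin{equation*}
(t\overline\xi)^*(z) = \begin{cases} (t\xi_{\text{reg}})^*(z) & \text{if } z \in \R_+^D, \\ +\infty & \text{otherwise.} \end{cases}
\end{equation*}
Indeed, for $z\notin\R_+^D$ one can send $p_d \to -\infty$ along a coordinate $d$ with $z_d < 0$ to make the objective blow up, while for $z\in\R_+^D$ the supremum reduces to the one over $p\in\R_+^D$, which is exactly $(t\xi_{\text{reg}})^*(z)$.

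Now I would combine two Hopf--Lax representations. Since $\overline\chi$ and $\overline\xi$ are both convex and Lipschitz on $\R^D$, the unique Lipschitz viscosity solution is given by $\overline S_t\overline\chi(x) = \sup_{y\in\R^D}\{\overline\chi(y) - (t\overline\xi)^*(y-x)\}$. By the dual computation, for $x\in\R_+^D$ the effective domain reduces to $\{y : y\ge x\}$, on which $y\in\R_+^D$ and $\overline\chi(y) = \chi(y)$, so $\overline S_t\overline\chi(x) = \sup_{y\in\R_+^D,\, y\ge x}\{\chi(y) - (t\xi_{\text{reg}})^*(y-x)\}$. On the other hand, the stability argument recalled just before the lemma lets us compute $S_t\chi$ using $\xi_{\text{reg}}$ in place of $\xi$, and Proposition~\ref{p.modified hopf-lax} (applied to $H = \xi_{\text{reg}}|_{\R_+^D}$) then yields $S_t\chi(x) = \sup_{y\in\R_+^D}\{\chi(y) - (t\xi_{\text{reg}})^*(y-x)\}$. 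To identify the two suprema, given any candidate $y\in\R_+^D$ I would set $y' := \max(y,x)$ coordinate-wise; then $\chi(y')\ge \chi(y)$ by the $\R_+^D$-monotonicity of $\chi$, and Lemma~\ref{l.H^*(x)=H^*(x_+)} applied to $\xi_{\text{reg}}$ gives $(t\xi_{\text{reg}})^*(y-x) = (t\xi_{\text{reg}})^*((y-x)_+) = (t\xi_{\text{reg}})^*(y'-x)$, so $y'$ is a no-worse candidate that lies in the restricted domain $\{y\ge x\}$, and the two expressions coincide.

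The main subtlety I anticipate is that $\overline\xi$ is convex but only linearly growing on $\R^D$, so $(t\overline\xi)^*$ is genuinely $+\infty$ off $\R_+^D$ and the Hopf--Lax formula must be invoked in a form that allows for such infinite dual values; this is standard (e.g.\ via the Hopf formula, using the convexity of $\overline\chi$), but the $\R^D$-boundary bookkeeping in the dual calculation is where one must be careful. Once that representation is established, matching with $S_t\chi$ is routine via $\R_+^D$-monotonicity and Lemma~\ref{l.H^*(x)=H^*(x_+)}.
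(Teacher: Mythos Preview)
Your proposal is correct and follows essentially the same route as the paper: both arguments establish the regularity of $\overline\chi$ coordinate-wise, compare the duals $\overline\xi^*$ and $\xi_{\mathrm{reg}}^*$, and then match the two Hopf--Lax formulas using the $\R_+^D$-monotonicity of $\chi$. Your computation of $(t\overline\xi)^*$ is slightly sharper (showing it equals $+\infty$ off $\R_+^D$, where the paper only records the inequality $\overline\xi^*(y)\ge \xi_{\mathrm{reg}}^*(y_+)$), which lets you restrict the outer supremum more directly, but this is a cosmetic difference; the anticipated subtlety about infinite dual values is harmless, since such points simply do not contribute to the supremum and the standard $\R^D$ Hopf--Lax representation applies as soon as $\overline\xi$ is convex and Lipschitz.
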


\begin{proof}
    In this proof, we let $\xi_\text{reg}^*$ denote the convex dual of $\xi_\text{reg}$ with respect to $\R^D_+$, more precisely 
    \begin{e*}
        \xi^*_\text{reg}(y) = \sup_{x \in \R^D_+} \left\{ x \cdot y - \xi_\text{reg}(x) \right\}.
    \end{e*}
    We also let $\bar \xi^*$ denote the convex dual of $\bar \xi$ with respect to $\R^D$, more precisely
    \begin{e*}
         \bar \xi^*(y) = \sup_{x \in \R^D} \left\{ x \cdot y - \bar \xi(x) \right\}.
    \end{e*}
    %
    
    \medskip
    
    \noindent \emph{Step 1.} We show that $\overline \chi$ is Lipschitz, convex and $\R_+^D$-increasing on $\R^D$.

    Since $\chi \in \mathfrak X$ we have $\overline \chi(x) = \sum_{d=1}^D \overline \chi_d(x_d)$, where $\overline \chi_d$ is the function which coincides with $\chi_d$ on $\R_+$ and is constant taking the value $\chi_d(0) = 0$ otherwise. Therefore it suffices to show that $\overline \chi_d$ is Lipschitz, convex and increasing on $\R$.  Let $\lambda_1 \in \R_+$ and $\lambda_2 \in \R_-$, we have 
    \begin{e*}
        |\overline \chi_d(\lambda_1) - \overline \chi_d(\lambda_2)| = |\chi_d(\lambda_1) - \chi_d(0)| \leq |\lambda_1|.                                                                    
    \end{e*}    
    Now observe that $|\lambda_1| = \lambda_1 \leq \lambda_1 - \lambda_2 = |\lambda_1 - \lambda_2|$, so 
    \begin{e*}
         |\overline \chi_d(\lambda_1) - \overline \chi_d(\lambda_2)| \leq |\lambda_1 - \lambda_2|.
    \end{e*}
    Now observe that the previous display clearly also holds when $\lambda_1,\lambda_2 \in \R_+$ or $\lambda_1,\lambda_2 \in \R_-$, thus $\overline \chi_d$ is Lipschitz. In addition, $\chi_d$ is increasing on $\R$ as a composition of increasing functions and $\overline \chi_d$ is convex on $\R$ as the composition of a convex function and a convex increasing function. In conclusion, $\overline \chi$ is Lipschitz, convex and $\R_+^D$-increasing on $\R^D$.
    
    \medskip
    
    \noindent \emph{Step 2.} We show that for every $x \in \R^D$, $\overline \xi ^*(x) \geq \xi_{\text{reg}}^*(x_+)$ with equality when $x \in \R_+^D$. 

     Let $x \in \R^D$, using Lemma~\ref{l.H^*(x)=H^*(x_+)} in the last line we have 
    \begin{align*}
        \overline \xi^*(x) &= \sup_{y \in \R^D} \left\{ x \cdot y - \xi_{\text{reg}}(y_+) \right\} \\
                               &\geq \sup_{y \in \R_+^D} \left\{ x \cdot y - \xi_{\text{reg}}(y) \right\} \\
                               &= \xi_{\text{reg}}^*(x) \\
                               &= \xi_{\text{reg}}^*(x_+).
    \end{align*}
    This proves the first part of the statement. Now assume that $x \in \R_+^D$. Since $x \cdot y \leq x \cdot y_+$ we have 
    \begin{e*}
        \overline \xi^*(x) = \sup_{y \in \R^D} \left\{ x \cdot y - \xi_{\text{reg}}(y_+) \right\} \leq \sup_{y \in \R^D} \left\{ x \cdot y_+ - \xi_{\text{reg}}(y_+) \right\} = \xi_{\text{reg}}^*(x).
    \end{e*}

    \medskip
    
     \noindent \emph{Step 3.} We show that for every $x \in \R_+^D$, $\overline S_t \overline \chi(x) = S_t \chi(x)$.
     
     Without loss of generality we assume that $t= 1$. Since $\xi_{\text{reg}}$ and $\overline \xi$ are convex on $\R_+^D$ and $\R^D$, we are simply going to verify that the Hopf-Lax representations of $\overline S_1 \overline \chi(x)$ and $S_1 \chi(x)$ coincide. Let $x \in \R^D_+$, thanks to the equality case in the inequality from Step 2 we have 
    \begin{align*}
        \overline S_1 \overline \chi(x) &= \sup_{y \in \R^D} \left\{ \overline \chi(x+y) - \overline \xi^*(y) \right\} \\
                                        &\geq \sup_{y \in \R_+^D} \left\{ \overline \chi(x+y) - \overline \xi^*(y) \right\} \\
                                        &= \sup_{y \in \R_+^D} \left\{ \chi(x+y) - \xi_{\text{reg}}^*(y) \right\} \\
                                        &= S_1 \chi(x).
    \end{align*}
    In addition, thanks to the sub-additivity of the positive part and the inequality from Step 2, we have for every $y \in \R^D$ that
    \begin{align*}
        x+y_+ - (x+y)_+ \in \R_+^D \\
        -\bar \xi^*(y) \leq -\xi_{\text{reg}}^*(y_+).
    \end{align*}
    Since $\overline \chi$ is $\R_+^D$-increasing on $\R^D$ according to Step 1, we have 
    \begin{e*}
        \overline \chi(x+y) -\bar \xi^*(y) \leq \chi(x+y_+) - \xi_{\text{reg}}^*(y_+) \leq S_1 \chi(x).
    \end{e*}
    Taking the supremum over $y \in \R^D$ in the previous display, we obtain $\overline S_1 \overline \chi(x) \leq S_1 \chi(x)$, thus the result.
\end{proof}

We can extend the notion of $\R_+^D$-convexity to functions on $\R^D$ by saying that such a function is $\R_+^D$-convex when \eqref{e.R^D_+-convexity} holds for every $x \in \R^D$ and $y,z \in \R_+^D$.

\begin{definition}[$\R^D_+$-convexity on $\R^D$]
    We say that $\bar \chi : \R^D \to \R$ is $\R^D_+$-convex when for every $x \in \R^D$ and $y,z \in \R_+^D$, we have 
    \begin{e}
       \bar \chi(x+y+z) - \bar \chi(x+z) \geq \bar \chi(x+y) - \bar \chi(x).
    \end{e}
\end{definition}

\begin{remark} \label{r.R^D_+-convexity}
   Reproducing the proof of Lemma~\ref{l.R^D_+-convexity}, one can check that a $\mcl C^2$ function $\bar \chi : \R^D \to \R$ is $\R^D_+$-convex if and only for every $x \in \R^D$, $\nabla^2 \bar \chi(x) \in \R^{D \times D}_+$. \qed
\end{remark}

\begin{lemma} \label{l.R^D_+-convexitysmooth}
    Let $H : \R^D \to \R$ and $g : \R^D \to \R$ be $\mcl C^\infty$ such that all the derivatives of order $\geq 1$ of $H$ are in $L^\infty$ and all the derivatives of order $\geq 2$ of $g$ are in $L^\infty$. Consider the Cauchy problem
    \begin{e} \label{e.vhj}
        \begin{cases}
            \partial_t u^\varepsilon - H(\nabla u^\varepsilon) = \varepsilon \Delta u^\varepsilon \text{ on } (0,+\infty) \times \R^D \\
            u^\varepsilon(0,\cdot) = g.
        \end{cases}
    \end{e}
    There is exactly one function $u^\varepsilon$ that is $\mcl C^\infty$ smooth with bounded derivatives of order $\geq 2$ and that satisfies \eqref{e.vhj}. Furthermore, if $H$ and $g$ are $\R_+^D$-convex, then for every $t \geq 0$, the function $u^\varepsilon(t,\cdot)$ is $\R_+^D$-convex.
\end{lemma}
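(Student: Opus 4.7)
The plan is to obtain existence and smoothness of $u^\varepsilon$ from standard parabolic theory for semilinear equations, and then establish the preservation of $\R_+^D$-convexity by applying an invariant-cone (maximum-principle) argument to the parabolic system satisfied by the Hessian of $u^\varepsilon$.

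For existence, the derivatives of $H$ of order $\geq 1$ are globally bounded, so $\nabla H$ is bounded on $\R^D$ and the semilinear equation \eqref{e.vhj} falls within the standard framework. Writing the equation in Duhamel form
\[ u^\varepsilon(t) = p^\varepsilon_t * g + \int_0^t p^\varepsilon_{t-s} * H(\nabla u^\varepsilon(s)) \, ds, \]
with $p^\varepsilon_t$ the heat kernel of $\varepsilon \Delta$, a Banach fixed point in a space that controls $\nabla^2 u^\varepsilon$ in $L^\infty$ produces a unique solution on a small time interval, which one then iterates. The $C^\infty$ regularity with bounded derivatives of order $\geq 2$ on $[0,T] \times \R^D$ follows from bootstrap, using the smoothing of the heat kernel and the smoothness of $H$ and $g$.

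For the preservation of $\R_+^D$-convexity, set $W_{ij} := \partial_i \partial_j u^\varepsilon$; by Remark~\ref{r.R^D_+-convexity}, the $\R_+^D$-convexity of $u^\varepsilon(t,\cdot)$ is equivalent to $W_{ij}(t,\cdot) \geq 0$ for every $i,j$. Differentiating \eqref{e.vhj} twice in $x$ produces the coupled quasilinear system
\[ \partial_t W_{ij} - \varepsilon \Delta W_{ij} - \nabla H(\nabla u^\varepsilon) \cdot \nabla W_{ij} = \sum_{k,l=1}^D \partial^2_{kl} H(\nabla u^\varepsilon) \, W_{ik}\, W_{jl} =: F_{ij}(W). \]
The $\R_+^D$-convexity of $H$ and of $g$ (via Remark~\ref{r.R^D_+-convexity}) gives that all entries of $\nabla^2 H$ are nonnegative on $\R^D$ and that $W(0,\cdot) = \nabla^2 g$ has all entries nonnegative. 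The key structural observation is that $F_{ij}(W) \geq 0$ as soon as every entry of $W$ is nonnegative: this is the Chueh--Conley--Smoller-type sign condition ensuring invariance under the flow of the cone $\{W \in \R^{D \times D}_{\mathrm{sym}} : W_{ij} \geq 0 \text{ for all } i,j\}$.

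To make this invariant-cone argument rigorous on the unbounded domain $\R^D$, I would regularize by working with $\tilde W^\eta_{ij}(t,x) := W_{ij}(t,x) + \eta e^{Kt}(1+|x|^2)$ for $\eta > 0$ small and $K$ large, chosen in terms of $\varepsilon$, $\|\nabla H\|_\infty$, $\|\nabla^2 H\|_\infty$, and the $L^\infty$ bound on $W$ provided by the first step. The quadratic growth in $x$, combined with the boundedness of $W$, guarantees that any first time $t_0$ at which $\min_{i,j} \inf_x \tilde W^\eta_{ij}$ drops to zero is attained at an interior point $(x_0, i_0, j_0)$; the standard minimum-point inequalities at $(t_0, x_0)$, combined with the nonnegativity of $F_{i_0 j_0}$ up to a first-order-in-$\eta$ error (which holds because every $W_{kl}(t_0, x_0) \geq -\eta e^{Kt_0}(1 + |x_0|^2)$), produce a contradiction once $K$ is large enough to dominate this error and the lower-order terms generated by applying the parabolic operator to the barrier. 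Sending $\eta \to 0$ then yields $W_{ij}(t,x) \geq 0$ everywhere, as required. The main obstacle is this last step: because the equations for the different entries of $W$ are coupled through the nonlinear source, no scalar maximum principle can be applied coordinate by coordinate, and the simultaneous tracking of all entries of $W$ on the unbounded domain is what requires the careful tuning of the barrier and of the constant $K$.
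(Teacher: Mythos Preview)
Your approach is correct and shares the same core idea as the paper's: derive the parabolic system for the Hessian $W=\nabla^2 u^\varepsilon$, observe that the coupling term $F_{ij}(W)=\sum_{k,l}\partial^2_{kl}H(\nabla u^\varepsilon)\,W_{ik}W_{jl}$ is nonnegative on the cone $\{W_{ij}\ge 0\}$ thanks to the $\R_+^D$-convexity of $H$, and conclude by a maximum-principle argument. The existence/uniqueness sketch is also the same (Duhamel plus fixed point, with the $L^\infty$ bound on $\nabla^2 u^\varepsilon$ crucial both for iteration and for the later barrier argument).

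The difference lies in how the positivity of $W$ is made rigorous. The paper perturbs the \emph{initial data}, replacing $g$ by $g_\alpha(x)=g(x)+\alpha(\sum_d x_d)^2$ so that $W^\alpha(0,\cdot)\ge \alpha\mathbf{1}$ strictly; it then defines the last time $T_\alpha$ at which all entries remain nonnegative, drops the (nonnegative) source on $[0,T_\alpha)$, and applies the \emph{scalar} linear comparison principle to each $W^\alpha_{dd'}$ to obtain $W^\alpha_{dd'}\ge\alpha$ there, forcing $T_\alpha=+\infty$; a separate continuity step ($\nabla^2 u_\alpha\to\nabla^2 u$) sends $\alpha\to 0$. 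Your route instead keeps $g$ fixed and uses a space-time barrier $\eta e^{Kt}(1+|x|^2)$, arguing by first touching and estimating $F_{i_0j_0}(W)\ge -C\,B$ from $|W|\le M$ and $W_{kl}\ge -B$, so that choosing $K$ large enough (depending on $\varepsilon$, $\|\nabla H\|_\infty$, $\|\nabla^2 H\|_\infty$, and $M$) yields a contradiction. Your method is slightly more self-contained (no auxiliary PDE for $u_\alpha$ and no convergence step), at the cost of handling the coupled system directly at the touching point rather than reducing to a scalar comparison.
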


The well-posedness of \eqref{e.vhj} is well known in many contexts and is a basic result in the theory of viscosity solutions to Hamilton--Jacobi equations  \cite{evans1980solving,friedman1973cauchy}. For a pedagogical and detailed proof of this result under some slightly different boundedness assumptions, we refer to the lecture notes \cite{benartzi2007viscous}. In the usual approach to well-posedness, the main ingredient is Duhamel's formula which allows to show that the solutions of \eqref{e.vhj} are the fixed points of a certain functional. For the sake of completeness, we explain in the proof below how to adapt the estimates of \cite[Section~3]{benartzi2007viscous} yielding existence of a fixed point.

\begin{proof}[Proof of Lemma~\ref{l.R^D_+-convexitysmooth}]
    Up to modifying $H$ and rescaling time, we can assume without loss of generality that $\varepsilon = 1$. In this proof we will thus not explicitly make the $\varepsilon$ dependence of $\eqref{e.vhj}$ apparent and simply write $u$ instead of $u^\varepsilon$. We let $P$ denote the $D$-dimensional heat kernel, more precisely
    \begin{e*}
        P(t,x) = \frac{1}{\left(4 \pi t\right)^{\frac{D}{2}}} e^{-\frac{|x|^2}{4t}}.
    \end{e*}
    Given two functions $a,b : \R^D \to \R$, we let $a*b$ denote their additive convolution, namely
    \begin{e*}
        a* b(x) = \int_{\R^D} a(y) b(x-y) \d y.
    \end{e*}

    \medskip
    
    \noindent \emph{Step 1.} Existence and uniqueness of solutions to \eqref{e.vhj}.
    
     Given a $\mcl C^\infty$ function $u : \R_+ \times \R^D \to \R$, we let for every $t \geq 0$
    \begin{e*}
        S(u)(t,\cdot)= P(t,\cdot) * g + \int_0^t P(t-s,\cdot) * H(\nabla u(s,\cdot)) \d s.
    \end{e*}
    We let $u^{(-1)} = 0$ and for every $k \geq 0$, $u^{(k)} = S(u^{(k-1)})$. By construction of $S$, for every $k \geq 0$, we have
    \begin{e} \label{e.recurrence pde}
        \begin{cases}
                \partial_t u^{(k)} - \Delta u^{(k)} = H(\nabla u^{(k-1)})   \text{ on } (0,+\infty) \times \R^D \\
                u^{(k)}(0,\cdot) = g.
            \end{cases}
    \end{e}
    Therefore, it suffices to show that the sequence $(u^{(k)})_{k \geq 1}$ converges in an appropriate sense to a function $u$ to get the existence of a solution for \eqref{e.vhj}.
    To do so, let us fix $T > 0$. We will prove that for every $n \geq 2$, the sequence $(\nabla^n u^{(k)})_{k \geq 1}$ is uniformly bounded on $[0,T] \times \R^D$. From this we will deduce that the sequence $((\partial_t,\nabla)^2 u^{(k)})_{k \geq 1}$ is uniformly bounded on $[0,T] \times \R^D$. Since $S(u)(0,\cdot) = g$ and $\nabla S(u)(0,\cdot) = \nabla g$, thanks to the Arzelà--Ascoli theorem, those estimates will yield local uniform convergence up to extraction of the sequence $(\nabla^n u^{(k)})_{k \geq 1}$ for all $n \geq 0$, which will conlude the proof of existence of a solution with bounded derivatives of order $\geq 2$ for \eqref{e.vhj}. 
    
    We define
    \begin{e*}
        L_n(k,t) = \| \nabla^n u^{(k)}(t,\cdot) \|_{L^\infty}.
    \end{e*}
    We now explain how to prove bounds that are uniform in $k \in \N$ and $t \in [0,T]$ for $L_2(k,t)$ following the proof of \cite[Claim~3.6]{benartzi2007viscous}, those estimates can easily be generalized to any $n \geq 2$. Differentiating twice the recurrence relation $u^{(k)} = S(u^{(k-1)})$, we have 
    \begin{align*}
        &\partial_{d}\partial_{d'} u^{(k)} 
        \\
        &= P(t)*  \partial_{d}\partial_{d'} g + \int_0^t \partial_{d'} P(t-s) * \left( \nabla H(\nabla u^{(k-1)})(s,\cdot) \cdot \nabla \partial_d u^{(k-1)}(s,\cdot) \right)\d s
    \end{align*}
    There exists a constant $c > 0$ depending only on $D$ such that $\| \nabla P(t) \|_{L^1} \leq ct^{-1/2}$. Therefore it follows from Young's inequality $\|a*b\|_{L^\infty} \leq \|a\|_{L^1}\|b\|_{L^\infty}$ and the previous display that
    \begin{e*}
        L_2(k,t) \leq \|\nabla^2 g\|_{L^\infty} + c |\nabla H|_\infty \int_0^t \frac{1}{\sqrt{t-s}} L_2(k-1,s) \d s.
    \end{e*}
    After further manipulations of this inequality and induction on $k$, we obtain the existence of a constant $\Lambda > 0$ depending only on $D$ and $|\nabla H|_\infty$ such that 
    \begin{e*}
        L_2(k,t) \leq 2  \|\nabla^2 g\|_{L^\infty} e^{\Lambda t}.
    \end{e*}
    More generally, by induction on $n$ using a similar argument, we can show that for every $T > 0$ and every $n \geq 2$ there exists $C(n,T) > 0$ such that for every $k \in \N$ and every $t \in [0,T]$,
    \begin{e*}
        L_n(k,t) \leq C(n,T).
    \end{e*}
    Those are the announced estimates for $\nabla^n u^{(k)}$. To convert this into estimates for $(\partial_t,\nabla)^2 u^{(k)}$ we use \eqref{e.recurrence pde} to deduce that 
    \begin{align*}
        &\partial_t \nabla u^{(k)} = \nabla^2 u^{(k)} \nabla H(\nabla u^{(k)}) + \nabla \Delta u^{(k-1)}, \\
        &\partial_t \partial_t u^{(k)} = \partial_t \nabla u^{(k)} \cdot \nabla H(\nabla u^{(k)}) + \partial_t \Delta u^{(k-1)}.
    \end{align*}
    In addition, using \eqref{e.recurrence pde} once more we have 
    \begin{align*}
        \partial_t \Delta u^{(k-1)} = \sum_{i} \partial_i \nabla u^{(k-1)} \cdot \nabla ^2 H(u^{(k-1)}) \partial_i \nabla u^{(k-1)} + \nabla H(\nabla u^{(k-1)}) \cdot \nabla \Delta u^{(k-1)}
        \\
        + \Delta \Delta u^{(k-2)}.
    \end{align*}
    Since $\nabla H \in L^\infty$ and $\nabla^2 H \in L^\infty$, it follows from the previous two displays and our previous estimates that $(\partial_t \nabla u^{(k)})_{k \geq 1}$ and $(\partial_t \partial_t u^{(k)})_{k \geq 1}$ are uniformly bounded on $[0,T] \times \R^D$.  Therefore, $((\partial_t, \nabla)^2 u^{(k)})_{k \geq 1}$ is uniformly bounded on $[0,T] \times \R^D$, so $((\partial_t, \nabla) u^{(k)})_{k \geq 1}$ is uniformly Lipschitz on $[0,T] \times \R^D$. Since at $t = 0$, $(\partial_t,\nabla) u^{(k)}(0,\cdot)=  (\Delta g + H(\nabla g),\nabla g)$, we deduce that $((\partial_t, \nabla) u^{(k)})_{k \geq 1}$ converges uniformly on $[0,T] \times \R^D$ thanks to the Arzelà-Ascoli theorem. In addition, it follows from the uniform Lipschitzness that $((\partial_t, \nabla) u^{(k)})_{k \geq 1}$ is locally uniformly bounded on $[0,T] \times \R^D$, so $(u^{(k)})_{k \geq 1}$ is uniformly locally Lipschitz on $[0,T] \times \R^D$. Since $u^{(k)}(0,\cdot) = g$, we can apply the Arzelà-Ascoli theorem once more to obtain local uniform convergence of $(u^{(k)})_{k \geq 1}$ on $[0,T] \times \R^D$.
    This concludes the existence part of the proof. For the uniqueness part we can proceed as in the proof of \cite[Theorem~3.2]{benartzi2007viscous} but using weighted $L^p$ norms instead of the $L^\infty$ norm. 
    
    \medskip
    
    \noindent \emph{Step 2.} Let $g_\alpha(x) = g(x) + \alpha \left( \sum_{d}^D x_d \right)^2$ and let $u_\alpha$ the unique solution of \eqref{e.vhj} with initial condition $g_\alpha$. We show that for every $t \geq 0$, $\nabla^2 u_\alpha(t,\cdot) \to \nabla^2 u(t,\cdot)$ pointwise up to extraction as $\alpha \to 0$.
    
     To prove this, we perform estimates as in Step 1, but this time for 
    \begin{e*}
       \tilde L_n(k,t) = \|\nabla^n u^{(k)}_\alpha(t,\cdot) - \nabla^n u^{(k)}(t,\cdot) \|_{L^p(w)},
    \end{e*}
    with $n = 1,2$, where $L^p(w)$ denotes the $L^p$ space with weight $w(x) = e^{-|x|}$ and $\| f\|_{L^p(w)} = \| fw\|_{L^p}$. In this context we can replace the usual Young inequality by the following weighted version,
    \begin{e*}
        \| a*b \|_{L^p(w)} \leq  \| a \|_{L^1(\overline{w})}\| b \|_{L^p(w)},
    \end{e*}
    where $\overline{w}(x) = e^{|x|}$. This weighted inequality can be deduced from the usual Young inequality after observing that $w(x+y) \leq \overline{w}(x) w(y) $, we refer to \cite[Theorem~2.4]{mourrat2017wellposedness} for a full proof. As previously, this yields estimates that are uniform in $t \in [0,T]$ and $k \in \N$ and that vanish in the limit $\alpha \to 0$. From this, we obtain convergence of $\nabla^2 u_\alpha(t,\cdot) \to \nabla^2 u(t,\cdot)$ in $L^p(w)$ and thus we obtain pointwise convergence after extraction.
    
    \medskip
    
    \noindent \emph{Step 3.} We show that if $H$ and $g$ are $\R_+^D$-convex, then for every $t \geq 0$, $u(t,\cdot)$ is  $\R_+^D$-convex.

    According to Lemma~\ref{l.R^D_+-convexity} and Remark~\ref{r.R^D_+-convexity}, it suffices to show that $w= \nabla^2 u$ is $\R^{D \times D}_+$-valued. To do so, observe that the coefficients of $w$ satisfy a system of semilinear parabolic PDEs. More precisely, for every $d,d' \in \{1,\dots,D\}$ we let $w_{dd'}$ denote the coefficient of index $(d,d')$ of $w$. Then, $w$ is a classical solution of 
    \begin{e*}
        \begin{cases}
            \partial_t w_{dd'} = \Delta w_{dd'} + b(t,x) \cdot \nabla w_{dd'} + f_{dd'}(t,x,w) \text{ on } (0,+\infty) \times \R^D \\
            w(0,\cdot) = \nabla^2 g,
        \end{cases}
    \end{e*}
    where $b(t,x) = \nabla H(\nabla u(t,x))$ and $f_{dd'}(t,x,w) = \left( w \nabla^2 H(\nabla u) w \right)_{dd'}$ is the coefficient of index $(d,d')$ of the matrix $w \nabla^2 H(\nabla u) w$. The key point here is that since $H$ is $\R_+^D$-convex, for $z \in \R_+^{D \times D}$ we have $f_{dd'}(t,x,z) \geq 0$. Using this we are going to show that $w_{dd'}$ is nonnegative.

    Let $\alpha > 0$ and let $w^\alpha = \nabla^2 u_\alpha$ where $u^\alpha$ is defined as in Step 2. The function $w^\alpha$ solves a system of semilinear parabolic PDEs as in the previous display but with different $b$ and $f$ depending on $\alpha$ and with initial condition $w^\alpha(0,\cdot) = \nabla^2 g + \alpha \1$, where $\1$ is the $D \times D$ matrix with all coefficients equal to $1$. Let
    \begin{e*}
        T_\alpha = \sup \left\{t \geq 0\ \big| \  \forall x \in \R^D, \, \forall d,d' \leq D, \, w^\alpha_{dd'}(t,x) \geq 0 \right\}
    \end{e*}
    denote the last time at which all the coefficients of $w^\alpha(t,\cdot)$ are all nonnegative. All the derivatives or order $\geq 2$ of $u^\alpha$ are bounded, thus for every $T > 0$, we have that $w^\alpha(\cdot,x)$ is uniformly Lipschitz in $x$ on $[0,T]$. Thus, it follows from the fact that $w_{dd'}^\alpha(0,x) \geq \alpha$, that $T_\alpha > 0$. Then $w_{dd'}^\alpha$ solves 
    \begin{e*}
        \begin{cases}
            \partial_t w^\alpha_{dd'} \geq \Delta w^\alpha_{dd'} + b_\alpha(t,x) \cdot \nabla w^\alpha_{dd'}  \text{ on } (0,T_\alpha) \times \R^D \\
            w^\alpha_{dd'} \geq \alpha.
        \end{cases}
    \end{e*}
    In particular, we are now working with a plain linear parabolic PDEs. By the comparison principle, we obtain that for every $(t,x) \in (0,T_\alpha) \times \R^D$, $w^\alpha_{dd'}(t,x) \geq \alpha$. Arguing by contradiction, we now assume that $T_\alpha <+\infty$. By continuity, we have $w_{dd'}^\alpha(T_\alpha,x) \geq \alpha$. Using the same argument that yielded $T_\alpha > 0$, we obtain that $w_{dd'}^\alpha$ remains nonnegative for a small time after $T_\alpha$, and this contradicts the definition of $T_\alpha$. In conclusion $T_\alpha = +\infty$ and  for every $(t,x) \in (0,+\infty) \times \R^D$, we have $w^\alpha_{dd'}(t,x) \geq \alpha$. Finally, from Step 2, we have that as $\alpha \to 0$, $w^\alpha_{dd'}(t,x) \to w_{dd'}(t,x)$. Thus, $w$ is $\R^{D \times D}_+$-valued, as desired. 
\end{proof}

\begin{proof}[Proof of Proposition~\ref{p.R^D_+-convexity}]

    By Lemma~\ref{l.removeborder} we know that $S_t \chi$ is the restriction to $\R_+^D$ of $\overline{S}_t \overline{\chi}$, therefore it suffices to prove that  $\overline{S}_t \overline{\chi}$ is $\R_+^D$-convex. We will prove that this result is a consequence of Lemma~\ref{l.R^D_+-convexitysmooth} by performing several approximation procedures.

    First, observe that for every $\delta > 0$ the mollifications $H_\delta = \overline{\xi} * \eta_\delta$ and $g_\delta = \overline{\chi}*\eta_\delta$ are $\R_+^D$-convex and $\mcl C^\infty$. Since $\xi_{\text{reg}}$ is linear outside a compact set, $H_\delta$  satisfies the hypotheses of Lemma~\ref{l.R^D_+-convexitysmooth}. On the other hand, the function $g_\delta$ may have unbounded derivatives of higher order. This can be easily fixed by replacing $\chi$ by $\chi_R$ where, given $R > 0$, we let $\chi_{d,R}(\lambda) = \chi_d(\lambda)$ for $\lambda \leq R$ and $\chi_{d,R}(\lambda)=\chi_d(R) + \lambda-R$ otherwise and $\chi_R = \sum_{d} \chi_{d,R}$. This way we guarantee that outside of a large ball $\overline{\chi_R}$ is linear and thus its derivatives of all orders are bounded. Also note that since each $\chi_d$ is $1$-Lipschitz and convex, the extensions $\chi_{d,R}$ are also $1$-Lipschitz and convex. In particular $\overline \chi_R$ is $\R_+^D$-convex.

\medskip

    \noindent \emph{Step 1.} Let $H$ and $g$ satisfying the hypotheses of Lemma~\ref{l.R^D_+-convexitysmooth} and such that $g$ is Lipschitz. We show that for every $t \geq 0$, $u(t,\cdot)$ is $\R_+^D$-convex, where $u$ is the unique vicosity solution of 
    \begin{e*}
        \begin{cases}
            \partial_t u - H(\nabla u) = 0 \text{ on } (0,+\infty) \times \R^D \\
            u(0,\cdot) = g.
        \end{cases}
    \end{e*}

     It is classical \cite{evans1980solving} that in the limit $\varepsilon \to 0$, the sequence of functions $(u^\varepsilon)_\varepsilon$ from Lemma~\ref{l.R^D_+-convexitysmooth} converges to $u$. Since according to Lemma~\ref{l.R^D_+-convexitysmooth}, for every $t \geq 0$, $u^\varepsilon(t,\cdot)$ is $\R_+^D$-convex, it follows that $u(t,\cdot)$ is $\R_+^D$-convex.

\medskip

    \noindent \emph{Step 2.} We show that for every $t \geq 0$, $u_{R}(t,\cdot)$ is $\R_+^D$-convex, where $u_{R}$ is the unique vicosity solution of 
    \begin{e*}
        \begin{cases}
            \partial_t u_R - \overline{\xi}(\nabla u_R) = 0 \text{ on } (0,+\infty) \times \R^D \\
            u_R(0,\cdot) = \overline{\chi_{R}}.
        \end{cases}
    \end{e*}

     We can molify $\overline{\xi}$ and $\overline{\chi_{R}}$ into $H_\delta$ and $g_{R,\delta}$. According to Step 1, the unique viscosity solution $u_{R,\delta}$ of 
    \begin{e*}
        \begin{cases}
            \partial_t u_{R,\delta} - H_\delta(\nabla u_{R,\delta}) = 0 \text{ on } (0,+\infty) \times \R^D \\
            u_{R,\delta}(0,\cdot) = g_{R,\delta},
        \end{cases}
    \end{e*}
    is $\R_+^D$-convex at every fixed $t$. In addition $H_\delta \to \overline{\xi}$ and $g_{R,\delta} \to \overline{\chi}_R$ locally uniformly as $\delta \to 0$. Therefore, any locally uniform limit of the sequence $(u_{R,\delta})_{\delta}$ must be the unique viscosity solution of the equation defining $u_R$ \cite[Section~6]{guide}. Hence, since the sequence $(u_{R,\delta})_{\delta}$ is uniformly Lipschitz, it follows from the Arzelà--Ascoli theorem that $\delta \to 0$, $u_{R,\delta} \to u_R$ locally uniformly.  The desired result follows.
    
    \medskip
    
    \noindent \emph{Step 3.} We show that $\overline{S}_t \overline{\chi}$ is $\R_+^D$-convex.

     The initial condition $\overline{\chi}_R$ and $\overline{\chi}$ coincide on $B(0,R)$.  Since the Hamilton--Jacobi  equation has finite speed of propagation \cite[Exercise~3.8 and solution]{HJbook},
    we have that $u_R(t,\cdot)$ and $\overline{S}_t \overline{\chi}$ coincide on $B(0,R-t/L)$ where $L$ is a Lipschitz constant of $H$. Hence, we have that $u_R(t,\cdot) \to \overline S_t \overline{\chi}$ pointwise as $R \to +\infty$. This allows to deduce the desired result from Step 2.
\end{proof}

%
%
%
%
%
%

\section{Optimal transport and convex duality}
\label{s.optim}
In this section, we prove the key observations \eqref{e.stchi.to.transport} and \eqref{e.kantorovich.potential}, and use them with convex-duality arguments to show the identity between the variational problems in \eqref{e.convexoptimization} and \eqref{e.convexoptimization2}. 

We recall that $S_t \chi$ was defined in \eqref{e.real.def.Stchi},
and that in \eqref{e.T_t=} we have introduced the quantity 
\begin{align}
    \mcl T_t(\mu,\nu) = \inf_{\pi\in\Pi(\mu,\nu)}\int (t\xi)^*(y-x)\d \pi(x,y),
\end{align}
which can be interpreted as the optimal transport cost from $\mu$ to $\nu$ with cost function $(x,y) \mapsto (t\xi)^*(y-x)$. As explained in \cite[Chapter~1]{villani}, Kantorovich duality yields the dual representation 
\begin{align} \label{e.kantorovich.dual.rep}
    \mcl T_t(\mu,\nu) = \sup_{\chi} \left\{ \int \chi d \nu - \int S_t \chi d\mu \right\},
\end{align}
where the supremum on the right-hand side is taken over all functions $\chi: \R^D_+ \to \R$ that are in $L^1(\nu)$ and are such that $S_t \chi \in L^1(\mu)$. Observe that here $\mcl T_t(\mu,\nu)$ is represented as a supremum of affine functions in $(\mu,\nu)$, in particular this means that the mapping $(\mu,\nu) \mapsto \mcl T_t(\mu,\nu)$ is convex. This dual representation motivates the following definition. 
\begin{definition}[Kantorovich potential] \label{d.kantorovich.potential}
    For every $\mu, \nu \in \mcl P(\R^D_+)$, we say that a function $\chi : \R^D_+ \to \R$ is a Kantorovich potential from $\mu$ to $\nu$ (for the cost function $(x,y) \mapsto (t\xi^*)(y-x)$) when $\chi \in L^1(\nu)$, $S_t \chi \in L^1(\mu)$, and
    \begin{e}
        \mcl T_t(\mu,\nu) = \int \chi d \nu - \int S_t \chi d\mu.
    \end{e}
\end{definition}
We can thus rephrase the identity \eqref{e.kantorovich.potential} as saying that the function $\chi$ is a Kantorovich potential. A key aspect of the proofs of our main results is that the function $\chi$ appearing in \eqref{e.kantorovich.potential} and in the surrounding informal discussion is not only a Kantorovich potential, but also an element of the set~$\mfk X$ defined in \eqref{e.frakX=}.

Recall the collection $\mathfrak X$ defined in~\eqref{e.frakX=}.
For $\psi$ appearing in~\eqref{e.decomp.psi}, we define
\begin{e}\label{e.psi_*=}
    \psi_*(\chi)  = \inf_{\nu \in \mcl P_1(\R_+^D)} \left\{ \int \chi \d \nu - \psi(\nu) \right\},\quad\forall \chi \in \mathfrak X.
\end{e}
For every $(t,\mu) \in \R_+ \times \mcl P_1(\R_+^D)$, we set
\begin{align}
    g(t,\mu) &= \inf_{\chi \in \mathfrak X} \left\{ \int S_t \chi \d\mu - \psi_*(\chi) \right\} \label{e.g(t,mu)=} \\
    h(t,\mu) &= \sup_{ \nu \in \mcl P_1(\R_+^D)} \left\{ \psi(\nu) - \mcl T_t(\mu,\nu) \right\}. \label{e.h(t,mu)=}
\end{align}
\begin{proposition} \label{p.reuninverted}
We have $h = g$ on $\R_+\times \mcl P_1(\R_+^D)$. Moreover, at every $(t,\mu)\in\R_+\times \mcl P_1(\R^D)$, there is a maximizer $\nu$ of $h(t,\mu)$ in~\eqref{e.h(t,mu)=} and a minimizer $\chi$ of $g(t,\mu)$ in~\eqref{e.g(t,mu)=} such that $\chi$ is the derivative of $\psi$ at $\nu$ given by Proposition~\ref{p.diff of psi}.
\end{proposition}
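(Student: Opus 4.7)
The plan is to prove $g = h$ by establishing the two inequalities separately: $g \ge h$ will follow from a direct estimate combining Kantorovich duality and the defining infimum for $\psi_*$, while $g \le h$ will be obtained by a first-order analysis at a maximizer $\bar\nu$ of $h(t, \mu)$, with $\chi$ taken as the derivative of $\psi$ at $\bar\nu$. The edge case $t = 0$ can be disposed of by taking $\bar\nu = \mu$ directly, so I focus on $t > 0$ below.

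For $g \ge h$, the key is that for every $\chi \in \mfk X$, the pointwise bound $\chi(y) - S_t\chi(x) \le (t\xi)^*(y-x)$ from \eqref{e.real.def.Stchi}, integrated against any $\pi \in \Pi(\mu, \nu)$ and optimized over $\pi$, yields $\int \chi \, \d\nu - \int S_t\chi \, \d\mu \le \mcl T_t(\mu, \nu)$. Combining this with $\psi_*(\chi) \le \int \chi\, \d\nu - \psi(\nu)$ (from \eqref{e.psi_*=}) and rearranging, I get $\psi(\nu) - \mcl T_t(\mu, \nu) \le \int S_t\chi \, \d\mu - \psi_*(\chi)$ for every $\chi$ and $\nu$. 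Taking sup over $\nu$ on the left and inf over $\chi$ on the right gives $h \le g$.

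For the reverse inequality I would first secure existence of a maximizer $\bar\nu \in \mcl P_1(\R_+^D)$ of $h(t, \mu)$. Since $\psi$ is $1$-Lipschitz for $\msf W_1$ by \eqref{e.Lip_psi} (so $\psi(\nu) \le \psi(\delta_0) + \int|y|\,\d\nu$), while $(t\xi)^*$ is superlinear on $\R_+^D$ by Lemma~\ref{l.xi* continuous and superlinear}, a marginal estimate forces $\mcl T_t(\mu, \nu)$ to eventually dominate any multiple of $\int|y|\,\d\nu$. Consequently the functional $\nu \mapsto \psi(\nu) - \mcl T_t(\mu, \nu)$ is coercive: any maximizing sequence has uniformly integrable first moments and admits a $\msf W_1$-convergent subsequence by Prokhorov. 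The $\msf W_1$-continuity of $\psi$ together with $\msf W_1$-lower semicontinuity of $\mcl T_t(\mu, \cdot)$ (from its dual representation \eqref{e.kantorovich.dual.rep} as a supremum of $\msf W_1$-continuous affine functionals) then upgrades the limit $\bar\nu$ to a maximizer.

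Finally, I would take $\chi \in \mfk X$ to be the derivative of $\psi$ at $\bar\nu$ given by Proposition~\ref{p.diff of psi} and exploit first-order optimality. Inserting $\nu_\eps = (1-\eps)\bar\nu + \eps\nu$ into $\psi(\nu_\eps) - \mcl T_t(\mu, \nu_\eps) \le \psi(\bar\nu) - \mcl T_t(\mu, \bar\nu)$, expanding $\psi(\nu_\eps) = \psi(\bar\nu) + \eps \int \chi\,\d(\nu - \bar\nu) + o(\eps)$ via Proposition~\ref{p.diff of psi}, bounding $\mcl T_t(\mu, \nu_\eps) \le (1-\eps)\mcl T_t(\mu, \bar\nu) + \eps\mcl T_t(\mu, \nu)$ by convexity of $\mcl T_t(\mu, \cdot)$, then dividing by $\eps$ and letting $\eps \to 0$, produces $\int\chi\,\d\nu - \mcl T_t(\mu, \nu) \le \int\chi\,\d\bar\nu - \mcl T_t(\mu, \bar\nu)$ for every $\nu$. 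Taking sup over $\nu$ and invoking \eqref{e.stchi.to.transport} gives $\int S_t\chi\,\d\mu = \int\chi\,\d\bar\nu - \mcl T_t(\mu, \bar\nu)$. Independently, the concavity of $\psi$ (Proposition~\ref{p.almost_strict_concavity}) together with the derivative formula yields $\psi(\nu) \le \psi(\bar\nu) + \int \chi\,\d(\nu-\bar\nu)$, equivalently $\psi_*(\chi) = \int\chi\,\d\bar\nu - \psi(\bar\nu)$. Substituting both identities into the definition of $g$ gives
\begin{equation*}
g(t,\mu) \le \int S_t\chi\,\d\mu - \psi_*(\chi) = \psi(\bar\nu) - \mcl T_t(\mu, \bar\nu) = h(t,\mu),
\end{equation*}
which closes the inequality and simultaneously identifies $(\bar\nu, \chi)$ as the desired optimizers. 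The main obstacle is the existence step for $\bar\nu$, since one must extract a $\msf W_1$-limit of a maximizing sequence in the unbounded space $\mcl P_1(\R_+^D)$ by balancing the coercivity of $(t\xi)^*$ (which degenerates as $t \to 0^+$) against the linear growth tolerated by the Lipschitz functional $\psi$; once this is handled the convex-duality part of the argument is standard, with \eqref{e.stchi.to.transport} providing the bridge between the transport cost and the Hamilton--Jacobi semigroup.
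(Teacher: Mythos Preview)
Your proposal is correct and, for the $g\le h$ direction, essentially identical to the paper's argument: you take a maximizer $\bar\nu$ of $h(t,\mu)$, set $\chi$ equal to the derivative of $\psi$ at $\bar\nu$, use first-order optimality together with the convexity of $\mcl T_t(\mu,\cdot)$ to obtain $\int S_t\chi\,\d\mu=\int\chi\,\d\bar\nu-\mcl T_t(\mu,\bar\nu)$ (this is exactly Lemma~\ref{l.rel_at_max}), and then identify $\psi_*(\chi)=\int\chi\,\d\bar\nu-\psi(\bar\nu)$ from concavity. The existence of $\bar\nu$ that you sketch is the content of Lemma~\ref{l.exist_max}; your coercivity heuristic is right in spirit, though note that in $\mcl T_t(\mu,\nu)$ the argument $y-x$ of $(t\xi)^*$ need not lie in $\R_+^D$, so the superlinearity from Lemma~\ref{l.xi* continuous and superlinear} has to be channeled through $(t\xi)^*(y-x)=(t\xi)^*((y-x)_+)$ (Lemma~\ref{l.H^*(x)=H^*(x_+)}) and the bound~\eqref{e.|y-x|-|(y-x)_+|}, as the paper does.

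The one genuine difference is your $g\ge h$ direction. The paper obtains it by applying Sion's min-max theorem to the saddle function $(\chi,\nu)\mapsto\int S_t\chi\,\d\mu-\int\chi\,\d\nu+\psi(\nu)$, which yields an intermediate quantity $T_t(\mu,\nu)=\sup_{\chi\in\mfk X}\{\int\chi\,\d\nu-\int S_t\chi\,\d\mu\}$ and then compares $T_t\le\mcl T_t$. Your route is more elementary: the pointwise bound $\chi(y)-S_t\chi(x)\le(t\xi)^*(y-x)$ combined with $\psi_*(\chi)\le\int\chi\,\d\nu-\psi(\nu)$ gives the weak-duality inequality $\psi(\nu)-\mcl T_t(\mu,\nu)\le\int S_t\chi\,\d\mu-\psi_*(\chi)$ directly, with no min-max interchange needed. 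This is a clean simplification; the paper's use of Sion is not essential here, though it does make the duality structure (and the role of $\mfk X$ as the set of Kantorovich potentials) more transparent.
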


To prove this, we start with some preliminary results.

\begin{lemma}[Existence of maximizers in~\eqref{e.h(t,mu)=}]\label{l.exist_max}
For every $t\in\R_+$ and $\mu\in\mcl P_1(\R_+^D)$, the variational formula of $h(t,\mu)$ in~\eqref{e.h(t,mu)=} achieves its maximum at some $\bar \nu\in \mcl P_1(\R_+^D)$.
Moreover, there is an optimal $\pi\in\Pi(\mu,\bar \nu)$ such that $\mcl T_t(\mu,\bar \nu) = \int (t\xi)^*(y-x)\d \pi(x,y)$.
\end{lemma}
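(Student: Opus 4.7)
The plan is to use the direct method of the calculus of variations. First I would check that $h(t,\mu)$ is finite: picking $\nu = \delta_0$ gives $\mcl T_t(\mu,\delta_0) = \int (t\xi)^*(-x)\,\d\mu(x) = \int (t\xi)^*(0)\,\d\mu = 0$ by Lemma~\ref{l.H^*(x)=H^*(x_+)} and the assumption $\xi(0)=0$, so $h(t,\mu) \ge \psi(\delta_0) > -\infty$. Then I would take a maximizing sequence $(\nu_n)_{n\in\N}$, extract a limit $\bar\nu$ in a suitable topology, and show that this limit is a maximizer.

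The key coercivity estimate uses the superlinearity of $(t\xi)^*$ on $\R_+^D$ from Lemma~\ref{l.xi* continuous and superlinear}. For any $M>0$ there exists $C_M>0$ such that $(t\xi)^*(z) \ge M|z| - C_M$ for every $z \in \R_+^D$. Applied to $z = (y-x)_+$ and combined with Lemma~\ref{l.H^*(x)=H^*(x_+)}, this gives $(t\xi)^*(y-x) \ge M|(y-x)_+| - C_M$. A pointwise bookkeeping $|y| \le \sqrt{D}(|(y-x)_+| + |x|)$ valid for $x,y \in \R_+^D$ then yields
\begin{equation*}
  \mcl T_t(\mu,\nu) \;\ge\; \frac{M}{\sqrt{D}} \int |y|\,\d\nu - M \int |x|\,\d\mu - C_M.
\end{equation*}
Combining this with the $1$-Lipschitz bound $\psi(\nu) \le \psi(\delta_0) + \int |y|\,\d\nu$ from~\eqref{e.Lip_psi} and picking $M$ strictly larger than $\sqrt{D}$, I obtain that any maximizing sequence satisfies $\sup_n \int |y|\,\d\nu_n < +\infty$. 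By Prokhorov, a subsequence converges weakly to some $\bar\nu \in \mcl P_1(\R_+^D)$.

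To pass to the limit, I need convergence in $\msf W_1$, not just weak convergence. For this I would show uniform integrability of $|y|$ under the $\nu_n$. Applied to an optimal plan $\pi_n \in \Pi(\mu,\nu_n)$, the superlinear lower bound on $(t\xi)^*$ with arbitrarily large $M$ and the uniform integrability of $|x|$ under $\mu$ gives that $\{|y|\}$ is uniformly integrable under $(\nu_n)$. Combined with weak convergence, this upgrades to $\msf W_1$-convergence along the subsequence. Then \eqref{e.Lip_psi} gives $\psi(\nu_n) \to \psi(\bar\nu)$, and lower semicontinuity of $\nu \mapsto \mcl T_t(\mu,\nu)$ with respect to weak convergence (a standard fact for optimal transport with lower semicontinuous nonnegative cost, since the cost $(x,y) \mapsto (t\xi)^*(y-x)$ is continuous and nonnegative on $\R^D_+\times\R^D_+$) gives $\liminf_n \mcl T_t(\mu,\nu_n) \ge \mcl T_t(\mu,\bar\nu)$. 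Therefore
\begin{equation*}
  h(t,\mu) \;=\; \lim_n \bigl\{\psi(\nu_n) - \mcl T_t(\mu,\nu_n)\bigr\} \;\le\; \psi(\bar\nu) - \mcl T_t(\mu,\bar\nu) \;\le\; h(t,\mu),
\end{equation*}
so $\bar\nu$ is a maximizer.

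For the second assertion, existence of an optimizing $\pi \in \Pi(\mu,\bar\nu)$ is the classical existence result for optimal transport plans: the cost $(x,y) \mapsto (t\xi)^*(y-x)$ is nonnegative and lower semicontinuous, the marginals $\mu,\bar\nu$ are tight, and $\Pi(\mu,\bar\nu)$ is compact for weak convergence, so the functional $\pi \mapsto \int (t\xi)^*(y-x)\,\d\pi$ attains its infimum on $\Pi(\mu,\bar\nu)$; the attained value is finite since $\nu$ ranges over measures with finite first moment and the previous step already shows the cost is finite at the maximizer. The main obstacle in the plan is the coercivity calculation, and specifically upgrading weak convergence to $\msf W_1$-convergence via uniform integrability — the rest is standard.
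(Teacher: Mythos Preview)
Your proposal is correct and follows essentially the same approach as the paper: both use the direct method, with coercivity coming from the superlinearity of $(t\xi)^*$ on $\R_+^D$ combined with Lemma~\ref{l.H^*(x)=H^*(x_+)} and the pointwise inequality~\eqref{e.|y-x|-|(y-x)_+|}, then a uniform-integrability upgrade to $\msf W_1$-convergence, and finally Fatou/lower semicontinuity to pass to the limit. One small correction: the cost $(x,y)\mapsto (t\xi)^*(y-x)$ is in general only lower semicontinuous (continuity requires $\xi$ to be superlinear, cf.\ Lemma~\ref{l.xi* continuous and superlinear}), but lower semicontinuity is all that is needed both for the existence of an optimal plan and for the $\liminf$ inequality, so your argument goes through unchanged.
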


\begin{proof}
\medskip

\noindent \emph{Step~1.}
We show that $\mcl T_t(\mu,\nu)$ and $\msf W_1(\mu,\nu)$ must be bounded for near maximizers $\nu$.
Recall the notation $y_+$ in~\eqref{e.x_+=} for $y\in\R^D$.
According to Lemma~\ref{l.xi* continuous and superlinear}, we have that $\xi^*$ is superlinear on $\R_+^D$. Since $(t\xi)^* = t\xi^*(\scdot/t)$, $(t\xi)^*$ is also superlinear on $\R_+^D$. 
Since $\mcl T_t(\mu,\mu) = 0$, the supremum in ~\eqref{e.h(t,mu)=} can be restricted to those measures $\nu \in \mcl P_1(\R_+^D)$ such that
\begin{align}\label{e.psi(nu)-T_t(mu,nu)>psi(mu)}
    \psi(\nu) - \mcl T_t(\mu,\nu) \geq \psi(\mu).
\end{align}
We start by showing that for every $\nu \in \mcl P_1(\R^D_+)$ satisfying \eqref{e.psi(nu)-T_t(mu,nu)>psi(mu)}, we have that $\msf W_1(\mu,\nu) \leq C$. The Lipschitzness of $\psi$ in \eqref{e.Lip_psi} together with \eqref{e.psi(nu)-T_t(mu,nu)>psi(mu)} yields
\begin{align}\label{e.T_t<W_1}
    \mcl T_t(\mu,\nu) \leq \msf W_1(\mu,\nu) .
\end{align}
For $\eps>0$, let $\pi \in \Pi(\mu,\nu)$ be a nearly optimal coupling for $\mcl T_t(\mu,\nu)$ such that $\int (t\xi)^*(y-x)\d\pi(x,y)\leq \mcl T_t(\mu,\nu)+\eps$.
Let $(X,Y)$ be a random variable with law $\pi$. Then, we can obtain from the above display that
\begin{align}\label{e.E[(txi)^*...]}
    \E \Ll[(t\xi)^*(Y-X)\Rr]\leq \E \Ll[|X-Y|\Rr]+\eps.
\end{align}
We choose $M>1$ and let $R$ be such that~\eqref{e.def_superlinear} holds with $\varsigma$ substituted with $(t\xi)^*$. By this and Lemma~\ref{l.H^*(x)=H^*(x_+)}, we have 
\begin{align}\label{e.(txi)^*(Y-X)geq M...}
    (t\xi)^*(Y-X)\geq M|(Y-X)_+|\qquad\text{on the event $|(Y-X)_+|\geq R$}.
\end{align}
Also by \eqref{e.|y-x|-|(y-x)_+|}, we have
\begin{align}\label{|X-Y|leq |Y-X|-|(}
    |X-Y| = |Y-X|-|(Y-X)_+|+|(Y-X)_+| \leq |X|+|(Y-X)_+|.
\end{align}
Using these and $|(Y-X)_+|\mathds{1}_{|(Y-X)_+|< R}\leq R$, we get
\begin{align*}
    M\E \Ll[|(Y-X)_+|\Rr]-MR &\stackrel{\eqref{e.(txi)^*(Y-X)geq M...}}{\leq}\E \Ll[(t\xi)^*(Y-X)\Rr]\stackrel{\eqref{e.E[(txi)^*...]}}{\leq}\E \Ll[|X-Y|\Rr]+\eps
    \\
    &\stackrel{\eqref{|X-Y|leq |Y-X|-|(}}{\leq} \E[|X|]+\E \Ll[|(Y-X)_+|\Rr] +\eps.
\end{align*}
In the first inequality in the display, we also used that $(t\xi)^*\geq -t\xi(0)=0$.
Rearranging, we arrive at
\begin{align}\label{e.E[|(Y-X)_+|]<...}
    \E\Ll[|(Y-X)_+|\Rr] \leq \frac{\E[|X|]+MR+\eps}{M-1}.
\end{align}
This along with~\eqref{e.|y-x|-|(y-x)_+|} implies that $\E[|Y-X|]$ is bounded by some constant~$C$.
Hence, we conclude that for every $\nu$ satisfying \eqref{e.psi(nu)-T_t(mu,nu)>psi(mu)}, we have $\mcl T_t(\mu,\nu)\leq\msf W_1(\mu,\nu) \leq C$, where we used~\eqref{e.T_t<W_1} in the first inequality.

\medskip

\noindent \emph{Step~2.}
Let $(\nu_n)_{n\in\N}$ be a maximizing sequence of~\eqref{e.h(t,mu)=}, namely,
\begin{align}\label{e.lim-psi(nu_n)...}
    \lim_{n\to\infty} \Ll(\psi(\nu_n)-\mcl T_t(\mu,\nu_n) \Rr)= h(t,\mu).
\end{align}
Our goal is to extract a maximizer from its subsequential limits.
We can choose the sequence $(\nu_n)_{n\in\N}$ such that \eqref{e.psi(nu)-T_t(mu,nu)>psi(mu)} holds for every $n$. By the previous step, we have that $\mcl T_t(\mu,\nu_n)\leq \msf W_1(\mu,\nu_n)\leq C$ for every $n$. Fix any $\eps>0$, since $\mcl T_t(\mu,\nu_n)$ has to be finite, we can choose a nearly optimal coupling $\pi_n\in \Pi(\mu,\nu_n)$ with $\int (t\xi)^*(y-x)\d \pi_n(x,y)\leq \mcl T_t(\mu,\nu_n)+\eps$.
Since $\mu\in\mcl P_1(\R_+^D)$ and $\msf W_1(\mu,\nu_n)~\leq~C$, the first moments of $\nu_n$ are bounded uniformly in $n$. 
Hence, by passing to a subsequence and invoking Skorokhod's representation theorem, we may assume that there is a sequence $(X_n,Y_n)_{n\in\N}$ of random variables satisfying $\mathrm{Law}(X_n,Y_n) = \pi_n$ and that $(X_n,Y_n)$ converges almost surely to some $(X,Y)$. Clearly, we have $\mu=\mathrm{Law}(X)$ since $\mu=\mathrm{Law}(X_n)$ for every $n$.
We need that $(X_n,Y_n)_{n\in\N}$ also converges to $(X,Y)$ in $L^1$, which together with the above follows from the uniform integrability of the second marginal: 
\begin{align}\label{e.limlimsupE[X...]=0}
    \lim_{r\to\infty} \sup_n \E \Ll[|Y_n|\one_{|Y_n|\geq r}\Rr] =0.
\end{align}
We postpone the proof of this fact and first use the $L^1$-convergence to find a maximizer. 

Let $\bar \nu=\mathrm{Law}(Y)$ and consequently, we have $\bar \nu\in\mcl P_1(\R_+^D)$. Then, the $L^1$-convergence together with the Lipschitzness of $\psi$ in \eqref{e.Lip_psi} implies
\begin{align}\label{e.lim-psi(nu_n)...2}
    \lim_{n\to\infty}\psi(\nu_n) =\psi(\bar \nu).
\end{align}
On the other hand, the function $(t\xi)^*$ is lower semi-continuous as a supremum of continuous functions, thus it follows from  Fatou's lemma that (recall that $(t\xi)^*\geq -t\xi(0)= 0$ is bounded from below)
\begin{align*}
    \E\Ll[(t\xi)^*(Y-X)\Rr] \leq \liminf_{n\to\infty} \E\Ll[(t\xi)^*(Y_n-X_n)\Rr],
\end{align*}
which further yields
\begin{align*}
    \mcl T_t(\mu,\bar \nu)\leq \E\Ll[(t\xi)^*(Y-X)\Rr] \leq \liminf_{n\to\infty}\mcl T_t(\mu,\nu_n) + \eps.
\end{align*}
This along with~\eqref{e.lim-psi(nu_n)...2} gives
\begin{align*}
    \psi(\bar \nu) - \mcl T_t(\mu,\bar \nu) 
    \geq \psi(\bar \nu) - \E\Ll[(t\xi)^*(Y-X)\Rr]
    \geq \limsup_{n\to\infty} \psi(\nu_n)- \mcl T_t(\mu,\nu_n)-\eps
    \\
    \stackrel{\eqref{e.lim-psi(nu_n)...}}{=} h(t,\mu)-\eps \stackrel{\eqref{e.h(t,mu)=}}{\geq } \psi(\bar \nu) - \mcl T_t(\mu,\bar \nu) -\eps.
\end{align*}
Since $\eps$ is arbitrary, we conclude that $\bar \nu$ maximizes~\eqref{e.h(t,mu)=} and $\mcl T_t(\mu,\bar \nu)=\E\Ll[(t\xi)^*(Y-X)\Rr]$, implying that $\pi$ is an optimal coupling for $\mcl T_t(\mu,\bar \nu)$.

\medskip

\noindent \emph{Step~3.}
It remains to verify~\eqref{e.limlimsupE[X...]=0}.
We need the following simple fact: for real numbers $r\geq 0$ and $a,b,c\geq0$ satisfying $a\leq b+c$, we have
\begin{align}\label{e.abc_tail}
    a\one_{a\geq 4r} \leq 2 b\one_{b\geq r} + 2 c\one_{c\geq r},
\end{align}
which follows from the fact that
\begin{align*}
    a\one_{a\geq 4r} \leq (b+c)\one_{a\geq 4r} &= b\one_{a\geq 4r,\, b\geq r} + b\one_{a\geq 4r,\, b< r} +  c\one_{a\geq 4r,\, c\geq r} + c\one_{a\geq 4r,\, c< r}
    \\
    &\leq b\one_{b\geq r} + \tfrac{a}{4}\one_{a\geq 4r} +  c\one_{c\geq r} + \tfrac{a}{4}\one_{a\geq 4r}.
\end{align*}
Simple applications of~\eqref{e.abc_tail} are
\begin{align*}
    |Y_n|\one_{|Y_n|\geq 4r} & \leq 2|X_n|\one_{|X_n|\geq r} +  2 |X_n-Y_n|\one_{|X_n-Y_n|\geq r},
    \\
    |X_n-Y_n|\one_{|X_n-Y_n|\geq 4r} &\leq 2|X_n|\one_{|X_n|\geq r} +  2 |(Y_n-X_n)_+|\one_{|(Y_n-X_n)_+|\geq r},
\end{align*}
where we used the triangle inequality in the first line and~\eqref{e.|y-x|-|(y-x)_+|} in the second line. Due to $\mathrm{Law}(X_n) =\mu$ independent of $n$, from the above display, we can see that~\eqref{e.limlimsupE[X...]=0} follows if we can show
\begin{align}\label{e.limlimsupE[Y-X...]=0}
    \lim_{r\to\infty} \sup_n \E \Ll[|(Y_n-X_n)_+|\one_{|(Y_n-X_n)_+|\geq r}\Rr] =0.
\end{align}
By the choice of $\nu_n$ and $\pi_n$, we have $\E\Ll[(t\xi)^*(Y_n-X_n)\Rr]\leq \mcl T_t(\mu,\nu_n)+\eps \leq C+\eps$. Recall that, for any $M>1$, we can find $R$ such that~\eqref{e.(txi)^*(Y-X)geq M...} holds. Hence, for $r>R$, we have
\begin{align*}
    \E \Ll[|(Y_n-X_n)_+|\one_{|(Y_n-X_n)_+|\geq r}\Rr] \leq M^{-1}\E\Ll[(t\xi)^*(Y_n-X_n)\Rr] \leq M^{-1}(C+\eps)
\end{align*}
uniformly in $n$. This implies~\eqref{e.limlimsupE[Y-X...]=0} and completes the proof.
\end{proof}

We now show \eqref{e.stchi.to.transport}. 
\begin{lemma} \label{l.hopf-lax with linear initial condtion}
    Let $\chi : \R_+^D \to \R$ be a Lipschitz and $\R_+^D$-increasing function and let $(S_t)_{t\in\R_+}$ be given as in~\eqref{e.real.def.Stchi}. For every $(t,\mu) \in \R_+ \times \mcl P_1(\R_+^D)$, we have
    \begin{e}\label{e.l.hopf-lax with linear initial condtion}
        \int S_t \chi \d \mu = \sup_{\nu \in \mcl P_1(\R_+^D)} \left\{ \int \chi \d \nu - \mcl T_t(\mu,\nu) \right\}.
    \end{e}
\end{lemma}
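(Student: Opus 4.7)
The plan is to prove \eqref{e.l.hopf-lax with linear initial condtion} by matching the two inequalities separately. The nontrivial direction requires producing a single measure $\nu$ that attains the supremum, constructed via a measurable selection of maximizers in the definition \eqref{e.real.def.Stchi} of $S_t\chi$.

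The bound ``$\geq$'' is immediate. By \eqref{e.real.def.Stchi}, for every $\nu\in\mcl P_1(\R_+^D)$, every $\pi\in\Pi(\mu,\nu)$, and every $(x,y)\in\R_+^D\times\R_+^D$, we have $\chi(y)-(t\xi)^*(y-x)\leq S_t\chi(x)$. Integrating against $\pi$ yields $\int\chi\,d\nu-\int(t\xi)^*(y-x)\,d\pi(x,y)\leq\int S_t\chi\,d\mu$; taking the infimum over $\pi$ and then the supremum over $\nu$ gives the inequality.

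For the opposite bound, the first step is to show that for each $x\in\R_+^D$ the supremum in \eqref{e.real.def.Stchi} is attained and that any maximizer $y$ satisfies a uniform growth bound $|y|\leq C(1+|x|)$, with $C$ depending only on $\chi$ and $t\xi$. Attainment follows since $y\mapsto\chi(y)-(t\xi)^*(y-x)$ is upper semi-continuous ($\chi$ is continuous, and $(t\xi)^*$ is lower semi-continuous as a supremum of continuous affine functions). For the growth bound, Lemma~\ref{l.H^*(x)=H^*(x_+)}, applicable since $t\xi$ is $\R_+^D$-increasing by Remark~\ref{r.xi.monotonicity}, gives $(t\xi)^*(y-x)=(t\xi)^*((y-x)_+)$. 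Comparing the value at a maximizer with the value at $y=x$, using $(t\xi)^*(0)=0$ and the Lipschitz constant $L$ of $\chi$, yields $(t\xi)^*((y-x)_+)\leq\chi(y)-\chi(x)\leq L|y-x|$. Combining this with \eqref{e.|y-x|-|(y-x)_+|}, which gives $|y-x|\leq|x|+|(y-x)_+|$, and using the superlinearity of $(t\xi)^*$ on $\R_+^D$ from Lemma~\ref{l.xi* continuous and superlinear}, a routine estimate forces $|(y-x)_+|\leq R+|x|$ for some $R$, hence $|y|\leq C(1+|x|)$.

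Next, consider the set-valued map $\Psi(x)=\mathrm{argmax}_{y\in\R_+^D}\Ll\{\chi(y)-(t\xi)^*(y-x)\Rr\}$, which has non-empty compact values by the previous step. Its graph is closed in $\R_+^D\times\R_+^D$: the function $(x,y)\mapsto\chi(y)-(t\xi)^*(y-x)$ is upper semi-continuous and $x\mapsto S_t\chi(x)$ is continuous (in fact Lipschitz, by Lemma~\ref{l.S_tLip}), so the superlevel set $\{(x,y):\chi(y)-(t\xi)^*(y-x)\geq S_t\chi(x)\}$ is closed. A classical measurable selection theorem (Kuratowski--Ryll-Nardzewski) then yields a Borel measurable map $\tau:\R_+^D\to\R_+^D$ with $S_t\chi(x)=\chi(\tau(x))-(t\xi)^*(\tau(x)-x)$ for every $x$. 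Setting $\nu=\tau\#\mu$ and $\pi=(\mathrm{Id},\tau)\#\mu\in\Pi(\mu,\nu)$, the growth bound on $\tau$ together with $\mu\in\mcl P_1(\R_+^D)$ guarantees $\nu\in\mcl P_1(\R_+^D)$, and
\begin{equation*}
\int\chi\,d\nu-\mcl T_t(\mu,\nu)\geq\int[\chi(\tau(x))-(t\xi)^*(\tau(x)-x)]\,d\mu(x)=\int S_t\chi\,d\mu,
\end{equation*}
which delivers the reverse inequality. The main technical obstacle is the measurable selection step, since $(t\xi)^*$ is only known to be lower semi-continuous in general; this is handled by verifying that the graph of $\Psi$ is closed, which suffices for the classical selection theorem to apply.
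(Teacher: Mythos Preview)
Your proof is correct, but it takes a genuinely different route from the paper's argument.

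The paper does not use measurable selection at all. Instead it first establishes the identity for finitely supported $\mu$ (where it is a finite-dimensional computation), then approximates a general $\mu\in\mcl P_1(\R_+^D)$ by empirical measures $\frac1K\sum_k\delta_{x_k^K}$, picks near-optimal couplings $\pi^K$ for each approximant, and passes to the limit. The bulk of the work is in controlling the limit: tightness of $(\pi^K)$ is obtained from the same superlinearity estimate you use, Skorokhod's representation theorem is invoked to upgrade weak convergence to almost-sure convergence, uniform integrability of the second marginals is checked (again via superlinearity of $(t\xi)^*$), and finally Fatou's lemma handles the lower-semicontinuous cost. Your argument bypasses this entire limiting procedure by constructing the optimal $\nu$ in one shot as the pushforward $\tau\#\mu$ of a measurable selector.

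What each approach buys: your argument is shorter and produces an \emph{exact} optimizer $\nu$ rather than a subsequential limit, at the price of importing the Kuratowski--Ryll-Nardzewski theorem (and the accompanying verification that the argmax correspondence has closed graph and is locally bounded, hence measurable). The paper's approach is more self-contained and reuses estimates (superlinearity, \eqref{e.|y-x|-|(y-x)_+|}, uniform integrability) that appear elsewhere in the paper anyway, so it fits the surrounding machinery more organically. One small point of care in your write-up: the phrase ``comparing the value at a maximizer with the value at $y=x$'' presupposes existence of the maximizer; it would be cleaner to first derive the a priori bound for any $y$ with $\chi(y)-(t\xi)^*(y-x)\geq\chi(x)$, deduce that the supremum may be restricted to a compact set, and then conclude attainment from upper semi-continuity.
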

\begin{remark} \label{r.refinement} 
    When $\chi \in \mfk X$, we can also write $S_t \chi$ from \eqref{e.real.def.Stchi} as  
    \begin{e}\label{e.r.refinement}
        S_t \chi(x) = \sup_{\substack{x'\,\in\, \R^D_+ \\ x'-x \,\in\, t\nabla \xi([0,1]^D)  } } \left\{ \chi(x') - (t\xi)^*(x'-x) \right\}.
    \end{e}
    To see this, observe first that if $p \in \R^D_+$ is a maximizer in \eqref{e.def.St1}, then $p \in [0,1]^D$. This is because $\chi^*(p) = +\infty$ when $p \in \R^D_+ \setminus [0,1]^D$ (recall that $\chi^*$ is defined as in~\eqref{e.def.chi.*}). Now given $p$ a maximizer in \eqref{e.def.St1}, we have
    \begin{align*}
        S_t \chi(x) &= x \cdot p - \chi^*(p) + t \xi(p) \\
                    &= (x +t \nabla \xi(p))\cdot p - \chi^*(p) - t(\nabla \xi(p) \cdot p - \xi(p)) \\
                    &= (x +t \nabla \xi(p))\cdot p - \chi^*(p) - t \xi^*(\nabla \xi(p)) \\
                    &\leq \sup_{q} \left\{ (x +t \nabla \xi(p))\cdot q - \chi^*(q) \right\} - t \xi^*(\nabla \xi(p)) \\
                    &= \chi(x+t\nabla \xi(p)) - (t \xi)^*(t\nabla \xi(p)).
    \end{align*}
    On the third line, we used that $\xi^*(\nabla \xi(p))=\nabla \xi(p) \cdot p - \xi(p)$, which can be deduced from the definition of $\xi^*$ in~\eqref{e.def.chi.*} (see~\eqref{e.theta=} and~\eqref{e.ttheta=(txi)^*(tnablaxi)}).
    This means that $y = x + t \nabla \xi(p)$ is a maximizer in \eqref{e.real.def.Stchi}, and this proves~\eqref{e.r.refinement}.
    
    Using~\eqref{e.r.refinement} in place of \eqref{e.real.def.Stchi} in the proof of  Lemma~\ref{l.hopf-lax with linear initial condtion} below yields that \eqref{e.l.hopf-lax with linear initial condtion} remains valid when the supremum is taken over $\nu \in \mcl P_1(\R^D_+)$ with the added condition that $\nu$ is supported in $\supp(\mu)+ t\nabla \xi([0,1]^D)$. \qed
\end{remark}
\begin{proof}[Proof of Lemma~\ref{l.hopf-lax with linear initial condtion}]
Using the definition of $S_t$ in~\eqref{e.real.def.Stchi}, we can get
\begin{e*}
    \frac{1}{K} \sum_{k = 1}^K S_t \chi(x_k) = \sup_{y \in (\R^D_+)^K} \left\{ \frac{1}{K} \sum_{k = 1}^K \chi(y_k) - \frac{1}{K} \sum_{k = 1}^K  (t\xi)^*(y_k - x_k) \right\}.
\end{e*}
Let $\Pi^K(x,\scdot)$ be the set of probability measures $\pi$ on $\R_+^D\times \R_+^D$ of the form $\pi = \frac{1}{K} \sum_{k = 1}^K  \delta_{(x_k,y_k)}$ with $y \in (\R_+^D)^K$.
For $\pi \in \Pi^K(x,\scdot)$, let $\pi_2$ be the second marginal of $\pi$ on $\R_+^D$.
The previous display can be written as
\begin{e}\label{e.1/Ksm^KS_tchi=}
    \frac{1}{K} \sum_{k = 1}^K S_t \chi(x_k) = \sup_{\pi \in \Pi^K(x,\scdot)} \left\{ \int \chi d\pi_2 - \int (t\xi)^*(b-a) \d \pi(a,b) \right\}.
\end{e}

Now, we let $\mu \in \mcl P_1(\R_+^D)$ and choose a sequence $(x^K)_{K \geq 1}$ in $(\R_+^D)^K$ such that $\mu$ is limit of $ \frac{1}{K} \sum_{k = 1}^K  \delta_{x^K_k}$ in law. Setting $x = x^K$ in the previous display and letting $K \to +\infty$, we claim that we can get
\begin{e}\label{e.intS_tchi=sup_piinPi}
    \int S_t \chi \d \mu = \sup_{\pi \in \Pi_1(\mu,\scdot)} \left\{ \int \chi d\pi_2 - \int (t\xi)^*(y-x) \d \pi(x,y) \right\} ,
\end{e}
where $\Pi_1(\mu,\scdot)$ denotes the set of probability measures $\pi$ on $\R_+^D \times \R_+^D$ such that $\pi$ has a finite first moment and its first marginal satisfies $\pi_1 = \mu$. 
Assuming the validity of this for now, writing $\Pi_1(\mu,\nu)$ for the set of couplings between $\mu$ and $\nu$, and replacing ``$ \sup_{\pi \in \Pi_1(\mu,\cdot)}$'' by ``$\sup_{\nu \in \mcl P_1(\R_+^D)} \sup_{\pi \in \Pi_1(\mu,\nu)}$'' we obtain~\eqref{e.l.hopf-lax with linear initial condtion} from~\eqref{e.intS_tchi=sup_piinPi} as desired.

Now, we verify~\eqref{e.intS_tchi=sup_piinPi}. Using the definition of $S_t\chi$ in~\eqref{e.real.def.Stchi}, we can deduce that the inequality ``$\geq$'' in~\eqref{e.intS_tchi=sup_piinPi} holds. We focus on the other direction.
Let $x^K$ be given as above~\eqref{e.intS_tchi=sup_piinPi}. 
Let $\pi^K\in\Pi^K(x^K,\scdot)$ be a near maximizer of~\eqref{e.1/Ksm^KS_tchi=} satisfying
\begin{align}\label{e.pi^K_near_optimal}
    \int S_t\chi \d \pi^K_1 \leq K^{-1} +\int\chi\d\pi^K_2 - \int(t\xi)^*(y-x)\d \pi^K(x,y).
\end{align}
Let $(X_K,Y_K)$ be a pair of $\R^D$-valued random variables with law equal to $\pi^K$. Due to the choice of $x^K$, the law of $X_K$ converges to $\mu\in\mcl P_1(\R_+^D)$. By choosing $x^K$ suitably, we may assume that
\begin{align}\label{e.X_K1st_mom}
    \sup_K \E [|X_K|] <\infty.
\end{align}
In the notation of random variables, we can rewrite~\eqref{e.pi^K_near_optimal} as
\begin{align}\label{e.E[S_tchi(X_K)]...}
    \E\Ll[ S_t\chi(X_K)\Rr]\leq K^{-1}+ \E \Ll[\chi(Y_K)\Rr] - \E\Ll[(t\xi)^*(Y_K-X_K)\Rr].
\end{align}
Since $S_t\chi$ and $\chi$ are Lipschitz (see Lemma~\ref{l.S_tLip}), there is a constant $C>0$ such that, for every $x,y$,
\begin{align*}
    |S_t\chi(x)|+|\chi(y)|\leq C(1+|x|+|y-x|).
\end{align*}
This along with~\eqref{e.E[S_tchi(X_K)]...} gives
\begin{align*}
    \E\Ll[(t\xi)^*(Y_K-X_K)\Rr] \leq K^{-1}+ C(1+\E [|X_K|]+\E[|Y_K-X_K|]).
\end{align*}
We can use this condition to substitute the one in~\eqref{e.E[(txi)^*...]} and follow the ensuing steps there (using the superlinearity of $(t\xi)^*$ on $\R^D_+$ by choosing $M$ sufficiently large) to reach an upper bound on $\E[|(Y_K-X_K)_+|]$ similar to the one in~\eqref{e.E[|(Y-X)_+|]<...}. Using~\eqref{e.X_K1st_mom} and~\eqref{e.|y-x|-|(y-x)_+|}, we can deduce
\begin{align}\label{e.supKsupK<}
    \sup_K\E[|Y_K-X_K|]<\infty\qquad \text{and}\qquad \sup_K\E\Ll[(t\xi)^*(Y_K-X_K)\Rr]<\infty.
\end{align}
The first bound in~\eqref{e.supKsupK<} along with~\eqref{e.X_K1st_mom} gives the tightness of $(\pi^K)_{K\in\N}$. By passing to a subsequence and using Skorokhod's representation theorem, we may assume that $(X_K,Y_K)$ converges a.s.\ to some $(X,Y)$ with law $\pi$.

We argue that the convergence also takes place in $L^1$.
It suffices to show that the families $(X_K)_{K}$ and $(Y_K)_{K}$ are uniformly integrable (as in~\eqref{e.limlimsupE[X...]=0}). By choosing $x^K$ suitably, we can verify this for the former. For the latter, the second relation in~\eqref{e.supKsupK<} allows us to repeat the argument for~\eqref{e.limlimsupE[X...]=0} in Step~3 of the proof of Lemma~\ref{l.exist_max}. The extra input is that we need to use the uniform integrability of $(X_K)_{K}$ as their laws are no longer fixed. With this explained, we now have that $(X_K,Y_K)$ converges in $L^1$ to $(X,Y)$.

Due to the assumption on the first marginal, we have $\mathrm{Law}(X)=\mu$ and thus $\pi\in \Pi_1(\mu,\scdot)$. Since $S_t\chi$ and $\chi$ are Lipschitz and thus have linear growths, we can use the dominated convergence theorem to get
\begin{align*}
    \lim_{K\to\infty}\E[S_t\chi(X_K)] = \int S_t\chi \d \mu \quad\text{and}\quad \lim_{K\to\infty}\E[\chi(Y_K)] = \int \chi \d \pi_2.
\end{align*}
The lower semi-continuity of $(t\xi)^*$ and Fatou's lemma (recall that $(t\xi)^*\geq -t\xi(0) = 0$) yield
\begin{align*}
    \int(t\xi)^*(y-x)\d \pi(x,y) \leq \liminf_{K\to\infty} \E\Ll[(t\xi)^*(Y_K-X_K)\Rr].
\end{align*}
Applying these convergences to~\eqref{e.E[S_tchi(X_K)]...}, we get 
\begin{align*}
    \int S_t\chi \d \mu  \leq \int \chi \d \pi_2 -\int(t\xi)^*(y-x)\d \pi(x,y).
\end{align*}
This gives ``$\leq$'' in~\eqref{e.intS_tchi=sup_piinPi} and completes the proof.
\end{proof}

We now show \eqref{e.kantorovich.potential}, which can be interpreted as saying that $\chi$ is a Kantorovich potential from $\mu$ to $\bar \nu$ (see Definition~\ref{d.kantorovich.potential}).
\begin{lemma}\label{l.rel_at_max}
Let $(t,\mu)\in\R_+\times\mcl P_1(\R_+^D)$ and let $\bar \nu$ be a maximizer of $h(t,\mu)$ in~\eqref{e.h(t,mu)=} (which exists by Lemma~\ref{l.exist_max}). Let $\chi \in\mathfrak X$ be the derivative of $\psi$ at $\bar \nu$ given as in Proposition~\ref{p.diff of psi}. We have
\begin{align}\label{e.l.rel_at_max}
    \int \chi \d \bar \nu -\int S_t\chi\d\mu = \mcl T_t(\mu,\bar \nu).
\end{align}
\end{lemma}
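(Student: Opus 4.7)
The plan is to exploit the optimality of $\bar\nu$ together with the convexity of $\nu \mapsto \mcl T_t(\mu,\nu)$ (which is evident from the dual representation \eqref{e.kantorovich.dual.rep}) and the differentiability of $\psi$ at $\bar\nu$, then close the argument using Lemma~\ref{l.hopf-lax with linear initial condtion}. The trick is that one inequality in \eqref{e.l.rel_at_max} comes from a first-order optimality condition at $\bar\nu$, while the other is just the Kantorovich inequality.

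First, I would fix an arbitrary $\nu' \in \mcl P_1(\R_+^D)$ and set $\nu_\eps = (1-\eps)\bar\nu + \eps\nu'$ for $\eps \in (0,1)$. Since $\bar\nu$ maximizes \eqref{e.h(t,mu)=}, we have $\psi(\bar\nu) - \mcl T_t(\mu,\bar\nu) \geq \psi(\nu_\eps) - \mcl T_t(\mu,\nu_\eps)$. Convexity of $\mcl T_t(\mu,\cdot)$ in its second argument (which follows from its representation as a supremum of affine functions in \eqref{e.kantorovich.dual.rep}) gives $\mcl T_t(\mu,\nu_\eps) \leq (1-\eps)\mcl T_t(\mu,\bar\nu) + \eps\,\mcl T_t(\mu,\nu')$. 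Plugging this into the previous inequality and simplifying yields
\begin{equation*}
    \psi(\nu_\eps) - \psi(\bar\nu) \leq \eps\bigl(\mcl T_t(\mu,\nu') - \mcl T_t(\mu,\bar\nu)\bigr).
\end{equation*}
By Proposition~\ref{p.diff of psi}, the left-hand side equals $\eps \int \chi\, \d(\nu' - \bar\nu) + o(\eps)$; dividing by $\eps$ and sending $\eps \to 0^+$ gives
\begin{equation*}
    \int \chi\, \d \bar\nu - \mcl T_t(\mu,\bar\nu) \geq \int \chi\, \d \nu' - \mcl T_t(\mu,\nu').
\end{equation*}

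Next, since $\nu' \in \mcl P_1(\R_+^D)$ was arbitrary, taking the supremum over $\nu'$ and invoking Lemma~\ref{l.hopf-lax with linear initial condtion} (applicable because $\chi \in \mathfrak X$ is Lipschitz and $\R_+^D$-increasing) yields
\begin{equation*}
    \int \chi\, \d \bar\nu - \mcl T_t(\mu,\bar\nu) \geq \sup_{\nu' \in \mcl P_1(\R_+^D)}\Bigl\{\int \chi\, \d \nu' - \mcl T_t(\mu,\nu')\Bigr\} = \int S_t\chi\, \d\mu.
\end{equation*}
The reverse inequality follows immediately by choosing $\nu' = \bar\nu$ inside the same supremum: Lemma~\ref{l.hopf-lax with linear initial condtion} gives $\int S_t\chi\, \d\mu \geq \int \chi\, \d\bar\nu - \mcl T_t(\mu,\bar\nu)$. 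Combining both directions yields \eqref{e.l.rel_at_max}.

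The only slightly delicate point is ensuring that each ingredient applies cleanly: the differentiation along the affine segment $\nu_\eps$ requires that all the measures involved lie in $\mcl P_1(\R_+^D)$, which is clear; the convexity of $\mcl T_t(\mu,\cdot)$ is transparent from \eqref{e.kantorovich.dual.rep}; and the hypotheses $\chi \in \mathfrak X$ ensure $\chi$ is Lipschitz and $\R_+^D$-increasing so that Lemma~\ref{l.hopf-lax with linear initial condtion} may be invoked with the roles of $\mu, \nu'$. No serious obstacle arises — this is purely a first-order optimality computation meeting a duality identity.
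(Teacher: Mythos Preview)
Your proof is correct and follows essentially the same approach as the paper's: both combine the first-order optimality condition at $\bar\nu$ (via the convexity of $\mcl T_t(\mu,\cdot)$ and the differentiability of $\psi$ from Proposition~\ref{p.diff of psi}) with Lemma~\ref{l.hopf-lax with linear initial condtion} to identify the resulting supremum as $\int S_t\chi\,\d\mu$. The only cosmetic difference is that the paper applies the secant inequality for the convex function $\mcl T_t(\mu,\cdot)$ directly, whereas you route the same estimate through the convex-combination bound $\mcl T_t(\mu,\nu_\eps)\le(1-\eps)\mcl T_t(\mu,\bar\nu)+\eps\,\mcl T_t(\mu,\nu')$; these are equivalent manipulations.
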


\begin{proof}
For every $\nu \in \mcl P_1(\R_+^D)$ and $\lambda\in (0,1]$, the convexity of $\mcl T_t(\mu,\scdot)$ implies
\begin{align*}
    \mcl T_t(\mu,\nu) -\mcl T_t(\mu,\bar \nu) 
    &\geq \frac{1}{\lambda}\Ll(\mcl T_t(\mu,\bar \nu+\lambda(\nu-\bar \nu)) -\mcl T_t(\mu,\bar \nu)\Rr)
    \\
    &\geq  \frac{1}{\lambda}\Ll(\psi(\bar \nu+\lambda(\nu-\bar \nu)) -\psi(\bar \nu)\Rr)
\end{align*}
where the second inequality follows from maximality at $\bar \nu$ of the formula for $h(t,\mu)$ (see~\eqref{e.h(t,mu)=}).
Using the differentiability of $\psi$ in Proposition~\ref{p.diff of psi} and sending $\lambda$ to zero, we get
\begin{align*}
    \mcl T_t(\mu,\nu) -\mcl T_t(\mu,\bar \nu)\geq \int \chi\d (\nu - \bar \nu)
\end{align*}
for any $\nu \in \mcl P_1(\R_+^D)$, which implies
\begin{align*}
    \int \chi \d \bar \nu - \mcl T_t(\mu,\bar \nu) = \sup_{\nu \in \mcl P_1(\R_+^D)}\Ll\{\int \chi \d \nu - \mcl T_t(\mu,\nu)\Rr\}.
\end{align*}
By Lemma~\ref{l.hopf-lax with linear initial condtion}, we can recognize the right-hand side as $\int S_t\chi \d \mu$, which verifies~\eqref{e.l.rel_at_max}. 
\end{proof}

We are now ready to show that the variational problem in \eqref{e.convexoptimization2} is dual to that in \eqref{e.convexoptimization}, that is, that $g = h$. 
\begin{proof}[Proof of Proposition~\ref{p.reuninverted}]
We will proceed by double inequality. 

First, we show $h \geq g$. Allowed by Lemma~\ref{l.exist_max}, let $\bar \nu \in \mcl P_1(\R_+^D)$ be such that 
\begin{e}\label{e.t.reuninverted.1}
    h(t,\mu) = \psi(\bar \nu) - \mcl T_t(\mu,\bar \nu).
\end{e}
By Lemma~\ref{l.rel_at_max}, we have
\begin{e*}
    \mcl T_t(\mu,\bar \nu) = \int \chi \d \bar \nu - \int S_t \chi \d \mu
\end{e*}
where $\chi$ is the derivative of $\psi$ at $\bar \nu$ given as in Proposition~\ref{p.diff of psi}.
The above two displays together yield
\begin{e}\label{e.h=psi-chi+Schi}
    h(t,\mu) = \psi(\bar \nu) - \int \chi d\bar \nu + \int S_t \chi \d\mu.
\end{e}
On the other hand, the concavity of $\psi$ given in Lemma~\ref{l.psi is concave} implies that, for any $\nu\in\mcl P_1(\R_+^D)$ and $\lambda \in (0,1]$, we have 
\begin{align*}
    \frac{1}{\lambda}\Ll(\psi\Ll(\lambda\nu+(1-\lambda)\bar \nu\Rr)-\psi(\nu)\Rr) \geq \psi(\nu)-\psi(\bar \nu).
\end{align*}
Sending $\lambda\to0$ and using Proposition~\ref{p.diff of psi}, for the same $\chi$ as above, we get $\int \chi \d (\nu - \bar \nu) \geq \psi(\nu) - \psi(\bar \nu)$.
Rearranging and taking the infimum over $\nu\in\mcl P_1(\R_+^D)$, we obtain $\psi_*(\chi) = \int \chi d\bar \nu - \psi(\bar \nu)$ due to its definition in~\eqref{e.psi_*=}.
This along with~\eqref{e.h=psi-chi+Schi} gives
\begin{e}\label{e.t.reuninverted.2}
    h(t,\mu) = \int S_t \chi \d \mu - \psi_*(\chi) \stackrel{\eqref{e.g(t,mu)=}}{\geq} g(t,\mu).
\end{e}
Next, we show $g\geq h$. 
According to the Arzelà--Ascoli theorem, $\mfk X$ is compact for the topology of local uniform convergence. We can thus use Sion's min-max theorem to obtain that
\begin{align*}
    g(t,\mu) &= \inf_{\chi \in \mathfrak X} \left\{ \int S_t \chi \d \mu - \psi_*(\chi) \right\} \\
             &= \inf_{\chi \in \mathfrak X} \sup_{\nu \in \mcl P_1(\R_+^D)} \left\{ \int S_t \chi \d \mu - \int \chi \d \nu + \psi(\nu) \right\} \\ 
             &= \sup_{\nu \in \mcl P_1(\R_+^D)} \inf_{\chi \in \mathfrak X}  \left\{ \int S_t \chi \d \mu - \int \chi \d \nu + \psi(\nu) \right\} \\
             &= \sup_{\nu \in \mcl P_1(\R_+^D)} \left\{\psi(\nu) - T_t(\mu,\nu) \right\},
\end{align*}
where $T_t(\mu,\nu) = \sup_{\chi \in \mathfrak X} \left\{ \int \chi \d \nu - \int S_t \chi \d \mu \right\}$ is to be compared with $\mcl T_t(\mu,\nu)$ defined in~\eqref{e.T_t=}.
The definition of $S_t\chi$ in~\eqref{e.real.def.Stchi} implies that $\chi(x)-S_t\chi(y)\leq (t\xi)^*(y-x)$ for every $x,y$.
Then, the Kantorovich duality \eqref{e.kantorovich.dual.rep} implies $T_t(\mu,\nu) \leq \mcl T_t(\mu,\nu)$ and thus 
\begin{e*}
    g(t,\mu) \geq \sup_{ \nu \in \mcl P_1(\R_+^D)} \left\{ \psi(\nu) - \mcl T_t(\mu,\nu) \right\} \stackrel{\eqref{e.h(t,mu)=}}{=} h(t,\mu),
\end{e*}
which completes the proof of $h=g$.
The additional statements on optimizers follow from $h=g$, \eqref{e.t.reuninverted.1}, and~\eqref{e.t.reuninverted.2}. 
\end{proof}

To close this section, we record the following useful result.

\begin{lemma}[Lipschitzness of $h$]\label{l.Lip_h}
Let $h$ be given as in~\eqref{e.h(t,mu)=}. For every $t,t'\in\R_+$ and $\mu,\mu'\in \mcl P_1(\R_+^D)$, we have
\begin{align}
    \Ll|h(t,\mu) - h(t',\mu')\Rr|\leq C\Ll|t-t'\Rr| + \msf W_1(\mu,\mu'),
\end{align}
where $C=\sup_{z\in\R_+^D:\: |z|\leq 1}|\xi(z)|$.
\end{lemma}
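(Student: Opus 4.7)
The plan is to leverage the dual representation $h = g$ established in Proposition~\ref{p.reuninverted}, in which the inner term $-\psi_*(\chi)$ of
\[
g(t,\mu) = \inf_{\chi \in \mathfrak X}\Bigl\{\int S_t\chi \, \d\mu - \psi_*(\chi)\Bigr\}
\]
is independent of $(t,\mu)$. The elementary fact that the infima of two functions differ by at most the uniform norm of their difference then reduces the task to establishing
\[
\sup_{\chi \in \mathfrak X}\Bigl|\int S_t\chi \, \d\mu - \int S_{t'}\chi \, \d\mu'\Bigr| \leq C|t-t'| + \msf W_1(\mu,\mu').
\]

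First I would verify that every $\chi \in \mathfrak X$ is $1$-Lipschitz on $\R_+^D$ with respect to the Euclidean norm. Writing $\chi(x) = \sum_d \lambda_{\infty,d}\chi_d(x_d)$ with each $\chi_d$ being $1$-Lipschitz on $\R_+$, Cauchy--Schwarz together with $\sum_d \lambda_{\infty,d} = 1$ (which follows from $(I_{N,d})_d$ being a partition and passing to the limit via~\eqref{e.lambda_infty}) yields
\[
|\chi(x) - \chi(y)| \leq \sum_d \lambda_{\infty,d}|x_d - y_d| \leq \Bigl(\sum_d \lambda_{\infty,d}^2\Bigr)^{1/2}|x-y| \leq |x-y|.
\]
Applying Lemma~\ref{l.S_tLip} with $\|\chi\|_\mathrm{Lip} \leq 1$ then gives the uniform estimate
\[
|S_t\chi(x) - S_{t'}\chi(x')| \leq C|t-t'| + |x-x'|, \qquad \forall \chi \in \mathfrak X,
\]
with $C = \sup_{z \in \R_+^D: |z| \leq 1}|\xi(z)|$, matching the constant in the statement.

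Finally, splitting the difference via a triangle inequality, I would bound the time part by integrating the pointwise estimate $|S_t\chi(x) - S_{t'}\chi(x)| \leq C|t-t'|$ against $\mu$, and the measure part by Kantorovich--Rubinstein duality, which, since $S_{t'}\chi$ is $1$-Lipschitz, gives $|\int S_{t'}\chi \, \d\mu - \int S_{t'}\chi \, \d\mu'| \leq \msf W_1(\mu,\mu')$. Summing these two bounds produces the required uniform estimate, which combined with $h = g$ completes the proof. I do not anticipate any serious obstacle; the argument is a routine consequence of the dual representation together with the Lipschitz properties already developed in Lemma~\ref{l.S_tLip}.
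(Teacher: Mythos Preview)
Your proposal is correct and follows essentially the same approach as the paper: both use the identity $h=g$ from Proposition~\ref{p.reuninverted}, the fact that every $\chi\in\mathfrak X$ is $1$-Lipschitz, and Lemma~\ref{l.S_tLip}. The only cosmetic differences are that the paper picks a single minimizing $\chi$ (whose existence is also asserted in Proposition~\ref{p.reuninverted}) and uses an optimal coupling directly, whereas you bound uniformly over $\chi\in\mathfrak X$ and invoke Kantorovich--Rubinstein duality; these are equivalent packagings of the same argument.
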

\begin{proof}
By Proposition~\ref{p.reuninverted}, we can work with $g$ given by~\eqref{e.g(t,mu)=} instead of $h$. Fix any $t,t',\mu,\mu'$. By the same proposition, there is $\chi\in \mathfrak X$ such that $g(t,\mu) = \int S_t\chi\d\mu- \psi_*(\chi)$. From the definition of $\mathfrak X$ in~\eqref{e.frakX=}, we see that $\|\chi\|_\mathrm{Lip}\leq 1$. Let $\pi\in\Pi(\mu,\mu')$ be an optimal coupling for $\msf W_1(\mu,\mu')$ and let $(X,X')$ satisfy $\mathrm{Law}(X,X')=\pi$. We have
\begin{align*}
    g(t',\mu') -g(t,\mu) \stackrel{\eqref{e.g(t,mu)=}}{\leq}\int S_{t'}\chi \d \mu' - \int S_t\chi \d \mu
    \leq \E \Ll[\Ll|S_{t'}\chi(X')-S_t\chi(X)\Rr|\Rr] 
    \\
    \leq C|t-t'|+  \E \Ll[|X'-X|\Rr] =C|t-t'| + \msf W_1(\mu,\mu'),
\end{align*}
where the last inequality follows from Lemma~\ref{l.S_tLip}. This implies the desired result.
\end{proof}

%
%
%
%
%
%

\section{Proofs of the main results} 
\label{s.proofs.main}

In this section, we complete the proofs of Theorems~\ref{t.unique_parisi}, \ref{t.convexoptimization}, and \ref{t.hopflike}. We also spell out the validity of Theorem~\ref{t.parisi.basic}. Recall from \eqref{e.h(t,mu)=} that we have set, for every $(t,\mu) \in \R_+ \times \mcl P_1(\R_+^D)$,
\begin{equation*}  
h(t,\mu) = \sup_{ \nu \in \mcl P_1(\R_+^D)} \left\{ \psi(\nu) - \mcl T_t(\mu,\nu) \right\}.
\end{equation*}
Recall also the notation $\cL_\sq$ from \eqref{e.L_q=}, and that for every $\mu \in \mcl P^\upa(\R^D_+)$, there exists a unique $\sq \in \mcl Q$ such that $\mu = \mcl L_\sq$. 
The key step towards the derivation of our main results is as follows. 
\begin{proposition}\label{p.h=hopf-lax_on_Q}
Assume that 
$\xi$ is strongly convex on $\R^D_+$. 
Let $h$ be given by~\eqref{e.h(t,mu)=}. For every $t\in\R_+$, $\sq \in \mcl Q_\infty$, and for  $\mu =\cL_\sq\in \mcl P^\upa_\infty(\R_+^D)$, we have
\begin{align}
    h(t,\mu) &= \sup_{\nu\in\mcl P^\upa_\infty(\R_+^D)}\Ll\{\psi(\nu) - \mcl T_t(\mu,\nu)\Rr\}\label{e.p.h=hopf-lax_on_Q(1)}
    \\
    &= \sup_{\sp \in \mcl Q_\infty}\Ll\{\psi \Ll(\cL_{\sq+t\nabla\xi(\sp)}\Rr) - t\int_0^1 \theta\, \d\cL_\sp\Rr\}. \label{e.p.h=hopf-lax_on_Q(2)}
\end{align}
Moreover, the suprema in~\eqref{e.p.h=hopf-lax_on_Q(1)} and~\eqref{e.p.h=hopf-lax_on_Q(2)} are achieved.
\end{proposition}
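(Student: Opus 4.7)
My plan is to establish the inequalities
\begin{equation*}
\sup_{\sp \in \mcl Q_\infty}\Bigl\{\psi\bigl(\cL_{\sq+t\nabla\xi(\sp)}\bigr) - t\!\int \theta\, \d\cL_\sp\Bigr\} \le \sup_{\nu \in \mcl P_\infty^\upa(\R_+^D)}\bigl\{\psi(\nu) - \mcl T_t(\mu,\nu)\bigr\} \le h(t,\mu),
\end{equation*}
together with the reverse bound $h(t,\mu) \le \sup_{\sp \in \mcl Q_\infty}\{\cdots\}$; these force all three quantities to coincide, and maximizers will appear in the course of the argument.

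\textbf{Easy direction.} For any $\sp \in \mcl Q_\infty$, the path $\sq+t\nabla\xi(\sp)$ is increasing because $\sq$ and $\sp$ are, while $\nabla\xi$ is $\R_+^D$-increasing by Remark~\ref{r.xi.monotonicity}; thus $\nu:=\cL_{\sq+t\nabla\xi(\sp)}\in \mcl P_\infty^\upa(\R_+^D)$. The explicit coupling $\pi:=\mathrm{Law}(\sq(U),\sq(U)+t\nabla\xi(\sp(U)))$ lies in $\Pi(\mu,\nu)$, and using \eqref{e.xi^*(nabla_xi)=} together with the definition \eqref{e.theta=} of $\theta$,
\begin{equation*}
\mcl T_t(\mu,\nu) \le \int_0^1 (t\xi)^*\bigl(t\nabla\xi(\sp(u))\bigr)\, \d u = t\int_0^1 \theta(\sp(u))\, \d u,
\end{equation*}
which yields the first two inequalities above.

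\textbf{Hard direction.} The heart of the argument is to analyse an optimizer of $h(t,\mu)$ and show it is of the form $\cL_{\sq+t\nabla\xi(\sp)}$ for some $\sp\in\mcl Q_\infty$. By Lemma~\ref{l.exist_max} there exist a maximizer $\bar\nu\in\mcl P_1(\R_+^D)$ and an optimal coupling $\pi\in\Pi(\mu,\bar\nu)$. Let $\chi\in\mfk X$ be the derivative of $\psi$ at $\bar\nu$ supplied by Proposition~\ref{p.diff of psi}; Lemma~\ref{l.rel_at_max} identifies it as a Kantorovich potential, so $\chi(y)-S_t\chi(x)=(t\xi)^*(y-x)$ for $\pi$-a.e.\ $(x,y)$. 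At interior, differentiable pairs, Proposition~\ref{p.nablaS_tchi_and_nablachi} yields
\begin{equation*}
\lambda_{\infty,d}\chi_d'(y_d) = \partial_d S_t\chi(x) = \partial_d(t\xi)^*(y-x),\qquad d\in\sS.
\end{equation*}
Two structural facts now combine. First, Proposition~\ref{p.R^D_+-convexity} makes $S_t\chi$ an $\R_+^D$-convex function, so $\nabla S_t\chi$ is $\R_+^D$-increasing (via the characterization noted after Definition~\ref{d.R^D_+-convexity}). Second, the strong convexity of $\xi$ makes $\nabla\xi$ a homeomorphism onto the relevant region by Lemma~\ref{l.save_the_day}, so the identity $\nabla(t\xi)^*(y-x)=\nabla S_t\chi(x)$ inverts to $y-x = t\nabla\xi(\nabla S_t\chi(x))$. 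Writing $x=\sq(U)$ with $U$ uniform on $[0,1)$ and setting $\sp(U):=\nabla S_t\chi(\sq(U))$, the first fact shows $\sp\in\mcl Q_\infty$ (monotonicity from the composition of two increasing maps; boundedness by $\lambda_{\infty,d}$ from $\chi\in\mfk X$ being $1$-Lipschitz and Lemma~\ref{l.S_tLip}), while the second gives $Y=\sq(U)+t\nabla\xi(\sp(U))$. Hence $\bar\nu=\cL_{\sq+t\nabla\xi(\sp)}$ is monotone and the cost identity of the easy direction gives $\mcl T_t(\mu,\bar\nu)=t\int\theta\,\d\cL_\sp$; therefore
\begin{equation*}
h(t,\mu) = \psi(\bar\nu)-\mcl T_t(\mu,\bar\nu) = \psi\bigl(\cL_{\sq+t\nabla\xi(\sp)}\bigr) - t\!\int\theta\,\d\cL_\sp,
\end{equation*}
which proves the third inequality and simultaneously exhibits maximizers for both \eqref{e.p.h=hopf-lax_on_Q(1)} and \eqref{e.p.h=hopf-lax_on_Q(2)}.

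\textbf{Main obstacle.} The principal difficulty is that $S_t\chi$ is only Lipschitz and each $\chi_d$ need not be strictly convex, so the identity $\lambda_{\infty,d}\chi_d'(y_d)=\partial_d S_t\chi(x)$ determines $y_d$ only up to a level set of $\chi_d'$, and the interior-differentiability hypotheses of Proposition~\ref{p.nablaS_tchi_and_nablachi} may fail on a non-negligible fraction of the $\pi$-mass (for example when $\mu$ carries atoms). The plan is to regularize by convolving $\mu$ with a small absolutely continuous kernel to obtain $\mu_\eps\in\mcl P_\infty^\upa(\R_+^D)$, whose optimizer $\bar\nu_\eps$, Kantorovich potential, and optimal coupling have enough regularity to run the above argument on a set of full measure, producing an increasing $\sp_\eps\in\mcl Q_\infty$. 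I would then pass to the limit $\eps\downarrow 0$ using Lemma~\ref{l.Lip_h} and a compactness argument on bounded increasing paths, which is available because $\mu$ is compactly supported and $\theta$ is coercive on $\R_+^D$; this limit produces $\sp\in\mcl Q_\infty$ realizing the claimed identity.
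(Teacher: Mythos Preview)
Your overall strategy---the easy inequalities, the Kantorovich-potential identification via Lemma~\ref{l.rel_at_max}, the inversion through $\nabla\xi$, and the regularize-then-limit plan---tracks the paper's proof closely. However, the proposed regularization contains a genuine gap.

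For $D\ge 2$, every measure in $\mcl P^\upa(\R_+^D)$ is supported on the image of an increasing path $\sq:[0,1)\to\R_+^D$, hence on a one-dimensional set, and is therefore singular with respect to Lebesgue measure. Convolving $\mu$ with an absolutely continuous kernel produces an absolutely continuous measure that is \emph{no longer monotone}, so you cannot obtain $\mu_\eps\in\mcl P_\infty^\upa(\R_+^D)$ with a density. The paper resolves this by taking absolutely continuous approximations $\mu_n$ that are \emph{not} monotone: for each $\mu_n$ the argument of your ``hard direction'' goes through (absolute continuity is used exactly to make $\nabla S_t\chi_n$ defined $\mu_n$-a.e.), yielding a coupling $(X_n,\,X_n+t\nabla\xi(T_n(X_n)))$ with $T_n=\nabla S_t\chi_n$ an $\R_+^D$-increasing map. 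Monotonicity is then recovered only in the \emph{limit}, via a separate lemma (Lemma~\ref{l.limit_monotone}): if such couplings converge weakly and the limiting first marginal is monotone with \emph{strictly} increasing $\sq$, then the limit coupling is itself monotone. This forces a further approximation layer in which $\sq$ is replaced by $\sq+k^{-1}s\mathbf 1$.

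There is a second, smaller gap: the inversion $\nabla(t\xi)^*(y-x)=p\Rightarrow y-x=t\nabla\xi(p)$ is valid only when $y-x\in t\nabla\xi(\R_+^D)$ (Remark~\ref{r.bla}); in general Lemma~\ref{l.save_the_day} gives only $t\nabla\xi(p)-(y-x)\in\R_+^D$. The paper handles this by first replacing the optimal $Y_n$ with $X_n+t\nabla\xi\bigl(\nabla(t\xi)^*((Y_n-X_n)_+)\bigr)$, checking this preserves optimality and forces $Y_n-X_n\in t\nabla\xi(\R_+^D)$ almost surely.
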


As discussed below \eqref{e.kantorovich.potential}, the key point in showing \eqref{e.p.h=hopf-lax_on_Q(1)} is to transfer the monotonicity of $\mu$ into the monotonicity of a maximizer in the definition of $h(t,\mu)$. In order to do so, we will smear out the measure $\mu$ so that it becomes absolutely continuous with respect to the Lebesgue measure. The smeared-out measure is however no longer monotone, only approximately so, and our first task is to develop convenient tools that allow us to track approximate and exact monotonicity of a measure.

\begin{lemma}[Characterization of monotonicity]\label{l.monotone}
Let $d\in\N$ and $\mu \in \mcl P(\R_+^d)$. The measure $\mu$ is monotone if and only if, for every $i,j\in\{1,\dots, d\}$, 
\begin{align}\label{e.l.monotone}
    \mu^{\otimes 2}\Ll(\Ll\{\, (y,y')\in\R_+^d\times \R_+^d:\:\ y_i\geq y'_i\ \text{ or }\ y_j\leq y'_j\, \Rr\}\Rr) =1.
\end{align}
\end{lemma}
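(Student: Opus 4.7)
The plan is to prove the two directions separately, with the reverse implication being the substantive one. For the forward direction, suppose $\mu = \mathrm{Law}(\sq(U))$ with $\sq : [0,1) \to \R^d_+$ increasing. Letting $U,U'$ be independent uniforms on $[0,1]$, we set $y = \sq(U)$ and $y' = \sq(U')$, so that $(y,y') \sim \mu^{\otimes 2}$. On $\{U \leq U'\}$ we get $\sq(U) \leq \sq(U')$ componentwise, hence $y_j \leq y'_j$; on $\{U \geq U'\}$ we similarly get $y_i \geq y'_i$. Either way the disjunction in \eqref{e.l.monotone} holds, which handles the forward direction.

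For the reverse direction I would first rewrite \eqref{e.l.monotone} by taking complements and intersecting over all $(i,j)$, to obtain the equivalent statement that $\mu^{\otimes 2}$-almost every pair $(y,y')$ is comparable in the componentwise order, i.e.\ $y \leq y'$ or $y' \leq y$. The key device is then to compress $y$ to a scalar while respecting this order: set $W(y) = \sum_{i=1}^d y_i$ and let $\nu = W_* \mu \in \mcl P(\R_+)$. Since $W$ is strictly order-preserving in the componentwise order, comparability forces $W(y)=W(y') \Rightarrow y=y'$ for $\mu^{\otimes 2}$-a.e.\ $(y,y')$, and $W(y)<W(y') \Rightarrow y \leq y'$.

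Disintegrating $\mu$ along $W$ as $\mu = \int \mu_w\, \d \nu(w)$, the displayed implication shows that for $\nu$-a.e.\ $w$ the conditional $\mu_w$ is a Dirac mass $\delta_{\phi(w)}$ for a Borel measurable $\phi : \R_+ \to \R_+^d$, so that $y = \phi(W(y))$ $\mu$-a.s. Applying the comparability property to two independent samples and passing through the disintegration shows that for $\nu^{\otimes 2}$-a.e.\ pair $(w,w')$ with $w \leq w'$, the vectors $\phi(w), \phi(w')$ are comparable, which combined with $W(\phi(w))=w \leq w'=W(\phi(w'))$ forces $\phi(w) \leq \phi(w')$. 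Thus $\phi$ is increasing on a set of full $\nu$-measure, and a routine modification on a $\nu$-null set yields an increasing function $\tilde\phi$ on all of $\R_+$ agreeing $\nu$-a.e.\ with $\phi$. Finally, letting $\sq_\nu : [0,1) \to \R_+$ be the (increasing) quantile of $\nu$, the map $\sq := \tilde\phi \circ \sq_\nu$ is increasing on $[0,1)$, and $\mathrm{Law}(\sq(U)) = \mathrm{Law}(\tilde\phi(W)) = \mathrm{Law}(\phi(W)) = \mu$, establishing monotonicity.

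The main obstacle is the measure-theoretic cleanup of the reverse direction: verifying the existence of the measurable disintegration $\{\mu_w\}$, checking carefully that the pairwise $\mu^{\otimes 2}$-a.s.\ comparability descends to $\nu^{\otimes 2}$-a.s.\ comparability of the $\phi$-values (requires a Fubini-type argument applied to the disintegration), and producing a genuinely everywhere-increasing modification of $\phi$ on the support of $\nu$ from the a.e.\ statement. All of these are standard but need to be laid out carefully. Everything else is a bookkeeping consequence once the scalar reduction via $W$ is in place.
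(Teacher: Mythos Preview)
The forward direction is fine. The reverse direction has a genuine gap at the step where you conclude that $\mu_w$ is a Dirac mass for $\nu$-a.e.\ $w$. Your justification is the $\mu^{\otimes 2}$-a.e.\ implication $W(y)=W(y') \Rightarrow y=y'$, but when $\nu = W_*\mu$ has a non-atomic part the event $\{W(y)=W(y')\}$ has $\mu^{\otimes 2}$-measure zero there, so this implication is vacuous and places no constraint whatsoever on the conditionals $\mu_w$. The conclusion is nonetheless \emph{true}, but proving it requires the second implication $W(y) < W(y') \Rightarrow y \le y'$ in an essential way together with the constraint $\sum_i y_i = W(y)$. One way: disintegrating, for $\nu^{\otimes 2}$-a.e.\ $w<w'$ the $\mu_w$-essential supremum $b_i(w)$ of $y_i$ lies below the $\mu_{w'}$-essential infimum $a_i(w')$; summing over $i$ and using that $\sum_i y_i = w$ under $\mu_w$ gives $w \le \sum_i b_i(w) \le \sum_i a_i(w') \le w'$, and sending $w' \downarrow w$ through the support of the non-atomic part of $\nu$ forces $a_i(w)=b_i(w)$ for every $i$. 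This is the crux of the argument, not part of the routine cleanup you flag at the end. A cleaner alternative that avoids disintegration entirely: once you have that $\mu^{\otimes 2}$-a.e.\ pair is comparable, show directly that the joint CDF of $\mu$ equals the Fr\'echet--Hoeffding upper bound $\min_i F_i(t_i)$, which is exactly the CDF of $(F_1^{-1}(U),\ldots,F_d^{-1}(U))$.

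For comparison, the paper's proof proceeds quite differently: it first upgrades \eqref{e.l.monotone} from coordinate directions $e_i,e_j$ to all $a,b \in \R_+^d$, then embeds $\R_+^d$ diagonally into the cone $S^d_+$ of positive semidefinite matrices and invokes the characterization of monotone measures on $S^d_+$ from \cite{mourrat2020free}. Your approach, once the gap above is repaired, is more elementary and self-contained.
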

\begin{remark}
Let $Y$ and $Y'$ two independent random variables with law $\mu$. Then, \eqref{e.l.monotone} can be rewritten as $\P(Y_i\geq Y'_i\text{ or } Y_j\leq Y'_j)=1$. \qed
\end{remark}

\begin{proof}[Proof of Lemma~\ref{l.monotone}]
We proceed in two steps. First, we show that~\eqref{e.l.monotone} is equivalent to a stronger formulation and then we apply results from~\cite[Section~2]{mourrat2020free}.

\medskip

\noindent \emph{Step~1.}
We show that the condition specified in~\eqref{e.l.monotone} is equivalent to 
\begin{align}\label{e.l.monotone3}
    \mu^{\otimes 2}\Ll(\Ll\{\, (y,y')\in \R_+^d\times \R_+^d:\:\ a \cdot y\geq a \cdot y'\ \text{ or }\ b\cdot y\leq b\cdot y'\, \Rr\}\Rr) =1
\end{align}
for every $a,b\in\R_+^d$.
One direction is obvious and we focus on deducing~\eqref{e.l.monotone3} from~\eqref{e.l.monotone}.

It is more convenient to work with random variables and we start with some notation. For a pair $(U,V)$ of real-valued random variables, we say that $(U,V)$ is monotone if, given an independent copy $(U',V')$, we have almost surely that $\{U\geq U'$ or $V\leq V'\}$. 
It is clear that $(U,V)$ is monotone if and only if $(V,U)$ is so.
We need the following property. Given three random variables $U,V,W$, we have, for every $s,t\geq0$,
\begin{align}\label{e.pairwise_monotone}
    \text{if both $(U,W)$ and $(V,W)$ are monotone, then so is $(sU+tV, W)$.}
\end{align}
This immediately follows from the definition. Indeed, let $(U',V',W')$ be an independent copy and there is nothing to show when $W\leq W'$. Otherwise, we must have $U\geq U'$ and $V\geq V'$, which implies $sU+tV\geq sU'+tV'$.

Let $Y$ be a random vector in $\R^d$ with $\mu =\mathrm{Law}(Y)$. The condition in~\eqref{e.l.monotone} is equivalent to that $(Y_i,Y_j)$ is monotone for every $1\leq i,j\leq d$. Fix any $a,b\in\R^d_+$. For each $j$, iteratively applying~\eqref{e.pairwise_monotone} to pairs $(Y_i,Y_j)$ for $1\leq i\leq d$, we can get that $(a\cdot Y,Y_j)$ is monotone. Similarly, we can get that $(a\cdot Y, b\cdot Y)$ is monotone, which is equivalent to~\eqref{e.l.monotone3}.

\medskip

\noindent \emph{Step~2.}
We use results from~\cite[Section~2]{mourrat2020free} to conclude. We start with some definitions.
Let $S^d$ be the linear space of real symmetric matrices endowed with the Frobenius inner product, namely, $a\cdot b= \sum_{i,j=1}^da_{ij}b_{ij}$ for $a,b\in S^d$. Let $S^d_+$ be the set of positive semi-definite matrices. Let $\bar\mu$ be the image of $\mu$ through the map $\R^d\ni x\mapsto \mathrm{diag}(x_1,\dots,x_d)$. 
Since $\bar\mu$ is supported on diagonal matrices and diagonal entries of $a,b\in  S^d_+$ are nonnegative, we can see that~\eqref{e.l.monotone3} is equivalent to
\begin{align}\label{e.l.monotone2}
    \bar\mu^{\otimes 2}\Ll(\Ll\{\, (y,y')\in S_+^d\times S_+^d:\:\ a \cdot y\geq a \cdot y'\ \text{ or }\ b\cdot y\leq b\cdot y'\, \Rr\}\Rr) =1
\end{align}
for every $a,b\in S^d_+$.

Monotone probability measures on $S^d_+$ are defined in the paragraphs above~\cite[Proposition~2.3]{mourrat2020free} and~\cite[(2.1)]{mourrat2020free}. The condition in~\eqref{e.l.monotone2} exactly matches this definition and thus $\bar\mu$ is monotone on $S^d_+$. By~\cite[Proposition~2.4]{mourrat2020free}, this monotonicity is equivalent to the existence of an increasing path $\bar \sq:[0,1)\to S^d_+$ such that $\bar\mu = \mathrm{Law}(\bar\sq(U))$, where the monotonicity of $\bar\sq$ is understood as $\bar\sq(s')-\bar\sq(s)\in S^d_+$ whenever $s'\geq s$. Since $\bar\mu$ is supported on diagonal matrices, $\bar\sq$ must take values in diagonal matrices. Therefore, $\bar\mu = \mathrm{Law}(\bar\sq(U))$ is equivalent to $\mu = \mathrm{Law}(\sq(U))$ for some increasing $\sq:[0,1)\to\R^d$, where $\sq$ and $\bar\sq$ are related by $\mathrm{diag}(\bar \sq)=\sq$ a.e.\ on $[0,1)$. Combining these equivalences, we obtain the desired characterization.
\end{proof}

The next lemma is tailored for application to our future context involving a smeared-out version of the measure $\mu \in \mcl P^\upa_\infty(\R^D_+)$, where we later let the smearing-out parameter tend to zero. Recall that $U$ is a random variable with uniform distribution over $[0,1]$.

\begin{lemma}\label{l.limit_monotone}
Let $\mu = \cL_\sq\in \mcl P^\upa_\infty(\R_+^D)$ (see~\eqref{e.L_q=}) with $\sq\in\mcl Q_\infty$ strictly increasing in the sense that $\sq(s)-\sq(s')\in(0,\infty)^\D$ whenever $s>s'$. 
For each $n\in\N$, let $\varpi_n \in \mcl P(\R^{2D}_+)$ and suppose that there is an $\R_+^D$-increasing measurable map $T_n:\R_+^D\to \R_+^D$ such that
\begin{align}\label{e.l.limit_monotone}
    x' = T_n(x) \quad \text{for $\varpi_n$-a.e.\ $(x,x')$}.
\end{align}
Assume that $\varpi_n$ converges weakly to some $\varpi  \in \mcl P(\R_+^{2D})$ whose projection on the first $D$ coordinates is equal to $\mu$. Then, we have $\varpi \in \mcl P^\upa(\R_+^{2D})$ and thus there is $\sp\in\mcl Q$ such that $\varpi = \mathrm{Law}(\sq(U),\sp(U))$.
\end{lemma}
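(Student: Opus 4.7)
The plan is to apply the characterization of monotone measures in Lemma~\ref{l.monotone} to the dimension $d = 2D$: it suffices to show that for every $i,j \in \{1,\dots,2D\}$, the open set
\[
B_{ij} = \{(y,y') \in (\R_+^{2D})^2 : y_i < y'_i \text{ and } y_j > y'_j\}
\]
satisfies $\varpi^{\otimes 2}(B_{ij}) = 0$. Since $\varpi_n \to \varpi$ weakly implies $\varpi_n^{\otimes 2} \to \varpi^{\otimes 2}$ weakly, and $B_{ij}$ is open, the Portmanteau theorem reduces this to proving $\varpi_n^{\otimes 2}(B_{ij}) \to 0$ for each such pair.

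The decisive ingredient is the hypothesis that $\sq$ is strictly increasing. Define the open set
\[
S = \{(x,x') \in (\R_+^D)^2 : x'-x \in (0,\infty)^D \text{ or } x-x' \in (0,\infty)^D\}.
\]
Representing $(X,X') = (\sq(U),\sq(U'))$ with $U,U'$ i.i.d.\ uniform on $[0,1]$, one has $U \neq U'$ almost surely, and strict monotonicity of $\sq$ then forces $(X,X') \in S$ almost surely; hence $\mu^{\otimes 2}(S) = 1$. Letting $\mu_n$ denote the first-$D$-coordinate marginal of $\varpi_n$, continuity of the projection map gives $\mu_n \to \mu$ weakly, so $\mu_n^{\otimes 2} \to \mu^{\otimes 2}$ weakly, and Portmanteau applied to the open set $S$ yields $\mu_n^{\otimes 2}(S) \to 1$.

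Under $\varpi_n^{\otimes 2}$, assumption~\eqref{e.l.limit_monotone} lets one write a pair as $((X, T_n(X)),(X', T_n(X')))$. On the event $\{(X,X') \in S\}$, either $X' - X \in (0,\infty)^D$ or $X - X' \in (0,\infty)^D$; since $T_n$ is $\R_+^D$-increasing, the corresponding (non-strict) inequality propagates to $(T_n(X), T_n(X'))$. A direct case check splitting according as $i,j$ both lie in $\{1,\dots,D\}$, both lie in $\{D+1,\dots,2D\}$, or one in each, shows that in each possibility the pair of strict inequalities defining $B_{ij}$ is incompatible with the comparability of $X$ and $X'$ on the event $\{(X,X') \in S\}$. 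Thus $\varpi_n^{\otimes 2}(B_{ij}) \leq 1 - \mu_n^{\otimes 2}(S) \to 0$, completing the verification that $\varpi \in \mcl P^\upa(\R_+^{2D})$.

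To conclude, the bijection $\sq \mapsto \cL_\sq$ from $\mcl Q$ to $\mcl P^\upa(\R_+^{2D})$ (in dimension $2D$) provides a unique càdlàg increasing $\bar \sq : [0,1) \to \R_+^{2D}$ with $\varpi = \mathrm{Law}(\bar \sq(U))$. Decomposing $\bar \sq = (\bar \sq^{(1)}, \bar \sq^{(2)})$, the first block $\bar \sq^{(1)}$ lies in $\mcl Q$ (in dimension $D$) and satisfies $\mathrm{Law}(\bar \sq^{(1)}(U)) = \mu$; the $D$-dimensional bijection then forces $\bar \sq^{(1)} = \sq$, so one may set $\sp = \bar \sq^{(2)} \in \mcl Q$. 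The main technical point of the argument is the combination of strict monotonicity of $\sq$ with the $\R_+^D$-monotonicity of $T_n$ needed to rule out each bad open event $B_{ij}$; everything else reduces to routine applications of the Portmanteau theorem and of the bijection between monotone measures and càdlàg increasing maps.
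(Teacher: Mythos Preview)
Your proof is correct and follows essentially the same route as the paper's: both invoke the characterization of Lemma~\ref{l.monotone}, use the strict monotonicity of $\sq$ together with the $\R_+^D$-monotonicity of $T_n$ for the pairwise case analysis, and pass to the limit via Portmanteau. The only cosmetic difference is that you work with the open strict-comparability set $S$ and the open ``bad'' events $B_{ij}$, whereas the paper works with the closed events $E_{i,j}$ and checks that their intersection is a $\mu^{\otimes 2}$-continuity set; your packaging saves that small verification.
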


\begin{proof}
In the following, we denote by $(y,y')$ an element in $\R^{2D}_+\times \R^{2D}_+$.
For $i,j\in\{1,\dots,2D\}$, we consider the following events on $\R^{2D}_+\times \R^{2D}_+$,
\begin{align*}
    E_{i,j} = \Ll\{ y_i\geq y'_i \ \text{or}\ y_j\leq y'_j\Rr\}.
\end{align*}
We also set
\begin{align*}
    E_i = \bigcap_{j=1}^D E_{i,j},\text{ for } i \in\{1,\dots ,D\}\quad \text{ and } E =\bigcap_{i=1}^D E_i = \bigcap_{i,j=1}^D E_{i,j}.
\end{align*}
For each $n$, let $\mu_n \in \mcl P(\R_+^D)$ be the projection of $\varpi_n$ to the first $D$ coordinates. For $i,j\leq D$, $E_{i,j}$ can be measured by $\mu_n$. Then, we immediately have
\begin{align*}
    \varpi_n^{\otimes 2}\Ll(E_{i,j}\Rr)=\mu_n^{\otimes 2}(E_{i,j}) \quad \text{if $i\leq j\leq D$}.
\end{align*}
When $i \leq D<j$, by~\eqref{e.l.limit_monotone}, we have $y_j = \big(T_n(y_{[D]})\big)_j$ for $\varpi_n$-a.e.\ $y$, where $y_{[D]} = (y_k)_{1\leq k\leq D}$. Since $T_n$ is assumed to be $\R_+^D$-increasing, for $\varpi_n^{\otimes 2}$-a.e.\ $(y,y')$, we have that $y_{[D]}\leq y'_{[D]}$ implies $y_j\leq y'_j$. On the event $E_i$ we have $y_i \geq y'_i$ or $y_{[D]}\leq y'_{[D]}$. In the latter case, it follows from the previous observation that $y_j\leq y'_j$ $\varpi^{\otimes 2}_n$-almost surely. Hence, we have $E_i\subset E_{i,j}$ and thus
\begin{align*}
    \varpi_n^{\otimes 2}\Ll(E_{i,j}\Rr)\geq  \varpi_n^{\otimes 2}\Ll(E_i\Rr) = \mu_n^{\otimes 2}(E_i) \quad \text{if $i \leq D<j$}.
\end{align*}
Similarly on the event $E$ we have $y_{[D]} \leq y'_{[D]}$ or $y_{[D]} \geq y'_{[D]}$ and by~\eqref{e.l.limit_monotone}, we can also deduce
\begin{align*}
    \varpi_n^{\otimes 2}\Ll(E_{i,j}\Rr)\geq \mu_n^{\otimes 2}(E) \quad \text{if $D<i \leq j$}.
\end{align*}
Using the above three displays and the symmetry $\varpi_n^{\otimes 2}\Ll(E_{i,j}\Rr)= \varpi_n^{\otimes 2}\Ll(E_{j,i}\Rr)$, we obtain
\begin{align}\label{e.varpi^2>mu}
    \varpi_n^{\otimes 2}\Ll(E_{i,j}\Rr)\geq \mu_n^{\otimes 2}(E),\quad\forall i,j\in\{1,\dots,2D\}.
\end{align}
Notice that $\partial E \subset \bigcup_{i,j=1}^D\partial E_{i,j} = \bigcup_{i=1}^D\{y_i=y'_i\}$.
Since $\sq$ as in  $\mu=\cL_\sq$ is strictly increasing, we can get
\begin{align*}
    \mu^{\otimes2}\Ll(\{y_i=y'_i\}\Rr)= \iint_{[0,1]^2}\mathds{1}_{\sq_i(s)=\sq_i(s')}\d s \d s'=0.
\end{align*}
Hence, $E$ is a continuity set of $\mu^{\otimes2}$. Since $\mu\in \mcl P^\upa_\infty(\R_+^D)$ is monotone, by Lemma~\ref{l.monotone}, we have that $\mu^{\otimes2}(E)=1$. Also, notice that $E_{i,j}$ is a closed set.
Using these and the Portmanteau theorem, we pass to the limit in~\eqref{e.varpi^2>mu} to get
\begin{align*}
    \varpi^{\otimes2}(E_{i,j}) \geq \limsup_{n\to\infty}\varpi_n^{\otimes2}(E_{i,j}) \geq \lim_{n\to\infty}\mu_n^{\otimes2}(E) = \mu^{\otimes2}(E)=1
\end{align*}
for every $i,j$.
Therefore, by Lemma~\ref{l.monotone}, $\varpi$ is monotone. Since the first marginal of $\varpi$ is $\mu=\cL_\sq$, we must have $\varpi = \mathrm{Law}(\sq(U),\sp(U))$ for some $\sp\in\mcl Q$.
\end{proof}

We are now ready to prove Proposition~\ref{p.h=hopf-lax_on_Q}.

\begin{proof}[Proof of Proposition~\ref{p.h=hopf-lax_on_Q}]

We need approximations of $\mu \in \mcl P^\upa_\infty(\R^D_+)$ from the collection $\mcl P_\mathrm{ac}(\R_+^D)$ of probability measures on $\R_+^D$ that are absolutely continuous with respect to the Lebesgue measure.
Let $(\mu_n)_{n\in\N}$ be a sequence in $\mcl P_\mathrm{ac}(\R_+^D)$ that converges weakly to $\mu$. Since $\mu$ is compactly supported, we may assume that the measures $\mu_n$ are all supported in some fixed ball (independent of $n$). Hence, by Lemma~\ref{l.Lip_h},
\begin{align}\label{e.limh(mu_n)=h(mu)}
    \lim_{n\to\infty} h(t,\mu_n) = h(t,\mu).
\end{align}
Allowed by Lemma~\ref{l.exist_max}, for each $n\in\N$, let $\bar \nu_n$ be a maximizer of $h(t,\mu_n)$ in~\eqref{e.h(t,mu)=} and let $\pi_n$ be an optimal coupling of $(\mu_n,\bar \nu_n)$ for $\mcl T_t(\mu_n,\bar \nu_n)$.
Hence, we have
\begin{align}
    h(t,\mu_n) &= \psi(\bar \nu_n) -\mcl T_t(\mu_n,\bar \nu_n),\label{e.h(t,mu_n)=psi(nu^*_n)...}
    \\
    \mcl T_t(\mu_n,\bar \nu_n) &= \int (t\xi)^*(y-x)d \pi_n(x,y). \label{e.T_t(mu,nu)=intxi^*dpi_n}
\end{align}
The plan is to show that $\bar \nu_n$ is approximately monotone. Then, by taking limits, we expect that the formula for $h(t,\mu)$ maximizes over monotone measures. 
By Lemma~\ref{l.rel_at_max}, there is $\chi_n \in \mathfrak X$ (see~\eqref{e.frakX=}) such that
\begin{align}\label{e.intchi_n-intS_tchi_n=T}
    \int \chi_n \d \bar \nu_n -\int S_t\chi_n\d\mu_n = \mcl T_t(\mu_n,\bar \nu_n).
\end{align}
Note that according to Lemma~\ref{l.xi^*.diff}, thanks to the assumption on $\xi$, the function $\xi^*$ is differentiable on $\R^D$. We proceed in five steps. 

\noindent \emph{Step~1.}
Let $(X_n,Y_n)$ be a random variable with law $\pi_n$. We show that possibly up to a redefinition of $\pi_n$, we may assume without loss of generality that $Y_n - X_n \in \nabla \xi(\R^D_+)$ almost surely. 

Define
\begin{e*}
   Y_n' = \nabla \xi(\nabla \xi^*((Y_n-X_n)_+)) + X_n,
\end{e*}
let $\bar \nu_n'$ denote the law of $Y_n'$ and let $\pi_n'$ denote the law of $(X_n, Y_n')$. Since $x_+ - x \in \R^D_+$, according to \eqref{e.KKT1} in Lemma~\ref{l.save_the_day} we have that $Y_n'-Y_n \in \R^D_+$ almost surely. Hence, by monotonicity of $\psi_d$ (which we can borrow from \cite[Proposition~3.6]{chenmourrat2023cavity} or rederive using Proposition~\ref{p.diff of psi} and the fact that functions in $\mfk X$ are increasing), we have 
$\psi(\bar \nu_n') \geq \psi(\bar \nu_n)$. In addition, according to Lemma~\ref{l.H^*(x)=H^*(x_+)} and \eqref{e.xi^*(nabla_xi(nabla_xi^*))} in Lemma~\ref{l.save_the_day}, we have $\E[(t\xi)^*(Y'_n - X_n)]= \E[(t\xi)^*(Y_n - X_n)]$, so
\begin{equation*}
    \mcl T_t(\mu_n, \bar \nu_n') \leq \int (t\xi)^*(y-x) \d \pi_n'(x,y) = \int (t\xi)^*(y-x) \d \pi_n(x,y) = \mcl T_t(\mu_n, \bar \nu_n).
\end{equation*}
We deduce that
\begin{align*}
    \psi(\bar \nu_n') - \mcl T_t(\mu_n,\bar \nu_n') &\geq \psi(\bar \nu_n') -  \int (t\xi)^*(y-x) \d \pi_n'(x,y)  \\
                                                    &\geq \psi(\bar \nu_n) - \mcl T_t(\mu_n,\bar \nu_n) \\
                                                    &= h(t,\mu_n) \\
                                                    &\geq \psi(\bar \nu_n') -  \mcl T_t(\mu_n,\bar \nu_n'),
\end{align*}
where the last inequality follows from \eqref{e.h(t,mu)=}. The above display implies that $\bar \nu_n'$ is a maximizer in the definition \eqref{e.h(t,mu)=} of $h(t,\mu_n)$ and that $\pi_n'$ is an optimal coupling for $\mcl T_t(\mu_n,\bar \nu')$. In addition, if $(X'_n,Y_n')$ is a random variable with law $\pi_n'$, we have by definition that $Y_n' - X_n' \in \nabla \xi(\R^D_+)$ almost surely, this means that up to replacing $(\pi_n,\bar \nu_n)$ by $(\pi_n',\bar \nu_n')$ (and redefining $\chi_n$ accordingly), we may assume that $Y_n - X_n \in \nabla \xi(\R^D_+)$ almost surely.

\medskip

\noindent \emph{Step~2.}
We now show that 
\begin{align}\label{e.y=T_n(x)}
    y = x + t\nabla\xi\Ll(\nabla S_t\chi_n(x)\Rr) \quad \text{for $\pi_n$-a.e. $(x,y)$}.
\end{align}
This implicitly states that $S_t\chi_n$ is differentiable at such points.
Using~\eqref{e.T_t(mu,nu)=intxi^*dpi_n} and~\eqref{e.intchi_n-intS_tchi_n=T}, we have
\begin{align*}
    \int \Ll(\chi_n(y) - S_t\chi_n(x) -(t\xi)^*(y-x)\Rr)\d \pi_n(x,y) = 0.
\end{align*}
By the formula for $S_t\chi_n$ in~\eqref{e.real.def.Stchi}, the integrand in the above display is non-positive. Therefore, we must have
\begin{align*}
    \chi_n(y) - S_t\chi_n(x) -(t\xi)^*(y-x)=0, \quad \text{for $\pi_n$-a.e. $(x,y)$}.
\end{align*}
Since $S_t\chi_n$ is Lipschitz due to Lemma~\ref{l.S_tLip}, $S_t\chi_n$ is differentiable almost everywhere with respect to the Lebesgue measure (see e.g.\ \cite[Theorem~2.10]{HJbook}). Hence, due to $\mu_n \in \mcl P_\mathrm{ac}(\R_+^D)$ (the first marginal of $\pi_n$), we have that for $\pi_n$-a.e.\ $(x,y)$, $S_t\chi_n$ is differentiable at $x$. 
By~\eqref{e.l.nablaS_tchi_and_nablachi(1)} in Proposition~\ref{p.nablaS_tchi_and_nablachi}, we get
\begin{align*}
    \nabla S_t\chi_n(x) =\nabla(t\xi)^*(y-x), \quad \text{for $\pi_n$-a.e. $(x,y)$}.
\end{align*}
Recalling from Step 1 that $y - x \in \nabla \xi(\R^D_+)$ for $\pi_n$-a.e.\ $(x,y)$, and using Remark~\ref{r.bla} we can apply $t\nabla \xi$ to both sides in the above and obtain~\eqref{e.y=T_n(x)}.
Since $S_t\chi_n$ is $\R_+^D$-convex due to Proposition~\ref{p.R^D_+-convexity}, we have that
\begin{align}\label{e.nablaS_tchi_n_increasing}
    \text{$\nabla S_t\chi_n$ is $\R_+^D$-increasing}
\end{align}
which will be used later.

\medskip

\noindent \emph{Step~3.}
We show that $(\pi_n)_{n\in\N}$ is tight and derive the limit of~\eqref{e.h(t,mu_n)=psi(nu^*_n)...}.
Let random variables $(X_n,Y_n,Z_n)$ satisfy
\begin{align}\label{e.Z_n=nabla...}
    \mathrm{Law}(X_n,Y_n) = \pi_n;\qquad Z_n = \nabla S_t\chi_n(X_n).
\end{align}
By~\eqref{e.y=T_n(x)}, we have
\begin{align}\label{e.Y_n=X_n+...}
    Y_n = X_n + t\nabla\xi(Z_n)\quad\text{a.s.}
\end{align}
Recall that $\chi_n$ given by Proposition~\ref{p.diff of psi} belongs to the collection $\mathfrak X$ defined in~\eqref{e.frakX=}, whose definition implies that $\chi_n$ is $1$-Lipschitz. 
By Lemma~\ref{l.S_tLip}, $S_t\chi_n$ is also $1$-Lipschitz. Setting $C= t\sup_{|z|\leq 1}|\nabla\xi(z)|$, we have 
\begin{align}\label{e.X_n,Y_n,Z_n_bdd}
    |Z_n|\leq 1 \quad\text{and}\quad \Ll|Y_n-X_n\Rr|\leq C \quad \text{a.s.}
\end{align}
Since $\mu_n = \mathrm{Law}(X_n)$ and $\mu_n$ is assumed to be supported in a fixed ball, by~\eqref{e.X_n,Y_n,Z_n_bdd}, $Y_n$ and $Z_n$ take values in a fixed compact set.  
In particular, the family $(X_n,Y_n,Z_n)_{n\in\N}$ is tight. 
By passing to a subsequence and invoking Skorokhod's representation theorem, we may assume that $(X_n,Y_n,Z_n)$ converges almost surely to some $(X,Y,Z)$, and therefore also in $L^1$ since these random variables are bounded uniformly over $n$. 
Since $Y_n$ is a continuous function of $X_n$ and $Z_n$ as in~\eqref{e.Y_n=X_n+...}, we also have
\begin{align}\label{e.Y=X+...}
    Y = X + t\nabla\xi(Z)\quad\text{a.s.}
\end{align}
Since $\mu_n = \mathrm{Law}(X_n)$ and $\mu_n$ converges to $\mu$ in $\mcl P_1(\R_+^D)$, we must have $\mu=\mathrm{Law}(X)$. 
We set
\begin{align}\label{e.nu^*=Law(Y)}
    \bar \nu = \mathrm{Law}(Y) \quad\text{and}\quad \pi = \mathrm{Law}(X,Y)\in\Pi(\mu,\bar \nu). 
\end{align} 
By~\eqref{e.T_t(mu,nu)=intxi^*dpi_n} and Fatou's lemma (recall that $(t\xi)^*\geq -t\xi(0) = 0$ and $(t\xi)^*$ is lower semi-continuous), we have
\begin{align*}
    \mcl T_t(\mu,\bar \nu) \leq \E \Ll[(t\xi)^*(Y-X)\Rr]\leq \liminf_{n\to\infty} \E \Ll[(t\xi)^*(Y_n-X_n)\Rr] = \liminf_{n\to\infty}\mcl T_t(\mu_n,\bar \nu_n).
\end{align*}
This along with the continuity of $\psi$ in $\mcl P_1(\R_+^D)$ in~\eqref{e.Lip_psi} implies that
\begin{align*}
    \psi(\bar \nu) -\mcl T_t(\mu,\bar \nu)&\geq \psi(\bar \nu) - \E \Ll[(t\xi)^*(Y-X)\Rr]
    \\
    &\geq \liminf_{n\to\infty}\psi(\bar \nu_n)-\mcl T_t(\mu_n,\bar \nu_n)\stackrel{\eqref{e.limh(mu_n)=h(mu)}\eqref{e.h(t,mu_n)=psi(nu^*_n)...}}{=}h(t,\mu)
    \\
    &\stackrel{\eqref{e.h(t,mu)=}}{\geq} \psi(\bar \nu) -\mcl T_t(\mu,\bar \nu).
\end{align*}
This together with~\eqref{e.Y=X+...} yields
\begin{align}
    h(t,\mu) & = \psi(\bar \nu) - \mcl T_t(\mu,\bar \nu)\label{e.h(t,mu)=...pi...}
    \\
    &= \psi\Ll(\mathrm{Law}(Y)\Rr)-\E\Ll[(t\xi^*)(t\nabla\xi(Z))\Rr].\label{e.h(t,mu)=...pi...2}
\end{align}

\medskip

\noindent \emph{Step~4.}
We show the desired results under the additional assumption that
\begin{align}\label{e.q_strict_incr}
    \mu = \cL_\sq \quad\text{ with $\sq$ strictly increasing}
\end{align}
in the sense that $\sq(s)-\sq(s')\in(0,\infty)^\D$ whenever $s>s'$.
Assume~\eqref{e.q_strict_incr},
we want to show that there is $\sp_*\in \mcl Q_\infty$ such that
\begin{align}\label{e.(X,Y,Z)=}
    (X,Y,Z)\stackrel{\d}{=}\Ll(\sq(U),\, \sq(U)+t\nabla\xi(\sp_*(U)),\, \sp_*(U)\Rr)
\end{align}
where $U$ is uniform over $[0,1]$. Due to~\eqref{e.Y=X+...}, we only need to find the representation for $(X,Z)$. 
The assumption in~\eqref{e.q_strict_incr} together with \eqref{e.nablaS_tchi_n_increasing} and~\eqref{e.Z_n=nabla...} allow us to apply Lemma~\ref{l.limit_monotone} with $\varpi_n$, $T_n$, $\varpi$ substituted by $\mathrm{Law}(X_n,Z_n)$, $\nabla S_t\chi_n$, and $\mathrm{Law}(X,Z)$, respectively. Consequently, $\mathrm{Law}(X,Z)$ is monotone. Hence, there must be an increasing path $\sp_*:[0,1)\to \R_+^D$ such that $(X,Z)\stackrel{\d}{=}(\sq(U),\sp_*(U))$.
Since $Z$ is bounded (as a consequence of~\eqref{e.X_n,Y_n,Z_n_bdd}), we have $\sp_\star\in\mcl Q_\infty$ and more precisely
\begin{align}\label{e.|p_*|<sqrtD}
    |\sp_\star(s)|\leq 1,\quad\forall s\in[0,1).
\end{align}

Due to $\bar \nu=\mathrm{Law}(Y)$ (see~\eqref{e.nu^*=Law(Y)}) and~\eqref{e.(X,Y,Z)=}, $\bar \nu$ is a monotone measure and has bounded support. In other words, $\bar \nu=\cL_{\sq+t\nabla\xi(\sp_\star)}\in\mcl P^\uparrow_1(\R_+^D)$.
In view of the definition of $h$ in~\eqref{e.h(t,mu)=} and~\eqref{e.h(t,mu)=...pi...}, we can deduce~\eqref{e.p.h=hopf-lax_on_Q(1)} and that the supremum is achieved at $\cL_{\sq+t\nabla\xi(\sp_\star)}$. 
From the definition of $\theta$ in~\eqref{e.theta=} and \eqref{e.xi^*(nabla_xi)=} in Lemma~\ref{l.save_the_day}, we have
that for every $x \in \R^D_+$,
\begin{align}\label{e.ttheta=(txi)^*(tnablaxi)}
    t\theta(x) = (t\xi)^*(t\nabla\xi(x)).
\end{align}
Using~\eqref{e.h(t,mu)=...pi...2}, \eqref{e.(X,Y,Z)=}, and~\eqref{e.ttheta=(txi)^*(tnablaxi)}, we have
\begin{align}
     h(t,\mu) = \psi\Ll(\cL_{\sq+t\nabla\xi(\sp_\star)}\Rr) -\int \theta\,\d\cL_{\sp_\star}.
\end{align}
Due to~\eqref{e.h(t,mu)=} and the definition of the optimal transport problem $\mcl T_t$ in~\eqref{e.T_t=}, we see that $h(t,\mu)$ is always an upper bound for the term in~\eqref{e.p.h=hopf-lax_on_Q(2)}. Then, the above display implies the equality in~\eqref{e.p.h=hopf-lax_on_Q(2)} and that the supremum is achieved at $\sp_\star$. 
Hence, we have shown~\eqref{e.p.h=hopf-lax_on_Q(1)} and~\eqref{e.p.h=hopf-lax_on_Q(2)} under the assumption~\eqref{e.q_strict_incr} and the suprema are achieved at $\cL_{\sq+t\nabla\xi(\sp_\star)}$ and $\sp_\star$, respectively.

\medskip

\noindent \emph{Step~5.}
We show the results for general $\mu= \cL_\sq$ without assuming~\eqref{e.q_strict_incr}.
We employ an approximation argument.
For $k\in\N$, define a strictly increasing path $\sq_k$ by setting $\sq_k(s)= \sq(s)+k^{-1}s\mathbf{1}$ for $s\in[0,1)$, where $\mathbf{1}=(1,1,\dots,1)\in\R_+^D$ consists of ones as entries. 
So, $\sq_k$ converges to $\sq$ pointwise.
We take $\mu_k = \cL_{\sq_k}\in\mcl P^\upa_\infty(\R_+^D)$ and thus $\mu_k$ converges weakly to $\mu$. 
Let $\sp_{\star,k}$ be the corresponding path appearing in~\eqref{e.(X,Y,Z)=}. Due to~\eqref{e.|p_*|<sqrtD}, $(\sp_{\star,k})_{k\in\N}$ is a family of increasing paths that are uniformly bounded. By~\cite[Lemma~3.4]{chenmourrat2023cavity}, this allows us to assume that $\sp_{\star,k}$ converges to $\sp_\star\in\mcl Q_\infty$ a.e.\ on $[0,1)$ after passing to a subsequence.

Then, \eqref{e.p.h=hopf-lax_on_Q(1)} and~\eqref{e.p.h=hopf-lax_on_Q(2)} follow from the following convergences. We have that $h(t,\mu_k)$ converges to $h(t,\mu)$ as $k\to\infty$ by Lemma~\ref{l.Lip_h}. For each $\nu\in\mcl P^\upa_\infty(\R_+^D)$, we have $\mcl T_t(\mu_k,\nu)$ converges to $\mcl T_t(\mu,\nu)$ due to boundedness of the measures and the weak convergence of $\mu_k$. By the continuity of $\psi$ as in~\eqref{e.Lip_psi}, $\psi \Ll(\cL_{\sq_k+t\nabla\xi(\sp)}\Rr) $ converges to $\psi \Ll(\cL_{\sq+t\nabla\xi(\sp)}\Rr) $ at any $\sp\in\mcl Q_\infty$.

Recall that, at each $\mu_k$, the relevant suprema are achieved at $\cL_{\sq_k+t\nabla\xi(\sp_{\star,k})}$ and $\sp_{\star,k}$, respectively.
By similar considerations, we can show that the suprema at $\mu$ are achieved at $\cL_{\sq+t\nabla\xi(\sp_\star)}$ and $\sp_\star$, respectively.
\end{proof}

Proposition~\ref{p.h=hopf-lax_on_Q} requires $\xi$ to be strongly convex on $\R^D_+$. Since this may not necessarily always be the case, we will introduce a small perturbation of $\xi$ that enforces this property, and argue by continuity to obtain the main results. 
For every $\lambda \geq 0$, we set 
\begin{e}
    \xi_\lambda(x) = \xi(x) + \frac{\lambda}{2}|x|^2.
\end{e}
We clearly have that for every $\lambda > 0$, $\xi_\lambda$ is strongly convex on $\R^D_+$. Setting 
\begin{e}
    H_N^\lambda(\sigma)=  H_N(\sigma) + \frac{\sqrt{\frac{\lambda}{2}}}{\sqrt{N}} \sum_{d = 1}^D \sum_{i,j \in I_{N,d}} J_{ij}^d \sigma_{i} \sigma_j,
\end{e}
where $J_{ij}^d$ are independent $\mcl N(0,1)$ random variables that are each independent of $H_N$, we have that
\begin{e}
    \E H^\lambda_N(\sigma) H^\lambda_N(\sigma') = N \xi_\lambda(R_N(\sigma,\sigma')).
\end{e}
We let $\bar F_N^\lambda$, $g^\lambda$, and $h^\lambda$ denote the quantities defined in \eqref{e.enriched free energy}, \eqref{e.g(t,mu)=}, and \eqref{e.h(t,mu)=} with $\xi$ and $H_N$ replaced by $\xi_\lambda$ and $H_N^\lambda$ therein. At $\lambda = 0$, we have $\xi_0 = \xi$ and $H_N^0 = H_N$, therefore $\bar F_N^0 = \bar F_N$, $g^0= g$, and $h^0 = h$. We now show the continuity of these quantities in $\lambda$.

\begin{lemma} \label{l.lambda.continuity}
    For every $(t,\mu) \in \R_+ \times \mcl P_1(\R^D_+)$, we have that the quantities $\lim_{N \to +\infty} \bar F_N^\lambda(t,\mu)$, $g^\lambda(t,\mu)$, and $h^\lambda(t,\mu)$ are continuous functions of $\lambda \in \R_+$.
\end{lemma}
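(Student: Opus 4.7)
The plan is to establish that each of the three quantities is Lipschitz continuous in $\lambda$ with constant $tD/2$, uniformly in $\mu$ and (for the free energy) in $N$; the stated continuity then follows immediately.

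\textbf{Continuity of $\lim_{N\to\infty}\bar F_N^\lambda(t,\mu)$.} For $0\le\lambda'<\lambda$, I would couple $H_N^\lambda$ and $H_N^{\lambda'}$ on a common probability space via $H_N^\lambda = H_N^{\lambda'} + \sqrt{(\lambda-\lambda')/2}\,\tilde H_N$, where $\tilde H_N(\sigma) = N^{-1/2}\sum_{d\in[D]}\sum_{i,j\in I_{N,d}}\tilde J^d_{ij}\sigma_i\sigma_j$ and the $\tilde J^d_{ij}$ are i.i.d.\ standard Gaussians independent of $H_N^{\lambda'}$. Setting $X(\sigma) = \sqrt{t(\lambda-\lambda')}\,\tilde H_N(\sigma) - \tfrac{tN(\lambda-\lambda')}{2}|R_N(\sigma,\sigma)|^2$, one has $Z_N^\lambda = Z_N^{\lambda'}\langle e^{X}\rangle_{\lambda'}$ with $\langle\cdot\rangle_{\lambda'}$ the enriched Gibbs average at parameter $\lambda'$. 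Since $\tilde H_N(\sigma)\sim\mcl N(0,N|R_N(\sigma,\sigma)|^2)$, the Gaussian moment generating formula gives $\E_{\tilde J}e^{X(\sigma)}=1$, hence $\E_{\tilde J}\langle e^X\rangle_{\lambda'}=1$; Jensen's inequality then yields $\E_{\tilde J}\log\langle e^X\rangle_{\lambda'}\le 0$, so $\bar F_N^\lambda\ge\bar F_N^{\lambda'}$. For the opposite bound, Jensen inside the Gibbs average gives $\E\log\langle e^X\rangle_{\lambda'}\ge\E\langle X\rangle_{\lambda'}$; since $\langle\cdot\rangle_{\lambda'}$ does not depend on $\tilde J$ we have $\E\langle\tilde H_N\rangle_{\lambda'}=0$, so $\E\langle X\rangle_{\lambda'} = -\tfrac{tN(\lambda-\lambda')}{2}\E\langle|R_N(\sigma,\sigma)|^2\rangle_{\lambda'}\ge -\tfrac{tND(\lambda-\lambda')}{2}$ using that $|R_N(\sigma,\sigma)|^2\le D$ for $\sigma\in [-1,1]^N$. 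This yields $\bar F_N^\lambda\le\bar F_N^{\lambda'}+\tfrac{tD(\lambda-\lambda')}{2}$, and combining the two inequalities gives $|\bar F_N^\lambda-\bar F_N^{\lambda'}|\le \tfrac{tD|\lambda-\lambda'|}{2}$ uniformly in $N$, which passes to the limit $N\to\infty$.

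\textbf{Continuity of $g^\lambda(t,\mu)$.} I would exploit the representation $S_t^\lambda\chi(x) = \sup_{p\in\R_+^D}\{p\cdot x + t\xi_\lambda(p) - \chi^*(p)\}$ from \eqref{e.def.St1}. For $\chi\in\mfk X$, each coordinate function $\chi_d$ is $1$-Lipschitz and vanishes at the origin, which forces $\chi^*(p)=+\infty$ whenever some $p_d>\lambda_{\infty,d}$; the effective domain of $\chi^*$ is thus contained in $[0,1]^D$. On this set $|t\xi_\lambda(p)-t\xi_{\lambda'}(p)| = \tfrac{t|\lambda-\lambda'|}{2}|p|^2\le \tfrac{tD|\lambda-\lambda'|}{2}$, so $\|S_t^\lambda\chi-S_t^{\lambda'}\chi\|_\infty\le \tfrac{tD|\lambda-\lambda'|}{2}$ uniformly in $\chi\in\mfk X$. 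Integrating against $\mu$ and taking the infimum over $\chi$ then yields $|g^\lambda(t,\mu)-g^{\lambda'}(t,\mu)|\le \tfrac{tD|\lambda-\lambda'|}{2}$.

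\textbf{Continuity of $h^\lambda(t,\mu)$.} The function $\xi_\lambda = \xi + \tfrac{\lambda}{2}|\cdot|^2$ satisfies all standing assumptions on $\xi$ (absolutely convergent power series, vanishing at the origin, convex on $\R_+^D$), so Proposition~\ref{p.reuninverted} applies verbatim with $\xi$ replaced by $\xi_\lambda$ and yields $h^\lambda(t,\mu)=g^\lambda(t,\mu)$ for every $\lambda\ge 0$; the Lipschitz estimate from the previous step transfers directly. I do not anticipate any serious obstacle: all three arguments reduce to elementary manipulations exploiting the boundedness of the spin configurations and the compactness of the effective support of $\chi^*$ for $\chi\in\mfk X$.
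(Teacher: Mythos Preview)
Your proof is correct and follows essentially the same approach as the paper: the paper bounds the $\lambda$-derivative of $\bar F_N^\lambda$ via Gaussian integration by parts (citing an external reference), which is the infinitesimal version of your coupling--Jensen interpolation, and then handles $g^\lambda$ and $h^\lambda$ exactly as you do, using the restriction of the supremum in \eqref{e.def.St1} to $[0,1]^D$ and the identity $h^\lambda=g^\lambda$ from Proposition~\ref{p.reuninverted}. Your bound $\chi^*(p)=+\infty$ for $p_d>\lambda_{\infty,d}$ is slightly sharper than the paper's $p\notin[0,1]^D$, but both yield the same Lipschitz constant $tD/2$.
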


\begin{proof}
    As in \cite[Proposition~6.1]{issa2024hopflike}, we can differentiate $\bar F_N^\lambda(t,\mu)$ with respect to $\lambda$ and use Gaussian integration by parts to observe that the derivative is bounded uniformly in $N$. This yields Lipschitz continuity in $\lambda$ of $\lim_{N \to +\infty} F_N^\lambda(t,\mu)$. 

    For every $\chi \in \mfk X$, we let $S_t^\lambda \chi$ denote the quantity obtained in \eqref{e.real.def.Stchi} with $\xi$ replaced by $\xi_\lambda$ therein. By Lemma~\ref{l.St1=St2} and since $\chi^*(p) = +\infty$ when $p \in \R^D_+ \setminus [0,1]^D$, we have 
    \begin{e*}
        S^\lambda_t \chi(x) = \sup_{p \in [0,1]^D} \left\{x \cdot p - \chi^*(p) + t \xi_\lambda(p) \right\}.
    \end{e*}
    This implies that
    \begin{e*}
        |S^\lambda_t \chi(x) - S^{\lambda'}_t \chi(x)| \leq \frac{Dt}{2}|\lambda - \lambda'|,
    \end{e*}
    and thus, for every $\mu \in \mcl P_1(\R^D_+)$, we have 
    \begin{e*}
        \left| \int S^\lambda_t \chi \d \mu - \int S^{\lambda'}_t \chi \d \mu \right| \leq \frac{Dt}{2}|\lambda - \lambda'|.
    \end{e*}
    The quantity $g^\lambda(t,\mu)$ is therefore Lipschitz continuous in $\lambda$ since using \eqref{e.g(t,mu)=} it can be written as the infimum of a family of uniformly Lipschitz functions of~$\lambda$. Finally, from Proposition~\ref{p.reuninverted} we have that $h^\lambda(t,\mu) = g^\lambda(t,\mu)$, so $h^\lambda(t,\mu)$ is also Lipschitz continuous in $\lambda$.
\end{proof}

We have the following immediate corollary.

\begin{corollary}\label{c.limF_N=h}
Let $(t,\mu) \in \R_+\times \mcl P^\upa_\infty(\R_+^D)$ and let $h(t,\mu)$ be given as in~\eqref{e.h(t,mu)=}. We have
$\lim_{N\to\infty}\bar F_N(t,\mu) = h(t,\mu)$.
\end{corollary}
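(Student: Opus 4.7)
The plan is to first establish the identity $\lim_{N\to\infty}\bar F_N^\lambda(t,\mu) = h^\lambda(t,\mu)$ under the stronger assumption that the covariance function is strongly convex on $\R_+^D$, and then remove this assumption via a small perturbation argument. The key observation is that the variational formula appearing in Proposition~\ref{p.parisi_gen} coincides exactly with the one obtained on the right-hand side of \eqref{e.p.h=hopf-lax_on_Q(2)} in Proposition~\ref{p.h=hopf-lax_on_Q}. Since $\psi$ is defined through the reference measures $\spindist_d$ and the Poisson--Dirichlet cascade, it does not depend on $\xi$, so replacing $\xi$ by $\xi_\lambda$ leaves $\psi$ unchanged and only modifies the terms $t\nabla\xi(\sp)$ and $\theta$.

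First I would fix $\lambda > 0$ and write $\mu = \cL_\sq$ with $\sq \in \mcl Q_\infty$. Since $\xi_\lambda$ is strongly convex on $\R_+^D$, Proposition~\ref{p.h=hopf-lax_on_Q} applied with $\xi_\lambda$ in place of $\xi$ yields
\begin{equation*}
h^\lambda(t,\mu) = \sup_{\sp \in \mcl Q_\infty} \Ll\{ \psi\Ll(\cL_{\sq+t\nabla\xi_\lambda(\sp)}\Rr) - t \int \theta_\lambda \,\d \cL_\sp \Rr\},
\end{equation*}
where $\theta_\lambda(y) = y\cdot\nabla\xi_\lambda(y) - \xi_\lambda(y)$. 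On the other hand, Proposition~\ref{p.parisi_gen} applied to the model with Hamiltonian $H_N^\lambda$ and covariance function $\xi_\lambda$ yields exactly the same expression for $\lim_{N\to\infty}\bar F_N^\lambda(t,\mu)$. Combining these two identities, we conclude that $\lim_{N\to\infty}\bar F_N^\lambda(t,\mu) = h^\lambda(t,\mu)$ for every $\lambda > 0$.

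Finally, we invoke Lemma~\ref{l.lambda.continuity}, which asserts that both $\lambda \mapsto \lim_{N\to\infty}\bar F_N^\lambda(t,\mu)$ and $\lambda \mapsto h^\lambda(t,\mu)$ are continuous on $\R_+$. Letting $\lambda \to 0^+$ and using $\bar F_N^0 = \bar F_N$ and $h^0 = h$ then yields the desired identity. No step in this argument presents a genuine obstacle since all the heavy technical work is encapsulated in Proposition~\ref{p.h=hopf-lax_on_Q}, the Parisi formula of Proposition~\ref{p.parisi_gen}, and the continuity result of Lemma~\ref{l.lambda.continuity}.
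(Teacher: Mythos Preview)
Your proof is correct and follows essentially the same approach as the paper: apply Proposition~\ref{p.h=hopf-lax_on_Q} and Proposition~\ref{p.parisi_gen} with $\xi_\lambda$ in place of $\xi$ to obtain $\lim_{N\to\infty}\bar F_N^\lambda(t,\mu) = h^\lambda(t,\mu)$ for $\lambda>0$, then pass to the limit $\lambda\to 0$ via Lemma~\ref{l.lambda.continuity}. The paper's version is simply more terse.
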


\begin{proof}
Let $\lambda > 0$, since $\xi_\lambda$ is strongly convex on $\R^D_+$, it follows from Proposition~\ref{p.parisi_gen} and~\eqref{e.p.h=hopf-lax_on_Q(2)} in Proposition~\ref{p.h=hopf-lax_on_Q} that 
\begin{e*}
    \lim_{N\to\infty}\bar F^\lambda_N(t,\mu) = h^\lambda(t,\mu).
\end{e*}
We can then let $\lambda \to 0$ and use Lemma~\ref{l.lambda.continuity} to get the desired result.
\end{proof}

We can now complete the proofs of Theorems~\ref{t.convexoptimization} and \ref{t.hopflike}. 

\begin{proof}[Proof of Theorem~\ref{t.convexoptimization}]
Let $\mu = \cL_\sq\in\mcl P^\upa_\infty(\R_+)$.
Due to $\mcl P^\upa_\infty(\R_+^D)\subset \mcl P_1(\R_+^D)$, the definition of $h$ in~\eqref{e.h(t,mu)=} and~\eqref{e.p.h=hopf-lax_on_Q(1)} in Proposition~\ref{p.h=hopf-lax_on_Q} yield
\begin{align}\label{e.h(t,mu)=sup_P_infty}
    h(t,\mu) = \sup_{ \nu \in \mcl P_\infty(\R_+^D)} \left\{ \psi(\nu) - \mcl T_t(\mu,\nu) \right\}.
\end{align}
The formula \eqref{e.convexoptimization} follows from~\eqref{e.h(t,mu)=sup_P_infty} and Corollary~\ref{c.limF_N=h}. The concavity of the functional follows from Proposition~\ref{p.almost_strict_concavity} and \eqref{e.kantorovich.dual.rep}. The last claim on the uniqueness of maximizers that are monotone follows from Corollary~\ref{c.strict_concavity}.
\end{proof}

\begin{proof}[Proof of Theorem~\ref{t.hopflike}]
This is a consequence of Corollary~\ref{c.limF_N=h}, the identity $h=g$ given in Proposition~\ref{p.reuninverted}, and the expression of $g$ in~\eqref{e.g(t,mu)=}.
\end{proof}

Before proceeding with the proofs of Theorems~\ref{t.parisi.basic} and \ref{t.unique_parisi}, we record the following result.
\begin{proposition}[Other forms of Parisi formula]\label{p.parisi_other}
For every $t\geq0$ and $\mu = \mcl L_q\in\mcl P^\upa_\infty(\R_+^D)$, we have
\begin{align}
    \lim_{N \to +\infty} \bar F_N(t,\mu) & = \sup_{\sq' \in \mcl Q_\infty} \Ll\{ \psi\Ll(\cL_{\sq+\sq'}\Rr)-\int  (t\xi)^* \,\d \cL_{\sq'}  \Rr\} \label{e.p.parisi_other(1)}
    \\
    &= \sup_{\sq'' \in \mcl Q_\infty} \Ll\{ \psi\Ll(\cL_{\sq''}\Rr)-\int_0^1  (t\xi)^*\Ll(\sq''(s)-\sq(s)\Rr) \,\d s  \Rr\}. \label{e.p.parisi_other(2)}
\end{align}
\end{proposition}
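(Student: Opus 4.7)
The proof will interpolate between the classical Parisi representation in Proposition~\ref{p.parisi_gen} and the optimal-transport representation $h(t,\mu)$ from \eqref{e.h(t,mu)=}, which equals the limit free energy by Corollary~\ref{c.limF_N=h}. The essential mechanism is the synchronous coupling of monotone measures, which provides sharp upper bounds on $\mcl T_t$.

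For \eqref{e.p.parisi_other(1)}, first rewrite the right-hand side of~\eqref{e.p.parisi_gen} using the identity $t\theta(x) = (t\xi)^*(t\nabla\xi(x))$ from \eqref{e.ttheta=(txi)^*(tnablaxi)}, which yields
\begin{align*}
    \lim_{N\to\infty}\bar F_N(t,\mu) = \sup_{\sp \in \mcl Q_\infty}\Ll\{\psi\Ll(\cL_{\sq+t\nabla\xi(\sp)}\Rr) - \int (t\xi)^*\,\d \cL_{t\nabla\xi(\sp)}\Rr\}.
\end{align*}
By Remark~\ref{r.xi.monotonicity}, $\nabla\xi$ is $\R_+^D$-increasing on $\R_+^D$, and since its power-series coefficients are non-negative we also have $\nabla\xi(0) \in \R_+^D$, so $\nabla\xi$ maps $\R_+^D$ into $\R_+^D$. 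Therefore $t\nabla\xi(\sp) \in \mcl Q_\infty$ whenever $\sp \in \mcl Q_\infty$, and the supremum above is bounded by the supremum over $\sq' \in \mcl Q_\infty$ appearing in the right-hand side of \eqref{e.p.parisi_other(1)}.

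For the reverse inequality, fix any $\sq' \in \mcl Q_\infty$ and consider the synchronous coupling $\pi = \mathrm{Law}(\sq(U), (\sq+\sq')(U)) \in \Pi(\mu, \cL_{\sq+\sq'})$. Then
\begin{align*}
    \mcl T_t(\mu, \cL_{\sq+\sq'}) \leq \int (t\xi)^*(y-x)\,\d\pi(x,y) = \int_0^1 (t\xi)^*(\sq'(s))\,\d s = \int (t\xi)^*\,\d \cL_{\sq'}.
\end{align*}
Combined with the definition of $h(t,\mu)$ in \eqref{e.h(t,mu)=} (and recalling $\cL_{\sq+\sq'} \in \mcl P_1(\R_+^D)$), this gives
$\psi(\cL_{\sq+\sq'}) - \int(t\xi)^*\,\d \cL_{\sq'} \leq h(t,\mu) = \lim_{N\to\infty}\bar F_N(t,\mu)$ by Corollary~\ref{c.limF_N=h}, completing the proof of \eqref{e.p.parisi_other(1)}.

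For \eqref{e.p.parisi_other(2)}, the change of variables $\sq'' = \sq + \sq'$ is a bijection between $\{\sq' \in \mcl Q_\infty\}$ and $\{\sq'' \in \mcl Q_\infty : \sq'' - \sq \in \mcl Q_\infty\} \subset \mcl Q_\infty$ under which the two integrands coincide; since \eqref{e.p.parisi_other(2)} is a supremum over the larger set $\mcl Q_\infty$, this gives ``RHS of \eqref{e.p.parisi_other(1)} $\leq$ RHS of \eqref{e.p.parisi_other(2)}'', hence the lower bound by the limit free energy. The matching upper bound comes from applying the same synchronous coupling idea: for any $\sq'' \in \mcl Q_\infty$, the coupling $\pi = \mathrm{Law}(\sq(U), \sq''(U))$ belongs to $\Pi(\mu, \cL_{\sq''})$ and yields $\mcl T_t(\mu, \cL_{\sq''}) \leq \int_0^1 (t\xi)^*(\sq''(s) - \sq(s))\,\d s$, so that
\begin{align*}
    \psi(\cL_{\sq''}) - \int_0^1 (t\xi)^*(\sq''(s)-\sq(s))\,\d s \leq \psi(\cL_{\sq''}) - \mcl T_t(\mu, \cL_{\sq''}) \leq h(t,\mu),
\end{align*}
using the definition \eqref{e.h(t,mu)=} and Corollary~\ref{c.limF_N=h}. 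No step poses a serious obstacle; the content is essentially bookkeeping once one has recognized that the synchronous coupling is admissible for monotone sources and targets, and that $\R_+^D$-increasingness of $\nabla\xi$ transports $\mcl Q_\infty$ into itself.
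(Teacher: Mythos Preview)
Your proof is correct and follows essentially the same approach as the paper: the chain of inequalities $\text{v}\eqref{e.p.parisi_gen}\leq\text{v}\eqref{e.p.parisi_other(1)}\leq\text{v}\eqref{e.p.parisi_other(2)}\leq h(t,\mu)$ is obtained from the same two ingredients (the identity $t\theta=(t\xi)^*\circ t\nabla\xi$ and the synchronous coupling bound on $\mcl T_t$). The only organizational difference is that you close the loop via Corollary~\ref{c.limF_N=h}, whereas the paper bounds by the \emph{monotone} formula v\eqref{e.p.h=hopf-lax_on_Q(1)} and then redoes the $\lambda$-perturbation argument directly; since Corollary~\ref{c.limF_N=h} already encapsulates that perturbation, your shortcut is legitimate and slightly cleaner.
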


\begin{proof}
We denote by v(X) the variational formula in display (X) and by v$^\lambda$(X) the variational formula in display (X) where $\xi$ is replaced by $\xi_\lambda$ therein. We have
\begin{e*}
    \text{v\eqref{e.p.h=hopf-lax_on_Q(2)}}\leq\text{v\eqref{e.p.parisi_other(1)}} \leq \text{v\eqref{e.p.parisi_other(2)}} \leq \text{v\eqref{e.p.h=hopf-lax_on_Q(1)}},
\end{e*}
where the first inequality follows from the identity in~\eqref{e.ttheta=(txi)^*(tnablaxi)} and the last inequality follows from $\int_0^1  (t\xi)^*\Ll(\sq'-\sq\Rr)\geq \mcl T_t(\cL_\sq,\cL_{\sq'})$ (due to the definition of $\mcl T_t$ in~\eqref{e.T_t=}). We now argue that $\text{v\eqref{e.p.h=hopf-lax_on_Q(1)}} \leq \text{v\eqref{e.p.h=hopf-lax_on_Q(2)}}$. 

For every $\pi \in \mcl P(\R^D_+ \times \R^D_+)$ we have by monotone convergence 
\begin{e*}
    \lim_{\lambda \to 0} \int (t\xi^*_\lambda)(y-x) \d \pi(x,y) = \int (t\xi^*)(y-x) \d \pi(x,y)
\end{e*}
In particular, writing 
\begin{e*}
   \text{v$^\lambda$\eqref{e.p.h=hopf-lax_on_Q(1)}} = \sup_{\pi \in \Pi(\mu,\cdot)} \left\{\psi(\pi_2) - \int (t\xi^*_\lambda)(y-x) \d \pi(x,y)\right\},
\end{e*}
we see that $\text{v$^\lambda$\eqref{e.p.h=hopf-lax_on_Q(1)}}$ is lower semi-continuous in $\lambda$ as a supremum of continuous functions of $\lambda$. By Proposition~\ref{p.parisi_gen} we know that for every $\lambda\geq 0$, $\lim_{N\to\infty}\bar F^\lambda_N(t,\mu) = \text{v$^\lambda$\eqref{e.p.h=hopf-lax_on_Q(2)}}$, so according to Lemma~\ref{l.lambda.continuity}, \text{v$^\lambda$\eqref{e.p.h=hopf-lax_on_Q(2)}} is a continuous function of $\lambda \in \R_+$. From Proposition~\ref{p.h=hopf-lax_on_Q} we have for $\lambda > 0$, $\text{v$^\lambda$\eqref{e.p.h=hopf-lax_on_Q(2)}}  =\text{v$^\lambda$\eqref{e.p.h=hopf-lax_on_Q(1)}}$. Letting $\lambda \to 0$ in this we obtain $\text{v\eqref{e.p.h=hopf-lax_on_Q(1)}} \leq \text{v\eqref{e.p.h=hopf-lax_on_Q(2)}}$. This completes the proof since $\lim_{N\to\infty}\bar F_N(t,\mu)=\text{v\eqref{e.p.h=hopf-lax_on_Q(2)}}$ by Proposition~\ref{p.parisi_gen}.
\end{proof}

\begin{proof}[Proof of Theorem~\ref{t.parisi.basic}]
The formula in~\eqref{e.parisi.usual} is given by Proposition~\ref{p.parisi_gen} at $\mu=\delta_0$. The formula~\eqref{e.parisi.preferred} is given by~\eqref{e.p.parisi_other(1)} at $\mu=\delta_0$.
\end{proof}

\begin{proof}[Proof of Theorem~\ref{t.unique_parisi}]

We start by proving that the variational formula in \eqref{e.parisi.preferred} admits a maximizer $\bar \nu$. According to the refined version of the Parisi formula given in \cite[Proposition~6.1]{chen2024free}, we know that the supremum in \eqref{e.p.h=hopf-lax_on_Q(2)} can be taken over $\sp \in \mcl Q_\infty$ satisfying $|\sp|_{L^\infty} \leq C$ for some fixed constant $C >0$. By \cite[Lemma~3.4]{chenmourrat2023cavity}, for every $p \in [1,+\infty)$, the set of $\sp \in \mcl Q_\infty$ satisfying $|\sp|_{L^\infty} \leq C$ is compact for the convergence in $L^p$. Therefore, since by~\eqref{e.F_L^1} the function $\psi$ is Lipschitz continuous with respect to the topology of $L^1$ convergence, there exists a maximizer $\bar \sp \in \mcl Q_\infty$ for the variational formula in \eqref{e.p.h=hopf-lax_on_Q(2)}. Choosing $\bar \mu = \mcl L_{\bar \sp}$ and $\bar \nu = \nabla \xi(\bar \mu)$, we have that $\bar \mu$ and $\bar \nu$ are maximizers of the variational formulas in \eqref{e.parisi.usual} and \eqref{e.parisi.preferred} respectively.

Let $\bar\mu, \bar\nu$ be maximizers of formulas~\eqref{e.parisi.usual} and~\eqref{e.parisi.preferred} respectively. Recall the expression of $t\theta$ in~\eqref{e.ttheta=(txi)^*(tnablaxi)}. By Theorem~\ref{t.convexoptimization} at $\mu=\delta_0$ and the simple fact due to~\eqref{e.T_t=} that $\mcl T_t(\delta_0,\nu) = \int (t\xi)^*\,\d\nu$ for any $\nu$, we have that $(t\nabla\xi)(\bar\mu)$ and $\bar\nu$ are maximizers of the variational formula in~\eqref{e.convexoptimization} at $\mu=\delta_0$. The uniqueness thus follows from the said theorem. Consequently, we also have $\bar \nu = (t \nabla \xi)(\bar \mu)$.
\end{proof}

We can in fact extend the statement concerning the uniqueness of Parisi measures in the following way. 

\begin{proposition}[Uniqueness of Parisi measure II]  \label{p.unique_parisi_gen}

For every $t \ge 0$, the suprema in \eqref{e.p.parisi_other(1)} and \eqref{e.p.parisi_other(2)} are each achieved at exactly one path, say $\bar \sq'$ and $\bar \sq''$ respectively, and these paths satisfy $\sq +\bar\sq'=\bar \sq''$. 
Moreover, the supremum in~\eqref{e.p.parisi_gen} is achieved at paths $\bar \sp$ satisfying $t\nabla\xi(\bar\sp)=\bar\sq'$.

\end{proposition}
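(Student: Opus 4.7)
The plan is to derive all three uniqueness claims from the uniqueness of the monotone maximizer in Theorem~\ref{t.convexoptimization}, transferring that result through the equalities between the variational formulas established in Proposition~\ref{p.parisi_other}. For existence, I would adapt the compactness argument sketched at the beginning of the proof of Theorem~\ref{t.unique_parisi}: relying on the refined Parisi formula of \cite[Proposition~6.1]{chen2024free} (for~\eqref{e.p.parisi_gen}) and on the superlinearity of $(t\xi)^*$ on $\R_+^D$ from Lemma~\ref{l.xi* continuous and superlinear} (for~\eqref{e.p.parisi_other(1)} and~\eqref{e.p.parisi_other(2)}), the suprema can be restricted to paths that are uniformly bounded in $L^\infty$; the compactness of such sets for $L^p$ convergence given by \cite[Lemma~3.4]{chenmourrat2023cavity}, together with the $L^1$-continuity of the functionals afforded by~\eqref{e.Lip_psi}, then yields maximizers.

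The core step is the uniqueness of $\bar\sq''$. Given any maximizer $\bar\sq''$ of~\eqref{e.p.parisi_other(2)}, the comonotone coupling $\mathrm{Law}(\sq(U),\bar\sq''(U))$ is an admissible element of $\Pi(\mu,\cL_{\bar\sq''})$, so
\begin{align*}
    h(t,\mu) &= \psi\Ll(\cL_{\bar\sq''}\Rr) - \int_0^1 (t\xi)^*\Ll(\bar\sq''(s)-\sq(s)\Rr)\d s \\
    &\le \psi\Ll(\cL_{\bar\sq''}\Rr) - \mcl T_t\Ll(\mu,\cL_{\bar\sq''}\Rr) \le h(t,\mu),
\end{align*}
where the first equality combines Corollary~\ref{c.limF_N=h} with Proposition~\ref{p.parisi_other}, and the last inequality is~\eqref{e.h(t,mu)=}. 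All inequalities are therefore equalities; in particular, $\cL_{\bar\sq''}$ is a monotone measure realizing the supremum in~\eqref{e.convexoptimization} at $\mu=\cL_\sq$. Theorem~\ref{t.convexoptimization} asserts that exactly one such measure exists, and the bijectivity of $\cL:\mcl Q_\infty\to\mcl P^\upa_\infty(\R_+^D)$ then forces $\bar\sq''$ itself to be unique.

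For $\bar\sq'$ and the identity $\sq+\bar\sq'=\bar\sq''$: if $\bar\sq'$ maximizes~\eqref{e.p.parisi_other(1)}, then $\sq+\bar\sq'$ lies in $\mcl Q_\infty$ as a sum of increasing paths, and plugging $\sq''=\sq+\bar\sq'$ into the functional in~\eqref{e.p.parisi_other(2)} reproduces the value of~\eqref{e.p.parisi_other(1)} at $\bar\sq'$, hence equals $h(t,\mu)$. By the uniqueness of $\bar\sq''$ just established, one obtains $\sq+\bar\sq'=\bar\sq''$, which simultaneously proves the announced relation and the uniqueness of $\bar\sq'$. Similarly, for any maximizer $\bar\sp$ of~\eqref{e.p.parisi_gen}, the path $t\nabla\xi(\bar\sp)$ is increasing because $\nabla\xi$ is $\R_+^D$-increasing on $\R_+^D$ (Remark~\ref{r.xi.monotonicity}), and the identity~\eqref{e.ttheta=(txi)^*(tnablaxi)} shows that $t\nabla\xi(\bar\sp)$ is a maximizer of~\eqref{e.p.parisi_other(1)}; uniqueness of $\bar\sq'$ then yields $t\nabla\xi(\bar\sp)=\bar\sq'$. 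The only real obstacle is thus the first step, where the comonotone coupling supplies the missing link between the path-based formulations and the measure-based variational problem handled by Theorem~\ref{t.convexoptimization}; all remaining arguments are bookkeeping through the substitutions $\sq''=\sq+\sq'$ and $\sq'=t\nabla\xi(\sp)$.
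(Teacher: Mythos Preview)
Your proof is correct and follows essentially the same route as the paper: the key uniqueness step---showing via the comonotone coupling that any maximizer $\bar\sq''$ of~\eqref{e.p.parisi_other(2)} yields a monotone maximizer $\cL_{\bar\sq''}$ of~\eqref{e.convexoptimization} and then invoking Theorem~\ref{t.convexoptimization}---is identical, and the remaining substitutions for $\bar\sq'$ and $\bar\sp$ match as well. The only minor difference is in the existence argument: the paper first obtains a maximizer $\bar\sp$ of~\eqref{e.p.parisi_gen} via \cite[Proposition~6.1]{chen2024free} and then pushes it forward to maximizers of~\eqref{e.p.parisi_other(1)} and~\eqref{e.p.parisi_other(2)}, whereas you sketch separate compactness arguments, but both are valid.
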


\begin{proof} 
For the existence of maximizers, as explained in the proof of Theorem~\ref{t.unique_parisi} a simple compactness and continuity argument yields the existence of a maximizer $\bar \sp \in \mcl Q_\infty$ for the variational formula in~\eqref{e.p.h=hopf-lax_on_Q(2)}. Then, the suprema in \eqref{e.p.parisi_other(1)} and \eqref{e.p.parisi_other(2)} are achieved at $\bar\sq'=t\nabla\xi(\bar\sp)$ and $\bar\sq''=\sq+\bar\sq'$ respectively.
This verifies the existence part. 
For the uniqueness part, notice that it suffices to show that for~\eqref{e.p.parisi_other(2)}.
Let $\bar \sq''$ be any maximizer of~\eqref{e.p.parisi_other(2)}, and let us write $\bar\nu= \mcl L_{\bar\sq''}\in\mcl P^\upa_\infty(\R^D_+)$ and $F=\lim_{N\to\infty}\bar F_N(t,\mu)$. Then, $\mathrm{Law}(\sq(U),\bar\sq''(U))$ is a coupling of $(\mu,\bar\nu)$ and we have
\begin{align*}
     F=\psi\Ll(\cL_{\bar\sq''}\Rr)-\int_0^1  (t\xi)^*\Ll(\bar\sq''(s)-\sq(s)\Rr) \,\d s \stackrel{\eqref{e.T_t=}}{\leq} \psi\Ll(\bar\nu\Rr) -\mcl T_t(\mu,\bar\nu)\stackrel{\eqref{e.convexoptimization}}{\leq}F.
\end{align*}
Hence, the supremum in~\eqref{e.convexoptimization} is achieved at $\bar\nu$. By Theorem~\ref{t.convexoptimization}, such $\bar\nu$ is unique in $\mcl P^\upa_\infty(\R^D_+)$ and thus $\bar \sq''$ has to be unique. Similarly if $\bar \sq'$ is a maximizer in~\eqref{e.p.parisi_other(1)}, then $\sq +\bar \sq'$ is a maximizer in~\eqref{e.p.parisi_other(2)}. Thus $\bar \sq'$ has to be unique and satisfy $\sq +\bar \sq'=\bar \sq''$.
\end{proof}

%
%
%
%
%
%

\section{Explicit representation in terms of martingales}
\label{s.explicit}

The goal of this section is to prove Theorem~\ref{t.explicit.uninverted}. One ingredient of the proof of Theorem~\ref{t.hopflike} is a representation of the function $\psi$ as, for every $\mu \in \mcl P^\upa_1(\R^D_+)$,
\begin{equation} \label{e.rep.psi}
    \psi(\mu) = \inf_{\chi \in \mfk X} \Ll\{ \int \chi \, \d \mu - \psi_*(\chi) \Rr\}.
\end{equation}
In this representation, the set $\mfk X$ and function $\psi_*$ are not uniquely determined. In order to derive Theorem~\ref{t.explicit.uninverted}, the main task compared with the proof of Theorem~\ref{t.hopflike} is to derive a more explicit representation of this function $\psi$. To do this, we will rely heavily on ideas developed in \cite{uninverting}, where very similar results are proved for models with scalar spins.

We recall that we give ourselves a probability space $\msc P = (\Omega, \mcl F, \PP)$ with associated expectation $\EE$, and $(\mcl F_1(t))_{t \ge 0}$, \ldots, $(\mcl F_D(t))_{t \ge 0}$ an independent family of complete filtrations over $\msc P$, which each comes with an adapted Brownian motion $(B_1(t))_{t \ge 0}$, \ldots, $(B_D(t))_{t \ge 0}$.  We denote by $\bmart_1$, \ldots, $\bmart_D$ the spaces of bounded martingales over $\msc P$ with respect to the filtrations $(\mcl F_1({t}))_{t \ge 0}$, \ldots, $(\mcl F_D({t}))_{t \ge 0}$ respectively, with $\bmart = \prod_{d = 1}^D \bmart_d$. 
When we want to make the dependence on the underlying probability space explicit we write $\bmart(\msc P)$ in place of $\bmart$.

We recall that the functions $\psi_d : \mcl P_1(\R_+) \to \R$, $\phi_d : \R_+ \times \R \to \R$ and $\phi_d^* : \R_+ \times \R \to \R$ are defined in \eqref{e.psi_d=}, \eqref{e.def.phid}, and \eqref{e.def.phid*} respectively. Since we have assumed that $\spindist_\s$ is not a Dirac mass, it follows that for every $t \geq 0$, $\phi_d(t,\cdot)$ is strictly convex on $\R$. Adapting the proof of \cite[Lemma~2.2]{uninverting}, we obtain the following explicit representation for $\psi_d$.
\begin{proposition}
 \label{p.rep.psid}
    For every $d \in [D]$, $T_d \in \R_+$, and $\nu_d \in \mcl P_\infty(\R_+)$ supported in $[0,T_d]$, we have
    \begin{multline}  \label{e.rep.psid}
        \psi_d(\nu_d) = \inf_{\alpha_d \in \bmart_d} \EE \bigg[   \phi^*_d(T_d, \alpha_d(T_d)) -\sqrt{2} \alpha_d(T_d) \cdot B_d(T_d) 
        \\- \int_0^{T_d} \nu_d[0,t] \alpha_d^2(t) \, \d t \bigg].
    \end{multline}
    In addition, there is a unique minimizer $\alpha_d \in \bmart_d$ in \eqref{e.rep.psid} and it must satisfy $\alpha_d(t)  = \partial_x \Phi_{\nu_d}(t,X_d(t))$ a.s.\ for almost every $t \leq T_d$, where $X_d$ is the unique strong solution to  %
    \begin{e} \label{e.def.X}       
           \left\{
    \begin{aligned}
      dX_d(t) &= 2\nu_d[0,t]\, \partial_x \Phi_{\nu_d}(t,X_d(t))\, dt + \sqrt{2}\, dB_d(t), \\
      X_d(0)  &= 0.
    \end{aligned}
    \right.
        \end{e}
\end{proposition}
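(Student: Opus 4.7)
The natural approach is to leverage the identity $\psi_d(\nu_d)=-\Phi_{\nu_d}(0,0)$ from \eqref{e.barpsi=Phi} and to recognize the right-hand side of \eqref{e.rep.psid} as what emerges from It\^o's formula applied to $\Phi_{\nu_d}$ along a drift-controlled process driven by $\alpha_d$, combined with Young's inequality for the convex-conjugate pair $(\phi_d(T_d,\cdot),\phi_d^*(T_d,\cdot))$. Since $\nu_d$ is supported in $[0,T_d]$, we have $\nu_d^{-1}(1)\le T_d$, so $\Phi_{\nu_d}$ satisfies the Parisi PDE \eqref{e.Phi_nu_eqn} on $[0,T_d]\times\R$ with terminal data $\Phi_{\nu_d}(T_d,\cdot)=\phi_d(T_d,\cdot)$. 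The uniform bounds on all spatial derivatives of $\Phi_{\nu_d}$ from Lemma~\ref{l.ext_Parisi_PDE} will guarantee that every stochastic integral below is a genuine martingale.

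\textbf{Lower bound.} Given $\alpha_d\in\bmart_d$, introduce the drift-controlled process
\[
Y_d(t)\;=\;\sqrt{2}\,B_d(t)+2\int_0^t \nu_d[0,s]\,\alpha_d(s)\,\d s.
\]
Apply It\^o's formula to $\Phi_{\nu_d}(t,Y_d(t))$, use \eqref{e.Phi_nu_eqn} to eliminate $\partial_t\Phi_{\nu_d}$, and complete the square via $2ab-b^2=a^2-(a-b)^2$ with $a=\alpha_d$ and $b=\partial_x\Phi_{\nu_d}(t,Y_d(t))$ to obtain
\[
\d\Phi_{\nu_d}(t,Y_d(t))=\nu_d[0,t]\bigl(\alpha_d^2-(\alpha_d-\partial_x\Phi_{\nu_d}(t,Y_d))^2\bigr)\d t+\sqrt{2}\,\partial_x\Phi_{\nu_d}(t,Y_d)\,\d B_d(t).
\]
Integrating on $[0,T_d]$ and taking expectations kills the stochastic integral (the integrand is bounded by Lemma~\ref{l.ext_Parisi_PDE}). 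Applying Young's inequality $\phi_d^*(T_d,\alpha_d(T_d))+\phi_d(T_d,Y_d(T_d))\ge \alpha_d(T_d)Y_d(T_d)$, expanding $Y_d(T_d)$, and using the martingale identity $\EE[\alpha_d(t)\alpha_d(T_d)]=\EE[\alpha_d(t)^2]$ for $t\le T_d$ gives
\[
-\Phi_{\nu_d}(0,0)\;\le\;\EE[\phi_d^*(T_d,\alpha_d(T_d))]-\sqrt 2\,\EE[\alpha_d(T_d)B_d(T_d)]-\int_0^{T_d}\!\nu_d[0,t]\,\EE[\alpha_d(t)^2]\,\d t,
\]
which is the lower bound $\psi_d(\nu_d)\le$ RHS of \eqref{e.rep.psid}.

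\textbf{Optimality and uniqueness of the minimizer.} Set $\bar\alpha_d(t)=\partial_x\Phi_{\nu_d}(t,X_d(t))$. Differentiating \eqref{e.Phi_nu_eqn} in $x$ and applying It\^o yields $\d\bar\alpha_d(t)=\sqrt{2}\,\partial_x^2\Phi_{\nu_d}(t,X_d(t))\,\d B_d(t)$, so $\bar\alpha_d\in\bmart_d$ (boundedness coming again from Lemma~\ref{l.ext_Parisi_PDE}). With $\alpha_d=\bar\alpha_d$ we have $Y_d=X_d$, and $\bar\alpha_d(T_d)=\partial_x\phi_d(T_d,X_d(T_d))$ since $\Phi_{\nu_d}(T_d,\cdot)=\phi_d(T_d,\cdot)$; thus Young's inequality is an equality and the completed-square term vanishes identically, so the chain above becomes an equality, showing that $\bar\alpha_d$ achieves the infimum. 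For uniqueness, note that equality in the lower bound forces both (i) Young equality $\alpha_d(T_d)=\partial_x\phi_d(T_d,Y_d(T_d))$ and (ii) $\nu_d[0,t]\bigl(\alpha_d(t)-\partial_x\Phi_{\nu_d}(t,Y_d(t))\bigr)^2=0$ for a.e.\ $t$. On $\{t:\nu_d[0,t]>0\}$, condition (ii) forces $Y_d$ to solve the SDE \eqref{e.def.X}; strong well-posedness of this SDE (Lipschitzness of the drift follows from the uniform bound on $\partial_x^2\Phi_{\nu_d}$) pins $Y_d=X_d$, hence $\alpha_d=\bar\alpha_d$, on this set. On the complementary set the drift of $Y_d$ vanishes, so $Y_d=\sqrt{2}B_d=X_d$ automatically, and then $\alpha_d(t)=\EE[\alpha_d(T_d)\mid\mcl F_d(t)]=\EE[\bar\alpha_d(T_d)\mid\mcl F_d(t)]=\bar\alpha_d(t)$ forces coincidence there as well.

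\textbf{Main obstacle.} The most delicate step will be the uniqueness claim on time intervals where $\nu_d$ puts no mass, because on such intervals neither Young's inequality nor the completed-square term directly constrains $\alpha_d$. The resolution is to combine strong uniqueness for the SDE \eqref{e.def.X} (to identify $Y_d=X_d$ globally) with the martingale property of $\alpha_d$ and $\bar\alpha_d$ (to propagate pointwise coincidence backward in time through conditional expectations), and this is the step that makes essential use of the strict convexity of $\phi_d(T_d,\cdot)$ inherited from the non-triviality of $\spindist_d$.
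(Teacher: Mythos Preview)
Your proposal is correct and follows essentially the same verification-via-It\^o approach that the paper invokes by citing \cite[Lemma~2.2]{uninverting}: apply It\^o's formula to $\Phi_{\nu_d}$ along the controlled process, eliminate the time derivative with the Parisi PDE, complete the square, and use the Fenchel--Young inequality for the terminal condition, with equality exactly at the feedback control $\bar\alpha_d(t)=\partial_x\Phi_{\nu_d}(t,X_d(t))$. The uniqueness argument via strong well-posedness of \eqref{e.def.X} and the martingale tower property is also the standard one; note that once you establish $\alpha_d(T_d)=\bar\alpha_d(T_d)$ (from the Young equality and $Y_d(T_d)=X_d(T_d)$), the identity $\alpha_d(t)=\EE[\alpha_d(T_d)\mid\mcl F_d(t)]=\EE[\bar\alpha_d(T_d)\mid\mcl F_d(t)]=\bar\alpha_d(t)$ already gives equality for \emph{all} $t\le T_d$, so the separate treatment of the interval $\{\nu_d[0,t]=0\}$ is not strictly needed for the conclusion.
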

We next recall that, for each $\alpha \in \bmart$, we have defined $\chi_\alpha : \R^D_+ \to \R$ in~\eqref{e.def.chi.alpha}. 
\begin{proposition} \label{p.explicit.rep.psi}
    For every $\nu \in \mcl P^\upa_\infty(\R_+^D)$ with marginals $\nu_1,\ldots, \nu_D$ and $T = (T_1, \ldots, T_D)$ such that for every $d \in [D]$, the measure $\nu_d$ is supported in $[0,T_d]$, we have
    \begin{multline}  \label{e.explicit.rep.psi}
        \psi(\nu) = \inf_{\al \in \bmart} \bigg\{ \int \chi_\al \, \d \nu - \chi_\al(T) 
        \\+  \sum_{d = 1}^D \lambda_{\infty,d} \EE \Ll[   \phi^*_d(T_d, \alpha_d(T_d)) -\sqrt{2} \alpha_d(T_d) \cdot B_d(T_d) \Rr] \bigg\}.
    \end{multline}
    In addition, there is a unique minimizer $\alpha \in \bmart$ in \eqref{e.explicit.rep.psi} and it must satisfy $\alpha_d(t)  = \partial_x \Phi_{\nu_d}(t,X_d(t))$ a.s.\  for every $d \in [D]$ and almost every $t \leq T_d$, where $X_d$ is the solution to \eqref{e.def.X}.
\end{proposition}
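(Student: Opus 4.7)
The plan is to decompose the functional along species, apply Proposition~\ref{p.rep.psid} to each marginal, and then rewrite the ``nuisance'' quadratic term in terms of $\chi_\alpha$ using Fubini's theorem.

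First, I would start from the species decomposition \eqref{e.decomp.psi}, which gives $\psi(\nu)=\sum_{d=1}^D \lambda_{\infty,d}\,\psi_d(\nu_d)$ with $\nu_d$ supported in $[0,T_d]$. For each $d\in[D]$, Proposition~\ref{p.rep.psid} yields
\begin{equation*}
\psi_d(\nu_d) = \inf_{\alpha_d \in \bmart_d}\EE\bigg[\phi_d^*(T_d,\alpha_d(T_d)) - \sqrt{2}\,\alpha_d(T_d) B_d(T_d) - \int_0^{T_d}\nu_d[0,t]\,\alpha_d(t)^2\,\d t\bigg],
\end{equation*}
with a unique minimizer given by $\alpha_d(t) = \partial_x \Phi_{\nu_d}(t,X_d(t))$. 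Since the filtrations $(\mcl F_d(t))_{t\ge0}$ are independent and each term in the sum depends only on $\alpha_d\in\bmart_d$, the sum $\sum_d \lambda_{\infty,d}\psi_d(\nu_d)$ equals the joint infimum over $\al=(\alpha_1,\ldots,\alpha_D)\in\bmart$, with the unique minimizer being the tuple of coordinate-wise minimizers.

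The key computation is to identify the quadratic term $\sum_d \lambda_{\infty,d}\int_0^{T_d}\nu_d[0,t]\EE[\alpha_d(t)^2]\,\d t$ with $\chi_\al(T)-\int\chi_\al\,\d\nu$. Since $\nu$ has marginals $\nu_d$ and $\chi_\al$ is a sum of functions depending on individual coordinates,
\begin{equation*}
\int \chi_\al\,\d\nu = \sum_{d=1}^D \lambda_{\infty,d}\int_0^{T_d}\Ll(\int_0^{x_d}\EE[\alpha_d(t)^2]\,\d t\Rr)\d\nu_d(x_d).
\end{equation*}
Applying Fubini to the inner integrals and using that $\nu_d$ is supported in $[0,T_d]$, I obtain
\begin{equation*}
\int \chi_\al\,\d\nu = \sum_{d=1}^D \lambda_{\infty,d}\int_0^{T_d}\EE[\alpha_d(t)^2]\,(1-\nu_d[0,t])\,\d t = \chi_\al(T) - \sum_{d=1}^D \lambda_{\infty,d}\int_0^{T_d}\nu_d[0,t]\EE[\alpha_d(t)^2]\,\d t.
\end{equation*}
Rearranging gives the required identity. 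Substituting this into the sum of representations for $\psi_d(\nu_d)$ produces exactly the right-hand side of \eqref{e.explicit.rep.psi}, and the uniqueness claim transfers directly from the coordinate-wise uniqueness in Proposition~\ref{p.rep.psid}.

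The only step that requires a bit of care is the independence-based splitting of the infimum: one must verify that any minimizing sequence for the joint problem can be decoupled into $D$ independent minimizing sequences, which follows because the objective is a sum of terms each depending only on one $\alpha_d$, and one may condition on each factor filtration independently. No other step presents a significant obstacle; the Fubini exchange is routine given that $\al$ is bounded and $\nu_d$ has finite support in $[0,T_d]$.
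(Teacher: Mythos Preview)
Your proof is correct and follows essentially the same approach as the paper: decompose $\psi$ via \eqref{e.decomp.psi}, apply Proposition~\ref{p.rep.psid} to each $\psi_d(\nu_d)$, and rewrite the quadratic term via Fubini/integration by parts to obtain $\chi_\al(T)-\int\chi_\al\,\d\nu$. Your final caveat about ``decoupling minimizing sequences'' is unnecessary, though: since $\bmart=\prod_d\bmart_d$ by definition \eqref{e.def.bmart} and the objective is a sum of terms each depending only on $\alpha_d$, the joint infimum factors immediately as the sum of coordinate infima, with no appeal to filtration independence needed.
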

\begin{proof} 
    By integration by parts, we have for every $d \in [D]$ that
    \begin{equation*}  
        \int_0^{T_d} \nu_d[0,t] \EE\Ll[\alpha_d^2(t)\Rr] \, \d t = \int \int_s^{T_d} \EE\Ll[\alpha_d^2(t)\Rr] \, \d t \, \d \nu_d(s),
    \end{equation*}
    and thus, due to~\eqref{e.def.chi.alpha},
    \begin{equation*}  
        \sum_{ d = 1}^D \lambda_{\infty, d} \int_0^{T_d} \nu_d[0,t] \EE\Ll[\alpha_d^2(t)\Rr] \, \d t = \chi_\al(T) - \int \chi_\al \, \d \nu.
    \end{equation*}
    Using this, the decomposition of $\psi$ into $\psi_\s$ as in \eqref{e.decomp.psi}, and the formula~\eqref{e.rep.psid}, we obtain~\eqref{e.explicit.rep.psi}. 
\end{proof}

Given $T = (T_1,\dots,T_d)$, for every $\alpha \in \bmart$, we let
\begin{e*}
    \varphi(\alpha) = -\chi_\alpha(T)+ \sum_{d = 1}^D \lambda_{\infty,d} \EE \Ll[   \phi^*_d(T_d, \alpha_d(T_d)) -\sqrt{2} \alpha_d(T_d) \cdot B_d(T_d) \Rr],
\end{e*}
and
\begin{multline*}
    K_0 = \Big\{ (\varphi(\alpha), (\EE[\alpha^2_1(t)])_{t \leq T_1}, \dots, (\EE[\alpha^2_D(t)])_{t \leq T_D} )  \ \Big| \ \alpha \in \bmart, \\ \, \EE[\phi_d^*(T_d,\alpha_d(T_d))] < +\infty \Big\}.
\end{multline*}
We also let $K$ denote the closed convex hull of $K_0$. Since $\bmart$ depends on the underlying probability space $\msc P$, the sets $K$ and $K_0$ also depend on $\msc P$. When we want to make this dependence explicit, we will write $K(\msc P)$ and $K_0(\msc P)$. For convenience we will also let $\mcl K_T = \prod_d [0,T_d]$.
\begin{proposition} \label{p.explicit.uninverted.mixed}
    For every $\mu \in \mcl P^\upa_\infty(\R^D_+)$ and $T = (T_1,\dots,T_D)$ such that $\supp(\mu) +t\nabla\xi([0,1]^D)\subset \prod_{\s\in\sS}[0,T_\s]$, we have 
    \begin{e} \label{e.explicit.uinverted.mixed}
        \lim_{N \to +\infty} \bar F_N(t,\mu)= \inf_{(\varphi,\gamma) \in K} \left\{ \int S_t \chi_{\gamma} \d \mu - \varphi \right\},
    \end{e}
    where $\chi_\gamma$ is given by $\chi_\gamma(x) = \sum_{d=1}^D \lambda_{\infty,d} \int_0^{x_d} \gamma_d(s) \d s$.
\end{proposition}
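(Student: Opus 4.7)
The plan is to combine the supremum-over-$\nu$ representation of $\lim_{N\to\infty}\bar F_N(t,\mu)$ from Theorem~\ref{t.convexoptimization} with the infimum-over-martingales representation of $\psi$ from Proposition~\ref{p.explicit.rep.psi}, and then to invert the order of the resulting sup and inf via Sion's min-max theorem. The inner supremum is then identified as $\int S_t \chi_\gamma\,\d\mu$ through Lemma~\ref{l.hopf-lax with linear initial condtion}.

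First I would argue that the supremum in Theorem~\ref{t.convexoptimization} can be restricted to measures in $\mcl P(\mcl K_T)$. Indeed, the assumption on $T$ combined with Proposition~\ref{p.h=hopf-lax_on_Q} ensures that the optimal monotone measure lies in $\supp(\mu)+t\nabla\xi([0,1]^D)\subset\mcl K_T$, so $\sup_{\mcl P^\upa(\mcl K_T)}=\lim_N \bar F_N(t,\mu)$; together with the trivial bound $\sup_{\mcl P(\mcl K_T)}\le\sup_{\mcl P_\infty(\R^D_+)}=\lim_N \bar F_N(t,\mu)$, this yields $\lim_N \bar F_N(t,\mu)=\sup_{\nu\in\mcl P(\mcl K_T)}\{\psi(\nu)-\mcl T_t(\mu,\nu)\}$. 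Next, I would substitute the representation $\psi(\nu)=\inf_{(\varphi,\gamma)\in K_0}\{\int\chi_\gamma\,\d\nu-\varphi\}$ from Proposition~\ref{p.explicit.rep.psi}, and use the affineness of the integrand in $(\varphi,\gamma)$ to pass to the infimum over the closed convex hull $K$, in a topology on the $\gamma$-component (e.g.\ weak-$*$ convergence in $L^\infty([0,T_d])$) under which this continuous extension is legitimate.

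At this stage, Sion's min-max theorem applies: the set $\mcl P(\mcl K_T)$ is convex and weakly compact, $K$ is convex, the joint objective $\int\chi_\gamma\,\d\nu-\varphi-\mcl T_t(\mu,\nu)$ is concave in $\nu$ (affine minus the convex functional $\mcl T_t(\mu,\cdot)$) and affine in $(\varphi,\gamma)$, and it is weakly continuous in $\nu$ on the compact set $\mcl K_T$. After the swap, the inner supremum becomes $\sup_\nu\{\int\chi_\gamma\,\d\nu-\mcl T_t(\mu,\nu)\}$, which by Lemma~\ref{l.hopf-lax with linear initial condtion} combined with Remark~\ref{r.refinement} equals $\int S_t\chi_\gamma\,\d\mu$; this yields one half of the desired identity, namely $\lim_N \bar F_N(t,\mu)\le\inf_{(\varphi,\gamma)\in K}\{\int S_t\chi_\gamma\,\d\mu-\varphi\}$. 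The reverse inequality is obtained by exhibiting a minimizer in $K_0$: letting $\bar\nu$ be the unique monotone maximizer from Theorem~\ref{t.convexoptimization} and $\bar\alpha$ the corresponding optimal martingale from Proposition~\ref{p.explicit.rep.psi}, the identification of $\chi_{\bar\alpha}$ as the derivative of $\psi$ at $\bar\nu$ (via~\eqref{e.chi=}) combined with Lemma~\ref{l.rel_at_max} gives $\int S_t\chi_{\bar\alpha}\,\d\mu-\varphi(\bar\alpha)=\psi(\bar\nu)-\mcl T_t(\mu,\bar\nu)=\lim_N \bar F_N(t,\mu)$.

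The main technical subtlety lies in verifying the hypotheses of Sion's theorem and in making precise the passage from the infimum over $K_0$ to the infimum over $K$. Moreover, Remark~\ref{r.refinement} is formulated for $\chi\in\mfk X$, which requires $\chi_\gamma$ to be $1$-Lipschitz, i.e., $|\gamma_d|\le 1$; since Proposition~\ref{p.rep.psid} identifies the unique optimal martingale as satisfying exactly this bound, one can restrict attention to elements $(\varphi,\gamma)\in K$ arising from martingales $\alpha$ with $|\alpha_d|\le 1$ without altering the value of the infimum, thus legitimizing the application of Remark~\ref{r.refinement}.
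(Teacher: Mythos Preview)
Your approach is essentially the paper's: restrict the supremum from Theorem~\ref{t.convexoptimization} to $\mcl P(\mcl K_T)$, plug in the martingale representation of $\psi$ from Proposition~\ref{p.explicit.rep.psi}, pass to the closed convex hull $K$ by affineness and continuity of $G_{t,\mu}(\nu;\cdot)$, swap via Sion (using weak compactness of $\mcl P(\mcl K_T)$), and identify the inner supremum through Lemma~\ref{l.hopf-lax with linear initial condtion} and Remark~\ref{r.refinement}.

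A few small points. First, the paper justifies the restriction to $\mcl K_T$ via \cite[Proposition~6.1]{chen2024free} rather than Proposition~\ref{p.h=hopf-lax_on_Q}, which carries a strong-convexity hypothesis on $\xi$ that you would otherwise have to remove by approximation. Second, once the $K_0 \to K$ passage and the inner-sup identification are in place, Sion already gives the full equality, so your separate ``reverse inequality'' argument through Lemma~\ref{l.rel_at_max} is superfluous; note also that this argument tacitly uses that $u_{\bar\nu_d}(t,\cdot)$ is the density of $X_d(t)$ (compare~\eqref{e.u_nu=eqn} with the Fokker--Planck equation for~\eqref{e.def.X}), which is what matches $\chi_{\bar\alpha}$ with~\eqref{e.chi=}. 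Third, your concern about $\chi_\gamma \in \mfk X$ is handled automatically: the condition $\EE[\phi_d^*(T_d,\alpha_d(T_d))] < \infty$ built into the definition of $K_0$ forces $|\alpha_d| \le 1$ (since $\phi_d^*(T_d,y)=+\infty$ for $|y|>1$), so every $(\varphi,\gamma)\in K$ satisfies $\gamma_d \le 1$ and Remark~\ref{r.refinement} applies directly.
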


\begin{proof}
    According to Theorem~\ref{t.convexoptimization}, we have 
    \begin{e*}
        \lim_{N \to +\infty} \bar F_N(t,\mu)=  \sup_{\nu \in \mcl P_\infty(\R^D_+)} \left\{ \psi(\nu) - \mcl T_t(\mu,\nu) \right\}.
    \end{e*}
    In view of the more precise statement of the Parisi formula for multi-species spin glasses given in \cite[Proposition~6.1]{chen2024free}, the supremum in the previous display is reached at a probability measure supported on $\supp(\mu) + t \nabla \xi([0,1]^D)$,
    the assumption on $T$ implies   
    \begin{align*}
        \supp(\mu) + t \nabla \xi([0,1]^D) \subset \mcl K_T.
    \end{align*}
    So, the supremum in the first display of this proof can be restricted to $\nu \in \mcl P(\mcl K_T)$ and thus, thanks to Proposition~\ref{p.explicit.rep.psi}, we have 
    \begin{multline*}
        \lim_{N \to +\infty} \bar F_N(t,\mu)=  \sup_{\nu \in \mcl P(\mcl K_T)} \inf_{\alpha \in \bmart} \bigg\{ \int \chi_\al \, \d \nu - \chi_\al(T) 
            \\+  \sum_{d = 1}^D \lambda_{\infty,d} \EE \Ll[   \phi^*_d(T_d, \alpha_d(T_d)) -\sqrt{2} \alpha_d(T_d) \cdot B_d(T_d) \Rr] - \mcl T_t(\mu,\nu)\bigg\}.
    \end{multline*}
    For $(\varphi,\gamma) \in K$, let $G_{t,\mu}(\nu;\varphi,\gamma)=\int \sum_{d} \lambda_{d,\infty} \Gamma_d \d \nu - \varphi - \mcl T_t(\mu,\nu)$ where we have set $\Gamma_d(x) = \int_0^{x_d} \gamma_d(s) \d s$. The previous display can be rewritten as
    \begin{e*}
        \lim_{N \to +\infty} \bar F_N(t,\mu)=  \sup_{\nu \in \mcl P(\mcl K_T)} \inf_{(\varphi,\gamma) \in K_0} G_{t,\mu}(\nu;\varphi,\gamma),
    \end{e*}
    Since $G_{t,\mu}(\nu;\cdot)$ is affine, we can replace $K_0$ by $K$ in the previous display, to obtain
    \begin{e*}
        \lim_{N \to +\infty} \bar F_N(t,\mu)=  \sup_{\nu \in \mcl P(\mcl K_T)} \inf_{(\varphi,\gamma) \in K} G_{t,\mu}(\nu;\varphi,\gamma).
    \end{e*}
    Observe that the function $G_{t,\mu}(\nu;\cdot)$ is continuous on $\R \times L^1$ and $G_{t,\mu}(\cdot \, ;\varphi,\gamma)$ is lower semicontinuous with-respect to the topology of weak convergence. In addition, $G_{t,\mu}(\nu;\cdot)$ is convex and $G_{t,\mu}(\cdot \, ;\varphi,\gamma)$ is concave because of \eqref{e.kantorovich.dual.rep}. Therefore, we can apply Sion's min-max theorem and obtain  
    \begin{e*}
        \lim_{N \to +\infty} \bar F_N(t,\mu)= \inf_{(\varphi,\gamma) \in K} \sup_{\nu \in \mcl P(\mcl K_T)}  G_{t,\mu}(\nu;\varphi,\gamma).
    \end{e*}
    From Lemma~\ref{l.hopf-lax with linear initial condtion}, Remark~\ref{r.refinement} and the fact that $\supp(\mu) + t \nabla \xi([0,1]^D) \subset \mcl K_T$, we have that for every $(\varphi,\gamma) \in K$,
    \begin{e*}
        \sup_{\nu \in \mcl P(\mcl K_T)}  G_{t,\mu}(\nu;\varphi,\gamma) = \int S_t \chi_{\gamma} \d \mu - \varphi,
    \end{e*}
    where $\chi_\gamma$ is given by $\chi_\gamma(x) = \sum_{d=1}^D \lambda_{\infty,d} \int_0^{x_d} \gamma_d(s) \d s$. This yields~\eqref{e.explicit.uinverted.mixed}.
\end{proof}

\begin{proposition} \label{p.explicit.uninverted.whenrich}
    Assume that there exists a uniform random variable $U : \msc P \to [0,1]$ that is $\mcl F_{d}(0)$-measurable for every $\s\in\sS$. Then \eqref{e.explicit.uninverted} holds with $\bmart=\bmart(\msc P)$.
\end{proposition}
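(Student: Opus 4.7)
The plan is to bypass the convex-hull description of Proposition~\ref{p.explicit.uninverted.mixed} and instead exhibit the minimizer of~\eqref{e.explicit.uninverted} directly. Let $\bar\nu\in\mcl P^\upa_\infty(\R^D_+)$ denote the unique monotone maximizer of~\eqref{e.convexoptimization} provided by Theorem~\ref{t.convexoptimization}, and for each $d\in[D]$ let $X_d$ be the unique strong solution on $\msc P$ of~\eqref{e.def.X} driven by $B_d$ with $\nu_d=\bar\nu_d$. Define $\bar\alpha\in\bmart(\msc P)$ by $\bar\alpha_d(t)=\partial_x\Phi_{\bar\nu_d}(t,X_d(t))$; the bound $|\partial_x\Phi_{\bar\nu_d}|\le 1$ from Lemma~\ref{l.ext_Parisi_PDE} together with the martingale property recorded in Proposition~\ref{p.rep.psid} ensures that indeed $\bar\alpha\in\bmart(\msc P)$. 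Note that this construction uses only the Brownian motions $B_d$; the uniform $U$ is not required for the direct proof sketched here, but remains available if one prefers to derive the conclusion from the identity $K(\msc P)=K_0(\msc P)$, in which case $U$ supplies the $\mcl F_d(0)$-measurable randomization used to realize convex combinations inside $\bmart(\msc P)$.

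For the lower bound, write $J(\alpha)=\varphi(\alpha)+\int S_t\chi_\alpha\,\d\mu$ for the objective in~\eqref{e.explicit.uninverted}. I would fix any $\alpha\in\bmart(\msc P)$ and any $\nu\in\mcl P_1(\R^D_+)$; Proposition~\ref{p.explicit.rep.psi} yields $\varphi(\alpha)+\int\chi_\alpha\,\d\nu\ge\psi(\nu)$, while Lemma~\ref{l.hopf-lax with linear initial condtion} yields $\int S_t\chi_\alpha\,\d\mu\ge\int\chi_\alpha\,\d\nu-\mcl T_t(\mu,\nu)$. Adding these and taking the supremum over $\nu$ gives
\[
    J(\alpha)\ge\sup_{\nu\in\mcl P_1(\R^D_+)}\bigl\{\psi(\nu)-\mcl T_t(\mu,\nu)\bigr\}=h(t,\mu)=\lim_{N\to\infty}\bar F_N(t,\mu),
\]
the last identity coming from the definition of $h$ in~\eqref{e.h(t,mu)=} and Corollary~\ref{c.limF_N=h}.

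The heart of the argument is the matching upper bound at $\bar\alpha$, which reduces to identifying $\chi_{\bar\alpha}$ with the derivative of $\psi$ at $\bar\nu$ from Proposition~\ref{p.diff of psi}. Because the SDE~\eqref{e.def.X} is precisely the forward Kolmogorov equation associated with the density $u_{\bar\nu_d}$ defined through~\eqref{e.u_nu=eqn}, the law of $X_d(\tau)$ has density $u_{\bar\nu_d}(\tau,\cdot)$, whence
\[
    \EE[\bar\alpha_d^2(\tau)]=\int_\R(\partial_x\Phi_{\bar\nu_d}(\tau,x))^2 u_{\bar\nu_d}(\tau,x)\,\d x,
\]
which matches the derivative formula in Lemma~\ref{l.d/dpsi=} exactly. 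Summing in $d$ with weights $\lambda_{\infty,d}$ identifies $\chi_{\bar\alpha}$ with the derivative of $\psi$ at $\bar\nu$. Since $\bar\nu$ maximizes $h(t,\mu)$ in~\eqref{e.h(t,mu)=}, Lemma~\ref{l.rel_at_max} then delivers $\int\chi_{\bar\alpha}\,\d\bar\nu-\int S_t\chi_{\bar\alpha}\,\d\mu=\mcl T_t(\mu,\bar\nu)$, and combining this with the minimizer identity $\varphi(\bar\alpha)+\int\chi_{\bar\alpha}\,\d\bar\nu=\psi(\bar\nu)$ coming from the uniqueness clause of Proposition~\ref{p.explicit.rep.psi} at $\nu=\bar\nu$ yields $J(\bar\alpha)=\psi(\bar\nu)-\mcl T_t(\mu,\bar\nu)=\lim_{N\to\infty}\bar F_N(t,\mu)$, as required.

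Uniqueness then follows automatically: if $\alpha\in\bmart(\msc P)$ is any minimizer, saturating both inequalities from the lower-bound step at $\nu=\bar\nu$ forces in particular $\varphi(\alpha)+\int\chi_\alpha\,\d\bar\nu=\psi(\bar\nu)$, and the uniqueness clauses of Propositions~\ref{p.rep.psid} and~\ref{p.explicit.rep.psi} (which rest on the strict convexity of $\phi_d$) impose $\alpha_d(t)=\partial_x\Phi_{\bar\nu_d}(t,X_d(t))$ for almost every $t\le T_d$, giving $\alpha=\bar\alpha$. The main technical obstacle---indeed essentially the only non-routine step---is the identification $\chi_{\bar\alpha}=\partial_\mu\psi(\bar\nu)$, which hinges on recognizing the Kolmogorov density $u_{\bar\nu_d}$ as the marginal density of the diffusion $X_d$ and is what allows the lower-bound chain and the Kantorovich-potential identity from Lemma~\ref{l.rel_at_max} to meet at the same value.
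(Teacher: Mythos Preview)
Your proof is correct and takes a genuinely different route from the paper's.

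The paper proves Proposition~\ref{p.explicit.uninverted.whenrich} by first establishing the representation over the closed convex hull $K$ in Proposition~\ref{p.explicit.uninverted.mixed}, then using the $\mcl F_d(0)$-measurable uniform $U$ to realize finite convex combinations of Wiener-space martingales as single martingales in $\bmart(\msc P)$ (via an $\mcl F_d(0)$-measurable random index $I$), thereby collapsing the infimum over $K$ back to one over $K_0(\msc P)$. You instead construct the optimizer $\bar\alpha$ directly from the maximizer $\bar\nu$ of Theorem~\ref{t.convexoptimization} and verify both bounds by hand. The crux of your argument is the identification $\chi_{\bar\alpha}=\partial_\mu\psi(\bar\nu)$, obtained by recognizing that the forward Kolmogorov equation of the SDE~\eqref{e.def.X} is exactly the adjoint equation~\eqref{e.u_nu=eqn}, so that the law of $X_d(\tau)$ has density $u_{\bar\nu_d}(\tau,\cdot)$ and $\EE[\bar\alpha_d^2(\tau)]$ coincides with the integrand in~\eqref{e.chi=}. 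This feeds directly into the Kantorovich-potential identity of Lemma~\ref{l.rel_at_max}, closing the argument.

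What each approach buys: the paper's route is modular and isolates the role of the richness hypothesis (the uniform $U$ is genuinely used to pass from $K_1(\msc W)$ to $K_0(\msc P)$), but it postpones the general case to the separate compactness argument of Proposition~\ref{p.existence.optimal.martingale} and the filtration argument in the proof of Theorem~\ref{t.explicit.uninverted}. Your route is more direct and, as you observe, does not use $U$ at all; in fact it proves the full conclusion of Theorem~\ref{t.explicit.uninverted} (including uniqueness of the minimizer) for every probability space $\msc P$ in one pass, making Propositions~\ref{p.explicit.uninverted.whenrich} and~\ref{p.existence.optimal.martingale} redundant. The trade-off is that you lean more heavily on the PDE machinery of Section~\ref{s.initial condition} (specifically the adjoint density $u_\nu$ and its probabilistic interpretation), whereas the paper at this stage only invokes the abstract variational characterization from Proposition~\ref{p.explicit.rep.psi}. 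One minor point to tighten: in the lower-bound step, the inequality $\varphi(\alpha)+\int\chi_\alpha\,\d\nu\ge\psi(\nu)$ coming from Proposition~\ref{p.explicit.rep.psi} is stated only for $\nu$ supported in $\mcl K_T$, so you should note (as the paper does at the start of the proof of Proposition~\ref{p.explicit.uninverted.mixed}) that $\bar\nu$ is supported in $\supp(\mu)+t\nabla\xi([0,1]^D)\subset\mcl K_T$ before invoking it.
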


\begin{proof}
    Since $K_0 \subset K$, Proposition~\ref{p.explicit.uninverted.mixed} implies 
    \begin{e}\label{e.p.explicit.uninverted.whenrich1}
        \lim_{N \to +\infty} \bar F_N(t,\mu) \leq \inf_{\alpha \in \bmart(\msc P)} \left\{ \int S_t \chi_\alpha \d \mu - \varphi(\alpha) \right\}.
    \end{e}
    So we only need to prove the converse bound. Let $\msc W$ denote the $D$-dimensional Wiener space with the canonical random variables $(W(t))_{t \geq 0}$ equipped with $D$ filtrations, each generated by a component of the canonical random variables. Let $K_1(\msc W)$ be the convex hull of $K_0(\msc W)$. We can apply Proposition~\ref{p.explicit.uninverted.mixed} to the probability space $\msc W$, and obtain from \eqref{e.explicit.uinverted.mixed} that (we can replace $K(\msc W)$ by $K_1(\msc W)$ using a continuity argument) 
    \begin{e*} 
        \lim_{N \to +\infty} \bar F_N(t,\mu)= \inf_{(\varphi,\gamma) \in K_1(\msc W)} \left\{ \int S_t \chi_{\gamma} \d \mu - \varphi \right\}.
    \end{e*}
    Now, let $(\varphi,\gamma) \in K_1(\msc W)$. By the definition of $K_1(\msc W)$ as the convex hull of $K_0(\msc W)$, there is an integer $n\in\N$ and constants $c_1,\dots,c_n \in [0,1]$ such that  $\sum_i c_i = 1$ and $\alpha^{(1)},\dots,\alpha^{(n)} \in \bmart(\msc W)$ such that 
    \begin{align*}
        &\varphi = \sum_{i = 1}^n c_i \varphi(\alpha^{(i)}), \\
        &\gamma_d(t_d) = \sum_{i =1}^n c_i \EE[(\alpha_d^{(i)}(t_d))^2].
    \end{align*}
    For every $\s \in \sS$, there is a canonical embedding from $\bmart_d(\msc W)$ to the sets of martingales in $\bmart_d(\msc P)$ that are independent of $\mcl F_{d}(0)$. This is because we can identify $W_d$ with the Brownian motion $B_d$. Thus, for every $i \leq n$ and $d \leq D$, we think of $\alpha_d^{(i)}$ as an element of $\bmart_d(\msc P)$ that is independent of $\mcl F_{d}(0)$. Now, by the assumption on $\msc P$ in the statement of Proposition~\ref{p.explicit.uninverted.whenrich}, there exists a random variable $I : \msc P \to \{1,\dots,n\}$ such that 
    \begin{e*}
        \P(I = i) = c_i,\quad\forall 1\leq i\leq n,
    \end{e*}
    and such that $I$ is $\mcl F_{d}(0)$-measurable for every $\s\in\sS$. 
    We let 
    \begin{e*}
        \beta_d(t) = \alpha_d^{(I)}(t)
    \end{e*}
    Since $I$ is $\mcl F_{d}(0)$-measurable and $\alpha_d^{(i)} \in \bmart_d(\msc P)$ is independent of $\mcl F_{d}(0)$, we have that $\beta\in \bmart(\msc P)$. Furthermore, it is easily verified that 
    \begin{align*}
        &\varphi = \varphi(\beta), \\
        &\gamma_d(t_d) =\EE[(\beta_d(t_d))^2].
    \end{align*}
    From this, it follows that 
    \begin{e*}
        \int S_t \chi_{\gamma} \d \mu - \varphi = \int S_t \chi_{\beta} \d \mu - \varphi(\beta) \geq \inf_{\alpha \in \bmart(\msc P)} \left\{ \int S_t \chi_{\alpha} \d \mu - \varphi(\alpha) \right\}.
    \end{e*}
    and thus 
    \begin{align*}
        \lim_{N \to +\infty} \bar F_N(t,\mu) &= \inf_{(\varphi,\gamma) \in K_1(\msc W)} \left\{ \int S_t \chi_{\gamma} \d \mu - \varphi \right\} 
        \\
        &\geq \inf_{\alpha \in \bmart(\msc P)} \left\{ \int S_t \chi_{\alpha} \d \mu - \varphi(\alpha) \right\}.
    \end{align*}
    This along with~\eqref{e.p.explicit.uninverted.whenrich1} implies~\eqref{e.explicit.uninverted}.
\end{proof}

\begin{proposition} \label{p.existence.optimal.martingale}
    There exists a probability space $\bar{\msc P}$ such that \eqref{e.explicit.uninverted} holds with $\bmart=\bmart(\bar{\msc P})$ and the infimum appearing there is achieved.
\end{proposition}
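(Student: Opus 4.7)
The overall plan is to realize the infimum in~\eqref{e.explicit.uninverted} on a specific rich probability space to which Proposition~\ref{p.explicit.uninverted.whenrich} applies, and to identify an explicit minimizing martingale by combining the unique monotone Parisi measure produced by Theorem~\ref{t.convexoptimization} with the martingale representation of $\psi$ in Proposition~\ref{p.explicit.rep.psi}.

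Concretely, I would take $\bar{\msc P} = [0,1] \times \Omega^{(1)} \times \cdots \times \Omega^{(D)}$, where each $(\Omega^{(d)}, \mcl G_d, \PP_d)$ is a one-dimensional Wiener space carrying a Brownian motion $B_d$. Equipping $[0,1]$ with the Lebesgue measure, let $U$ be the projection onto the first factor, and define $\mcl F_d(t)$ as the completion of $\sigma(U) \vee \sigma(B_d(s) : s \leq t)$. The Brownian motions $B_d$ are then mutually independent and each is adapted to its filtration, and $U$ is $\mcl F_d(0)$-measurable for every $d$. This is exactly the setting of Proposition~\ref{p.explicit.uninverted.whenrich}, which delivers the identity~\eqref{e.explicit.uninverted} on $\bar{\msc P}$.

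It remains to produce an $\bar \alpha \in \bmart(\bar{\msc P})$ that attains this infimum. Let $\bar \nu \in \mcl P_\infty^\upa(\R^D_+)$ be the unique monotone maximizer of~\eqref{e.convexoptimization} given by Theorem~\ref{t.convexoptimization}. Since $\bar \nu$ is supported in $\supp(\mu) + t\nabla\xi([0,1]^D) \subset \prod_d [0,T_d]$, each marginal $\bar \nu_d$ lies in $\mcl P_\infty(\R_+)$ with support in $[0,T_d]$. For each $d$, let $X_d$ solve the SDE~\eqref{e.def.X} driven by $B_d$ with $\nu_d = \bar \nu_d$, and set $\bar \alpha_d(t) = \partial_x \Phi_{\bar \nu_d}(t, X_d(t))$; by Lemma~\ref{l.ext_Parisi_PDE} and Proposition~\ref{p.rep.psid}, this is a bounded martingale adapted to $(\mcl F_d(t))_{t\geq 0}$, hence $\bar \alpha \in \bmart(\bar{\msc P})$.

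To verify that $\bar \alpha$ is a minimizer, I would chain three identities. First, Proposition~\ref{p.explicit.rep.psi} applied to $\bar \nu$ states that this particular $\bar \alpha$ achieves the infimum defining $\psi(\bar \nu)$, namely $\psi(\bar \nu) = \int \chi_{\bar \alpha} \,\d \bar \nu + \varphi(\bar \alpha)$, where $\varphi(\alpha)$ denotes the non-transport part of the functional in~\eqref{e.explicit.uninverted}. Second, since the Kolmogorov forward equation associated with~\eqref{e.def.X} is precisely~\eqref{e.u_nu=eqn}, the density of $X_d(s)$ is $u_{\bar \nu_d}(s,\cdot)$; comparing~\eqref{e.def.chi.alpha} with~\eqref{e.chi=} and using the decomposition~\eqref{e.decomp.psi} then shows that $\chi_{\bar \alpha}$ coincides with the derivative of $\psi$ at $\bar \nu$ provided by Proposition~\ref{p.diff of psi}. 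Third, applying Lemma~\ref{l.rel_at_max} with this derivative gives $\mcl T_t(\mu,\bar \nu) = \int \chi_{\bar \alpha}\,\d\bar \nu - \int S_t \chi_{\bar \alpha}\,\d\mu$. Combining these with Corollary~\ref{c.limF_N=h} and Theorem~\ref{t.convexoptimization} yields
\begin{equation*}
    \lim_{N \to \infty} \bar F_N(t,\mu) = \psi(\bar \nu) - \mcl T_t(\mu,\bar \nu) = \varphi(\bar \alpha) + \int S_t \chi_{\bar \alpha}\,\d\mu,
\end{equation*}
which is exactly the value of the functional in~\eqref{e.explicit.uninverted} at $\bar \alpha$, so the infimum is attained. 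The main subtlety is reconciling the ``independent family of filtrations'' framework with the measurability of $U$ in every $\mcl F_d(0)$: the construction above accepts shared initial randomness while keeping the forward Brownian increments mutually independent, which is what the proof of Proposition~\ref{p.explicit.uninverted.whenrich} actually uses.
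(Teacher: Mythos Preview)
Your argument is correct and takes a genuinely different route from the paper. The paper's proof is a soft compactness argument: it takes a minimizing sequence $\alpha^{(n)}$ on a rich space, uses that $\alpha^{(n)}_d(t)\in[-1,1]$ to get tightness, and passes to a limit on a new space $\bar{\msc P}$ via Prokhorov and Skorokhod; only afterward, in the proof of Theorem~\ref{t.explicit.uninverted}, does it identify the minimizer with $\partial_x\Phi_{\bar\nu_d}(t,X_d(t))$ through a saddle-point argument. You instead construct the minimizer directly from the Parisi measure $\bar\nu$ and verify it attains the value $\lim_N\bar F_N(t,\mu)$ by chaining Proposition~\ref{p.explicit.rep.psi}, the identification of $\chi_{\bar\alpha}$ with the derivative of $\psi$ at $\bar\nu$ (via the Fokker--Planck equation~\eqref{e.u_nu=eqn} matching the SDE~\eqref{e.def.X}), and Lemma~\ref{l.rel_at_max}. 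This is more constructive and effectively merges Proposition~\ref{p.existence.optimal.martingale} with the identification step of Theorem~\ref{t.explicit.uninverted}, at the cost of relying more explicitly on the structural results of Sections~\ref{s.initial condition} and~\ref{s.optim}.

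Two minor remarks. First, the detour through Proposition~\ref{p.explicit.uninverted.whenrich} and the shared $U$ is unnecessary: your $\bar\alpha_d$ is adapted to the filtration of $B_d$ alone, so you can take $\bar{\msc P}$ to be the product of $D$ one-dimensional Wiener spaces with genuinely independent filtrations, and get the inequality $\lim_N\bar F_N(t,\mu)\le\inf_{\alpha\in\bmart(\bar{\msc P})}\{\cdots\}$ directly from Proposition~\ref{p.explicit.uninverted.mixed} using $K_0\subset K$. This sidesteps entirely the tension you flag about independent filtrations versus common initial randomness. Second, the assertion that $\bar\nu$ is supported in $\supp(\mu)+t\nabla\xi([0,1]^D)$ deserves an explicit pointer: it follows from Proposition~\ref{p.unique_parisi_gen} (or the proof of Proposition~\ref{p.h=hopf-lax_on_Q}, specifically~\eqref{e.|p_*|<sqrtD}), which gives $\bar\nu=\mcl L_{\sq+t\nabla\xi(\sp_\star)}$ with $\sp_\star$ taking values in $[0,1]^D$.
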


\begin{proof}
    We let $\msc P$ be a probability space such that Proposition~\ref{p.explicit.uninverted.whenrich} holds and let $\alpha^{(n)} \in \bmart(\msc P)$ be such that 
    \begin{e*}
        \lim_{n \to +\infty} \int S_t \chi_{\alpha^{(n)}} \d \mu - \varphi(\alpha^{(n)}) = \lim_{N \to +\infty} \bar F_N(t,\mu).
    \end{e*}
    Since Proposition~\ref{p.explicit.uninverted.whenrich} holds, for $n$ large enough, in view of~\eqref{e.explicit.uninverted}, we must have $\EE[\phi_d^*(T_d,\alpha^{(n)}_d(T_d))] < +\infty$. The definition of $\phi_\s$ in~\eqref{e.def.phid} and that of $\phi_\s^*$ in~\eqref{e.def.phid*} imply that, for each $\s\in\sS$ and for $t \leq T$, $\alpha_d(t)$ is valued in $[-1,1]$ a.s. In particular, the sequence $(((\alpha^{(n)}_\s(t))_{0\leq t\leq T_\s})_{\s\in\sS})_{n\in\N}$ (indexed by $n$) is tight. Applying Prokhorov’s theorem and Skorokhod's representation theorem, not relabeling the extraction, we obtain a probability space $\bar{\msc P} = (\bar\Omega,\bar {\mcl{F}}, \bar\P)$ and random variables $(B,(\alpha_1(t_1),\dots,\alpha_D(t_D))_{t_d \in [0,T_d] \cap \Q})$ such that, for every integer $l\in\N$, every continuous bounded function $J:\R\times (\R^D)^l \to\R$, and $t^i = (t^i_d)_{d\in\sS}\in \prod_{d\in\sS}[0,T_\s]$ for $1\leq i\leq l$, we have 
    \begin{e*}
        \lim_{n \to +\infty} \EE J\Ll(B,\alpha^{(n)}(t^1),\dots,\alpha^{(n)}(t^l)\Rr) = \bar\EE J\Ll(B,\alpha(t^1),\dots,\alpha(t^l)\Rr)
    \end{e*}
    where $\alpha^{(n)}(t^i)=(\alpha^{(n)}_d(t^i_d))_{d\in\sS}$.
    For $t_d \leq T_d$, we let $\bar{\mcl F}_d(t_d)$ denote the $\sigma$-algebra generated by $(B_d(s_d))_{s_d \in [0,t_d] \cap \Q}$ and $(\alpha_d(s_d))_{s_d \in [0,t_d] \cap \Q}$. We set $\alpha_d(t_d) = \bar\EE [\alpha_d(T_d) | \bar{\mcl F}_d(t_d) ]$, by construction $\alpha_d \in \bmart_d(\bar{ \msc P})$ with respect to the filtration $(\bar{\mcl F}_d(t_d))_{t_d \leq T_d}$. Letting $n \to +\infty$, it can then checked as in \cite[Lemma~2.5]{uninverting} that 
    \begin{e*}
        \lim_{N \to +\infty} \bar F_N(t,\mu) = \int S_t \chi_\alpha \d \mu - \varphi(\alpha).
    \end{e*}
    Therefore, we get
    \begin{align*}
        \lim_{N \to +\infty} \bar F_N(t,\mu) \geq \inf_{\alpha \in \bmart(\bar{\msc P})} \left\{ \int S_t \chi_{\alpha} \d \mu - \varphi(\alpha) \right\}.
    \end{align*}
    Since the converse inequality holds thanks to Proposition~\ref{p.explicit.uninverted.mixed}, we obtain the desired result.
\end{proof}

We are now ready to prove Theorem~\ref{t.explicit.uninverted}. Note that as a consequence of the proof below, in addition to the announced result we will observe that a condition similar to condition (1) in \cite[Theorem 2]{uninverting} (i.e. that the support of the Parisi measure is contained in the set of maximizers of $\int_t^1 (s-\EE[\bar \alpha_s^2]) \d s$) is satisfied by the optimal martingale $\bar \alpha$, see Remark~\ref{r.parisi measure in terms of optimal mart} below.

\begin{proof}[Proof of Theorem~\ref{t.explicit.uninverted}]
We let 
\begin{e*}
    \Gamma(\nu,\alpha) = \int \chi_\alpha \d \nu - \varphi(\alpha) - \mcl T_t(\mu,\nu).
\end{e*}
Let $\bar{\msc P}$ be a probability space such that Proposition~\ref{p.existence.optimal.martingale} holds. This means that we have 
\begin{e*}
    \lim_{N \to +\infty} \bar F_N(t,\mu) = \inf_{\alpha\in\bmart(\bar{\msc P})} \sup_{\nu \in \mcl P(\mcl K_T)} \Gamma(\nu,\alpha),
\end{e*}
and that the infimum in the previous display is reached at some $\bar \alpha$. Now let $\bar \nu$ denote the unique maximizer in \eqref{e.convexoptimization}. As discussed at the beginning of the proof of Proposition~\ref{p.explicit.uninverted.mixed}, thanks to \cite[Proposition 6.1]{chen2024free}, we have $\bar \nu \in \mcl P(\mcl K_T)$, hence 
\begin{e*}
    \lim_{N \to +\infty} \bar F_N(t,\mu) = \sup_{\nu \in \mcl P(\mcl K_T)} \inf_{\alpha\in\bmart(\bar{\msc P})}  \Gamma(\nu,\alpha).
\end{e*}
%
In particular, for every $\nu \in \mcl P(\mcl K_T)$ and $\alpha \in \bmart(\bar{\msc P})$, we have 
\begin{e*}
    \Gamma(\bar \nu,\alpha) \geq \lim_{N \to +\infty} \bar F_N(t,\mu) \geq \Gamma(\nu,\bar\alpha).
\end{e*}
Choosing $\alpha = \bar \alpha$ and $\nu = \bar \nu$, we obtain $\lim_{N \to +\infty} \bar F_N(t,\mu) = \Gamma(\bar \nu,\bar \alpha)$. In particular the previous display reads  
\begin{e*}
    \Gamma(\bar \nu,\alpha) \geq \Gamma(\bar \nu,\bar \alpha) \geq \Gamma(\nu,\bar\alpha).
\end{e*}
In particular $\bar \alpha$ is a minimizer of $\Gamma(\bar \nu,\cdot)$, this means that $\bar \alpha$ is a minimizer in the variational formula \eqref{e.explicit.rep.psi} with $\nu = \bar \nu$. According to the optimality condition in Proposition~\ref{p.explicit.rep.psi}, we have that $\bar \alpha$ is uniquely characterized and satisfies $\bar \alpha_d(t) = \partial_x \Phi_{\bar \nu_d}(t,X_d(t))$, where $X_d$ solves \eqref{e.def.X}. In particular, $\bar \alpha_d$ is measurable with respect to the filtration generated by $B_d$. This means that $\bar \alpha$ lies in the image of $\bmart(\msc W)$ in $\bmart(\bar{\msc P})$ through the canonical injection (recall that $\msc W$ denotes the Wiener space and that the canonical injection is obtained by replacing the canonical Brownian motion $W$ by $B$). From this, we obtain that for any probability space $\msc P$, there is a copy of $\bar \alpha$ in $\bmart(\msc P)$ that we still denote $\bar \alpha$ for convenience. Thus we have 
\begin{e*}
    \lim_{N \to +\infty} \bar F_N(t,\mu) = \int S_t \chi_{\bar \alpha} \d \mu - \varphi(\alpha) \geq \inf_{\alpha\in\bmart(\msc P)} \left\{ \int S_t \chi_{ \alpha} \d \mu - \varphi(\alpha) \right\}.
\end{e*}
The other bound follows immediately from Proposition~\ref{p.explicit.uninverted.mixed} recalling that $K_0 \subset K$. 
\end{proof}

\begin{remark} \label{r.parisi measure in terms of optimal mart}
    Also observe that from $\Gamma(\bar \nu,\bar \alpha) \geq \Gamma(\nu,\bar\alpha)$, we have that 
    \begin{e*}
        \int \chi_{\bar \alpha} \d \bar \nu - \mcl T_t(\mu,\bar \nu) \geq \int \chi_{\bar \alpha} \d \nu - \mcl T_t(\mu, \nu).
    \end{e*}
    Taking the supremum over $\nu \in \mcl P(\mcl K_T)$ on the right-hand side, using Lemma~\ref{l.hopf-lax with linear initial condtion} and rearranging, we obtain that
    \begin{e*}
        \int \chi_{\bar \alpha} \d \bar \nu - \int S_t \chi_{\bar \alpha} \d \mu \geq \mcl T_t(\mu, \bar \nu).
    \end{e*}
    This means that $\chi_{\bar \alpha}$ is a Kantorovich potential from $\mu$ to $\bar \nu$ (see Definition~\ref{d.kantorovich.potential}). This condition replaces condition (1) in \cite[Theorem 2]{uninverting} (i.e. that the support of the Parisi measure is contained in the set of maximizers of $\int_t^1 (s-\EE[\bar \alpha_s^2]) \d s$). \qed
\end{remark}

%

%
%
%
%
%
%

\appendix

\section{Proof of the \texorpdfstring{$2$}{2}-dimensional inequality for \texorpdfstring{$\xi^*$}{xi\^*}} \label{s.xi^* ineq}

In this section, we give a proof of Proposition~\ref{p.xi^* inequality}.

\begin{proof}[Proof of Proposition~\ref{p.xi^* inequality}]
    Let us first show that without loss of generality we may assume that $\xi$ is strongly convex on $\R^2_+$. Assume that Proposition~\ref{p.xi^* inequality} holds when $\xi$ is further assumed to be strongly convex. Applying Proposition~\ref{p.xi^* inequality} to the function  $\xi_\lambda= \xi + \frac{\lambda}{2} |\cdot|^2$, we obtain for all real numbers $a \leq a'$ and $b \leq b'$,
    \begin{equation*} 
        \xi_\lambda^*(a,b) + \xi_\lambda^*(a',b') \leq \xi_\lambda^*(a',b) + \xi_\lambda^*(a,b').
    \end{equation*}
    Thus, to show that \eqref{e. xi^* inequality} holds for $\xi$, it is enough to show that for every $y \in \R^2_+$, $\xi^*_\lambda(y) \to \xi^*(y)$ as $\lambda \to 0$. We fix $y \in \R^2_+$, the sequence $(\xi^*_\lambda(y))_{\lambda}$ increases as $\lambda$ decreases to $0$ and is upper-bounded by $\xi^*(y)$, we denote by $\ell \in [0,\xi^*(y)]$ its limit. In addition, for every $x \in \R^2_+$, we have $\xi_\lambda^*(y) \geq x \cdot y - \xi(x) - \frac{\lambda}{2}|x|^2$ letting $\lambda \to 0$ in this inequality yields
    \begin{equation*}
        \ell = \lim_{\lambda \to 0} \xi_\lambda^*(y) \geq x \cdot y - \xi(x).
    \end{equation*}
    Taking the supremum over $x \in \R^2_+$ in the last display, we obtain $\xi^*(y) \leq \ell$. Thus $\lim_{\lambda \to 0} \xi_\lambda^*(y) = \xi^*(y)$, as desired. 
    
    In view of the previous argument, we will assume that $\xi$ is $\lambda$-strongly convex for some $\lambda > 0$ for the rest of this proof.
    
    \medskip
    
    \noindent \emph{Step 1}. We show that for every $y \in \R^2_+$, the supremum in the definition of $\xi^*(y)$ is reached at a unique point $x_{\mathrm{opt}}(y) \in\R^2_+$ and that we have $|x_{\mathrm{opt}}(y)| \leq |y|/\lambda$.
    
     We fix $y \in \R^2_+$. Since $\xi$ is strongly convex, the map $ x \mapsto x \cdot y - \xi(x)$ is strongly concave on $\R^2_+$; so it has at most one maximizer. In addition, we have for every $x \in \R^2_+$ that $\xi(x) \geq \frac{\lambda}{2}|x|^2$. Hence for $|x| > 2|y|/\lambda$, we have 
    \begin{equation*}
        x \cdot y - \xi(x) \leq |x||y| - \frac{\lambda}{2} |x|^2 \leq |x|(|y| - \frac{\lambda}{2} |x|) < 0 \leq \xi^*(y).
    \end{equation*}
    It follows that
    \begin{equation*}
        \xi^*(y) = \sup_{\substack{x \in \R^2_+ \\ |x| \leq \frac{y}{\lambda}}} \left\{ x \cdot y - \xi(x) \right\}.
    \end{equation*}
    The optimization problem in the last display has an optimizer $x_{\mathrm{opt}}(y) \in\R^2_+$ satisfying $|x_{\mathrm{opt}}(y)| \leq |y|/\lambda$.
    
    \medskip
    
    \noindent \emph{Step 2.} We show that $x_{\mathrm{opt}}(y) = 0$ if and only if $y = 0$.
    
     If $y= 0$, the condition $|x_{\mathrm{opt}}(y)| \leq |y|/\lambda$ imposes $x_{\mathrm{opt}}(y) = 0$. Conversely, if we have $x_{\mathrm{opt}}(y) = 0$, then $\xi^*(y) = 0$. We will show that $\xi^*(y) > 0$ for $y \neq 0$. Let $r > 0$, the function $\xi$ is $\mcl C^{1,1}$ on the ball $0$ of center and radius $r$ in $\R^2_+$. In particular, since we have assumed that $\nabla \xi(0)=0$, there exists $\theta=  \theta(r) > 0$ such that for every $|x| < r$
    \begin{e*}
        \xi(x) \leq \xi(0) + \nabla \xi(0) \cdot x + \frac{\theta}{2} |x|^2 = \frac{\theta}{2} |x|^2.
    \end{e*}
    For $|y| < \lambda r$, chosing $x = \varepsilon y$ for $\varepsilon > 0$ small enough, we have $|x| < r$ and $|x| \leq |y|/\lambda$, this yields
    \begin{equation*}
        \xi^*(y) \geq x \cdot y - \xi(x) \geq \varepsilon|y|^2 - \frac{\theta \varepsilon^2}{2} |y|^2 = \varepsilon|y|^2 \left( 1- \frac{\theta \varepsilon}{2}\right) > 0.
    \end{equation*}

    \medskip
    
    \noindent \emph{Step 3.} We show that $\xi^*$ is $\mcl C^{1,1}$ on $\R^2$. In particular, $\xi^*$ is differentiable on $\R^2$ and its gradient is a Lipschitz function which is itself differentiable almost everywhere.
    
     We know that $\xi^*$ is convex, so it is enough to show that the function $y \mapsto \xi^*(y) - \frac{1}{2\lambda} |y|^2$ is concave on $\R^2$. We have
    \begin{align*}
        \xi^*(y) - \frac{1}{2\lambda} |y|^2 &= \sup_{x \in \R^2_+ } \left\{ x \cdot y  - \frac{1}{2\lambda} |y|^2 - \xi(x) \right\} \\
                                            &= \sup_{x \in \R^2_+ } \left\{ - \frac{1}{2} \left| \frac{y}{\sqrt{\lambda}} - \sqrt{\lambda} x \right|^2 - \left(\xi(x)- \frac{\lambda}{2}|x|^2 \right) \right\}.
    \end{align*}
    Thus, $y \mapsto \xi^*(y) - \frac{1}{2\lambda} |y|^2$ is the supremum of a jointly concave functional on $\R^2 \times \R^2_+$ so it is concave on $\R^2$.

\medskip

    \noindent \emph{Step 4.} We show that for $y \in \R^2_{++}$, $x_{\mathrm{opt}}(y) = \nabla \xi^*(y)$.

     We fix $y \in \R^2_{++}$, we have for every $y' \in \R^2_{++}$ and $x \in \R^2_{++}$,
    \begin{align*}
       \xi^*(y') &\geq  x_{\mathrm{opt}}(y) \cdot y' - \xi(x_{\mathrm{opt}}(y)) \\
                &=  x_{\mathrm{opt}}(y) \cdot y - \xi(x_{\mathrm{opt}}(y)) + x_{\mathrm{opt}}(y) \cdot(y'-y) \\
                &= \xi^*(y) +  x_{\mathrm{opt}}(y) \cdot(y'-y).
    \end{align*}
    This means that for every $v \in \R^2$ such that $|v|=1$, we have for $\varepsilon > 0$ small enough $y + \varepsilon v \in \R^2_{++}$ and
    \begin{equation*}
        \frac{\xi^*(y+\varepsilon v) - \xi^*(y)}{\varepsilon} \geq x _{opt}(y) \cdot v.
    \end{equation*}
    Letting $\varepsilon \to 0$, we obtain
    \begin{equation*}
        \nabla \xi^*(y) \cdot v \geq x_{\mathrm{opt}}(y) \cdot v.
    \end{equation*}
    Since the last display is valid for all $v$ in the sphere of radius $1$ in $\R^2$, it imposes that $\nabla \xi^*(y) = x_{\mathrm{opt}}(y)$.
    
    \medskip
    

    
    
    \noindent \emph{Step 5.} We show that $\partial_{a} \partial_{b} \xi^* \leq 0$ almost everywhere on $\R^2_+$.

\medskip

     \emph{Step 5.1.} Letting $V = \nabla \xi(\R^2_{++})$, we show that $\R^2_{++} \setminus \bar V = U_a \cup U_b$ where $U_a = \{ y \in \R^2_{++} \setminus \bar{V}  \big| \, x_{\mathrm{opt},a} = 0 \}$ and $U_b = \{ y \in \R^2_{++} \setminus \bar{V}  \big| \, x_{\mathrm{opt},b} = 0 \}$ are disjoint open sets.

    Clearly $U_a \cup U_b \subset \R^2_{++} \setminus \bar V$. Conversely, let $y \in \R^2_{++} \setminus \bar V$, and by contradiction assume that $x_{\mathrm{opt}}(y) \in \R^2_{++}$. Perturbing slightly around $x_{\mathrm{opt}}(y)$, we get the first order condition and $\nabla \xi(x_{\mathrm{opt}}(y)) = y$, this is a contradiction. Hence, we  have $x_{\mathrm{opt},a}(y) = 0$ or $x_{\mathrm{opt},b}(y) = 0$. This proves that $\R^2_{++} \setminus \bar V = U_a \cup U_b$. By Step 2, we must have $U_a \cap U_b = \emptyset$. Finally, let us show that $U_a$ is open. Given $y \in U_a$, we have $x_{\mathrm{opt},a}(y) = 0$ and $x_{\mathrm{opt},b}(y) > 0$. By Step 4, $x_{\mathrm{opt},b}(y)$ is a continuous function of $y$ so for every $y' \in B_r(y) \subset \R^2_{++} \setminus \bar V$ with $r > 0$ small we have $x_{\mathrm{opt},b}(y') > 0$. This imposes $x_{\mathrm{opt},a}(y') = 0$ as otherwise $y' \in V$ and thus $B_r(y) \subset U_a$. This proves that $U_a$ is an open set and by symmetry we also have that $U_b$ is an open set.
    
    \medskip
    
    \noindent \emph{Step 5.2.} We show that $\partial_{a}\partial_{b} \xi^* \leq 0$ on $V = \nabla \xi(\R^2_{++})$.

According to Step 3, $\xi^*$ is differentiable with Lipschitz gradient on $\R^2$. According to \eqref{e.xi^*(nabla_xi)=} in Lemma~\ref{l.save_the_day}, we have that for every $x \in \R^2_{++}$, 
    \begin{equation*}
        \xi^*(\nabla \xi(x))= x \cdot \nabla \xi(x) - \xi(x).
    \end{equation*}
    Differentiating the expression in the previous display, we obtain $\nabla \xi^*(\nabla \xi(x)) = x$. Now let $y \in V$, there exists $x \in \R^2_{++}$ such that $y = \nabla \xi(x)$, and we have
    \begin{equation*}
        \nabla \xi(\nabla \xi^*(y)) = \nabla \xi(\nabla \xi^*(\nabla \xi(x))) = \nabla \xi(x) = y.
    \end{equation*}
    Finally, since $\xi$ is strongly convex on $\R^2_+$, the function $\nabla \xi$ is continuous and injective on $\R^2_+$, so by invariance of domain, $V$ is an open set. Differentiating the last display with respect to $y \in V$, we obtain that for almost all $y \in V$,
    \begin{e*}
        \nabla^2 \xi^*(y) = [\nabla ^2 \xi(\nabla \xi^*(y))]^{-1}.
    \end{e*}
    The $2 \times 2$ matrix $A = \nabla ^2 \xi(\nabla \xi^*(y))$ is symmetric positive definite and has non-negative coefficients, this is because the condition \eqref{e.def H_N} imposes some positivity constraint on the coefficients of the power series expansion of~$\xi$ as explained in Remark~\ref{r.xi.monotonicity}. Since $(A^{-1})_{ab} = -\det(A)^{-1} A_{ab}$, we obtain that $(A^{-1})_{ab} \leq 0$, as desired. 
    
    \medskip
    
     \noindent \emph{Step 5.3.} We show that $\partial_{a}\partial_{b} \xi^* \leq 0$ almost everywhere on $U_a \cup U_b$.
    
Let $y \in U_a$, we have $x_{\mathrm{opt},a}(y) = 0$ and $x_{\mathrm{opt},b}(y) > 0$, so 
     \begin{equation*}
         \xi^*(y) = \sup_{x_b > 0} \left\{ x_by_b - \xi(0,x_b) \right\}.
     \end{equation*}
    The value of the unique optimizer in the previous display is $x_{\mathrm{opt},b}(y)$. In particular, $x_{\mathrm{opt},b}(y)$ is independent of the value of $y_a$ and by Step 4, we have $\partial_b \xi^*(y) = x_{\mathrm{opt},b}(y)$. Thus, for almost all $y \in U_a$, $\partial_{a}\partial_{b} \xi^*(y) = 0$. By symmetry, this also holds for every $y \in U_b$.

\medskip

    \noindent \emph{Step 5.4.} We show that the boundary of $V$ in $\R^2_{++}$ is a Lebesgue negligible set.

 Let us show that the boundary of $V$ is $\nabla \xi(\R^2_+ \setminus \R^2_{++})$. Since $\R^2_+ \setminus \R^2_{++}$ is Lebesgue negligible and $\nabla \xi$ is a smooth function, the desired result will follow. Let $\bar V$ denote the closure of $V$ in $\R^2_{++}$ and let $y \in \bar V$. There exists a sequence $(x_n)_n$ in $\R^2_{++}$ such that $\nabla \xi(x_n) \to y$. The sequence $(\nabla \xi(x_n))_n$ is bounded and $\nabla \xi(x_n) - \lambda x_n \in \R^2_+ $ so the sequence $(x_n)_n$ is bounded. Let $x \in \R^2_+$ denote a subsequential limit of $(x_n)_n$, up to extraction we have $y = \lim_n \nabla \xi(x_n) = \nabla \xi(x)$. Hence $\bar V \subset \nabla \xi (\R^2_+)$. Conversely given $y \in \nabla \xi (\R^2_+)$, for some $x \in \R^2_+$, we have $y = \nabla \xi(x) = \lim_n \nabla \xi(x_a+\frac{1}{n},x_b  +\frac{1}{n})$, so $y \in \bar V$. 
Thus $\bar V = \nabla \xi (\R^2_+)$ and $\bar V \setminus V = \nabla \xi(\R^2_+) \setminus \nabla \xi(\R^2_{++})$. Since $\xi$ is assumed to be strongly convex on $\R^2_+$, $\nabla \xi$ is injective on $\R^2_+$ and we have $\nabla \xi(\R^2_+) \setminus \nabla \xi(\R^2_{++}) =\nabla \xi(\R^2_+ \setminus \R^2_{++})$. Thus $\bar V \setminus V = \nabla \xi(\R^2_+ \setminus \R^2_{++})$ as announced.
     
     \medskip
     
    \noindent \emph{Step 6.} We show that \eqref{e. xi^* inequality} holds when $\xi$ is $\lambda$-strongly convex.

We let $a \leq a'$, $b \leq b'$, such that $a \neq a'$ and $b \neq b'$. This way $a' = a +h$ and $b' = b +k$ for $h,k \in \R_{++}^2$. We then have 
    \begin{align*}
        \big( \xi^*(a,b)- \xi^*(a',b) \big) -  \big( \xi^*(a,b')- \xi^*(a',b') \big)
        \\
        = \int_0^1 \int_0^1 hk \partial_{a}\partial_{b} \xi^*(a+th,b+sk) \d s \d t.
    \end{align*}
    According to Step 5, the right-hand side in the previous display is the integral of an almost everywhere nonnegative function, so it is nonnegative and
    \begin{equation*}
        \big( \xi^*(a,b)- \xi^*(a',b) \big) -  \big( \xi^*(a,b')- \xi^*(a',b') \big) \geq 0.
    \end{equation*}
    Rearranging the terms in the previous display, we obtain \eqref{e. xi^* inequality} when $a \neq a'$ and $b \neq b'$. Finally, by continuity of $\xi^*$, the inequality \eqref{e. xi^* inequality} holds even  when $a = a'$ or $b = b'$. 
\end{proof}

\vspace{-0.5cm}

\small
\bibliographystyle{plain}
\bibliography{ref}

\end{document}